% !TEX TS-program = LaTeX
% !TEX encoding = UTF-8 Unicode

%\documentclass[a4paper,12pt]{amsart}
\documentclass[oneside, abstract=true]{scrartcl}%report, book, article
\usepackage[main=english,french]{babel}
\usepackage[utf8]{inputenc}
\usepackage[T1]{fontenc}

\usepackage[french=guillemets, english=british]{csquotes}

\usepackage{amsmath,amssymb}
\usepackage{amsfonts}
\usepackage{mathtools}
\usepackage{amssymb}
\usepackage{mathabx}
\usepackage{MnSymbol}
\usepackage{xargs}
\usepackage{enumitem}
%
%dessins
\usepackage{tikz}
\usetikzlibrary{matrix,arrows,calc,external,positioning, svg.path, patterns, patterns.meta}
%calculs
\usepackage{calc}
%
%%%package for notes
\usepackage[mode=multiuser,status=final,lang=french]{fixme} %status=final % to remove comments!
\fxsetup{theme=colorsig}
%
%%liens hypertexte
\usepackage{hyperref}
%\hypersetup{final,colorlinks=true}
%, linkcolor=[rgb]{0.1, 0.1, 0.44}, citecolor=[rgb]{0.1, 0.1, 0.44}}

\usepackage{amsthm} 
\usepackage[capitalise]{cleveref}

\usepackage{needspace}
\usepackage{graphicx}

%\usepackage{marginnote}

%glossaires, liste de symboles, index
%\usepackage[
%           sanitize,
%           nohypertypes={index},
%           nomain
%           ]{glossaries}
%\usepackage{glossary-mcols} 

%%%%%%%

\theoremstyle{plain} 
\newtheorem{theorem}{Theorem}[section] 
\newtheorem{proposition}[theorem]{Proposition}

\newtheorem{corollary}[theorem]{Corollary}
\newtheorem{lemma}[theorem]{Lemma}
\newtheorem{conjecture} {Conjecture}
\newtheorem{fact}[theorem]{Fact}

\newtheorem*{claim*}{Claim}

\newtheorem{claim}[theorem]{Claim}

\newtheorem{maintheorem}{Main Theorem}
\newtheorem{question} {Question}

\crefname{maintheorem}{Main Theorem}{Main Theorems}
\crefname{fact}{Fact}{Facts}

\theoremstyle{definition}
\newtheorem{definition}[theorem]{Definition}

\theoremstyle{remark}
\newtheorem{remark}[theorem]{Remark}
\newtheorem*{remark*}{Remark}

\newtheorem{example}[theorem]{Example}

 % * signifie que ces paragraphes ne seront pas numérotés

%\newtheorem*{conv} {Convention} 
%\newtheorem*{nota} {Notation} 
%\newtheorem*{notas} {Notations} 

%\newtheorem*{expl} {Explication}
%\newtheorem*{quest} {Question}

%Create Uppercase aliases when we need to avoid page break after theorem title, for example for theorems with lists
%it requires needspace package

\newenvironment{Theorem}{%
    \Needspace*{3\baselineskip}%     \Needspace*{3\baselineskip}%
    \theorem
}{\endtheorem}

\newenvironment{Proposition}{%
    \Needspace*{2\baselineskip}%
    \proposition
}{\endproposition}

\newenvironment{Fact}{%
    \Needspace*{5\baselineskip}%
    \fact
}{\endfact}

\newenvironment{Corollary}{%
    \Needspace*{2\baselineskip}%
    \corollary
}{\endcorollary}

%%%% Proof environnement
%% When trouble with the qed symbol one solution is to put
%% \belowdisplayskip=-12pt  the last align or split environnement.

\newlist{enumThm}{enumerate}{2}% enumerate in Definitions
\setlist[enumThm,1]{label=\upshape(\roman*)}%beginpenalty=4,
\setlist[enumThm,2]{label=\upshape(\alph*)}

\newlist{enumDefn}{enumerate}{2}% enumerate in Definitions
\setlist[enumDefn,1]{label=(\arabic*)}
\setlist[enumDefn,2]{label=(\arabic{enumDefni}\alph*),leftmargin=3em}

\newlist{enumRems}{enumerate}{1}% enumerate in Definitions
\setlist[enumRems,1]{label=\upshape(\arabic*)}

\newlist{itemProof}{itemize}{1}% itemize in Proofs
\setlist[itemProof]{label=--}

\newlist{enumProof}{enumerate}{1}% itemize in Proofs
\setlist[enumProof,1]{label=\upshape(\alph*)}

\newlist{enumProofParameter}{enumerate}{1}% itemize in Proofs
\setlist[enumProofParameter,1]{label=\upshape(\alph*)$_{\xi}$}

\newcommand*{\addsemicolon}[1]{#1:}
\newlist{descThm}{description}{1}% Desc in Thm statement
\setlist[descThm]{font=\normalfont\addsemicolon}

\newlist{descProof}{description}{1}% Desc in proof statement
\setlist[descProof]{font={\normalfont\itshape}}

%%%%Pour résoudre le problème des thms commencant par une liste, pour que le premier item ne soit pas sur la même ligne que l'entête du thm
%
\makeatletter
\def\enumfix{%
\if@inlabel
 \noindent\par\nobreak\vskip-\parskip\vskip-\parskip\hrule\@height\z@
\fi}
%
%\let\oldenumThm\enumThm
%\def\enumThm{\enumfix\oldenumThm}
%
%\let\oldenumRems\enumRems
%\def\enumRems{\enumfix\oldenumRems}
%
%\let\oldenumDefn\enumDefn
%\def\enumDefn{\enumfix\oldenumDefn}
%
%\let\olditemize\itemize
%\def\itemize{\enumfix\olditemize}
%
%\makeatother
%%%%%%%%%

%\if@cref@capitalise
%\crefname{prop}{Proposition}{Propositions}
%\else
%\crefname{prop}{proposition}{propositions}
%\fi
%
%\Crefname{prop}{Proposition}{Propositions}

%\crefmultiformat{prop}{propositions~#2#1#3} { and~#2#1#3}{, #2#1#3}{ and~#2#1#3}
%\Crefmultiformat{prop}{Propositions~#2#1#3} { and~#2#1#3}{, #2#1#3}{ and~#2#1#3}
\makeatletter
\if@cref@capitalise

\crefname{section}{Section}{Sections}
\crefname{subsection}{Subsection}{Subsections}
\crefname{equation}{}{}

\else

\crefname{section}{section}{sections}
\crefname{subsection}{subsection}{subsections}
\crefname{equation}{}{}

\fi

\makeatother

\Crefname{section}{Section}{Sections}
\Crefname{subsection}{Subsection}{Subsections}
\Crefname{equation}{}{}

\crefformat{enumDefni}{#2#1#3}
 \Crefformat{enumDefni}{#2#1#3}
\crefname{enumDefni}{}{}
\crefrangeformat{enumDefni}{#3#1#4--#5#2#6}

\crefformat{enumDefnii}{#2#1#3}
 \Crefformat{enumDefnii}{#2#1#3}
\crefname{enumDefnii}{}{}
 
 \crefformat{enumThmi}{#2#1#3}
 \Crefformat{enumThmi}{#2#1#3}
\crefname{enumThmi}{}{}

 \crefformat{enumThmii}{#2#1#3}
 \Crefformat{enumThmii}{#2#1#3}
\crefname{enumThmii}{}{}

 \crefformat{enumProofi}{#2#1#3}
 \Crefformat{enumProofi}{#2#1#3}
\crefname{enumThmi}{}{}
 
%\newenvironment{enumThm}{% enumerate in all AMS Thm env, prop, lem, thm...
%\begin{enumerate}[\upshape (i)]}
%{%
%\end{enumerate}
%}

%%MACROS FOR SOME CLASSICAL CONSTANTS AND OPERATORS

%\newcommand{\seq}[1]{\langle #1 \rangle}
\newcommand{\rest}[1]{ \upharpoonright{#1}}

\newcommand{\N}{\mathbb{N}}

\newcommand{\cN} {\N^\N} 
\newcommand{\cantor} {2^\N}

\newcommand{\sub}{\subseteq}

\newcommand{\Dee}[1]{\ensuremath{\mathbf{\Delta}^0_#1}}
\newcommand{\Pee}[1]{\ensuremath{\mathbf{\Pi}^0_#1}}

%%%MACROS FOR ARROWS

\newcommand{\rao} {\rightarrow}

\newcommand{\Lglra} {\Longleftrightarrow}

% !TEX encoding = UTF-8 Unicode

%\renewcommand{\setminus}{\smallsetminus}
%\renewcommand{\geq}{\geqslant}
%\renewcommand{\leq}{\leqslant}
%\renewcommand{\ngeq}{\not\geqslant} %Problem!!!!
%\renewcommand{\nleq}{\not\leqslant}

\newcommand{\wqo}{\textsc{wqo}}
\newcommand{\Wqo}{\textsc{Wqo}}

\newcommand{\bqo}{\textsc{bqo}}

%forgetful functor

% \newcommand{\Wqo}{\mathbf{Wqo}}

%\renewcommand{\ge}{\geqslant}
%\renewcommand{\le}{\leqslant}

\newcommand{\conc}{\mathord{{}^{\smallfrown}}} %concatenation of sequences

\DeclareMathOperator{\im}{\mathrm{im}} %image
\DeclareMathOperator{\id}{\mathrm{id}} %identity map
\DeclareMathOperator{\dom}{\mathrm{dom}}

%logical symbols
 %displaymath
 %inline

 %predecessor set
 %predecessor set

 %interval order

\DeclareMathOperator{\base}{\mathord{\bigcup}}

\DeclareMathOperator{\CB}{CB}

 %downward closed subsets
 %downward closed subsets
 %countable generated downward closed subsets
 %upward closed subs/ for wqo equal finitely generated upward closed subs
 % finitely generated upward closed subs
 %finitely generated downward closed subsets

\newcommand{\Q}{\mathbb{Q}}

\newcommand{\nsegm} {\not\sqsubseteq}
\newcommand{\segm} {\sqsubseteq}

\newcommand{\segs} {\sqsubset}

\newcommand{\shift}[1]{{}_{*}#1}

%%%%%%%%%%%%%%%%%%%

\newcommand{\iw}[1]{ (#1)^{\infty}}

\newcommand{\ie} {\text{\emph{i.e. }}}

\newcommand{\1}{{\mathbf{1}}}
\newcommand{\sC}{\mathsf{Scat}}

\newcommand{\gl}{\bigoplus}
\newcommand{\glbin}{\oplus}
\DeclareMathOperator{\pgl}{ptgl}
\newcommand{\fctwedge}{\textstyle{\bigvee}}

\newcommand{\cocenter}{co\-center}
\DeclareMathOperator{\tp}{tp}
 %\bigodot :-)

 %the structure M 

\newcommand{\generator}[1]{\mathsf{G}_{#1}}
\newcommand{\centered}[1]{\mathsf{C}_{#1}}
\newcommand{\omegaregular}[1]{\mathsf{W}_{#1}}

\newcommand{\Minimalfct}[1]{\mathsf{k}_{#1}}
\newcommand{\Maximalfct}[1]{\mathsf{l}_{#1}}

\newcommand{\restr}[1]{\mathord{\upharpoonright}_{#1}} %restriction on the domain
\newcommand{\corestr}[1]{\mathord{\downharpoonright}^{#1}} %restriction on the domain
 %restriction on the domain

\newcommand{\ABSd}[2][]{
 \ifthenelse{\equal{#1}{undefined}}{\overline{\mathsf{Abs}}(#2)}
 {\overline{\mathsf{Abs}}_{#1}(#2)}} %Absorbption set

\newcommand{\ABS}[2][]{
 \ifthenelse{\equal{#1}{undefined}}{\mathsf{Abs}(#2)}
 {\mathsf{Abs}_{#1}(#2)}}

\newcommand{\FinGl}[1]{\textstyle\gl_{\mathsf{Fin}}(#1)}

\newcommandx{\set}[2][2 = undefined]{
  \ifthenelse{\equal{#2}{undefined}}{\{ #1 \}}{
    \{ #1 \mid #2 \}
  }
}

\newcommand{\Ellentuckspace}{[\N]^\N}

\newcommand{\functionsonbaire}{\mathcal{F}(\cN)}%{\mathsf{Func}(\cN)}
\newcommand{\FG}{\mathrm{FG}}

\DeclareMathOperator{\C}{C}

\newcommand{\nbhd}[2]{N_{#1\restr{#2}}}

\newcommand{\ray}[2]{#1^{(#2)}}

\newcommand{\partit}{\mathsf{P}}
\newcommand{\partitn}[1]{\mathsf{P}^{\neq #1}}

%%%%   an environment for subproof
\newenvironment{subproof}[1][\proofname]{%
  \begin{proof}[#1]%
}{%
  \end{proof}%
}

\setlist[enumerate,1]{label=(\roman*),ref=(\roman*)}
\creflabelformat{enumi}{#2#1#3}
\crefname{enumi}{}{}
\Crefname{enumi}{}{}

%\newfontfamily{\fraktur}{UnifrakturMaguntia21}[Scale=MatchUppercase]

%\reversemarginpar

% Fixme Commands
%\newcommand{\Ysection}[2]{\section{#1\protect\marginnote{\normalfont\small\colorbox{teal}{\textcolor{white}{Y-- #2}}}}}
%\newcommand{\Rsection}[2]{\section{#1\protect\marginnote{\normalfont\small\colorbox{olive}{\textcolor{white}{R-- #2}}}}}
%\newcommand{\Ypart}[2]{\part{#1\protect\marginnote{\normalfont\small\colorbox{teal}{\textcolor{white}{Y-- #2}}}}}
%\newcommand{\Rpart}[2]{\part{#1\protect\marginnote{\normalfont\small\colorbox{olive}{\textcolor{white}{R-- #2}}}}}
\FXRegisterAuthor{y}{ay}{Y}
\FXRegisterAuthor{r}{ar}{R}

\setcounter{tocdepth}{1}
\begin{document}

\title{A well-quasi-order\\ for continuous functions}

\author{Rapha\"el Carroy and Yann Pequignot}

\maketitle

\begin{abstract}
We prove that continuous reducibility is a well-quasi-order on the class of continuous functions between separable metrizable spaces with analytic zero-dimensional domain.
To achieve this, we define \emph{scattered} functions, which generalize scattered spaces, and describe exhaustively scattered functions between zero-dimen\-sional separable metrizable spaces up to continuous equivalence.
%We introduce the class of \emph{scattered} functions as those whose restrictions are always somewhere locally constant, and we prove that the class of scattered continuous functions
%between separable metrizable spaces with a zero-dimensional domain is a well-quasi-order for continuous reducibility.

\end{abstract}

\tableofcontents

%\listoffixmes

% !TEX root = main.tex

\section{Introduction}

\begin{definition}\label{MainDef}
Given topological spaces $X,X', Y, Y'$,
we say that a function $f:X\rao Y$ \emph{continuously reduces} to a function $g:X'\rao Y'$ if there are two continuous functions $\sigma:X\rao X'$
and $\tau:\im(g\circ\sigma)\rao\im(f)$ such that $f=\tau\circ g\circ\sigma$. We also say that the pair $(\sigma,\tau)$ \emph{continuously reduces}, or simply \emph{reduces}, $f$ to $g$.
\end{definition}

\begin{center}
\begin{tikzpicture}[line width=1pt]

\def \Side{2}
\def \p{0.2}
\draw[->, dotted] (\p,0) -- ({\Side-\p},0) node[midway,below] {$\sigma$};

\draw[->, dotted] ({\Side-\p},\Side)-- (\p,\Side)  node[midway,above] {$\tau$};
\draw[->, thick] (0,\p) -- (0,{\Side-\p}) node[midway,right] {$f$};
\draw[->, thick] (\Side,\p) -- (\Side,{\Side-\p})  node[midway,left] {$g$};
\draw (0.5*\Side,0.5*\Side) node    {$\leq$};

\end{tikzpicture}
\end{center}

Continuous reducibility is a transitive and reflexive relation, which makes it a \emph{quasi-order}.
As usual with quasi-orders, there is an induced equivalence relation: we write $f\equiv g$ when both $f\leq g$ and $g\leq f$ hold,
and we say that $f$ and $g$ are \emph{continuously equivalent}.
We also denote by $f<g$ the relation $f\leq g$ and $f\not\geq g$, and we say that $f$ and $g$ are \emph{incomparable} when both $f\not\leq g$ and $g\not\leq f$.

An \emph{antichain} is a set of pairwise incomparable elements, and an \emph{infinite strictly descending chain} is a sequence $(f_n)_{n\in\N}$
satisfying $f_{n+1}<f_n$ for all $n\in\N$. A quasi-order is called a \emph{well-quasi-order}, or \wqo{}, if it has no infinite antichains and no infinite descending chains.

A topological space is \emph{Polish} if it is separable and completely metrizable, and \emph{zero-dimensional} if it has a basis consisting of \emph{clopen sets},
that is sets that are both open and closed. A topological space is \emph{analytic} if it is a continuous image of a Polish space.

Our main result is the following theorem.

\begin{maintheorem}\label{MainTheorem1}
Continuous reducibility is a well-quasi-order on the class of continuous functions from an analytic zero-dimensional space to a separable metrizable space.
\end{maintheorem}

The analyticity hypothesis in \cref{MainTheorem1} is only used to show that all such continuous functions with uncountable image are equivalent to the identity function on the Cantor space. In fact, we prove

\begin{maintheorem}\label{MainTheorem2}
Continuous reducibility is a well-quasi-order on the class of continuous functions from a separable metrizable zero-dimensional space to a countable metrizable space.
\end{maintheorem}

The above theorem is proved following a dichotomy which is reminiscent of the case of spaces.
Recall that a space is \emph{scattered} if any of its non-empty subsets contains an isolated point. We suggest to make the following definition:
a function $f$ between topological spaces is \emph{scattered} if any non-empty subset of its domain contains a {non-empty} open set on which $f$ is constant.
Every continuous function with a scattered image is scattered, but scattered continuous functions may have a non scattered image (for example, any bijection $:\N\to\Q$) or domain (for example, a constant function on $\Q$).

While the space $\Q$ of rationals is universal for countable metrizable spaces, any non scattered metrizable space contains a copy of $\Q$.
We show analogous results for functions with $\id_\Q$, the identity on $\Q$, in the role of $\Q$, thereby establishing that, up to continuous equivalence,
$\id_\Q$ is the only non scattered continuous function from a metric space to a countable metric space. 

With this in mind, the main challenge in proving our \cref{MainTheorem2} concerns the scattered functions.
The result at the heart of our main theorem can then be stated as follows.

\begin{maintheorem}\label{MainTheorem3}
Continuous reducibility is a well-quasi-order on the class of scattered continuous functions from a zero-dimensional separable metrizable space to a metrizable space.
\end{maintheorem}

We thus in particular answer positively \cite[Question 5.5]{carroy2013quasi}, as then conjectured.

\subsection{The context}
Our \cref{MainTheorem1} ties together two concepts, continuous reduction and well-quasi-orders.
The notion of reduction originally comes from computability theory; given sets $X$ and $Y$ we say that a relation $R\subseteq X^n$ for some $n\in\N$ \emph{reduces} to
$S\subseteq Y^n$ if there is a map $\sigma:X\to Y$ satisfying $(x_1,\ldots,x_n)\in R$ if and only if $(\sigma(x_1),\ldots\sigma(x_n))\in S$.
Continuous and Borel reducibilities between definable (equivalence) relations on Polish and analytic spaces have been explored in a rich series of works,
see \cite{MR1791302,MottoRosSurvey,MR3837073} for surveys on that matter, and the references contained therein.

Well-quasi-orders (\wqo{}s) are a natural generalization of well-orders in the context of partial quasi-orders. They too have a rich history and body of works,
see \cite{kruskal1972theory} for an early account of \wqo{} theory, and \cite{WQObook} for a recent book on \wqo{} theory.
Proving that a rich class of structures is a \wqo{} often requires an extensive analysis, as illustrated for instance by Robertson and Seymour's twenty papers on
the graph minor, culminating in their \wqo{} theorem in \cite{Robertson2004325}.
Our article presents similarities with another celebrated \wqo{} result: Laver's theorem \cite{laverfraisse} proving Fraïssé's conjecture that embeddability is \wqo{} on countable linear orders.
Much like Laver's work on linear orders, our result on continuous functions mostly consists of unfolding the properties of scattered objects.
Our result thus comes back to the origins of the concept of \enquote{scatteredness}, first studied by Cantor for spaces.
See \cite{MR0107806,MR0107849,MR0334150} for more on scattered spaces.

\Wqo{} results for continuous reductions are fairly rare, the first one is the Wadge-Martin-Monk result that continuous reducibility is \wqo{} on Borel subsets
of the Baire space $\cN$ equipped with the product of the discrete topology (see \cite{wadge,cabal2} for the early works, and \cite{articolo_raphael,CMRSWadge} for recent accounts).
Continuous reduction on subsets, also known as Wadge theory, can be seen as continuous reduction between characteristic functions.
In this sense our \cref{MainTheorem1} is in the lineage of all these results.

Wadge theory has also sparked a lot of research, among which \cite{vEMSbqo} in the late 1980's tackles for the first time more than characteristic functions.
A consequence of that work -- not phrased this way at the time -- is that continuous reducibility is \wqo{} on Borel functions with finite image.
In \cite{vEMSbqo} is also considered the stronger notion of \emph{topological embeddability},
which is the quasi-order on functions obtained by requiring the reducing pair $(\sigma,\tau)$ in \cref{MainDef}
to be \emph{topological embeddings}, that is injective continuous functions whose inverse is also continuous.
They prove that $\Dee{2}$-measurable functions with finite image are \wqo{} under topological embeddability, and recently Hertling gave a description of this class of functions \cite{MR4133716}.

A number of finite basis results can be proven for topological embeddability between functions, see \cite{soleckidecomposing,pawlikowski2012decomposing,debs2012,carroymiller2}.
However,  in \cite{embeddabilityCPZ} with Zolt\'an Vidny\'anszky we manage to prove that topological embeddability between continuous functions defined on the Baire space,
as a quasi-order, reduces all analytic quasi-orders, and therefore is as far away from a \wqo{} as it can possibly be.

Starting in the 1990's and the work of Weihrauch, the computable version of continuous reducibility between functions has become subject of interest in computable analysis, see \cite{her-wei}.
In computable analysis, continuous reducibility between functions is known as \emph{topological strong Weihrauch reducibility}. We still choose the name continuous reducibility, not only
to underline the filiation with descriptive set-theoretical studies of definable reducibilities, but also because the objects studied in computable analysis are not functions but multi-functions, which changes radically the nature of the question.
Interestingly, Dzhafarov recently proved in \cite{MR4028100} that (computable) strong Weihrauch reducibility on multi-functions has a lattice structure that contains a copy of all
countable lattices, making it also very far from being a \wqo{}. See \cite{MR4300761} for a recent survey on Weihrauch redubility in computable analysis.

\subsection{The strategy}

The proof of \cref{MainTheorem1,MainTheorem2,MainTheorem3} takes most of the paper. We now briefly summarize our strategy, explaining the organization of the paper along the way.
First off, we adopt a strategy similar to Laver's in \cite{laverfraisse} and prove that continuous reducibility actually enjoys the stronger property introduced by Nash-Williams~\cite{nash1965well}: it is a better-quasi-order.
Here we briefly give a definition and use some results that are to be recalled later, but we will rely entirely on the existing literature on better-quasi-orders,
see for instance \cite{simpsonbqo,marcone1994foundations,yann2017towardsbetter,wellbetterinbetween}.

We denote by $\Ellentuckspace$ the \emph{Ellentuck space} of all infinite subsets of natural numbers.
We identify each element of the Ellentuck space with its increasing enumeration, making $\Ellentuckspace$ (homeomorphic to) a closed subset of $\cN$,
thus endowing it with a Polish topology. Given $Z\in\Ellentuckspace$ we denote by $\shift{Z}$ the \emph{shift} of $Z$, that is $Z\setminus\set{\min{Z}}$.
Given a quasi-order $(Q,\leq_Q)$, a \emph{$Q$-multisequence} is a locally constant map $\varphi:\Ellentuckspace\to Q$, we say that it is \emph{bad} if
for all $Z\in\Ellentuckspace$ we have $\varphi(Z)\not\leq_Q\varphi(\shift{Z})$. We say that $Q$ is a \emph{better-quasi-order}, or \bqo{}, if there are no bad $Q$-multisequences.
As the name suggests, every \bqo{} is in particular a \wqo{}. We aim for the following theorem, which is thus a strengthening of \cref{MainTheorem1,MainTheorem2}.

\begin{Theorem}\label{Introthm:BQO}
Continuous reducibility is a better-quasi-order on the class of continuous functions $f:X\to Y$ from a zero-dimensional separable metrizable space $X$ to a metrizable space $Y$ such that either $X$ is analytic or $Y$ is countable. 
\end{Theorem}

We start by characterizing scattered functions in a way that parallels the characterization of scattered spaces using isolated points (see \cite{SierpMazCompDen,Frechet1927,MR0107849}).

The set of points at which a function $f$ is locally constant is an open set of its domain, so the restriction of $f$ to the set of points on which $f$ is not locally constant defines a derivative
 that we call the \emph{Cantor--Bendixson derivative} of $f$ (see \cref{sectionCB} and \cite[Section 34.D]{kechris} for more on derivations).
The fixed point for this derivative is called the \emph{perfect kernel} of $f$, and the minimal ordinal index of the fixed point is the \emph{Cantor--Bendixson rank}.

The restriction of $f$ to its perfect kernel is nowhere locally constant, this is why we also call the perfect kernel the \emph{nowhere locally constant part} of $f$.

\begin{theorem}\label{scatterediffemptykernel}
Suppose that $X$ is metrizable, $Y$ Hausdorff, and $f:X\to Y$ continuous.
Then $f$ is scattered if and only if it has empty perfect kernel if and only if $\id_\Q$ does not embed topologically in $f$.
\end{theorem}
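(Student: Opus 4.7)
The plan is to prove the chain of equivalences via (i) $\Leftrightarrow$ (ii) and (ii) $\Leftrightarrow$ (iii), using the Cantor--Bendixson filtration $(X_\alpha)_\alpha$ of $f$ as the bridge: $X_0 = X$, $X_{\alpha+1}$ removes from $X_\alpha$ the relatively open set of points at which $f|_{X_\alpha}$ is locally constant, $X_\lambda = \bigcap_{\alpha<\lambda}X_\alpha$ at limits, and $P = \bigcap_\alpha X_\alpha$ is the perfect kernel, characterised as the largest subset of $X$ on which $f$ is nowhere locally constant. The equivalence (i) $\Leftrightarrow$ (ii) mirrors the classical argument for scattered spaces. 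For (i) $\Rightarrow$ (ii), if $P$ were non-empty it would directly witness the failure of scatteredness, since $f|_P$ is nowhere locally constant on $P$. For (ii) $\Rightarrow$ (i), $P = \emptyset$ gives every $x \in X$ a well-defined Cantor--Bendixson rank $\rho(x)$ with $x \in X_{\rho(x)} \setminus X_{\rho(x)+1}$; for a non-empty $A \subseteq X$, the minimum $\alpha = \min\{\rho(a) : a \in A\}$ yields $A \subseteq X_\alpha$, and a point $a \in A$ realizing this minimum has a neighbourhood of the form $W \cap X_\alpha$ on which $f$ is constant, so that $W \cap A$ is the required relatively open piece of $A$.

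For (iii) $\Rightarrow$ (ii), I argue the contrapositive. A topological embedding $(\sigma, \tau)$ of $\id_\Q$ into $f$ yields a set $A = \sigma(\Q) \subseteq X$ homeomorphic to $\Q$ with $f|_A$ a homeomorphism onto $f(A) \cong \Q$. Since $A$ has no isolated points and $f|_A$ is injective, $f|_A$ is nowhere locally constant on $A$. A straightforward transfinite induction then shows $A \subseteq X_\alpha$ for every $\alpha$: the limit step is immediate, and at the successor step the inductive inclusion $A \subseteq X_\alpha$ transports any local constancy of $f$ in $X_\alpha$ down to $A$, contradicting the previous observation. Hence $A \subseteq P$, so $P \neq \emptyset$.

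The substance of the theorem, and the main obstacle, lies in (ii) $\Rightarrow$ (iii): from $P \neq \emptyset$, produce a topological embedding of $\id_\Q$ into $f$. I will build by recursion on $s \in 2^{<\N}$ a Cantor scheme of non-empty open subsets $U_s \subseteq P$ together with points $a_s \in U_s$ satisfying (a) $U_{s0}$ and $U_{s1}$ are disjoint open subsets of $U_s$ with $d_X$-diameter and $d_Y$-image-diameter at most $2^{-|s|-1}$, and $d_Y(f(U_{s0}), f(U_{s1})) \geq \eta_s > 0$; (b) $a_s \in U_{s0}$, with the convention $a_{s0} = a_s$, so that $a_s \in U_{s 0^n}$ for all $n$. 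Each recursive step uses nowhere local constancy of $f|_{U_s}$ together with Hausdorffness of $Y$ to pick two values of $f$, separate them by open balls with disjoint closures, then pull back and shrink within $P$. Taking $R = \{s \in 2^{<\N} : s \text{ is empty or ends in } 1\}$ as canonical representatives, the candidate copy of $\Q$ is $A = \{a_s : s \in R\}$: each $a_s$ is the limit in $X$ of the distinct points $a_{s 0^{n-1} 1} \in U_{s 0^{n-1}}$, giving density of $A$ in itself, and continuity plus $f$-injectivity transport this property to $f(A)$. The crucial verification that $f|_A$ is a homeomorphism reduces to a case analysis on a sequence $(t_n)$ in $R$ with $f(a_{t_n}) \to f(a_t)$: the cases where $t_n$ is a proper prefix of $t$ or is incomparable with $t$ involve only finitely many possible values of the relevant common prefix, bounding $d_Y(f(a_{t_n}), f(a_t))$ below by a positive constant and so contradicting convergence along an infinite subsequence; the only surviving case is $t_n \succ t$, written $t_n = t\, 0^{m_n}\, 1\, \ldots$, where the bound $d_Y \geq \eta_{t 0^{m_n}}$ forces $m_n \to \infty$, and then $a_t, a_{t_n} \in U_{t 0^{m_n}}$ of $d_X$-diameter at most $2^{-|t|-m_n}$ yields $a_{t_n} \to a_t$. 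Sierpi\'nski's characterization of $\Q$ then identifies both $A$ and $f(A)$ with $\Q$, and $f|_A$ provides the required topological embedding. The hard part throughout is packaging the diameter and separation controls into the Cantor scheme so that closeness in $f(A)$ can only arise from a long common prefix between indices, which in turn forces closeness in $A$.
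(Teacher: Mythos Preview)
Your overall strategy—(i)$\Leftrightarrow$(ii) via the Cantor--Bendixson filtration, (iii)$\Rightarrow$(ii) by showing the embedded copy of $\Q$ sits inside every $X_\alpha$, and (ii)$\Rightarrow$(iii) via a Cantor scheme inside the perfect kernel—is sound and is essentially the paper's route. However, your construction for (ii)$\Rightarrow$(iii) repeatedly invokes a metric $d_Y$ on $Y$: you speak of ``$d_Y$-image-diameter at most $2^{-|s|-1}$'', of ``open balls with disjoint closures'' in $Y$, of separation bounds $\eta_s = d_Y(f(U_{s0}),f(U_{s1}))$, and your case analysis forcing $m_n\to\infty$ rests on these $\eta$'s. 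The hypothesis only gives $Y$ Hausdorff, so no such metric is available and, as written, the argument does not go through.

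The repair is to replace the metric bookkeeping on the $Y$ side by a second Cantor scheme of open sets. At the inductive step, having found $a_s\in U_s$ and $b\in U_s$ with $f(a_s)\neq f(b)$, use Hausdorffness of $Y$ to choose disjoint open $V_{s0},V_{s1}\subseteq V_s$ with $f(a_s)\in V_{s0}$ and $f(b)\in V_{s1}$, then take $U_{si}\subseteq U_s\cap f^{-1}(V_{si})$ of small $d_X$-diameter. The scheme $(V_s)_s$ does exactly what your $\eta_s$ were meant to do: in each of your three cases on $t_n$ versus $t$, the finitely many relevant prefixes $s$ give finitely many open sets $V_{s0},V_{s1}$ separating $f(a_{t_n})$ from $f(a_t)$, and the convergence $f(a_{t_n})\to f(a_t)$ forces $f(a_{t_n})$ to eventually enter whichever $V_{si}$ contains $f(a_t)$, yielding the same contradictions without any metric on $Y$. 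The paper packages this still more tightly: it checks the single combinatorial identity $f\sigma(\Q\cap N_s)=f\sigma(\Q)\cap V_s$, which shows at once that $f\sigma$ is open onto its image, so no sequential case analysis (and no separate appeal to Sierpi\'nski for $f(A)$) is needed.
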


We will see in \cref{scatterediffemptykernel_general} that, much like for Fréchet's result on scattered spaces \cite{Frechet1927},
the extra hypotheses on $X$, $Y$ and $f$ in \cref{scatterediffemptykernel} are only needed to prove that the third condition implies the first two.

We proceed with proving that there are, up to continuous equivalence, only two non-scattered continuous functions at stake in \cref{Introthm:BQO}:
$\id_\Q$ and $\id_{\cN}$ (\cref{FirststepforBQOthm}).
This allows us to focus next on scattered functions. More precisely, \cref{Introthm:BQO} follows from the following strengthening of \cref{MainTheorem3} (see \cref{FirststepforBQOthm}).

\begin{theorem}\label{Introthm:BQOonScat}
Continuous reducibility is a \bqo{} on the class of scattered continuous functions from a zero-dimensional separable metrizable space to a metrizable space.
\end{theorem}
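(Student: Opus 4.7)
The plan is to run a minimal bad array argument, inducting on the Cantor-Bendixson rank of scattered functions. Suppose, toward a contradiction, that $\varphi : \Ellentuckspace \to Q$ is a bad multisequence into the class $Q$ of scattered continuous functions as in the statement, quasi-ordered by continuous reducibility. By \cref{scatterediffemptykernel}, each $\varphi(Z)$ has a well-defined ordinal Cantor-Bendixson rank. Using the Nash-Williams minimal bad array machinery \cite{simpsonbqo,marcone1994foundations}, I would assume $\varphi$ is \emph{minimal}: no bad multisequence $\psi$ satisfies $\psi(Z) \leq \varphi(Z)'$ for all $Z$, where $\varphi(Z)'$ denotes the Cantor-Bendixson derivative of $\varphi(Z)$. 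In particular, the induced multisequence $Z \mapsto \varphi(Z)'$ is good, and after a Ramsey-theoretic refinement of $\Ellentuckspace$ I may assume uniform witnesses $\varphi(Z)' \leq \varphi(\shift Z)'$.

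Next, I would analyse the local decomposition of each $f = \varphi(Z)$. Let $U_f \subseteq \dom f$ be the open set of points at which $f$ is locally constant; on its closed complement $f$ restricts to the derivative $f'$ of strictly smaller rank. Since $\dom f$ is second countable and zero-dimensional, $U_f$ splits as a countable disjoint union of clopen pieces $(P_i^f)_{i \in I_f}$ on each of which $f$ takes a single value $y_i^f$. A scattered function of rank $\alpha+1$ thus decomposes into the pair consisting of its derivative $f'$ of rank $\alpha$ and its \emph{shallow data}, which records the countable labelling by values $y_i^f$ together with the way each clopen piece $P_i^f$ accumulates onto the closed set $\dom f'$.

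The remaining work is to package this shallow data as labels in an auxiliary bqo built via standard bqo-preservation theorems — labelled countable trees, or countable sequences over a bqo, as developed in \cite{simpsonbqo,marcone1994foundations,yann2017towardsbetter} — and then perform a second Ramsey refinement to obtain a compatible morphism between the shallow structures of $\varphi(Z)$ and $\varphi(\shift Z)$. Combining this with the reduction of the derivatives already secured, one glues across the countable clopen partition using zero-dimensionality of the domain to produce a continuous reduction $\varphi(Z) \leq \varphi(\shift Z)$, contradicting badness. The main obstacle will be this gluing step: extending the reducing pair $(\sigma', \tau')$ from the derivatives onto the shallow open set requires that the accumulation pattern of each piece $P_i^{\varphi(Z)}$ onto $\dom \varphi(Z)'$ be honoured by $\sigma$, while the codomain map $\tau$ must simultaneously match each shallow value $y_i^{\varphi(Z)}$ and agree with $\tau'$ on the image of the derivative. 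Capturing both coherence constraints inside a single auxiliary bqo that still interacts correctly with continuous reducibility is the heart of the argument, and is what makes the stronger \bqo{} hypothesis at the inductive step — rather than a mere \wqo{} hypothesis — indispensable.
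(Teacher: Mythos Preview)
Your proposal is not a proof but a plan, and the plan has a genuine gap at exactly the place you identify as ``the heart of the argument''. The decomposition $f\mapsto (f',\text{shallow data})$ does not interact with continuous reducibility in the clean way you need. A reduction $(\sigma,\tau)$ from $f$ to $g$ does send $\CB_\alpha(f)$ into $\CB_\alpha(g)$ (this is \cref{CBbasicsfromJSL}), but it does \emph{not} send the locally constant part $U_f$ into $U_g$: points of $U_f$ may well be mapped into $\dom g'$. So a reduction $f\leq g$ does not split as a reduction of derivatives plus a reduction of shallow data, and conversely, having $f'\leq g'$ together with a matching of the shallow labellings gives you no obvious way to construct $\sigma$ on all of $\dom f$, let alone a single coherent $\tau$ on $\im(g\sigma)$.

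The paper's approach is completely different and illustrates why your gluing step is not merely hard but structurally wrong as stated. Rather than inducting through the derivative, the paper works level by level and proves that each $\sC_\alpha$ is \emph{finitely generated} under the gluing operation (\cref{Introthm:LevelsAreFinitelyGenerated}); finite generation trivially gives \bqo{} on each level (\cref{SecondstepforBQOthm}), and the General Structure \cref{JSLgeneralstructure} then assembles the levels. The finite generation is obtained not by peeling off one CB-derivative, but by decomposing into \emph{centered} functions (\cref{sectionCentered}) via the pointed gluing operation, and crucially requires a third operation, the \emph{wedge} (\cref{subsectionWedge}), to capture behaviours that your ``labelled accumulation pattern'' cannot: for instance several non-converging sequences in the domain being sent to sequences converging to the same point, as in $\fctwedge(\Maximalfct{\lambda}\mid\Minimalfct{\lambda+1})$. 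The examples in \cref{SomeCounterExamples} show that such functions are not continuously equivalent to anything built from gluing and pointed gluing alone, so no auxiliary \bqo{} of ``countable labelled trees'' in the usual sense will suffice to encode the shallow data. If you want to rescue a minimal-bad-array approach, you would effectively have to rediscover the centered/wedge analysis of \cref{sectionCentered,PreciseStructureFinal,sectionDoubleSucc} inside your inductive step.
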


We will see that the class of functions from \cref{Introthm:BQOonScat} can be represented, up to homeomorphism, as the class $\sC$ of scattered continuous functions $f:A\to B$ where $A,B$ are subsets of the Baire space $\cN$. 
For $\alpha\in\omega_1$, we denote by $\sC_\alpha$ the functions in $\sC$ that have Cantor--Bendixson rank exactly $\alpha$.
The proof of \cref{Introthm:BQOonScat} splits in two parts:
the first part consists of proving that if $\sC_\alpha$ is \bqo{} for all $\alpha<\omega_1$, then \cref{Introthm:BQOonScat} holds.
This is done in \cref{sectionPointedGluing} through a generalization to $\sC$ of the main theorem from \cite{carroy2013quasi}: the General Structure \cref{JSLgeneralstructure}.

The second part is the central novelty of this paper, it consists of proving by induction that continuous reducibility is \bqo{} on $\sC_\alpha$ for all $\alpha<\omega_1$;
it will be the goal of \cref{sectionCentered,PreciseStructureFinal,sectionDoubleSucc}.

In \cite{carroy2013quasi} the main results were proven for the class of continuous functions between Polish zero-dimensional spaces with countable image.
We dedicate \cref{subsectionGluing,sectionPointedGluing} to generalize and strengthen these results to more general classes of spaces and functions\footnote{In an effort to provide a self-contained account of the result, all proofs are included even when they are essentially the same as in \cite{carroy2013quasi}. Similarities are duly signaled.}.
For now let us just give a brief outline.

Our strategy for proving that continuous reducibility is \bqo{} on $\sC_\alpha$ is to define operations that allow to describe
every continuous equivalence class of functions in $\sC_\alpha$ from functions with smaller rank.
To do so, we consider an operation on continuous function that we call \emph{gluing}. It is the natural operation of sum on functions;
it roughly consists of the disjoint union on both domains and co-domains of a sequence of functions.
We use the term gluing to distinguish this operation from the disjoint union performed on domains only, that will also be used in this paper.

We say that a set $\mathcal{F}$ of functions is \emph{finitely generated} if there exists a finite set $\generator{}$ of functions such that each element of $\mathcal{F}$
is continuously equivalent to a finite gluing of elements of $\generator{}$.
It is easy to see that continuous reducibility is always \bqo{} on a finitely generated set (cf. \cref{SecondstepforBQOthm}). 
The main result of the present article can therefore be formulated as follows.

\begin{theorem}\label{Introthm:LevelsAreFinitelyGenerated}
For all $\alpha\in\omega_1$, the set $\sC_\alpha$ is finitely generated.
\end{theorem}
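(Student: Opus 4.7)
The plan is to prove \cref{Introthm:LevelsAreFinitelyGenerated} by transfinite induction on $\alpha \in \omega_1$, leveraging the Cantor--Bendixson derivative to reduce each rank $\alpha$ function to functions of strictly smaller rank, and then identifying a canonical finite family of ``centered'' building blocks sufficient to describe the local behaviour around the derivative.

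For the base case $\alpha = 0$, the class $\sC_0$ consists precisely of the locally constant continuous functions $f \colon A \to B$ with $A, B \sub \cN$. Any such $f$ is determined by a clopen partition of the zero-dimensional separable metric space $A$ on which $f$ is constant. Up to continuous equivalence, the possibilities collapse to a short list of types depending on whether domain and image are empty, a singleton, or (necessarily countable and) infinite; I expect a small explicit finite set $\generator{0}$ to generate $\sC_0$.

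For the successor step $\alpha = \beta + 1$, given $f \in \sC_\alpha$, its Cantor--Bendixson derivative $f'$ lies in $\bigcup_{\gamma \leq \beta} \sC_\gamma$, which is finitely generated by the inductive hypothesis. The strategy is to localize: near each point $x$ in the domain of $f'$, the restriction of $f$ to a sufficiently small clopen neighbourhood of $x$ is a \emph{centered} scattered function whose nowhere-locally-constant part is concentrated at the single point $x$. Following \cref{sectionCentered}, I would first develop a theory of centered functions $\centered{\gamma}$ of rank $\gamma$, then show that $f$ is continuously equivalent to a gluing of centered functions indexed by the points of $\dom(f')$ and controlled by $f'$ itself. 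The key step is to prove that, modulo continuous equivalence, the centered functions of rank $\beta + 1$ form a finitely generated class, obtained by a ``pointed gluing'' operation from the generators of $\sC_{\leq \beta}$ where a sequence of smaller-rank functions accumulates at a single distinguished point. This is the stage at which the parity of the successor matters: if $\beta$ is itself a successor, the pointed gluing introduces a restricted family of centered types, whereas if $\beta$ is a limit one must account for all cofinal approximating sequences, which is the distinction motivating the separate analysis in \cref{sectionDoubleSucc}.

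For $\alpha$ a limit ordinal, finite generation of $\sC_\alpha$ should follow from the General Structure \cref{JSLgeneralstructure}: a rank $\alpha$ function decomposes, via its derivative tower, as a pointed gluing of functions from $\bigcup_{\beta < \alpha} \sC_\beta$, and the inductive finite generation at each $\beta < \alpha$ combines with a Ramsey-theoretic/\bqo{} collapse to produce finitely many gluing templates. In each inductive stage, the reduction to finitely many generators must be checked by exhibiting, for an arbitrary $f \in \sC_\alpha$, an explicit continuous reduction in both directions between $f$ and a finite gluing of generators; the generators themselves will be singled out by rank-theoretic invariants (the rank of the derivative, the rank profile of accumulating centered pieces, and the existence of an ``infinite repetition'' of centered pieces of maximal rank).

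The main obstacle is twofold. First, at the successor stage, one must tightly control the interaction between the centered structure at an isolated point of $\dom(f')$ and the behaviour of $f'$ around that point, since the way a centered piece is glued onto $f'$ determines the continuous equivalence class and threatens to introduce a proper class of types unless carefully normalized; this is where the \cref{PreciseStructureFinal} structure theorem will be doing the heavy lifting. Second, at limit ordinals, verifying that the pointed gluing operation preserves finite generation across the infinitary accumulation of smaller-rank generators requires a delicate combinatorial argument, likely in the vein of Laver's handling of scattered linear orders, to absorb tail sequences of generators into already-listed generators via explicit continuous reductions.
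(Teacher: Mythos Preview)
Your plan has the right overall shape (transfinite induction, local decomposition into centered pieces, pointed gluing from lower-rank generators), but it contains several concrete errors and one genuinely missing idea.

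First, your indexing is off by one: $\sC_0$ contains only the empty function, and it is $\sC_1$ that consists of the locally constant functions (\cref{LocallyConstantFunctions}). More importantly, your expectation for the limit case is inverted. You anticipate a delicate Ramsey-style argument to tame pointed gluings across a cofinal sequence of ranks, but in fact the General Structure \cref{JSLgeneralstructure} already shows that any two functions of the same limit rank $\lambda$ are continuously equivalent, so $\sC_\lambda$ is generated by the single function $\Maximalfct{\lambda}$. No further work is needed there. Relatedly, you have the relative difficulty of the successor cases backwards: the successor of a limit $\sC_{\lambda+1}$ is the easier warm-up (\cref{FGatsuccessoroflimit}), while the double successor $\sC_{\alpha+2}$ is the hard case requiring the full machinery of fine $c$-partitions, pseudo-centered and solvable functions (\cref{sectionDoubleSucc}).

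The critical missing idea is the \emph{wedge operation}. You build your plan entirely on gluing and pointed gluing, but these two operations are not enough to generate $\sC_{\lambda+1}$ even for $\lambda$ limit or $1$: the generator $\fctwedge(\Maximalfct{\lambda}\mid\Minimalfct{\lambda+1})$ is provably not equivalent to any finite gluing of the other generators (\cref{OptimalityatSuccessorofLimit}). The wedge is needed to encode functions that send several converging sequences (or a non-converging sequence) to sequences sharing a common limit, a behaviour that neither gluing nor pointed gluing can produce. Without introducing this third operation and the Disjointification \cref{DisjointificationLemma} that controls it as a lower bound, your inductive step at successor ranks cannot close.
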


To prove \cref{Introthm:LevelsAreFinitelyGenerated} we will explicitly produce a finite generating set of functions for all $\alpha<\omega_1$ (\cref{subsectionGenerators});
this set is made of required functions obtained inductively from generators of smaller ranks using three operations:
the aforementioned gluing (\cref{subsectionGluing}), the Pointed Gluing (\cref{sectionPointedGluing}),
and the Wedge operation (\cref{subsectionWedge}).

To prove \cref{Introthm:LevelsAreFinitelyGenerated}, we proceed in two main steps.
For the first one, we prove new upper and lower bound results for the Pointed Gluing operation (\cref{Pgluingasupperbound,Pgluingaslowerbound});
these allow us to identify minimum functions in $\sC_{\leq \alpha}$ and maximum functions in $\sC_{\leq \alpha}$ (thus generalizing yet another result of \cite{carroy2013quasi}) in \cref{Maxfunctions,Minfunctions}.
We use this machinery to prove the (aforementioned) General Structure~\cref{JSLgeneralstructure}. That result takes care of $\sC_\lambda$ for all limit $\lambda$ simultaneously.

The second step is the main new feature, we prove finite generation of $\sC_{\lambda+n+1}$ for all limit $\lambda$ and $n\in\N$, uniformly on $\lambda$, by induction on $n$.
A first new ingredient is the identification of specific functions that we call \emph{centered} (\cref{sectionCentered}), we use the induction hypothesis to prove that:
in $\sC_{\lambda+n+1}$ all functions are locally centered (\cref{LocalCenterednessFromBQO}) and there are only finitely many centered functions (\cref{finitenessofcenteredfunctions}).
Using this first new ingredient and the Wedge operation (which is the second new ingredient) we treat separately the base case $\sC_{\lambda+1}$
(\cref{simplefunctionslambda+1damuddafuckaz,FGatsuccessoroflimit}) and the general one (\cref{FGatdoublesuccessors}).

We conclude the article by discussing the optimality of our strategy, proving that the generators that we identify cannot be omitted (\cref{OptimalityOfGenerators}), as well as
future directions for research following two main angles: continuous reducibility on more generally definable functions on (general) Polish spaces (\cref{DirectionPolish}),
and functions with uncountable image on more general spaces (\cref{DirectionPerfect}). We give all along \cref{sectionConclusion} a healthy list of questions and conjectures.

\Needspace{10\baselineskip}
\subsection*{Acknowledgments}
We want to thank the Paris ``Groupe de Travail en Théorie Descriptive des Ensembles'' for their valuable advice and their patience during numerous hours of seminars,
presenting various aspects of this work at various stages of development. We also want to thank Andrew Marks for inviting the first author to UCLA for a month in 2018,
giving us a unique opportunity to come up with a first version of the strategy for this proof.
We also want to thank Luca Motto Ros for suggesting a very early stage of the Vertical \cref{VerticalTheorem}.

% !TEX root = main.tex

\section{Some known and less known preliminaries}\label{sectionPrelim}

Given a function $f:X\to Y$, we use the following classical notations. We denote by $\dom f=X$ the domain of $f$ and refer to $Y$ as the codomain of $f$.
We write $\im f =\set{f(x)}[x\in X]$ for the image, or the range, of $f$. 
For $A\subseteq X$, $f\restr{A}:A\to Y$ stands for the restriction of $f$ to $A$, and $\id_{X}:X\to X$ denotes the identity function on $X$.
For $B\subseteq Y$, $f^{-1}(B)=\set{x\in X}[f(x)\in B]$ denotes the preimage of $B$ by $f$, and we call \emph{co-restriction} of $f$ to $B$ the function $f\restr{f^{-1}(B)}$, denoted by $f\corestr{B}$.
%Given a function $f$ and a set $X$, we use the following classical notations:
%$\im(f)$ stands for the image, or the range, of $f$, $f^{-1}(X)$ denotes the preimage of $X$ by $f$,
%$f\restr{X}$ stands for the restriction of $f$ to $X$, and $\id_{X}$ denotes the identity function on $X$.
To improve readability, we often use the multiplicative notation $fg$ for the composition $f\circ g$ of $g$ with $f$.

%Given a function $f:X\to Y$ and $B\subseteq Y$, we call \emph{co-restriction} of $f$ to $B$ the function $f\restr{f^{-1}(B)}$ and we denote it by $f\corestr{B}$.

It follows from the definition of continuous reducibility that any function $f:X\to Y$ is continuously equivalent to the surjective function $f\corestr{\im f}:X\to \im f$,
where $\im f$ is endowed with the subspace topology\footnote{However, restricting our attention to surjective functions would not simplify proofs nor notations.
Indeed some functions like the centered functions from \cref{sectionCentered} are equivalent to their corestriction to arbitrary small subset of their image.}. 

\smallskip

About the existence of a reduction $(\sigma,\tau)$ between functions $f$ and $g$, it is useful to notice that to any given (potential) $\sigma$ can only correspond to a single $\tau$.
More precisely, $f\leq g$ if and only if there exists a continuous $\sigma:X\to X'$ such that the relation  
\[
T=\set{g\sigma(x), f(x))}[x\in X]
\]
is the graph of a continuous function $\tau: \im (g\sigma) \to \im(f)$.

While $\sigma$ fully determines $\tau$, a given candidate $\tau$ can potentially correspond to various $\sigma$.
Indeed, $f\leq g$ if and only if there exists a subset $B$ of $Y'$ and a continuous surjection $\tau:B\to \im(f)$ such that the relation 
\[
\set{(x,y)\in X\times X'}[g(y)\in B \text{ and } \tau g(y) =f(x)]
\]
admits a continuous uniformization (for more on uniformizations, see \cite[Section 18]{kechris}).

\medskip

We next show that continuous reducibility between functions respects measurability and generalizes both (coarse) Wadge reducibility and embeddability.

Recall that for two topological spaces $X$ and $Y$, $A\subseteq X$ \emph{Wadge reduces to} $B\subseteq Y$ if there is a continuous function $\sigma:X\to Y$
such that $\sigma^{-1}(B)=A$, denoted by $A\leq^{X,Y}_WB$. Given $A\subseteq X$ the notation $\1_{A,X}:X\to \{0,1\}$ stands for the characteristic function of $A$ in $X$.

A continuous function $\sigma:X\to Y$ is a topological embedding (or simply an embedding) if and only if there is a continuous function $\tau:\im(\sigma)\to X$
such that $\tau\sigma=\id_X$, we also say that $(\sigma,\tau)$ \emph{witnesses} the embedding from $X$ to $Y$.

We sometimes write \enquote{reduces} and \enquote{embeds} instead of \enquote{continuously reduces} and \enquote{topologically embeds}.

Finally, a \emph{pointclass} is a functional class $\mathbf{\Gamma}$ mapping any topological space $X$ to a subset of the power set of $X$ with the property
that given any continuous function $\sigma:X\to Y$ if $A\in\mathbf{\Gamma}(Y)$ then $\sigma^{-1}(A)\in\mathbf{\Gamma}(X)$.
Observe that $\mathbf{\Gamma}(X)$ is closed under preimages by continuous functions from $X$ to $X$, for all spaces $X$.
A function $f:X\to Y$ is said \emph{$\mathbf{\Gamma}$-measurable} if the preimage of an open set of $Y$ by $f$ is in $\mathbf{\Gamma}(X)$.

\begin{proposition}\label{LinkswithWadgeEmbedMeas}
Let $X,X',Y$ and $Y'$ be topological spaces.
\begin{enumerate}
\item $X$ embeds in $Y$ if and only if $\id_X\leq\id_Y$.
\item If $A\subseteq X$ and $B\subseteq Y$ then $\1_{A,X}\leq\1_{B,Y}$ if and only if $A\leq^{X,Y}_WB$ or $A\leq^{X,Y}_WY\setminus B$.
\item Given a pointclass $\mathbf{\Gamma}$, if $f:X\to Y$ reduces to $g:X'\to Y'$ and $g$ is $\mathbf{\Gamma}$-measurable, then so is $f$.
\end{enumerate}
\end{proposition}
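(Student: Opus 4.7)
The three items will all be settled by unpacking the definition of continuous reducibility and picking the right witness; no additional machinery is needed, so the whole proposition amounts to a bookkeeping exercise.

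For (i), I will simply unwind the definitions. The reduction $\id_X\leq\id_Y$ requires continuous $\sigma:X\to Y$ and continuous $\tau:\im(\id_Y\circ\sigma)\to\im(\id_X)$ with $\id_X=\tau\circ\id_Y\circ\sigma$. Since $\im(\id_Y\circ\sigma)=\im(\sigma)$ and $\im(\id_X)=X$, this is literally the data $(\sigma,\tau)$ of a topological embedding of $X$ into $Y$, as defined just above. The two conditions thus coincide.

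For (ii), I will exploit the fact that $\{0,1\}$ is discrete, which makes every function into $\{0,1\}$ automatically continuous. Starting from $\1_{A,X}\leq\1_{B,Y}$ via $(\sigma,\tau)$, I will branch on the image of $\1_{A,X}$. If $\im(\1_{A,X})$ is a singleton, then $A\in\{\emptyset,X\}$, and $A\leq_W^{X,Y}B$ or $A\leq_W^{X,Y}Y\setminus B$ will follow by taking a constant $\sigma'$ landing in $Y\setminus B$ or $B$ respectively. If $\im(\1_{A,X})=\{0,1\}$, I will argue that $\im(\1_{B,Y}\circ\sigma)=\{0,1\}$ must hold as well, since otherwise $\tau$ would be constant and $\1_{A,X}=\tau\circ\1_{B,Y}\circ\sigma$ would not surject onto $\{0,1\}$; the only maps $\tau:\{0,1\}\to\{0,1\}$ with full image are the identity and the swap, yielding $A=\sigma^{-1}(B)$ or $A=\sigma^{-1}(Y\setminus B)$. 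The converse direction is symmetric: a Wadge reduction $\sigma$ paired with $\tau=\id_{\{0,1\}}$ or the swap furnishes a continuous reduction between the characteristic functions.

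For (iii), given $f\leq g$ witnessed by $(\sigma,\tau)$ and an open $U\subseteq Y$, I will compute $f^{-1}(U)$ directly. Continuity of $\tau$ on the subspace $\im(g\circ\sigma)\subseteq Y'$ provides an open set $V\subseteq Y'$ with $\tau^{-1}(U)=V\cap\im(g\circ\sigma)$; pulling back through $g\circ\sigma$ absorbs the intersection, so
\[
f^{-1}(U)=(g\circ\sigma)^{-1}(V)=\sigma^{-1}(g^{-1}(V)).
\]
Measurability of $g$ places $g^{-1}(V)$ in $\mathbf{\Gamma}(X')$, and the defining closure property of a pointclass then places $\sigma^{-1}(g^{-1}(V))$ in $\mathbf{\Gamma}(X)$.

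None of this is delicate; the only place that demands genuine care is the degenerate subcases in (ii) where $A$, $B$, or $Y\setminus B$ happens to be empty, which a constant $\sigma'$ always resolves.
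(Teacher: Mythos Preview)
Your proof is correct and follows essentially the same approach as the paper's: all three items are handled by direct unwinding of the definition, with item (iii) matching the paper's computation verbatim. Your treatment of item (ii) is in fact more careful than the paper's---you explicitly handle the degenerate cases where $\im(\1_{A,X})$ is a singleton, whereas the paper's terse claim that ``there are only two (continuous) functions $\{0,1\}\to\{0,1\}$'' glosses over this (and is literally false, though the intended meaning is clear once one restricts to $\tau$ with full image).
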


\begin{proof}
A space $X$ embeds in $Y$ if and only if a pair of maps $(\sigma,\tau)$ witnesses the embedding from $X$ to $Y$ if and only if $(\sigma,\tau)$ reduces $\id_X$ to $\id_Y$.

There are only two (continuous) functions $\set{0,1}\to\set{0,1}$, the identity and the function swapping $0$ and $1$.
So, given $A\subseteq X$ and $B\subseteq Y$, $(\sigma,\tau)$ reduces $\1_{A,X}$ to $\1_{B,Y}$ if and only if $A\leq^{X,Y}_WB$ (in case $\tau$ is the identity)
or $A\leq^{X,Y}_WY\setminus B$ (in case $\tau$ swaps $0$ and $1$).

Fix a continuous reduction $(\sigma,\tau)$ from $f$ to $g$, and assume that $U\subseteq Y$ is open. Note that we have $f^{-1}(U)=(g\sigma)^{-1}(\tau^{-1}(U))=\sigma^{-1}(g^{-1}(\tau^{-1}(U)))$. By continuity of $\tau$, $\tau^{-1}(U)$ is open in $\im (g\sigma)$ so there exists an open set $V$ in $Y'$ with $\tau^{-1}(U)=V\cap \im (g\sigma)$. Since $g$ is $\mathbf{\Gamma}$-measurable and $\sigma$ is continuous, it follows that $g\sigma:X\to Y'$ is $\mathbf{\Gamma}$-measurable. Therefore, we get that $f^{-1}(U)=(g\sigma)^{-1}(\tau^{-1}(U))=(g\sigma)^{-1}(V)\in \mathbf{\Gamma}(X)$, as desired.
\end{proof}

We use the following basic facts all along the paper without reference.

\begin{proposition}
Let $f$ be a function between topological spaces.
\begin{enumerate}
\item If $X\subseteq\dom f$ then $f\restr{X}\leq f$,
\item if $f$ is continuous and $\dom f$ embeds in $X$ then $f\leq\id_X$, and
\item if $f$ is continuous and $\im f$ embeds in $Y$ then $f\leq\id_Y$.
\end{enumerate}
\end{proposition}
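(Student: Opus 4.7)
The plan is to exhibit an explicit reducing pair $(\sigma,\tau)$ for each of the three items; all three cases boil down to writing down the obvious candidate maps and checking continuity.

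For (i), I would take $\sigma : X \to \dom f$ to be the inclusion map, which is continuous because $X$ carries the subspace topology, and take $\tau$ to be the identity on $\im(f \restr{X})$, which coincides with $\im(f \circ \sigma)$. The equation $f \restr{X} = \tau \circ f \circ \sigma$ then holds by definition, so $(\sigma,\tau)$ reduces $f \restr{X}$ to $f$.

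For (ii), let $\sigma : \dom f \to X$ be an embedding, witnessed by a continuous inverse $\sigma^{-1} : \im \sigma \to \dom f$. I would take the same $\sigma$ as the first component of the reduction of $f$ to $\id_X$, and set $\tau = f \circ \sigma^{-1} : \im \sigma \to \im f$; note $\im(\id_X \circ \sigma) = \im \sigma$, so $\tau$ has the correct domain, and it is continuous as a composition of continuous maps. The identity $\tau \circ \id_X \circ \sigma = f \circ \sigma^{-1} \circ \sigma = f$ finishes the case. For (iii), given an embedding $\iota : \im f \to Y$ with continuous inverse $\iota^{-1}$, I would take $\sigma = \iota \circ f : \dom f \to Y$ and $\tau = \iota^{-1} : \im \sigma \to \im f$; then $\im(\id_Y \circ \sigma) = \im(\iota \circ f) = \im \iota$ lies in the domain of $\iota^{-1}$, and $\tau \circ \id_Y \circ \sigma = \iota^{-1} \circ \iota \circ f = f$, as required.

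None of the three items presents a genuine obstacle; the statement is essentially an unpacking of \cref{MainDef} once one observes that inclusions and embeddings have continuous inverses on their images, and that characterisation of reductions by a single continuous map $\sigma$ discussed just after \cref{MainDef} makes the verification of the $\tau$-part automatic. I would therefore present the whole proposition as a single short proof listing the three pairs $(\sigma,\tau)$ above.
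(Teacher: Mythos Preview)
Your proof is correct and matches the paper's approach exactly: the paper simply lists the three reducing pairs as $(\id_X,\id_{f(X)})$, $(\sigma,f\tau)$, and $(\sigma f,\tau)$, where $(\sigma,\tau)$ witnesses the relevant embedding, which is precisely what you wrote out with more detail.
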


\begin{proof}
The desired continuous reductions are:
$(\id_X, \id_{f(X)})$; $(\sigma, f\tau)$ with $(\sigma,\tau)$ witnessing the embedding from $\dom f$ into $X$;
and finally $(\sigma f, \tau)$ with $(\sigma,\tau)$ witnessing the embedding from $\im f$ into $Y$.
\end{proof}

Using the universal properties of $\Q$, $\cantor$ and $\cN$ \cite[(7.12),(7.8)]{kechris}, we have the following corollary.
\begin{Corollary}\label{ConsequencesofUniversality}
Let $f:X\to Y$ be continuous.
\begin{enumerate}
\item If $\im f$ is countable and metrizable, then $f\leq \id_\Q$.
\item If $X$ is zero-dimensional separable metrizable, then $f\leq \id_{\cN}$ and $f\leq \id_{\cantor}$. 
\end{enumerate}
In particular, $\id_{2^\N}\equiv \id_{\cN}$.
%\begin{enumerate}
%\item $\id_{2^\N}\equiv \id_{\cN}$.
%\item If $f$ is continuous with countable metrizable image, then $f\leq \id_\Q$.
%\item If $f$ is continuous with zero-dimensional separable metrizable domain, then $f\leq \id_{\cN}$. 
%\end{enumerate}
\end{Corollary}

Finally, recall from the introduction that when a reduction $(\sigma,\tau)$ from $f$ to $g$ consists of topological embeddings, we say that $f$ \emph{embeds topologically}\footnote{Observe that this actually coincides with   the following definition used in the articles cited in the introduction: $f$ embeds in $g$ if there is a pair $(\sigma, \tau)$ of embeddings such that $\tau f=g\sigma$. See \cite[Section 3]{embeddabilityCPZ} for more details.}
 in $g$, we denote it by $f\segm g$.
Topological embeddability and continuous reducibility coincide on topological embeddings.

\begin{proposition}\label{ContRedonEmbed}
Let $f$ and $g$ be functions between topological spaces.
Assume that $(\sigma,\tau)$ reduces $f$ to $g$.

If $f$ is injective, then $\sigma$, $g\restr{\im \sigma}$ and $\tau$ are injective too.

If moreover $f$ is an embedding, then so is $g\restr{\im \sigma}$, and actually $(\sigma,\tau)$ embeds $f$ in $g$.
\end{proposition}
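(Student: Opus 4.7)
The plan is to chase the identity $f = \tau \circ g \circ \sigma$ in two passes: one using only injectivity of $f$, and one using in addition the continuity of $f^{-1}\colon \im f \to X$.

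\medskip

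First I would settle the injectivity statements. For $\sigma$, observe that $\sigma(x_1)=\sigma(x_2)$ forces $f(x_1) = \tau g \sigma(x_1) = \tau g \sigma(x_2) = f(x_2)$, whence $x_1=x_2$ by injectivity of $f$. For $g\restr{\im \sigma}$, take $y_1,y_2 \in \im \sigma$ with $g(y_1)=g(y_2)$; write $y_i = \sigma(x_i)$, apply $\tau$ to get $f(x_1)=f(x_2)$, hence $x_1=x_2$, hence $y_1=y_2$. For $\tau$, take $z_1,z_2 \in \im(g\sigma)$ with $\tau(z_1)=\tau(z_2)$; write $z_i = g\sigma(x_i)$ so again $f(x_1)=f(x_2)$, and therefore $z_1=z_2$.

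\medskip

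Now assume $f$ is an embedding, so $f^{-1}\colon \im f \to X$ is continuous. From the previous step, $\sigma$, $g\restr{\im \sigma}$ and $\tau$ are continuous injections; it remains to show that their set-theoretic inverses are continuous on their images. The key remark is that each such inverse admits an explicit formula as a composition of maps known to be continuous. More precisely, I would check:
\begin{itemize}
\item $\sigma^{-1} = f^{-1} \circ \tau \circ g\restr{\im \sigma}$ on $\im \sigma$, since for $y = \sigma(x)$ one has $f^{-1}(\tau(g(y))) = f^{-1}(f(x)) = x$;
\item $\tau^{-1} = g \circ \sigma \circ f^{-1}$ on $\im \tau = \im f$, since for $w = f(x)$ one has $g\sigma f^{-1}(w) = g\sigma(x)$, which is $\tau^{-1}(w)$ because $\tau(g\sigma(x)) = f(x) = w$;
\item $(g\restr{\im \sigma})^{-1} = \sigma \circ f^{-1} \circ \tau$ on $\im(g\sigma)$, since for $z = g\sigma(x)$ one has $\sigma f^{-1}(\tau(z)) = \sigma f^{-1}(f(x)) = \sigma(x)$.
\end{itemize}
Each right-hand side is a composition of continuous maps, so each inverse is continuous and thus $\sigma$, $\tau$ and $g\restr{\im \sigma}$ are topological embeddings. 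In particular $(\sigma,\tau)$ embeds $f$ in $g$ in the sense of the introduction.

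\medskip

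There is no real obstacle here: the entire content is diagram-chasing on the square $f=\tau g \sigma$. The only point requiring care is the bookkeeping of domains and codomains — ensuring for instance that $\im \tau = \im f$ (which holds since $f = \tau \circ g \circ \sigma$ makes $\tau$ surjective onto $\im f$), so that $\tau^{-1}$ is indeed defined on $\im f$ and the composition $g \circ \sigma \circ f^{-1}$ matches it.
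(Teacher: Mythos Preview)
Your proof is correct and takes essentially the same approach as the paper's: both exploit $f^{-1}$ (which the paper calls $h$) and obtain the inverses of $\sigma$, $\tau$, and $g\restr{\im\sigma}$ as compositions of the available continuous maps. Your version is more explicit in writing out the formulas and checking domains, but the underlying diagram-chase is identical.
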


\begin{proof}
To see the first point, note that if any of those maps is not injective, nor is $f$ since $f=\tau g \sigma$ holds.

If now $f$ is an embedding, by definition there is an embedding $h$ such that $hf=\id_{\dom f}$, but since $hf= h\tau g\sigma=\id_{\dom f}$, we know that $\sigma$ is an embedding too.
Similarly $fh=\tau g\sigma h=\id_{\im f }$ and $\tau$ is an embedding. Finally we get $g\sigma h\tau=\id_{\im \sigma}$, which concludes.
\end{proof}

We say that a function $f:X\to Y$ is \emph{homeomorphic} to a function $f':X'\to Y'$ if $f\leq f'$ is witnessed by $(\sigma,\tau)$ where $\sigma:X\to X'$ and $\tau:\im f'\to \im f$ are both homeomorphisms.
Note that in this special case, not only does $f$ embed in $f'$ and reduces to $f'$, but also the pair $(\sigma^{-1},\tau^{-1})$ is a homeomorphism from $f'$ to $f$. Let us denote by $\functionsonbaire$ the class of functions $f:A\to B$ where $A,B$ are subsets of the Baire space $\cN$.

\begin{proposition}\label{RepresentationforFunctions}
Any function between $0$-dimensional separable metrizable spaces is homeomorphic to a function in $\functionsonbaire$. 
So is any function from a $0$-dimensional separable metrizable space to a countable metrizable space. 
\end{proposition}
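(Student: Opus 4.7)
The plan is to use the universal property of the Baire space $\cN$: every zero-dimensional separable metrizable space topologically embeds into $\cN$ (this is \cite[(7.8)]{kechris}, already cited in \cref{ConsequencesofUniversality}). Given a function $f:X\to Y$ between two zero-dimensional separable metrizable spaces, I would pick embeddings $\iota_X: X\hookrightarrow \cN$ and $\iota_Y:Y\hookrightarrow\cN$ and let $A=\iota_X(X)$, $B=\iota_Y(Y)$, so that $\iota_X$ and $\iota_Y$ restrict to homeomorphisms $X\to A$ and $Y\to B$ respectively.

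I would then define
\[
f'=\iota_Y\circ f\circ\iota_X^{-1}\colon A\longrightarrow B,
\]
which is a function in $\functionsonbaire$ by construction. Setting $\sigma=\iota_X:X\to A$ and $\tau=(\iota_Y\restr{\im f})^{-1}:\im f'\to \im f$, both are homeomorphisms (noting that $\im f'=\iota_Y(\im f)$), and they witness $\tau\circ f'\circ\sigma=f$. Hence $f$ is homeomorphic to $f'$ in the sense specified just before the statement.

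For the second claim, I would observe that every countable metrizable space $Y$ is automatically zero-dimensional separable metrizable: separability is trivial, and zero-dimensionality follows because in any metric $d$ on $Y$, for each $y\in Y$ and $\varepsilon>0$ the set $\{d(y,y'):y'\in Y\}$ is countable, so one can choose radii avoiding this set to obtain arbitrarily small clopen balls forming a basis. Thus the second statement is an immediate consequence of the first applied to $f:X\to Y$ with this $Y$.

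There is no real obstacle here; the proof is essentially a direct application of universality of $\cN$. The only minor point to double-check is that the definition of \emph{homeomorphic} given in the paper indeed requires $\sigma$ to be a homeomorphism $X\to X'=\dom f'$ and $\tau$ to be a homeomorphism $\im f'\to\im f$, rather than $\tau$ being defined on all of $Y'$; the construction above provides exactly such a pair.
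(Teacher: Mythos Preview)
Your proof is correct and follows essentially the same approach as the paper's. The only cosmetic difference is in the second claim: the paper invokes the universality of $\Q$ (every countable metrizable space embeds in $\Q$, hence is zero-dimensional), whereas you argue zero-dimensionality directly via the countability of the set of distances; both routes are equally valid and equally short.
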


\begin{proof}
Because any $0$-dimen\-sional separable metrizable space is homeomorphic to a subset of $\cN$ and every countable metrizable space is homeomorphic to a subset of $\Q$, hence $0$-dimensional.
\end{proof}

\subsection{Non-scattered functions}

Recall that a topological space $X$ is \emph{scattered} if any of its non-empty subset has an isolated point.
For a metrizable space, $X$ is scattered if and only if $\Q$ does not embed into $X$ (see for instance \cite[Théorème 2]{MR0107849}, or \cite[Notes on (7.12)]{kechris}).
We show an analogous characterization for continuous functions.
\begin{definition}
We say that a function $f$ is \emph{scattered} if for every non-empty subset $A\subseteq \dom f$ the restriction $f\restr{A}$ is constant on some non-empty open subset of $A$.
\end{definition}
%Recall that we say that a function $f$ is \emph{scattered} if 
% for every non-empty subset $A\subseteq \dom f$ the restriction $f\restr{A}$ is constant on some non-empty open subset of $A$.
 Note that $f$ is not scattered if and only if it admits a non-empty restriction $f\restr{A}$ which is \emph{nowhere locally constant},
 that is such that $f\restr{U}$ is not constant for every non-empty open subset $U$ of $A$. 
Remark that  the identity on the space of rationals $\id_\Q$ is not scattered. For continuous functions between metrizable spaces, this function is actually the only obstruction to being scattered.
%We will see that for any function $f$, if $\id_\Q\leq f$ then $f$ is not scattered (\cref{CBbasicsfromJSL}).
%For continuous functions between metrizable spaces, the identity on the space of rationals is actually the only obstruction to being scattered:

\begin{theorem}\label{prop:nlc_implies_nonscattered}
Let $f:X\to Y$ be a continuous function from a metrizable space to a Hausdorff space.
If $f$ is not scattered, then $\id_\Q\segm f$.  
\end{theorem}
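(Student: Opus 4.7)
The plan is to reduce to the case where $X$ is metrizable, $Y$ is Hausdorff, and $f$ is nowhere locally constant on all of $X$, by restricting $X$ to a non-empty witnessing subset $A$ (which is itself metrizable as a subspace of a metrizable space). Fix a compatible metric $d$ on $X$. The goal is to build a countable subset $D\subseteq X$ that is dense-in-itself and on which $f\restr{D}$ is a homeomorphism onto $f(D)$. Since $\Q$ is, up to homeomorphism, the unique countable metric space without isolated points, $D$ and $f(D)$ will both be copies of $\Q$, and $\sigma\colon\Q\xrightarrow{\cong} D\hookrightarrow X$ together with $\tau\coloneqq(f\restr{D})^{-1}\colon f(D)\to\Q$ will witness $\id_\Q\segm f$.

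A preliminary observation, obtained by a standard clopen-partition argument using Hausdorffness of $Y$, is that $f\restr{U}$ takes infinitely many distinct values on every non-empty open $U\subseteq X$: otherwise, the finitely many fibers would be clopen in $U$, forcing $f$ to be constant on a non-empty open set. The main construction is then an inductive Cantor-like scheme indexed by the tree $\N^{<\omega}$: for each $s\in\N^{<\omega}$, I would select a point $x_s\in X$, an open neighborhood $U_s\subseteq X$ of $x_s$, and an open neighborhood $V_s\subseteq Y$ of $f(x_s)$, with $f(U_s)\subseteq V_s$, satisfying (a) the nesting $U_{s\conc n}\subseteq U_s$ and $V_{s\conc n}\subseteq V_s$ together with pairwise disjointness of $(U_{s\conc n})_n$ and of $(V_{s\conc n})_n$; (b) the closure-separations $x_s\notin\overline{U_{s\conc n}}$ and $f(x_s)\notin\overline{V_{s\conc n}}$; and (c) the accumulation conditions $d(x_{s\conc n},x_s)\to 0$ and $\mathrm{diam}_d(U_{s\conc n})\to 0$ as $n\to\infty$. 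At each node $s$, children will be picked one at a time: the preliminary observation yields a new $x_{s\conc n}\in U_s\setminus\bigcup_{i<n}\overline{U_{s\conc i}}$ (a non-empty open set containing $x_s$, thanks to (b) of earlier steps), close to $x_s$, with $f(x_{s\conc n})$ distinct from the finitely many already chosen $f$-values and lying outside $\bigcup_{i<n}\overline{V_{s\conc i}}$; Hausdorffness of $Y$ then produces $V_{s\conc n}$ inside an open neighborhood of $f(x_{s\conc n})$ that is Hausdorff-separated from $f(x_s)$, which automatically forces $f(x_s)\notin\overline{V_{s\conc n}}$.

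Setting $D\coloneqq\{x_s:s\in\N^{<\omega}\}$: the set $D$ is countable, metric, and dense-in-itself by (c), hence $D\cong\Q$. Injectivity of $f\restr{D}$ follows from (a): for $s\neq t$, the sibling $V$'s at the longest common prefix separate $f(x_s)$ and $f(x_t)$. Continuity of $f\restr{D}$ is automatic, so only continuity of its inverse requires work. Given $f(x_{t_n})\to f(x_s)$ in $Y$, the Cantor-scheme property that two $V$'s sharing a point must be comparable, combined with injectivity, shows that $t_n$ eventually extends $s$; the closure-separation (b) prevents the sequence from stabilizing in any fixed subtree $s\conc m$ (otherwise $f(x_{t_n})\in V_{s\conc m}$ while $f(x_s)\notin\overline{V_{s\conc m}}$, contradicting convergence in $Y$); therefore the first letter of $t_n$ past $s$ tends to infinity, and (c) then delivers $d(x_{t_n},x_s)\to 0$.

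The main obstacle is executing the inductive step under only Hausdorffness of $Y$ (no regularity). The key elementary observation is that two disjoint open sets $W,W'$ of a Hausdorff space satisfy $\overline{W}\cap W'=\emptyset$ automatically, since $W\subseteq Y\setminus W'$ and the latter is closed. Hence choosing $V_{s\conc n}$ inside an open neighborhood of $f(x_{s\conc n})$ that is Hausdorff-separated from an open neighborhood of $f(x_s)$ suffices to obtain the $Y$-side closure-separation in (b); the analogous separations on $X$ are routine since $X$ is metric.
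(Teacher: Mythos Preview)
Your proof is correct and takes a genuinely different route from the paper's.

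The paper indexes its scheme by the binary tree $2^{<\omega}$ and identifies $\Q$ with the set of eventually-zero sequences in $2^{\N}$: at each node it keeps $x_{s\conc 0}=x_s$, so each $x_s$ is the unique point of $\bigcap_n C_{s\conc 0^n}$, and the resulting map $\sigma:\Q\to X$ is the one associated to the Cantor scheme. The embedding property of $f\sigma$ is then verified in one stroke via the scheme identity $f\sigma(\Q\cap N_s)=f\sigma(\Q)\cap U_s$, which holds because the $U_s$'s separate along the tree. This avoids any appeal to Sierpi\'nski's characterization of $\Q$ and makes the inverse-continuity argument essentially automatic.

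Your construction instead indexes by $\N^{<\omega}$, lets $D=\{x_s:s\in\N^{<\omega}\}$ be the copy of $\Q$ directly (via Sierpi\'nski), and then checks by hand that $f\restr{D}$ is a homeomorphism onto its image. This works, but the inverse-continuity argument is longer, and the inductive step is slightly more delicate: to obtain $f(x_{s\conc n})\notin\bigcup_{i<n}\overline{V_{s\conc i}}$ you must first use continuity of $f$ at $x_s$ (together with $f(x_s)\notin\bigcup_{i<n}\overline{V_{s\conc i}}$) to shrink the domain neighborhood before invoking your preliminary observation; the observation alone does not give it. Also, strictly speaking $\tau$ should be $(f\sigma)^{-1}$ rather than $(f\restr{D})^{-1}$, since the latter lands in $D$, not $\Q$. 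Both are cosmetic, but worth tightening.
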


\begin{proof}
If $f$ is not scattered, then it admits a non-empty restriction $f\restr{A}$ that is nowhere locally constant and $f\restr{A}\segm f$.
Hence, it suffices to show that if $f$ is nowhere locally constant, then $\id_\Q\segm  f$.

Let $d$ be a compatible metric for $X$.
We make the following observation: if $x\in X$, $\epsilon>0$ and $f(B(x,\epsilon))\subseteq U$ for some open set $U$ of $Y$,
then there exists $x'\in B(x,\epsilon)$, $\epsilon'<\epsilon$ and open sets $U_0,U_1\subseteq U$ such that $B(x,\epsilon'),B(x',\epsilon')\subseteq B(x,\epsilon)$,
$U_0\cap U_1=\emptyset$, $f(B(x,\epsilon'))\subseteq U_0$ and $f(B(x',\epsilon'))\subseteq U_1$.
To see this, note that since $f$ is nowhere locally constant there exists $x'\in B(x,\epsilon)$ such that $f(x)\neq f(x')$.
Since $Y$ is Hausdorff, there exists disjoint neighborhoods $U_0,U_1\subseteq U$ separating $f(x)$ from $f(x')$.
By continuity of $f$, we can find a small enough $\epsilon'<\epsilon$ such that $f(B(x,\epsilon'))\subseteq U_0$ and $f(B(x',\epsilon'))\subseteq U_1$ and $B(x,\epsilon'), B(x',\epsilon')\subseteq B(x,\epsilon)$.

We use this to define Cantor schemes\footnote{See \cite[Definition 6.1]{kechris} for a definition of \emph{Cantor scheme}.}
$(C_s)_{s\in 2^{<\N}}$ consisting of open balls $B(x_s, \rho_s)$ of $X$ and $(U_s)_{s\in 2^{<\N}}$ consisting of open sets of $Y$ such that for all $s\in 2^{<\N}$:
\begin{enumerate}
\item $f(C_{s\conc(i)})\subseteq U_{s\conc(i)}$ for $i=0,1$,
\item $\rho_s\leq 2^{-|s|}$.
\end{enumerate}
Choose $x_\emptyset\in X$, let $\rho_\emptyset=1$, set $C_\emptyset=B(x,1)$ and $U_\emptyset=Y$. Assume we have defined $C_s=B(x_s, \rho_s)$.
The preliminary observation allows us to find $x'$ and $\rho\leq 2^{-(|s|+1)}$ such that $B(x_s,\rho)$ and $B(x',\rho)$ are included in $C_s$ and whose images by $f$ are separated by open sets $U_0,U_1\subseteq U_s$.
We set $x_{s\conc(0)}=x_s$, $x_{s\conc(1)}=x'$, $\rho_{s\conc(0)}=\rho_{s\conc(1)}=\rho$ and $U_{s\conc (i)}=U_i$ for $i=0,1$. 

Note that by construction, we have $\{x_{s}\}= \bigcap_{n\in\N}C_{s\conc (0)^n}$ for every $s\in 2^{<\N}$.
Identifying $\Q$ with the subset of $\cantor$ of sequences that are eventually constant equal to $0$, our Cantor scheme induces a function $\sigma:\Q\to X$ which is an embedding by \cite[(7.6)]{kechris}. 
Moreover, $f\sigma$ is an embedding too. 
To see this, note that $f\sigma(\Q\cap N_s)=f\sigma(\Q)\cap U_s$ for all $s\in 2^{<\N}$.
Indeed if $x\in \Q\cap N_s$ then by construction $\sigma(x)\in C_s$ and so $f\sigma(x)\in U_s$.
Conversely, if $x\notin \Q\cap N_s$ then $x\in N_t$ for some $t\in 2^{<\N}$ incompatible with $s$ and so $f\sigma(x)\in U_t$ but then $f\sigma(x)\notin U_s$ since $U_s\cap U_t=\emptyset$.

Since $f\sigma$ is an embedding, its inverse $\tau:\im (f\sigma)\to \Q$, $f\sigma(x)\mapsto x$ is an embedding too and clearly $\id_\Q=\tau f \sigma$. Therefore $\id_\Q\segm f$, as desired.
\end{proof}

%Combining \cref{prop:nlc_implies_nonscattered} with \cref{ConsequencesofUniversality} gives
%
%\begin{corollary}\label{nonscatteredfunctions}
%Any non-scattered continuous function from a metrizable space to a countable metrizable space is continuously equivalent to $\id_\Q$. 
%\end{corollary}

Continuity\footnote{Metrizability as well, see \cite[Footnote of Théorème 2]{MR0107849}.} is needed in \cref{prop:nlc_implies_nonscattered}
\begin{example}[A non-scattered function that does not reduce $\id_\Q$]\label{counterexamplenonscat}
Let $f$ be the characteristic function of a dense and co-dense subset of $\Q$.
Then $f$ is not scattered because it is nowhere locally constant, as its image on every basic open set is exactly $2$.
Being discontinuous and having finite image, it cannot continuously reduce $\id_\Q$.
\end{example}

For a finite basis result for all functions from $\Q$ to an analytic metric space with respect to closed topological embeddability, see \cite[Theorem 5.5]{carroymiller2}.

\medskip

Our next goal is to understand the continuous functions with uncountable range up to continuous reducibility.
For a continuous function $f$ with a Polish domain and separable metric image, the proof of \cref{prop:nlc_implies_nonscattered} can be adapted to show that if $f$ is not scattered,
then the constructed Cantor schemes are actually witnessing $\id_{\cantor}\segm f$.

We next show that we can extend this phenomenon to analytic domains and Borel functions,
for which we first establish equivalent conditions for the embeddability of $\id_{\cantor}$ into a function.
Recall that a function is \emph{Baire-measurable} if preimages of open sets have the Baire property,
and that $\Pee{2}$ sets are countable intersections of open sets.

\begin{proposition}\label{EquivEmbedCantor}
Let $f$ be a function between topological spaces. The following are equivalent:
\begin{enumerate}
\item There is an embedding $\sigma:\cantor\to\dom(f)$ such that $f\sigma$ is an embedding.
\item There is a Baire-measurable $\sigma:\cantor\to\dom(f)$ such that $f\sigma$ is an embedding.
\item There is $K\subseteq \dom(f)$ homeomorphic to $\cantor$ such that $f\restr{K}$ is both injective and Baire-measurable.
\end{enumerate}
\end{proposition}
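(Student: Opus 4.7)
The plan is to establish the three implications $(1)\Rightarrow(2)\Rightarrow(3)\Rightarrow(1)$ in that order. The first, $(1)\Rightarrow(2)$, is immediate since every continuous function is Baire-measurable. For the other two, the central tool I would invoke is a classical consequence of the Baire category theorem (see, e.g., \cite[Theorem~8.38]{kechris}): any Baire-measurable map from a topological space into a second countable space is continuous on a comeager $G_\delta$ subset of its domain. Combined with the standard observation that every nonempty dense $G_\delta$ subset of $\cantor$ contains a homeomorphic copy of $\cantor$ (built via a Cantor scheme), this lets me extract, from any Baire-measurable map defined on $\cantor$, a Cantor subset of its domain on which the map restricts to a continuous injection and hence, by compactness and Hausdorffness of the target, to a topological embedding.

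For $(2)\Rightarrow(3)$, given $\sigma\colon\cantor\to\dom(f)$ Baire-measurable with $f\sigma$ an embedding, I would apply the scheme above to $\sigma$ itself to obtain a Cantor set $K'\subseteq\cantor$ on which $\sigma\restr{K'}$ is a continuous injection (injectivity of $\sigma$ being inherited from that of $f\sigma$), and then set $K=\sigma(K')$. By compactness of $K'$ together with the Hausdorffness of the relevant subspace of $\dom(f)$, the map $\sigma\restr{K'}\colon K'\to K$ is a homeomorphism, so $K\cong\cantor$. The factorization $f\restr{K}=(f\sigma\restr{K'})\circ(\sigma\restr{K'})^{-1}$ then exhibits $f\restr{K}$ as a composition of continuous injections, hence as a continuous injection, and in particular as injective and Baire-measurable.

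For $(3)\Rightarrow(1)$, fix any homeomorphism $h\colon\cantor\to K$; the map $fh=(f\restr{K})\circ h$ is then a Baire-measurable injection. A second application of the extraction scheme yields a Cantor set $K''\subseteq\cantor$ on which $(fh)\restr{K''}$ is a topological embedding. Reparametrizing $K''$ through a homeomorphism $\phi\colon\cantor\to K''$, the composition $\sigma=h\circ\phi\colon\cantor\to\dom(f)$ is an embedding as a composition of embeddings, and $f\sigma=(fh\restr{K''})\circ\phi$ is likewise an embedding, closing the cycle.

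The only substantive step in the whole argument is the application of the Kuratowski-style continuity theorem and the extraction of Cantor copies inside dense $G_\delta$ subsets of $\cantor$; all remaining moves are routine manipulations involving compactness, continuous injections, and Hausdorffness. The point that will require the most care is checking at each stage that the continuous injections produced are in fact embeddings, which hinges on the Hausdorffness and second countability of the relevant images---both inherited in each case from the copies of $\cantor$ supplied by the embedding hypotheses.
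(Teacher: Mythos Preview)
Your proof is correct and follows essentially the same route as the paper: the same cycle $(1)\Rightarrow(2)\Rightarrow(3)\Rightarrow(1)$, with the nontrivial implications both handled by invoking \cite[Theorem~8.38]{kechris} to pass to a comeager $G_\delta$ set of continuity, extracting a copy of $\cantor$ inside it, and using compactness to upgrade continuous injections to embeddings. The only cosmetic difference is that in $(3)\Rightarrow(1)$ you first transport to $\cantor$ via a homeomorphism $h$ before applying the extraction, whereas the paper applies it directly on $K$; this changes nothing of substance.
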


\begin{proof}
Any embedding -- being continuous -- is Baire-measurable, so the first item implies the second one.

%Any $\sigma$ such that $f\sigma$ is injective must be injective, and if moreover such $\sigma$ is Baire-measurable on $\cantor$ 
If $f\sigma$ is injective, then so is $\sigma$. Since $\sigma$ is moreover Baire-measurable on $\cantor$ 
then by \cite[(8.38)]{kechris} it must be continuous on a dense $\Pee{2}$ subset $G$ of $\cantor$.
In particular, $G$ is a perfect Polish space and so there exists an embedding $\sigma':\cantor \to G$ \cite[(6.2)]{kechris}.
%Now $\sigma\sigma'$ is both injective and continuous, hence an embedding by compactness of $\cantor$, so $f\sigma\sigma'$ is also an embedding and we obtained the first item.
Now $\sigma\sigma'$ is both injective and continuous, so $K=\im \sigma\sigma'$ is homeomorphic to $\cantor$ by compactness. Moreover, $f\restr{K}=(f\sigma) \sigma^{-1}$ is an embedding as a composition of embeddings. Hence $f\restr{K}$ is injective and Baire measurable, which shows that the second item implies the third one.

%Now if $\sigma$ is an embedding and $f\sigma$ is an embedding, then $f\restr{\im(\sigma)}$ is also an embedding, in particular it is injective and Baire-measurable, so we have the third item.
Finally, suppose there is $K\subseteq \dom(f)$ homeomorphic to $\cantor$ such that $f\restr{K}$ is both injective and Baire-measurable. By
\cite[(8.38)]{kechris} again, $f\restr{K}$ is continuous on a dense $\Pee{2}$ subset of $K$. So there exists an embedding $\sigma:\cantor\to K$
such that $f\sigma$ is both injective and continuous, hence an embedding by compactness% of $\cantor$
, and so the first item holds.
\end{proof}

We say that a class of functions has the \emph{Perfect Function Property} if every function in it either has countable image or $\id_{\cantor}$ embeds in $f$.
Observe that a class of spaces has the perfect set property if and only if the class of their identity functions has the perfect function property.

The following proposition has various proofs. It was first proven in \cite[Proposition 2.1]{carroy2013quasi} using Silver's Theorem and
 a game-theoretic proof was recently found by Lutz and Siskind \cite{lutz20231}.
 These proofs both use the third item from \cref{EquivEmbedCantor}; here we propose another proof, suggested to us by Zolt\'an Vidny\'anszky.

A subset $A$ of a topological space $X$ is \emph{Borel} if it is in the $\sigma$-algebra generated by the open sets,
and a function is \emph{Borel} if it is Borel-measurable. Note that zero-dimensionality is not used in the following result.

\begin{proposition}\label{uncountablerange}
The class of Borel functions from analytic to separable metrizable spaces has the Perfect Function Property.
\end{proposition}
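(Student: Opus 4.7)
The plan is to reduce to the Polish case, then apply Silver's dichotomy to the equivalence relation induced by the function, and finally use the characterisation in \cref{EquivEmbedCantor}\,(iii) to conclude.

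First, since $X$ is analytic, I can fix a continuous surjection $\pi \colon \cN \to X$ and consider $g = f \circ \pi \colon \cN \to Y$. Being the composition of a continuous and a Borel map, $g$ is Borel, and $\im g = \im f$. Having a Polish domain for $g$ will allow me to invoke the classical descriptive set-theoretic machinery.

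Next, I consider the equivalence relation $E_g = \{(x,y) \in \cN^2 \mid g(x) = g(y)\}$ on $\cN$. Because $Y$ is separable metrizable, in particular Hausdorff, the diagonal $\Delta_Y$ is closed in $Y \times Y$, so $E_g = (g \times g)^{-1}(\Delta_Y)$ is Borel and therefore $\mathbf{\Pi}^1_1$. Silver's dichotomy then yields two cases: either $E_g$ has only countably many equivalence classes, or there exists a perfect set $P \subseteq \cN$ of pairwise $E_g$-inequivalent elements. In the first case, $\im f = \im g$ is in bijection with the set of classes, hence countable, and we are done.

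In the second case, $P$ being a non-empty perfect Polish space contains a homeomorphic copy $K$ of $\cantor$ by \cite[(6.2)]{kechris}. On $K$ the map $g\restr{K}$ is injective by choice of $P$, and injectivity of $g\restr{K} = f\restr{\pi(K)} \circ \pi\restr{K}$ forces $\pi\restr{K}$ to be injective as well. Since $K$ is compact, $\pi\restr{K}$ is a homeomorphism onto $\pi(K)$, so $K' := \pi(K) \subseteq X$ is homeomorphic to $\cantor$. The restriction $f\restr{K'}$ is injective (because $g\restr{K}$ is), Borel as a restriction of a Borel function, and hence Baire-measurable. Applying \cref{EquivEmbedCantor} to $K'$ produces an embedding $\sigma \colon \cantor \to X$ such that $f \sigma$ is an embedding, i.e.\ $\id_\cantor \segm f$. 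The main subtlety is simply checking that $E_g$ is coanalytic and that the Cantor set extracted in $\cN$ transports back to $X$ correctly via $\pi$; once those are in place, Silver's theorem and \cref{EquivEmbedCantor} do all the work.
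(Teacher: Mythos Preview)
Your proof is correct, but it follows a different route from the one the paper presents. In fact, the paper explicitly mentions your approach: just before the proof, it notes that the result ``was first proven in \cite[Proposition 2.1]{carroy2013quasi} using Silver's Theorem'' and that this proof uses item~(iii) of \cref{EquivEmbedCantor}. The paper then opts to give an alternative argument, suggested by Vidny\'anszky, which avoids Silver's theorem entirely.

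The paper's proof works on the codomain side rather than the domain side: it observes that $\im(f\phi)$ is an uncountable analytic subset of a separable metric space, so the Perfect Set Theorem for analytic sets yields an embedding $\tau:\cantor\to\im(f\phi)$. The relation $\set{(x,x')}[\tau(x)=f\phi(x')]$ is analytic, and Jankov--von~Neumann uniformization gives a Baire-measurable $\sigma:\cantor\to\cN$ with $f\phi\sigma=\tau$. One then concludes via item~(ii) of \cref{EquivEmbedCantor}. Your approach instead works on the domain side: you apply Silver's dichotomy to the Borel equivalence relation $E_g$, extract a perfect set of pairwise inequivalent points, push it down through $\pi$ by compactness, and conclude via item~(iii). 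Both arguments are short; the paper's version trades the (deeper) Silver theorem for the combination of the perfect set theorem for analytic sets and Jankov--von~Neumann uniformization, while yours has the virtue of making the ``injective on a Cantor set'' structure appear directly without any uniformization step.
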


\begin{proof}
Let $f:X\to Y$ be Borel with uncountable image. As $X$ is analytic we can take a continuous surjection $\phi:\cN\to X$.
Since $\im(f\phi)$ is analytic and its completion is an uncountable Polish space, the Perfect Set Theorem for analytic sets \cite[(29.1)]{kechris} ensures that there is an embedding $\tau:\cantor\to\im(f\phi)$.
The relation $xRx'$ if and only if $\tau(x)=f\phi(x')$ is analytic in $\cantor\times\cN$, so by the Jankov--von Neumann uniformization Theorem (\cite[(29.9)]{kechris})
it has a Baire-measurable uniformizing function $\sigma:\cantor\to\cN$, so $f\phi\sigma=\tau$ is an embedding, and this concludes by \cref{EquivEmbedCantor}.
\end{proof}

\subsection{Scattered functions}\label{sectionCB}

We generalize the Cantor--Bendixson analysis of spaces to functions, which was already initiated in~\cite{carroy2013quasi}. % the basics of the Cantor--Bendixson analysis for functions.

Given a function $f$ and $x\in\dom f$, we say that $x$ is \textit{$f$-isolated} if $f^{-1}(\{f(x)\})$ is a neighbourhood of $x$, \ie $f$ is locally constant at $x$.
Note that a function $f$ is scattered if and only if every non-empty subset $A$ of $\dom f$ contains an $f\restr{A}$-isolated point.
Given $A\subseteq\dom f$, we let $I(f,A)$ stand for the set of all $f\restr{A}$-isolated points. It is an open subset of $A$.
We now define inductively the following closed sets of $\dom f$:
\begin{align*}
\CB_0(f)&=\dom f,\\
\CB_{\alpha+1}(f)&=\CB_\alpha(f)\backslash I(f,\CB_\alpha(f)),\\ 
\CB_\lambda(f)&=\bigcap_{\alpha<\lambda}\CB_\alpha(f)\mbox{ for }\lambda\mbox{ limit.}
\end{align*}
We set $\CB(f)=\min\set{\alpha}[\CB_\alpha(f)=\CB_{\alpha+1}(f)]$, and
we call it the \textit{Cantor--Bendixson rank} of $f$, or \emph{$\CB$-rank} for short.
Recall from the introduction that $\ker_{\CB}{f}:=\CB_{\CB(f)}(f)$ is called the perfect kernel of $f$, and $f$ is nowhere locally constant on it.
In particular, functions with an empty perfect kernel play a central role in this paper as are exactly the scattered functions. 
Note that this equivalence stated in introduction as part of \cref{scatterediffemptykernel} actually holds for functions between topological spaces. 

\begin{proposition}\label{scatterediffemptykernel_general}
Let $f$ and $g$ be two functions between topological spaces. Then $f$ is scattered if and only if $f$ has empty perfect kernel. 
\end{proposition}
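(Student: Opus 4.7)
The plan is to prove each direction by a short transfinite argument using the definition of $\CB_\alpha(f)$.

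For the forward direction, I would argue by contraposition: assume the perfect kernel $K = \ker_{\CB}(f)$ is non-empty and show $f$ is not scattered. By definition of $\CB(f)$, the set $K$ is a fixed point of the derivation, meaning $K = K \setminus I(f,K)$, i.e., $I(f,K) = \emptyset$. Thus $K$ is a non-empty subset of $\dom f$ on which $f\restr{K}$ has no locally constant point, witnessing that $f$ is not scattered.

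For the backward direction, assume $\ker_{\CB}(f) = \emptyset$ and fix a non-empty $A \subseteq \dom f$; I want to produce an $f\restr{A}$-isolated point. Since $A \not\subseteq \emptyset = \ker_{\CB}(f)$ and $A \subseteq \CB_0(f)$, the set of ordinals $\set{\beta}[A \not\subseteq \CB_\beta(f)]$ is non-empty, so let $\alpha$ be its least element. This $\alpha$ cannot be $0$, nor a limit ordinal (since $\CB_\lambda(f) = \bigcap_{\beta < \lambda} \CB_\beta(f)$ and $A \subseteq \CB_\beta(f)$ for every $\beta < \lambda$ would force $A \subseteq \CB_\lambda(f)$). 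Hence $\alpha = \beta + 1$ for some $\beta$ with $A \subseteq \CB_\beta(f)$, and there exists $x \in A$ with $x \notin \CB_{\beta+1}(f) = \CB_\beta(f) \setminus I(f, \CB_\beta(f))$; since $x \in \CB_\beta(f)$, this forces $x \in I(f, \CB_\beta(f))$.

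It remains to transfer this isolation from $\CB_\beta(f)$ down to $A$. By definition, there is an open set $U$ of $\dom f$ with $x \in U$ and $U \cap \CB_\beta(f) \subseteq f^{-1}(\{f(x)\})$. Since $A \subseteq \CB_\beta(f)$, we get $U \cap A \subseteq f^{-1}(\{f(x)\})$, so $x$ is $f\restr{A}$-isolated, as required.

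The only delicate point is the minimality argument at a limit ordinal, which I expect to be straightforward since the closed sets $\CB_\beta(f)$ are decreasing in $\beta$ and limit stages are defined by intersection; there is no real obstacle, and the proof should be short and work between arbitrary topological spaces (as the proposition allows), with the spurious mention of a second function $g$ in the statement being harmless.
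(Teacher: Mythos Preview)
Your proof is correct and takes essentially the same approach as the paper. The only cosmetic difference is in how the key ordinal is named: the paper takes $\gamma=\min\set{\beta}[A\cap I(f,\CB_\beta(f))\neq\emptyset]$ directly, while you take the least $\alpha$ with $A\not\subseteq\CB_\alpha(f)$ and argue it must be a successor $\beta+1$; these yield the same $\beta=\gamma$, and the transfer of isolation from $\CB_\beta(f)$ down to $A$ is identical.
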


\begin{proof}
If $f$ has non-empty perfect kernel $A$, then by definition of the Cantor--Bendixson derivative $f\restr{A}$ is nowhere locally constant, so $f$ is not scattered.
Conversely, if $f$ has empty perfect kernel, then for some ordinal $\alpha$ we have $\dom(f)=\bigcup_{\beta<\alpha}I(f,\CB_\beta(f))$,
so given any non-empty $A\subseteq\dom(f)$, setting $\gamma=\min\set{\beta<\alpha}[A\cap I(f,\CB_\beta(f))\neq\emptyset]$,
then $A\subseteq \CB_\gamma(f)$ and $f\restr{A}$ is locally constant at any $x\in A\cap I(f,\CB_\gamma(f))$, which shows that $f$ is scattered.
\end{proof}
The Cantor--Bendixson rank is defined using a closed derivative, so any function with a second countable domain has a countable $\CB$-rank\footnote{See \cite[Theorem 6.9]{kechris}}.
Since a locally constant function on a second countable space has countable range, it follows from \cref{scatterediffemptykernel_general} that any scattered function with second countable domain has countable image. 
In particular, every function from \cref{MainTheorem3}, namely the scattered continuous functions from a zero-dimensional separable metrizable space to a metrizable space, has countable range and therefore is homeomorphic to a function in $\functionsonbaire$ by \cref{RepresentationforFunctions}. With this in mind, we define the class of functions which is the main focus of this article as follows: 
{
%\begin{definition}
%The class $\sC$ denotes the class of functions from a separable metrizable zero-dimen\-sional space to a metrizable space which are both continuous and scattered. 
%\end{definition}
\begin{definition}
The class $\sC$ denotes the class of functions $f\in \functionsonbaire$ which are continuous and scattered. 
\end{definition}
 }
We are now in a position to show that \cref{Introthm:BQO} reduces to \cref{Introthm:BQOonScat}.
Recall that both \wqo{} and \bqo{} are closed under finite unions, so if $Q$ and $Q'$ are \wqo{} (resp. \bqo{}) then so is $Q\cup Q'$.
Recall as well that quasi-orders whose induced equivalence relation has finitely many classes are \bqo{} (see \cite[Example 3.8 and Proposition 3.11]{wellbetterinbetween}).
As announced we have:

\begin{theorem}\label{FirststepforBQOthm} 
For every function $f$ that satisifies the hypothesis of \cref{Introthm:BQO}, one of the following holds: $f\equiv \id_\Q$, $f\equiv \id_{\cN}$ or $f$ is scattered.
In particular, \cref{Introthm:BQOonScat} implies \cref{Introthm:BQO}.
\end{theorem}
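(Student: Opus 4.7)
The plan is to establish a trichotomy for a continuous function $f:X\to Y$ satisfying the hypothesis of \cref{Introthm:BQO}, and then deduce the ``In particular'' claim by closure of \bqo{} under finite unions. The trichotomy splits along whether $f$ is scattered and, when it is not, whether its image is countable.

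First, if $f$ is scattered we are done. So assume $f$ is not scattered. Since $X$ is metrizable and $Y$ (being metrizable) is Hausdorff, \cref{prop:nlc_implies_nonscattered} gives $\id_\Q \segm f$, and in particular $\id_\Q \leq f$. Now I split on the size of $\im f$.

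If $\im f$ is countable, then since $\im f$ is also metrizable (as a subspace of $Y$), \cref{ConsequencesofUniversality}(i) yields $f \leq \id_\Q$, and combined with the reduction above we get $f \equiv \id_\Q$. If $\im f$ is uncountable, then $Y$ itself must be uncountable, so by the hypothesis of \cref{Introthm:BQO} the space $X$ must be analytic. As $f$ is continuous hence Borel, the Perfect Function Property for Borel functions from analytic to separable metrizable spaces (\cref{uncountablerange}) provides an embedding $\id_{\cantor} \segm f$, hence $\id_{\cantor} \leq f$. Using $\id_{\cantor} \equiv \id_{\cN}$ from \cref{ConsequencesofUniversality}, this gives $\id_{\cN} \leq f$. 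Conversely, $X$ being zero-dimensional separable metrizable, \cref{ConsequencesofUniversality}(ii) yields $f \leq \id_{\cN}$. Therefore $f \equiv \id_{\cN}$, completing the trichotomy.

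For the second assertion, assume \cref{Introthm:BQOonScat} holds. The class of functions in \cref{Introthm:BQO} is, by the trichotomy just proved, the union of three sub-classes: the scattered ones, the continuous equivalence class of $\id_\Q$, and the continuous equivalence class of $\id_{\cN}$. The first is \bqo{} by assumption, while the last two are quasi-orders with a single equivalence class each, hence trivially \bqo{}. Since \bqo{}-ness is closed under finite unions, the whole class is \bqo{}.

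The main obstacle is hidden in the uncountable image case: we need an actual topological embedding of the Cantor space inside the domain on which $f$ is already injective, rather than just an abstract cardinality statement on $\im f$. This is precisely the content of \cref{uncountablerange}, which has been established using Jankov--von Neumann uniformization; once that result is in hand the rest of the argument is a clean assembly of \cref{prop:nlc_implies_nonscattered,ConsequencesofUniversality}, together with the standard fact that finite unions preserve \bqo{}.
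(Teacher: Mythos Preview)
Your proof is correct and follows essentially the same approach as the paper: establish the trichotomy via \cref{prop:nlc_implies_nonscattered}, \cref{ConsequencesofUniversality}, and \cref{uncountablerange}, then conclude by closure of \bqo{} under finite unions. The only cosmetic difference is that the paper additionally remarks that the scattered functions can be represented in $\sC$ (via \cref{RepresentationforFunctions}), but this is not needed for the logical deduction since \cref{Introthm:BQOonScat} is stated directly for scattered continuous functions on zero-dimensional separable metrizable domains.
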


\begin{proof}
Let $f:X\to Y$ be a continuous function with $X$ zero-dimensional separable metrizable and $Y$ metrizable. If $f$ is scattered then being continuous with second countable domain it has countable range. So in light of \cref{RepresentationforFunctions}, there exists $\hat{f}\in\sC$ such that $f$ is homeomorphic to $\hat{f}$ and in particular $f\equiv \hat{f}$.

So suppose that $f$ is non scattered, so $\id_\Q\segm f$ by \cref{prop:nlc_implies_nonscattered}. If $\im f$ is furthemore countable, then actually $f\equiv \id_\Q$ by \cref{ConsequencesofUniversality}. Otherwise if $X$ is furthermore analytic, then $f$ has the Perfect Function Property by \cref{uncountablerange} and so $\id_{\cN}\segm f$. In this case, since $X$ is zero-dimen\-sional separable metrizable, then in fact $f\equiv \id_{\cN}$ \cref{ConsequencesofUniversality}.
% Using \cref{nonscatteredfunctions,nonscatteredfunctions_PFP,uncountablerange}, we see that any non-scattered function that satisfies the hypotheses of \cref{Introthm:BQO} is equivalent to either $\id_\Q$ or $\id_{\cN}$.

Therefore, up to continuous equivalence, the class of functions described in \cref{Introthm:BQO} can be written as the union of $\sC$ together with the two equivalence classes of $\id_\Q$ or $\id_{\cN}$. Hence if $\sC$ is \bqo{} then the whole class is \bqo{}.
\end{proof}

%
%
%\begin{aynote}
%Dans un sens, je pense que ce serait même mieux de définir $\sC\subseteq\functionsonbaire$. Cela est possible par \cref{RepresentationforFunctions}, car toute $f\in \sC$ est homéomorphe à une fonction $\hat{f}$ dans $\functionsonbaire$, et donc $f$ et $\hat{f}$ sont indistinguables par des propriétés topologiques qui sont celles qui nous intéressent. Cela éviterait certaines précisions dans la suite. Il faudrait aussi introduire $\functionsonbaire$ plus tôt, pour l'instant c'est dans le gluing. Once we establish that these scattered functions from a susbet of $\cN$ necessarily have countable (hence $0$-dim) range, we can simply say that $\sC$ is the class of scattered and continuous functions between zero-dimensional metrizable spaces. C'est plus joli, mais il nous faut la dérivée pour le voir.
%\end{aynote}
%
Here are few basic facts about our closed derivative.

\begin{fact}\label{CBbasics0}
Let $f$ be a function between topological spaces.
\begin{enumerate}
\item For every open set $U\subseteq\dom(f)$ and every ordinal $\alpha$ we have $\CB_\alpha(f\restr{U})=\CB_\alpha(f)\cap U$. \label{CBbasicsfromJSL2}
\item If $f$ scattered and $\CB(f)$ is limit, then for all $x\in\dom(f)$ there is an open $U\ni x$ with $\CB(f\restr{U})<\CB(f)$.
\item If $f$ is scattered and has nonempty compact domain, then $\CB(f)$ is a successor ordinal $\alpha+1$ and $f(\CB_\alpha(f))$ is finite.
\end{enumerate}
\end{fact}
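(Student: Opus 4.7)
The plan is to prove the three items in order, with (i) providing the technical backbone and items (ii), (iii) following from it combined with scatteredness and compactness respectively.

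For (i), I would proceed by transfinite induction on $\alpha$. The base case $\alpha=0$ just says $U=\dom(f)\cap U$. For the successor step, assuming $\CB_\alpha(f\restr{U})=\CB_\alpha(f)\cap U$, the key observation is that for $x\in\CB_\alpha(f)\cap U$, since $U$ is open in $\dom(f)$, any open neighbourhood $V$ of $x$ in $U$ is also open in $\dom(f)$, and conversely any open neighbourhood $W$ of $x$ in $\dom(f)$ gives $W\cap U$ open in $U$. Together with the fact that $V\subseteq U$ implies $V\cap\CB_\alpha(f)=V\cap\CB_\alpha(f)\cap U$, this shows $I(f\restr U,\CB_\alpha(f)\cap U)=I(f,\CB_\alpha(f))\cap U$. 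Subtracting then yields $\CB_{\alpha+1}(f\restr U)=\CB_{\alpha+1}(f)\cap U$. The limit case is immediate from the inductive equalities by intersecting.

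For (ii), assume $\CB(f)=\lambda$ is a limit and $f$ is scattered, so $\CB_\lambda(f)=\emptyset$. Fix $x\in\dom(f)$ and let $\alpha$ be the least ordinal with $x\notin\CB_\alpha(f)$. Since $\CB_0(f)=\dom(f)$, we have $\alpha>0$; and $\alpha$ cannot be a limit because then $x\in\CB_\beta(f)$ for all $\beta<\alpha$ would place $x$ in $\bigcap_{\beta<\alpha}\CB_\beta(f)=\CB_\alpha(f)$. So $\alpha=\beta+1$ with $x\in\CB_\beta(f)\setminus\CB_{\beta+1}(f)$, \ie{} $x\in I(f,\CB_\beta(f))$: there is an open $U\ni x$ in $\dom(f)$ on which $f$ is constant on $U\cap\CB_\beta(f)$. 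This forces $U\cap\CB_\beta(f)\subseteq I(f,\CB_\beta(f))$, hence $\CB_{\beta+1}(f)\cap U=\emptyset$. By (i), $\CB_{\beta+1}(f\restr U)=\emptyset$, so $\CB(f\restr U)\leq\beta+1<\lambda$, using that $\lambda$ is limit and $\beta<\lambda$.

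For (iii), $\dom(f)\neq\emptyset$ gives $\CB(f)>0$. Suppose towards contradiction that $\CB(f)=\lambda$ is a limit; by (ii), every $x\in\dom(f)$ admits an open neighbourhood $U_x$ with $\CB(f\restr{U_x})<\lambda$. By compactness extract a finite subcover $U_{x_1},\ldots,U_{x_n}$ and set $\beta=\max_i\CB(f\restr{U_{x_i}})<\lambda$. Then $\CB_\beta(f\restr{U_{x_i}})=\emptyset$ for each $i$, so by (i) $\CB_\beta(f)\cap U_{x_i}=\emptyset$, whence $\CB_\beta(f)=\emptyset$ and $\CB(f)\leq\beta<\lambda$, a contradiction. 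Therefore $\CB(f)=\alpha+1$ for some $\alpha$. A short transfinite induction shows each $\CB_\gamma(f)$ is closed in $\dom(f)$ (the successor step uses that $I(f,\CB_\gamma(f))$ is open in $\CB_\gamma(f)$; the limit step is an intersection), hence $\CB_\alpha(f)$ is compact. Since $\CB_{\alpha+1}(f)=\emptyset$, every $y\in\CB_\alpha(f)$ is $f\restr{\CB_\alpha(f)}$-isolated, giving an open $V_y\subseteq\dom(f)$ with $f$ constant on $V_y\cap\CB_\alpha(f)$. Extracting a finite subcover of $\CB_\alpha(f)$ from $\{V_y\cap\CB_\alpha(f)\}_{y\in\CB_\alpha(f)}$ shows $f(\CB_\alpha(f))$ is finite.

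The only mildly delicate step is the case analysis in (ii) identifying $\alpha$ as a successor; everything else is routine transfinite induction (for (i)) or a standard compactness pigeonhole argument (for (iii)).
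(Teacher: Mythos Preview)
Your proof is correct and follows essentially the same approach as the paper. For (i) you give more detail but the idea is identical. For (ii) and (iii) your arguments are correct but slightly less direct than the paper's: in (ii) the paper simply takes $U=\dom f\setminus\CB_\alpha(f)$ for any $\alpha<\CB(f)$ with $x\notin\CB_\alpha(f)$, avoiding the case analysis on whether $\alpha$ is a successor; in (iii) the paper rules out a limit rank by noting that $(\CB_\alpha(f))_{\alpha<\lambda}$ would be a decreasing family of nonempty closed subsets of a compact space with empty intersection, rather than passing through (ii) and an open-cover argument.
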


\begin{proof}
\begin{enumerate}
\item {By induction on $\alpha$, observing that} for $U$ open and $x\in U$ we have $f\restr{U}$ is locally constant at $x$ if and only if $f$ is locally constant at $x$. 
\item As $\ker_{\CB}{f}=\bigcap_{\alpha<\CB(f)}\CB_\alpha(f)=\emptyset$, for all $x\in \dom f$ there exists $\alpha<\CB(f)$
such that $x$ belongs to the open set $U=\dom f \setminus \CB_\alpha(f)$. By the first item, it follows that $\CB(f\restr{U})\leq \alpha$. 
\item If $f$ had a limit $\CB$-rank $\lambda$, then the decreasing sequence of closed sets $(\CB_\alpha(f))_{\alpha<\lambda}$ would have empty intersection,
a contradiction with the compactness of $\dom f$.
So $\CB(f)=\alpha+1$ and $\CB_{\alpha}(f)$ is compact -- as a closed set of $\dom f$. Since $\CB_{\alpha+1}(f)=\emptyset$,
the sets $f^{-1}(\set{y})\cap \CB_{\alpha}(f)$ for $y\in \im(f\restr{\CB_\alpha(f)})$, form a partition of $\CB_\alpha(f)$ in nonempty open sets,
so $f$ must have must have finite range on $\CB_\alpha(f)$.\qedhere
\end{enumerate}
\end{proof}

The following basic results on the interplay between continuous reducibility and the $\CB$-derivative on functions is proven in \cite[Proposition 2.2 and Corollary 2.3]{carroy2013quasi} for Polish spaces,
but the same proofs work for arbitrary functions between topological spaces.

\begin{proposition}\label{CBbasicsfromJSL}
Let $f$ and $g$ be two functions between topological spaces. %Suppose that $g$ is scattered.
\begin{enumerate}
\item If $f\leq g$ and $g$ is scattered, then $f$ is scattered too. \label{CBbasicsfromJSL0}
\item If $(\sigma,\tau)$ continuously reduces $f$ to $g$, then for all $\alpha$ we have $\sigma(\CB_\alpha(f))\subseteq\CB_\alpha(g)$. \label{CBbasicsfromJSL1}
\end{enumerate}
\end{proposition}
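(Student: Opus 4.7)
My plan is to prove part (ii) first by transfinite induction on $\alpha$, and then deduce part (i) as a direct consequence.

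For part (ii), at $\alpha=0$ both sides are the full domains, so $\sigma(\CB_0(f))=\sigma(\dom f)\subseteq \dom g=\CB_0(g)$. For a limit ordinal $\lambda$, if $x\in \CB_\lambda(f)=\bigcap_{\alpha<\lambda}\CB_\alpha(f)$, then by the inductive hypothesis $\sigma(x)\in\CB_\alpha(g)$ for every $\alpha<\lambda$, so $\sigma(x)\in\CB_\lambda(g)$. The substantive step is the successor case, and I would argue by contrapositive. Fix $x\in\CB_\alpha(f)$ such that $\sigma(x)\notin\CB_{\alpha+1}(g)$, i.e. such that $g\restr{\CB_\alpha(g)}$ is locally constant at $\sigma(x)$: there is an open set $V\ni\sigma(x)$ in the codomain of $\sigma$ on which $g\restr{V\cap\CB_\alpha(g)}$ takes a constant value. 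Using continuity of $\sigma$, the set $\sigma^{-1}(V)$ is an open neighbourhood of $x$, and by the inductive hypothesis $\sigma(\sigma^{-1}(V)\cap\CB_\alpha(f))\subseteq V\cap\CB_\alpha(g)$. Hence $g\sigma$, and therefore $f=\tau g\sigma$, is constant on $\sigma^{-1}(V)\cap\CB_\alpha(f)$, which shows that $x$ is $f\restr{\CB_\alpha(f)}$-isolated and so $x\notin\CB_{\alpha+1}(f)$, closing the contrapositive.

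For part (i), suppose $g$ is scattered, so by \cref{scatterediffemptykernel_general} we have $\CB_\beta(g)=\emptyset$ for some ordinal $\beta$. Applying part (ii) to any reduction $(\sigma,\tau)$ from $f$ to $g$ at level $\beta$ gives $\sigma(\CB_\beta(f))\subseteq\CB_\beta(g)=\emptyset$, and since $\sigma$ is a genuine function we conclude $\CB_\beta(f)=\emptyset$, so $f$ is scattered again by \cref{scatterediffemptykernel_general}.

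I expect no real obstacle; the only delicate point is the successor step of the induction, where one must remember that local constancy of $f\restr{\CB_\alpha(f)}$ at $x$ only requires $f$ to be constant on a neighbourhood of $x$ \emph{within $\CB_\alpha(f)$}, and the inductive hypothesis $\sigma(\CB_\alpha(f))\subseteq\CB_\alpha(g)$ is precisely what is needed to transport the constancy of $g$ along $\sigma$ to that set, before finally applying $\tau$ to obtain constancy of $f$.
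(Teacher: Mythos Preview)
Your proof is correct and the core argument --- pulling back local constancy of $g$ along $\sigma$ and then applying $\tau$ --- is exactly what the paper does. The only difference is in packaging: the paper isolates this as a single observation valid for an arbitrary $A\subseteq\dom f$ (if $g\restr{\sigma(A)}$ is locally constant at $\sigma(x)$ then $f\restr{A}$ is locally constant at $x$), then gets (i) \emph{directly} from the definition of scattered (any nonempty $A$ yields an $f\restr{A}$-isolated point) and gets (ii) by the same induction you run; you instead embed the observation inside the successor step of (ii) and then deduce (i) from (ii) via \cref{scatterediffemptykernel_general}. Both routes are equally short; the paper's version has the minor advantage of not invoking the empty-kernel characterization.
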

\begin{proof}
Let $(\sigma,\tau)$ continuously reduce $f$ to $g$ and $A\subseteq \dom f$. We show that for all $x\in A$, if $g\restr{\sigma(A)}$ is locally constant at $\sigma(x)$, then $f\restr{A}$ is locally constant at $x$. If $g\restr{\sigma(A)}$ is locally constant at $\sigma(x)$, then there exists $U\ni \sigma(x)$ open in $\dom g$ such that $g(U\cap \sigma(A))$ is a singleton. By continuity of $\sigma$ there exists an open set $V\ni x$ of $\dom f$ such that $\sigma(V)\subseteq U$.
Hence $g\sigma(V\cap A)$ is a singleton, and therefore $f=\tau g \sigma$ is constant on $V\cap A$. This directly implies the first item and by induction it yields the second item.
\end{proof}

We now have all the elements to prove \cref{scatterediffemptykernel} from the introduction:
a continuous function between metrizable spaces is scattered if and only if it has non-empty perfect kernel if and only if $\id_\Q$ does not embed in it.

\begin{proof}[Proof of \cref{scatterediffemptykernel}]
After \cref{scatterediffemptykernel_general}, note that $\id_\Q$ is not scattered and use \cref{prop:nlc_implies_nonscattered} and \cref{CBbasicsfromJSL}~\cref{CBbasicsfromJSL0}.
\end{proof}

Here is useful relation on the $\CB$-rank of a scattered function and the ranks of its restrictions to open sets. %in the presence of an open cover of its domain.

\begin{corollary}\label{CBrankofclopenunion}
Let $f$ be a scattered function and $(A_i)_{i\in I}$ be an open covering of $\dom(f)$ for some set $I$.
Then $\CB(f)=\sup_{i\in I}\CB(f\restr{A_i})$.
\end{corollary}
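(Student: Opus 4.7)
The plan is to deduce this directly from item~\ref{CBbasicsfromJSL2} of \cref{CBbasics0}, namely that $\CB_\alpha(f\restr{U})=\CB_\alpha(f)\cap U$ for any open set $U\subseteq\dom(f)$ and any ordinal $\alpha$. Since $f$ is scattered, and since the restriction of a scattered function to an open set remains scattered (by \cref{scatterediffemptykernel_general}, since the perfect kernel of $f\restr{U}$ is contained in that of $f$, which is empty), the Cantor--Bendixson rank of $f$ and of each $f\restr{A_i}$ is the least ordinal at which the corresponding derivative becomes empty. This is the key observation that turns the proof into a routine bookkeeping.

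First I would prove the inequality $\sup_{i\in I}\CB(f\restr{A_i})\leq\CB(f)$. Setting $\alpha=\CB(f)$ we have $\CB_\alpha(f)=\emptyset$, so for each $i\in I$,
\[
\CB_\alpha(f\restr{A_i})=\CB_\alpha(f)\cap A_i=\emptyset,
\]
which gives $\CB(f\restr{A_i})\leq\alpha=\CB(f)$ for every $i\in I$.

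For the reverse inequality, let $\beta=\sup_{i\in I}\CB(f\restr{A_i})$. By the first part, $\beta\leq\CB(f)$. For each $i\in I$, the function $f\restr{A_i}$ being scattered means $\CB_{\CB(f\restr{A_i})}(f\restr{A_i})=\emptyset$, and the $\CB$-derivatives are decreasing, so $\CB_\beta(f\restr{A_i})=\emptyset$. Applying \cref{CBbasics0}\cref{CBbasicsfromJSL2} once more,
\[
\CB_\beta(f)\cap A_i=\CB_\beta(f\restr{A_i})=\emptyset \quad \text{for every } i\in I.
\]
Since $(A_i)_{i\in I}$ covers $\dom(f)$, we conclude $\CB_\beta(f)=\emptyset$, hence $\CB(f)\leq\beta$, which finishes the proof.

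There is no real obstacle here: the whole argument rests on the already-established commutation of the $\CB$-derivative with restriction to open sets together with the characterization of scatteredness by emptiness of the perfect kernel; everything else is transfinite ordinal manipulation.
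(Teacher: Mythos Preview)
Your proof is correct and follows essentially the same approach as the paper's: both directions rest on \cref{CBbasics0}\cref{CBbasicsfromJSL2} together with the fact that scatteredness gives $\CB_{\CB(g)}(g)=\emptyset$. The only cosmetic difference is the order in which the two inequalities are proved, and that you explicitly remark that each $f\restr{A_i}$ is scattered (which the paper leaves implicit).
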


\begin{proof}
Set $\alpha=\sup_{i\in I}\CB(f\restr{A_i})$. Using \cref{CBbasics0}~\cref{CBbasicsfromJSL2}, for all $i\in I$, we have $\CB_\alpha(f\restr{A_i})=\emptyset$ and 
 $\CB(f\restr{A_i})=\CB(f)\cap A_i$, and so: 
\[\CB_\alpha(f)\subseteq\bigcup_{i\in I}\CB_\alpha(f)\cap A_i=\bigcup_{i\in I}\CB_\alpha(f\restr{A_i})=\emptyset,\]
and so $\CB(f)\leq\alpha$. Now if $\beta<\alpha$, then $\beta<\CB(f\restr{A_i})$ for some ${i\in I}$ and
\[\CB_\beta(f)\supseteq\CB_\beta(f)\cap A_i=\CB_\beta(f\restr{A_i})\neq\emptyset.\]
Since $\CB_{\CB(f)}(f)=\emptyset$, it follows that $\beta<\CB(f)$, hence $\alpha=\CB(f)$.
\end{proof}

Given a scattered function $f$, we define as follows the \emph{$\CB$-degree} of $f$ that we denote by $N_f$. When $\CB(f)$ is limit or null, set $N_f=0$.
When $\CB(f)=\alpha+1$ for some $\alpha<\omega_1$, we let $N_f$ be the cardinality of $f(\CB_\alpha(f))$. 
We also call \emph{$\CB$-type} of $f$ the pair $\tp(f)=(\CB(f),N_f)$. 

We denote by $\leq_{lex}$ the lexicographical order on sequences of ordinals. Here is an immediate application of \cref{CBbasicsfromJSL} (see \cite[Fact, p. 642]{carroy2013quasi}).

\begin{corollary}\label{CBbasicsfromJSL-tp}
For any scattered functions $f$ and $g$, $f\leq g$ implies $\tp(f)\leq_{lex}\tp(g)$.
\end{corollary}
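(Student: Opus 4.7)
The plan is to deduce the corollary directly from \cref{CBbasicsfromJSL}\,\cref{CBbasicsfromJSL1}, using the factorization $f = \tau g \sigma$ for a reducing pair $(\sigma,\tau)$.

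First, I would establish the inequality on Cantor--Bendixson ranks. Fix a continuous reduction $(\sigma,\tau)$ of $f$ to $g$ and set $\beta=\CB(g)$. Since $g$ is scattered, \cref{scatterediffemptykernel_general} gives $\CB_\beta(g)=\emptyset$. By \cref{CBbasicsfromJSL}\,\cref{CBbasicsfromJSL1}, $\sigma(\CB_\beta(f))\subseteq\CB_\beta(g)=\emptyset$, and since $\sigma$ is a function this forces $\CB_\beta(f)=\emptyset$, hence $\CB(f)\leq\CB(g)$. If the inequality is strict, then $\tp(f)<_{lex}\tp(g)$ and we are done.

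So I may assume $\CB(f)=\CB(g)=\alpha$ and I need to show $N_f\leq N_g$. If $\alpha=0$ or $\alpha$ is a limit, both degrees are $0$ by definition, so there is nothing to prove. The only non-trivial case is $\alpha=\gamma+1$. Here I use the factorization: for every $x\in\CB_\gamma(f)$ we have $f(x)=\tau g\sigma(x)$, and by \cref{CBbasicsfromJSL}\,\cref{CBbasicsfromJSL1} we have $\sigma(\CB_\gamma(f))\subseteq\CB_\gamma(g)$. Therefore
\[
f(\CB_\gamma(f))=\tau\bigl(g\sigma(\CB_\gamma(f))\bigr)\subseteq\tau\bigl(g(\CB_\gamma(g))\bigr).
\]
Since $\tau$ is a function, the cardinality of its image is at most that of its domain restricted to $g(\CB_\gamma(g))$, so $\lvert f(\CB_\gamma(f))\rvert\leq\lvert g(\CB_\gamma(g))\rvert$, which is exactly $N_f\leq N_g$.

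There is no real obstacle here: the corollary is essentially a bookkeeping consequence of \cref{CBbasicsfromJSL}. The only subtle point is the observation that $\sigma(A)=\emptyset$ forces $A=\emptyset$, which underlies the rank comparison, and the use of $f=\tau g\sigma$ together with the inclusion $\sigma(\CB_\gamma(f))\subseteq\CB_\gamma(g)$ to control the top-level fiber count in the successor case.
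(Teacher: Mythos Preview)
Your proof is correct and follows essentially the same approach as the paper: use \cref{CBbasicsfromJSL}\,\cref{CBbasicsfromJSL1} at level $\CB(g)$ to get the rank inequality, then in the successor case use the inclusion $\sigma(\CB_\gamma(f))\subseteq\CB_\gamma(g)$ together with $f=\tau g\sigma$ to bound the cardinality of $f(\CB_\gamma(f))$ by that of $g(\CB_\gamma(g))$. The only cosmetic point is that $\tau$ is defined only on $\im(g\sigma)$, so writing $\tau\bigl(g(\CB_\gamma(g))\bigr)$ is a slight abuse; the paper phrases this as ``$\tau$ maps surjectively a subset of $g(\CB_\gamma(g))$ onto $f(\CB_\gamma(f))$'', which amounts to the same cardinality bound.
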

\begin{proof}
Assume that $f\leq g$ as witnessed by $(\sigma,\tau)$. By \cref{CBbasicsfromJSL}~\cref{CBbasicsfromJSL1}, we have $\sigma(\CB_{\CB(g)(f)})\subseteq \ker_{\CB} g=\emptyset$ so $\CB(f)\leq \CB(g)$. If $\CB(f)<\CB(g)$, then $\tp(f)\leq_{lex}\tp(g)$. So assume that $\CB(f)=\CB(g)=\alpha$. If $\alpha$ is limit, then by definition $N_f=N_g=0$. Otherwise, $\alpha=\beta+1$ and $\sigma(\CB_\beta(f))\subseteq \CB_\beta(g)$ by \cref{CBbasicsfromJSL}~\cref{CBbasicsfromJSL1}. 
Hence for every $x\in \CB_\beta(f)$ we have $f(x)=\tau ( g\sigma(x))$ with $g\sigma(x)\in g(\CB_\beta(g))$,
which shows that $\tau$ maps surjectively a subset of $g(\CB_\beta(g))$ onto $f(\CB_\beta(f))$, so $N_g \geq N_f$ as desired.
\end{proof}

\subsection{Disjoint union}

\begin{definition}
Given an index set $I$ and a sequence $(f_i)_{i\in I}$ of functions between topological spaces,
we say that a function $f$ is the \emph{disjoint union} of the maps $f_i$ and we write $f=\bigsqcup_{i\in I} f_i$
if $f=\bigcup_{i\in I}f_i$ and $(\dom f_i)_{i\in I}$ is a clopen partition of $\dom(f)$.
\end{definition}

Note that the disjoint union over the empty set is the empty function.
The previous definition makes sense for any set $I$, but in this paper $I$ will always be countable, so in general we assume that $I\subseteq \N$.
When the context is clear, we write $(f_i)_i$ instead of $(f_i)_{i\in I}$.

The main reason for the disjoint union operation is its the following strong link with local properties for functions with $0$-dimensional domains.
Given a class $\mathcal{F}$ of functions, we say that a function $f$ is \emph{locally in $\mathcal{F}$}, or \emph{locally $\mathcal{F}$} for short,
if for all $x\in\dom(f)$ there is an open $U\ni x$ such that $f\restr{U}\in\mathcal{F}$.

Given sequences $s\in\N^{<\N}$ and $x\in\N^{\leq\N}$ we write $s\sqsubseteq x$ when $s$ is a \emph{prefix}, or an \emph{initial segment}, of $x$.
Recall that the sets $N_{s}=\set{x\in\cN}[s\segm x]$ for all $s\in\N^{<\N}$ form a basis of clopen sets for $\cN$.
A \emph{tree} $T$ (on $\N$) is a subset of $\N^{<\N}$ closed under prefixes, it is \emph{pruned} when all $s\in T$ have a strict extension $t\sqsupset s$ in $T$.
In this case we denote by $[T]$ the set of infinite branches of $T$.
Recall that sets of the form $[T]$ for pruned trees $T$ on $\N$ are in bijection with closed sets $C\subseteq\cN$ (see \cite[(2.4)]{kechris}).

\begin{proposition}\label{0dimanddisjointunion}
Let $f$ be a function with separable metrizable 0-dimensional domain and $\mathcal{F}$ a class of functions.
Then $f$ is locally $\mathcal{F}$ if and only if $f=\bigsqcup_{i\in I}f_i$ for some sequence of functions $(f_i)_{i\in I}\subseteq\mathcal{F}$.
\end{proposition}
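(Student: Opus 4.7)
The plan is to prove the two implications separately; the reverse direction is essentially a definition chase, while the forward direction uses a standard refinement argument exploiting both second countability and the existence of a clopen basis. For the reverse direction, if $f=\bigsqcup_{i\in I} f_i$ with $f_i\in\mathcal{F}$, then for any $x\in \dom(f)$ the unique piece $\dom(f_i)\ni x$ is open by the definition of disjoint union and satisfies $f\restr{\dom(f_i)}=f_i\in\mathcal{F}$, witnessing that $f$ is locally $\mathcal{F}$.

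For the forward direction, I would proceed in three steps. First, for each $x\in\dom(f)$ pick an open $U_x\ni x$ with $f\restr{U_x}\in\mathcal{F}$, and then use 0-dimensionality to shrink $U_x$ to a basic clopen $V_x$ with $x\in V_x\subseteq U_x$. Second, since $\dom(f)$ is separable metrizable it is Lindel\"of, so only countably many distinct $V_x$ need be kept, and I enumerate the resulting clopen cover as $(V_n)_{n\in I}$ with $I\subseteq\N$. Third, I would disjointify by setting
\[
W_n = V_n\setminus\bigcup_{\substack{k<n\\ k\in I}}V_k,
\]
which is clopen as a finite Boolean combination of clopens; the family $(W_n)_{n\in I}$ is then a clopen partition of $\dom(f)$, and setting $f_n=f\restr{W_n}$ yields $f=\bigsqcup_{n\in I}f_n$.

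The only subtle point, and the one I would want to handle carefully, is that the argument requires $f\restr{W_n}\in\mathcal{F}$ whereas we only know a priori that $f\restr{U_{x_n}}\in\mathcal{F}$ for some $x_n$ with $W_n\subseteq V_n\subseteq U_{x_n}$. In other words, one needs $\mathcal{F}$ to be closed under restriction to clopen subsets of its elements' domains. This is tacitly the case for all classes of relevance in this article (continuous functions, scattered functions, $\sC$, classes cut out by Cantor--Bendixson conditions, and so on are visibly stable under restriction), so this closure can be taken for granted in context; a direct statement of the proposition would add it as a hypothesis.
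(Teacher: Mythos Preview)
Your proof is correct, and the subtlety you flag about closure of $\mathcal{F}$ under restriction to clopen subsets is well observed; it is in fact equally implicit in the paper's own argument.

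The route you take differs from the paper's. You use Lindel\"of to extract a countable clopen cover and then disjointify via $W_n=V_n\setminus\bigcup_{k<n}V_k$. The paper instead embeds $\dom(f)$ into $\cN$, considers the tree $T$ with $[T]=\overline{X}$, sets $S=\{s\in T : f\restr{N_s\cap X}\in\mathcal{F}\}$, and takes $B$ to be the $\sqsubseteq$-minimal elements of $S$; the sets $N_s\cap X$ for $s\in B$ are then automatically pairwise disjoint and cover $X$. Your argument is slightly more elementary and topological in spirit, while the paper's exploits the combinatorics of the ambient tree structure, which is consistent with how the rest of the article works with subsets of $\cN$. Neither approach avoids the closure hypothesis you identify: the paper's proof also needs that $f\restr{N_s\cap X}\in\mathcal{F}$ follows from $N_s\cap X\subseteq U$ with $f\restr{U}\in\mathcal{F}$, which is precisely the same closure requirement.
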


\begin{proof}
Being separable metrizable and 0-dimensional, the domain of $f$ is homeomorphic to a subset of $\cN$,
so without loss of generality we can suppose that $X=\dom f$ is actually a subset of $\cN$.
Call $T$ the pruned subtree of $\N^{<\N}$ satisfying $[T]=\overline{X}$ the closure of $X$ in $\cN$. 
Let $S=\set{s\in T}[f\rest{(N_{s}\cap X)}\in\mathcal{F}]$ and define $B$ as the set of $\sqsubseteq$-minimal elements of $S$.
The elements of $B$ are pairwise incomparable for the prefix relation $\segm$, so the clopen sets $N_s$ for $s\in B$ are pairwise disjoint.
Since $f$ is locally $\mathcal{F}$, for all $x\in X$ there exists $s\in B$ with $s\sqsubset x$, so the sets $N_s\cap X$ for $s\in B$ form an open cover of $X$.
We have thus obtained a clopen partition $\set{N_s\cap X}[s\in B]$ of $\dom f$ such that $f\rest{(N_s\cap X)}\in\mathcal{F}$ for all $s\in B$, proving our point.
\end{proof}

Scattered functions $f$ with successor rank $\alpha+1$ which are constant on $\CB_\alpha(f)$ are ubiquitous and play an important role in the sequel. 
\begin{definition}
A function $f$ is said to be \emph{simple} if it is scattered with $\CB$-degree $1$. A simple function has successor $\CB$-rank $\alpha+1$ for some ordinal $\alpha$ and we call the unique point in the image of $f$ on  $\CB_{\alpha}(f)$ the \emph{distinguished point of $f$}.
%We call \emph{simple} a function $f$ of $\CB$-degree $1$, note that the $\CB$-rank of a simple function is always successor.
%In this case, we call the unique point in the image of $f$ on  $\CB_{\CB(f)-1}(f)$ the \emph{distinguished point of $f$}.
\end{definition}
A first immediate but crucial application of \cref{0dimanddisjointunion} is the following essential lemma that generalizes \cite[Lemma 2.4]{carroy2013quasi} with the same proof.

\begin{lemma}[Decomposition Lemma]\label{JSLdecompositionlemma}
Any scattered function from a $0$-dim\-en\-sional separable metrizable space is locally simple.

If moreover it has successor rank $\beta+1$, then it is locally of type $(\beta+1,1)$. % it is moreover $\CB(f)=\beta+1$, then one can write $f=\bigsqcup_{i\in I}f_i$ with $\tp(f_i)=(\beta+1,1)$ for all $i\in I$.
\end{lemma}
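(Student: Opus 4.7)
The plan is to prove the first assertion by a pointwise analysis using the Cantor--Bendixson stratification of a scattered function, then to deduce the second assertion from the first by enlarging the witnessing neighbourhood so as to pick up a point at the top CB-level.

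For the first assertion, since $f$ is scattered we have $\bigcap_{\alpha}\CB_\alpha(f)=\emptyset$, so for each $x\in\dom(f)$ there is a unique ordinal $\alpha$ with $x\in\CB_\alpha(f)\setminus\CB_{\alpha+1}(f)$; equivalently, $x$ is $f\restr{\CB_\alpha(f)}$-isolated. By definition of isolation, this provides an open neighbourhood $V$ of $x$ in $\dom(f)$ such that $V\cap\CB_\alpha(f)\subseteq f^{-1}(\{f(x)\})$. Using \cref{CBbasics0}~\cref{CBbasicsfromJSL2} to compute the CB-levels of $f\restr{V}$, one finds $\CB_\alpha(f\restr{V})=V\cap\CB_\alpha(f)\ni x$, and every point $z$ in it has $f$-value $f(x)$ and is therefore isolated within the $\alpha$-th level of $f\restr{V}$. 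This forces $\CB_{\alpha+1}(f\restr{V})=V\cap\CB_{\alpha+1}(f)=\emptyset$, so $\CB(f\restr{V})=\alpha+1$ and $f$ takes the single value $f(x)$ on $\CB_\alpha(f\restr{V})$. Hence $f\restr{V}$ has type $(\alpha+1,1)$ and is simple.

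For the second assertion, assume $\CB(f)=\beta+1$. When the ordinal $\alpha$ attached to $x$ above equals $\beta$, the neighbourhood $V$ just built already has type $(\beta+1,1)$. Otherwise $\alpha<\beta$ and $x\notin\CB_\beta(f)$, so since $\CB_\beta(f)$ is closed one picks an open $W\ni x$ disjoint from $\CB_\beta(f)$. Now fix any $y\in\CB_\beta(f)$ (such a $y$ exists because $\CB(f)=\beta+1$) and apply the first assertion at $y$: this produces an open $V'\ni y$ with $f\restr{V'}$ of type $(\beta+1,1)$. Setting $U:=W\cup V'$, a further appeal to \cref{CBbasics0}~\cref{CBbasicsfromJSL2} gives $\CB_\beta(f\restr{U})=V'\cap\CB_\beta(f)\neq\emptyset$, which is contained in $f^{-1}(\{f(y)\})$, while $\CB_{\beta+1}(f\restr{U})=U\cap\CB_{\beta+1}(f)=\emptyset$. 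Therefore $f\restr{U}$ has type $(\beta+1,1)$, as desired.

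I do not anticipate any substantive obstacle: the whole argument reduces to the elementary interaction between the CB-derivative and restriction to open sets recorded in \cref{CBbasics0}, together with the fact that a closed set has open complement. Note in particular that zero-dimensionality of the domain is not actually used in the proof; the argument goes through for any scattered function between topological spaces, the hypotheses of the lemma being merely the standing context of this section.
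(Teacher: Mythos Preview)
Your proof is correct and takes a genuinely different, more elementary route than the paper's.

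The paper proceeds by induction on $\CB(f)$: in the limit case it passes to a clopen neighbourhood of smaller rank and invokes the induction hypothesis together with \cref{0dimanddisjointunion}; in the successor case $\CB(f)=\beta+1$ it covers $\dom(f)$ by open sets meeting $\CB_\beta(f)$ in a single fibre and then applies the generalized reduction property for open sets in zero-dimensional second countable spaces \cite[22.16]{kechris} to extract a \emph{clopen} partition $(C_y)_y$ with each $f\restr{C_y}$ simple of rank $\beta+1$. Your argument instead works pointwise: you read off the CB-level $\alpha$ of a given $x$, use isolation at that level to produce an open $V$ on which $f$ has type $(\alpha+1,1)$, and for the second assertion simply adjoin to the local witness at $x$ a small open set around any point of $\CB_\beta(f)$.

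Your approach is shorter and, as you observe, does not use zero-dimensionality (nor induction, nor the reduction property); it establishes the statement for scattered functions on arbitrary domains. What the paper's argument buys is that it directly produces a clopen \emph{partition} into simple pieces of the correct type, which is the form in which the lemma is invoked throughout the paper (via \cref{0dimanddisjointunion}). Of course your pointwise statement combined with \cref{0dimanddisjointunion} yields the same thing, so nothing is lost.
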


\begin{proof}
Let $f:X\to Y$ be scattered with $X$ a $0$-dimensional separable metrizable.
By induction on $\CB(f)$. Vacuously true if $\CB(f)=0$. If $\CB(f)$ is limit, then every $x\in \dom f$ admits a clopen neighbourhood $C$ such that $\CB(f\restr{C})<\CB(f)$
and the result follows by induction hypothesis and \cref{0dimanddisjointunion}.
Finally, assume that $\CB(f)=\beta+1$ and let $I= f(\CB_\beta(f))$. Since $f$ is locally constant on the closed set $\CB_\beta(f)$,
we can choose for each $y\in I$ an open set $U_y$ of $\dom(f)$ such that $U_y\cap \CB_\beta(f)=f^{-1}(\{y\})\cap \CB_\beta(f)$.
Applying the generalized reduction property of open sets \cite[22.16]{kechris} to the open cover of $\dom(f)$ given by $V_y=U_y\cup (\dom(f)\setminus \CB_\beta(f))$ for $y\in I$
yields a clopen partition $(C_y)_{y\in I}$ of $\dom(f)$ with $C_y\subseteq V_y$ for all $y\in I$.
Note that for all $y\in I$ we have $C_y\cap \CB_\beta(f)= f^{-1}(\{y\})\cap \CB_\beta(f)$,
which readily implies that each $f\restr{C_y}$ is simple of $\CB$-rank equal to $\beta+1$ using \cref{CBbasics0}~\cref{CBbasicsfromJSL2}, as desired.
\end{proof}

\smallskip

We say that an operation on a class $\mathcal{C}$ of objects -- such as sets, spaces, functions -- \emph{preserves} a property $\mathcal{F}$ if
the operation gives a result in $\mathcal{F}$ whenever the operands are in $\mathcal{F}$.
For instance, the disjoint union operation preserves continuity by the following basic but useful criterion for continuity, similar to \cite[2.2.4 and 2.2.6]{engelking}.

\smallskip

Given a metrizable space $X$ and an index set $I$, we say that a family $(A_i)_{i\in I}$ of subsets of $X$ is a \emph{relative clopen partition} in $X$
when $(A_i)_{i\in I}$ forms a clopen partition of the subspace $\bigcup_{i\in I} A_i$, or equivalently
if the sets $A_i$ are pairwise disjoint and relatively open in $\bigcup_iA_i$.
Note that a pairwise disjoint family $(A_i)_{i\in I}$ is a relative clopen partition exactly when the labelling function $c:\bigcup_i A_i \to I$,
$c(a)=i$ if and only if $a\in A_i$, is continuous when $I$ is endowed with the discrete topology.

\begin{lemma}\label{lem:ContUnion}
Let $X$ and $Y$ be topological spaces with $X$ metrizable and $(f_i:A_i\to B_i)_{i\in I}$ be countable sequence of continuous functions with $A_i\subseteq X$ and $B_i\subseteq Y$ for all $i\in I$.
If $(A_i)_i$ is a relative clopen partition, then $f=\bigsqcup_{i\in I} f_i$ is continuous.
\end{lemma}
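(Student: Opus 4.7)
The plan is to verify continuity of $f$ directly from the definition, by showing that the preimage under $f$ of any open subset of $Y$ is open in $A := \bigcup_{i \in I} A_i$ (equipped with the subspace topology from $X$). Fix an open $V \subseteq Y$. Because the sets $(A_i)_{i \in I}$ are pairwise disjoint and $f$ restricts to $f_i$ on each $A_i$, we have the set-theoretic identity
\[
f^{-1}(V) \;=\; \bigcup_{i \in I} f_i^{-1}(V).
\]

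The key observation is that the hypothesis of being a \emph{relative} clopen partition is in particular the statement that each $A_i$ is open in $A$, which is exactly what is needed to glue openness witnesses together. Indeed, by continuity of $f_i : A_i \to B_i$, the set $f_i^{-1}(V)$ is open in $A_i$; and since $A_i$ is itself open in $A$, we get that $f_i^{-1}(V)$ is open in $A$. Thus $f^{-1}(V)$ is a union of subsets open in $A$, hence open in $A$, which proves that $f$ is continuous.

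There is really no obstacle to overcome here: the argument is a standard pasting of continuous pieces along a locally finite--like open cover, and this is the reason we bothered recording the notion of a relative clopen partition in the first place. It is worth noting that the full strength of the hypotheses is not used in this direction: neither the metrizability of $X$, nor the countability of $I$, nor the relative closedness of the $A_i$ is needed to conclude continuity; only relative openness is invoked. The stronger hypotheses reflect the setting in which the lemma will be applied in the rest of the paper (and in which a converse-type statement such as \cref{0dimanddisjointunion} is available).
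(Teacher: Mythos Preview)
Your proof is correct and, in fact, cleaner than the paper's. The paper argues via sequential continuity: given $x_n\to x$ in $A=\bigcup_i A_i$, say $x\in A_i$, openness of $A_i$ forces a tail of $(x_n)$ into $A_i$, and then continuity of $f_i$ gives $f(x_n)\to f(x)$. This is why the paper assumes $X$ metrizable: it needs sequential continuity to imply continuity. Your direct preimage argument bypasses this entirely, using only that each $A_i$ is open in $A$, and so establishes the lemma for arbitrary topological spaces $X$ and arbitrary index sets $I$. What the paper's approach buys is consistency with how continuity is checked elsewhere in the article (several later proofs, e.g.\ those of \cref{Pgluingasupperbound,Pgluingaslowerbound}, use \cref{prop:sufficientcondforcont}, which is sequential in spirit); what your approach buys is a strictly more general statement with a shorter proof. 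Your closing remark that metrizability, countability of $I$, and relative closedness are all unused is exactly right.
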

\begin{proof}
Since $X$ is metrizable, it is enough to prove that for any sequence $(x_n)_{n\in\N}$ converging in $\bigcup_{i\in I}A_i$ to $x$, the sequence of images converges to $f(x)$.
If $(x_n)_{n\in\N}$ converges in $\bigcup_i A_i$, say to $x\in A_i$ for some $i$,
then as $A_i$ is open in $\bigcup_i A_i$, then for some $N\in\N$ we have $x_n\in A_i$ for all $n\geq N$. Since $(f_i(x_n))_{n\geq N}$ converges to $f_i(x)$ by continuity of $f_i$, it follows that $(f(x_n))_{n\in\N}$ converges to $f_i(x)=f(x)$.
\end{proof}

We remark that we can remove the metrizability assumption on $X$ in the above result by requiring that the sets $A_i$ are \emph{separated by open sets}, namely that
there is a family $(B_i)_ {i\in I}$ of pairwise disjoint open sets in $X$ such that $A_i\subseteq B_i$ for all $i\in I$. 
In fact, in metrizables spaces, relative clopen partitions coincide with families separated by open sets. This follows from the fact that metric spaces are paracompact~\cite{rudin1969new} and therefore hereditarily collectionwise normal~\cite[Theorem 5.1.18 and Problem 5.5.1]{engelking}.
For $0$-dimensional separable metrizable spaces, a direct proof of this fact can be given using similar ideas as in \cref{0dimanddisjointunion}.

\subsection{Gluing}\label{subsectionGluing}

We close this preliminary section with the study of the Gluing operation which provides a natural \enquote{addition} for functions. Combined with the Pointed Gluing operation, also defined in~\cite{carroy2013quasi} and studied in \cref{sectionPointedGluing}, this operation will allow us to define  maximum (respectively minimum) functions among functions in $\sC$ with $\CB$-rank at most $\alpha$ (respectively at least $\alpha$) for all countable ordinal $\alpha$. %, thanks to \cref{JSLdecompositionlemma}.
%whose hypotheses are almost - continuity excepted - those satisfied in $\sC$.
%\ynote*{pertinent?}{Even though our main focus is indeed on $\sC$, we try to state our results in more generality when it does not make them too cumbersome.}

Given sequences $s\in\N^{<\N}$ and $x\in\N^{\leq\N}$ we denote by $s\conc x$ the concatenation of $s$ with $x$,
and for any $A\subseteq\N^{\leq\N}$ the notation $s\conc A$ stands for $\set{s\conc x}[x\in A]$.
The following operation is called \emph{sum of the mappings} in \cite[2.2.E]{engelking}.

\begin{definition}
We call \emph{gluing} of a countable sequence $(A_i)_ {i\in I}$ of subsets of $\cN$ the set $\gl_{i\in I}A_i=\bigcup_{i\in I}(i)\conc A_i$.
Given a countable sequence of functions $(f_i:A_i\rao B_i)_{i\in I}$ in $\functionsonbaire$,
We call \emph{gluing} of $(f_i)_{i\in I}$, the map
\begin{align*}
\gl_{i\in I}f_i : \gl_{i\in I}A_i &\longrightarrow\gl_{i\in I}B_i\\
 (i)\conc x &\longmapsto (i)\conc f_i(x).
\end{align*}
\end{definition}

The following facts are either classical exercises left to the reader, or a direct consequence of \cref{CBrankofclopenunion}.

\begin{Fact}\label{BasicsOnGluing}
\
\begin{enumerate}
\item The gluing operation preserves countability\footnote{Because we only consider \emph{countable} gluings here.}, openness, closure, and Polishness of sets; continuity, injectivity, surjectivity and scatteredness of functions.
\item For $I\subseteq \N$ and $(f_i)_{i\in I}$ in $\functionsonbaire$, we have $\CB(\gl_if_i)=\CB(\bigsqcup_if_i)=\sup_i\CB(f_i)$.
\item Gluing \emph{commutes with the identity natural transformation}, that is for any $(X_i)_{i\in I}\subseteq \cN$ we have $\id_{\gl_{i\in I}X_i}=\gl_{i\in I}(\id_{X_i})$.
\end{enumerate}
\end{Fact}

\subsubsection{Gluing as an upper bound}

Our main task for each of our three operations is to prove sufficient -- and, whenever possible, necessary -- conditions on operands $(g_i)_i$
for the result of the operation to reduce (or be reduced by) an arbitrary function $f$.
The next result represents the first step, characterizing the situation where $\gl_i g_i$ is an upper bound for a given function $f$.
These results are all new, except for the next one that was somehow already present in \cite[Lemma 3.3]{carroy2013quasi}.
%\rnote{j'ai tout commenté car je trouvais ça pas clair...}
%\ynote*{modifié}{via restrictions of $f$ on a specific partition}. %the existence of a specific partition of its domain.

\begin{proposition}[Gluing as upper bound]\label{Gluingasupperbound}
Let $(g_i:X_i\to Y_i)_{i\in I}$ be countable sequence in $\functionsonbaire$. %, $I\subseteq\N$, and for all $i\in I$ $g_i:X_i\to Y_i$ be
The following are equivalent for every function $f:A\to B$ between topological spaces:
\begin{enumerate} 
\item $f\leq \gl_{i\in I}g_i$, %there is a continuous reduction from $f$ to $\gl_{i\in I}g_i$ if and only if
\item there is a clopen partition $(A_i)_{i\in I}$ of $A$ such that $f\restr{A_i}\leq g_i$ for all $i\in I$.
\end{enumerate}
\end{proposition}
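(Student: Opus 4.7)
The strategy is to exploit the fact that each block $(i)\conc X_i$ is clopen inside $\gl_{i\in I}X_i$: it equals $N_{(i)}\cap \gl_{i\in I}X_i$ (open), and its complement is the open set $\bigcup_{j\neq i}(N_{(j)}\cap \gl_{i\in I}X_i)$. This clopen structure is precisely what converts a reduction into a gluing of reductions and back.

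For the implication (2) $\Rightarrow$ (1), I would assume witnessing pairs $(\sigma_i,\tau_i)$ for $f\restr{A_i}\leq g_i$ for each $i\in I$. I would then define $\sigma:A\to\gl_{i\in I}X_i$ by setting $\sigma(x)=(i)\conc \sigma_i(x)$ whenever $x\in A_i$. Since $(A_i)_{i\in I}$ is a clopen partition of $A$, \cref{lem:ContUnion} — or just a direct check that the restriction to each piece of a clopen partition is continuous — gives continuity of $\sigma$. Observing that $(\gl_{i\in I}g_i)\sigma(x)=(i)\conc g_i\sigma_i(x)$ for $x\in A_i$, I would then define $\tau$ on $\im((\gl_{i\in I}g_i)\circ \sigma)=\bigsqcup_{i\in I}(i)\conc \im(g_i\sigma_i)$ piecewise by $\tau((i)\conc y)=\tau_i(y)$; the domain pieces are again clopen relative to their union, so $\tau$ is continuous and $\tau (\gl_{i\in I}g_i)\sigma=f$.

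For the implication (1) $\Rightarrow$ (2), I would start from a reduction $(\sigma,\tau)$ of $f$ to $\gl_{i\in I}g_i$ and set $A_i=\sigma^{-1}\bigl((i)\conc X_i\bigr)$. Because $(i)\conc X_i$ is clopen in $\gl_{i\in I}X_i$ and $\sigma$ is continuous, $(A_i)_{i\in I}$ is a clopen partition of $A$. For each $i\in I$, if we write $\sigma(x)=(i)\conc \sigma_i(x)$ for $x\in A_i$, the map $\sigma_i:A_i\to X_i$ is continuous since it is the composition of $\sigma\restr{A_i}$ with the shift (i.e. erasing the first coordinate), which is continuous on $(i)\conc X_i$. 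Similarly define $\tau_i:\im(g_i\sigma_i)\to \im(f\restr{A_i})$ by $\tau_i(y)=\tau((i)\conc y)$; this is well-defined and maps into $\im(f\restr{A_i})$ since $\tau((i)\conc g_i\sigma_i(x))=\tau (\gl_{i\in I}g_i)\sigma(x)=f(x)$ for $x\in A_i$. The identity $\tau_i g_i\sigma_i=f\restr{A_i}$ follows immediately, so $(\sigma_i,\tau_i)$ reduces $f\restr{A_i}$ to $g_i$.

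I do not expect any serious obstacle; the only mild point of care is making sure, in both directions, that the piecewise-defined $\tau$ has the correct (co)domain — namely that $\tau$ is defined exactly on $\im((\gl_{i\in I}g_i)\sigma)$ and lands in $\im(f)$, and similarly for each $\tau_i$. This is handled by unpacking the equality $f=\tau(\gl_{i\in I}g_i)\sigma$ on each piece $A_i$, as above. The continuity of the piecewise-defined maps in each direction is the standard clopen-partition gluing argument already encapsulated in \cref{lem:ContUnion}.
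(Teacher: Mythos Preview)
Your proof is correct and follows essentially the same approach as the paper. The only differences are cosmetic: in the direction (1) $\Rightarrow$ (2) the paper simply notes $f\restr{A_j}\leq(\gl_{i\in I}g_i)\restr{N_{(j)}}\equiv g_j$ rather than explicitly building $(\sigma_i,\tau_i)$, and for the continuity of $\sigma$ in (2) $\Rightarrow$ (1) the paper does the direct clopen-partition check (your parenthetical alternative) rather than invoking \cref{lem:ContUnion}, which as stated requires metrizability of the domain and so strictly speaking does not apply to the general topological space $A$.
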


\begin{proof}
Suppose that for all $i\in I$ there is a continuous reduction $(\sigma_i,\tau_i)$ from $f\restr{A_i}$ to $g_i$. Set then $\sigma:x\mapsto (i)\conc \sigma_i(x)$ if $x\in A_i$ and $\tau:(i)\conc y\mapsto\tau_i(y)$. The map $\tau$ is continuous by \cref{lem:ContUnion}. Since the sets $A_i$ are pairwise disjoint, $\sigma$ is well-defined, and it is continuous because the sets $A_i$ are clopen.
The fact that $(\sigma_i,\tau_i)$ is a reduction for all $i\in I$ implies that $(\sigma, \tau)$ is a reduction from $f$ to $\gl_{i\in I}g_i$.

Suppose now that $(\sigma,\tau)$ continuously reduces $f$ to $\gl_i g_i$, and set $A_i=\sigma^{-1}(N_{(i)})$.
Since $(N_{(i)})_i$ is a clopen partition of $\dom(\gl_{i\in I} g_i)$ and $\sigma$ is continuous, $(A_i)_i $ is a clopen partition of $\dom(f)$.
Conclude by observing that $f\restr{A_j}\leq(\gl_{i\in I}g_i)\restr{N_{(j)}}\equiv g_j$ for all $j\in I$.
\end{proof}

The following simple corollary will be used extensively in the sequel.

\begin{corollary}\label{Gluingasupperbound_cor}
Let $f:A\to B$ in $\functionsonbaire$ and let $\partit$ be a clopen partition of $A$.
Then $f=\bigsqcup_{P\in\partit} f\restr{P}\leq \gl_{P\in\partit} f\restr{P}$.
%For $I\subseteq\N$, let $(f_i:A_i\to B_i)_{i\in I}$ be in $\functionsonbaire$.
%If $(A_i)_{i\in I}$ is a relative clopen partition, then $\bigsqcup_{i\in I}f_i\leq\gl_{i\in I} f_i$.
\end{corollary}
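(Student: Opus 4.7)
The plan is to observe that both assertions are essentially tautological, given the previous proposition. First, the equality $f=\bigsqcup_{P\in\partit}f\restr{P}$ is immediate from the definition of disjoint union: by hypothesis $(P)_{P\in\partit}$ is a clopen partition of $A=\dom(f)$, and by definition $f=\bigcup_{P\in\partit}f\restr{P}$, so the disjoint union expression agrees with $f$ on the nose.

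For the inequality $f\leq \gl_{P\in\partit}f\restr{P}$, I would first note that $\partit$ is necessarily countable, because $A\subseteq\cN$ is second countable, so the gluing on the right-hand side is a well-defined element of $\functionsonbaire$ after enumerating $\partit$ (say as a subset of $\N$). Then I would invoke \cref{Gluingasupperbound} directly with the functions $g_P := f\restr{P}$ for $P\in\partit$: the very partition $(P)_{P\in\partit}$ of $A$ witnesses condition (ii) of that proposition, because for each $P\in\partit$ we trivially have $f\restr{P}\leq g_P = f\restr{P}$ by reflexivity of continuous reducibility. Hence condition (i) of \cref{Gluingasupperbound} holds, giving $f\leq \gl_{P\in\partit}f\restr{P}$.

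There is no real obstacle here; the corollary is essentially a restatement of the proposition in the special case where the target functions are the restrictions themselves. The only point worth mentioning explicitly is the countability of $\partit$, which is needed in order for the gluing notation to be applicable.
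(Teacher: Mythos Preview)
Your proposal is correct and matches the paper's intent: the corollary is stated without proof in the paper, being an immediate consequence of \cref{Gluingasupperbound} applied with $g_P=f\restr{P}$ and the tautological partition $(P)_{P\in\partit}$. Your remark on countability of $\partit$ is a nice point of care that the paper leaves implicit.
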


\subsubsection{Gluing as a lower bound}
Dually, bounding a function $f$ from below with a gluing involves corestrictions of $f$.

\begin{Proposition}\label{Gluingaslowerbound}
Let $(g_i:X_i\to Y_i)_{i\in I}$ be a countable sequence in $\functionsonbaire$.
%$f:A\to B$ be a function, $I\subseteq\N$,
%and $g_i:X_i\to Y_i$ be in $\functionsonbaire$ for all $i\in I$.
%Then for every function $f:A\to B$ between metrizable spaces, $\gl_{i\in I}g_i\leq f$ if and only if there is a family $(B_i)_{i\in I}$ of subsets of $B$ satisfying:
%\begin{enumerate}
%\item the family $(B_i)_{i\in I}$ is a relative clopen partition,
%\item for all $i\in I$ we have $g_i\leq f\corestr{B_i}$.
%\end{enumerate}
The following are equivalent for every function $f:A\to B$ between metrizable spaces:
\begin{enumerate}
\item $\gl_{i\in I}g_i\leq f$
\Needspace{4\baselineskip}
\item there exists a family $(U_i)_{i\in I}$ of subsets of $B$ satisfying:
\begin{enumerate}
\item $(U_i)_{i\in I}$ is a relative clopen partition, and
\item for all $i\in I$ we have $g_i\leq f\corestr{U_i}$.
\end{enumerate}
\end{enumerate}
\end{Proposition}

\begin{proof}
Suppose first that $(\sigma,\tau)$ continuously reduces $\gl_{i\in I}g_i$ to $f$, then the sets $U_i=\tau^{-1}(N_{(i)})$, $i\in I$, 
form a clopen partition of $\dom \tau=\im f\sigma$. 
As $g_i\leq(\gl_{j\in I}g_j)\corestr{N_{(i)}}=(\tau f\sigma)\corestr{N_{(i)}}=(f\sigma)\corestr{U_i}\leq f\corestr{U_i}$, we have the direct implication.

For all $i\in I$, we fix a continuous reduction $(\sigma_i,\tau_i)$ from $g_i$ to $f\corestr{U_i}$,
and set $\sigma:(i)\conc x\mapsto\sigma_i(x)$ for $x\in X_i$ and $\tau: y\mapsto (i)\conc \tau_i(y)$ for $y\in \dom \tau_i$.
Since the sets $U_i$ are pairwise disjoint and $\dom \tau_i\subseteq U_i$, $(\sigma,\tau)$ is a well-defined pair of maps. Note that $\sigma$ is continuous by \cref{lem:ContUnion}.
Since $(U_i)_i$ is a relative clopen partition and $\dom \tau_i\subseteq U_i$, so is the family $(\dom \tau_i)_i$ and $\tau$ is continuous by \cref{lem:ContUnion} too.
The fact that $(\sigma_i,\tau_i)$ are reductions for all $i\in I$ implies that $(\sigma, \tau)$ is a reduction from $\gl_{i\in I}g_i$ to $f$.
Indeed, for $x\in X_i$ we have $\tau f \sigma ((i)\conc x)=(i)\conc \tau_i f \sigma_i(x)= (i)\conc g_i(x)$.
\end{proof}

We use the following simple corollary extensively in the sequel.

\begin{corollary}\label{Gluingaslowerbound2}
Let $f:A\to B$ be in $\functionsonbaire$. If $\mathcal{U}$ is a family of pairwise disjoint open sets of $B$, then $\gl_{U\in\mathcal{U}}f\corestr{U}\leq f$.
%If $(B_i)_{i\in I}$ is a relative clopen partition in $\im f$ then $\gl_{i\in I}f\corestr{B_i}\leq f$.
\end{corollary}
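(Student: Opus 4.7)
The plan is to derive this directly from \cref{Gluingaslowerbound} by taking $g_{U} := f\corestr{U}$ for each $U\in\mathcal{U}$. First I would observe that the family $\mathcal{U}$ is automatically countable: since $B\subseteq\cN$ is separable metrizable, any family of pairwise disjoint nonempty open sets in $B$ is countable, so we may harmlessly discard the empty ones and enumerate $\mathcal{U}$ as an indexed countable family. Each $g_U = f\corestr{U}$ belongs to $\functionsonbaire$, as its domain $f^{-1}(U)$ and codomain $U$ are both subsets of $\cN$.

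Next I would check the two conditions required by \cref{Gluingaslowerbound}. For condition (a), the family $(U)_{U\in\mathcal{U}}$ is a relative clopen partition of $\bigcup\mathcal{U}$: the sets are pairwise disjoint by assumption, and each $U$ is open in $B$, hence \emph{a fortiori} open (and thus clopen) in the subspace $\bigcup\mathcal{U}$. Condition (b) is completely trivial, as $g_U = f\corestr{U}$ reduces to itself via the pair $(\id, \id)$.

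Applying \cref{Gluingaslowerbound} now yields $\gl_{U\in\mathcal{U}} g_U \leq f$, which is exactly the desired inequality $\gl_{U\in\mathcal{U}} f\corestr{U}\leq f$. There is no real obstacle here; the only mild subtlety is the countability of $\mathcal{U}$, which is handled by the separability of $B$.
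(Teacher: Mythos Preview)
Your proposal is correct and follows exactly the intended route: the paper states this as a ``simple corollary'' of \cref{Gluingaslowerbound} without spelling out a proof, and your argument is precisely the straightforward specialization $g_U := f\corestr{U}$, $U_i := U$ that makes the conditions of \cref{Gluingaslowerbound} trivially hold. Your remark on countability of $\mathcal{U}$ via separability of $B\subseteq\cN$ is a nice point the paper leaves implicit.
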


In order to prove that a function $f$ is continuously equivalent to a gluing of functions, one needs to consider both restrictions and corestrictions of $f$. 
One needs to find both a clopen partition of the domain of $f$ to use \cref{Gluingasupperbound} and a relatively clopen partition in the image of $f$
to use \cref{Gluingaslowerbound}.
While a clopen partition of the image of a continuous $f$ induces a clopen partition of the domain as in the following corollary,
we will also consider families in the domain and the range of $f$ that are not related in this way (see \cref{SolvableDecomposition}).

\begin{corollary}\label{UsefulcriterionforequivFinGl}
Let $f:A\to B$ be a continuous function between topological spaces
and $(g_i:X_i\to Y_i)_{i\in I}$ be countable sequence in $\functionsonbaire$.

If there exists a clopen partition $B=\bigsqcup_{i\in I} B_i$ with $f\corestr{B_i}\equiv g_i$ for all $i\in I$, then $f\equiv \gl_{i\in I} g_i$.
\end{corollary}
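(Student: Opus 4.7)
The plan is to prove both directions of the equivalence $f\equiv\gl_{i\in I} g_i$ by invoking \cref{Gluingasupperbound} for $\leq$ and \cref{Gluingaslowerbound} for $\geq$, feeding them with the obvious partitions extracted from the hypothesis.

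For $f\leq\gl_{i\in I} g_i$, I would first note that since $f$ is continuous and $(B_i)_{i\in I}$ is a clopen partition of $B$, the family $A_i := f^{-1}(B_i)$ is a clopen partition of $A$. By construction $f\restr{A_i} = f\corestr{B_i}$, so the assumption $f\corestr{B_i}\equiv g_i$ gives in particular $f\restr{A_i}\leq g_i$ for every $i\in I$. The conclusion follows immediately from \cref{Gluingasupperbound}.

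For $\gl_{i\in I}g_i\leq f$, the natural partition to consider lives in the codomain: set $U_i := B_i\cap\im(f)$. Since the $B_i$ are pairwise disjoint and clopen in $B$, the sets $U_i$ are pairwise disjoint and clopen in $\im(f)$, so $(U_i)_{i\in I}$ is a relative clopen partition. Moreover $f\corestr{U_i}=f\corestr{B_i}\equiv g_i$, hence $g_i\leq f\corestr{U_i}$ for each $i$, and \cref{Gluingaslowerbound} yields $\gl_{i\in I}g_i\leq f$.

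The main point to be careful about is that \cref{Gluingaslowerbound} is formally stated under the assumption that $A$ and $B$ are metrizable, whereas the corollary at hand only assumes that $A$ and $B$ are topological spaces. This is a harmless issue: metrizability is used in the proof of \cref{Gluingaslowerbound} only through \cref{lem:ContUnion}, where it can be replaced by the weaker requirement that the pieces of the relevant partition be separated by open sets. In our setting the $U_i$ are clopen in $\im(f)$, so they are trivially separated by themselves and the proof of \cref{Gluingaslowerbound} carries over verbatim. Alternatively, one can construct the reducing pair $(\sigma,\tau)$ by hand from fixed reductions $(\sigma_i,\tau_i)$ of $g_i$ to $f\corestr{B_i}$, and verify continuity of $\tau$ directly using that each $\im(f\sigma_i)$ is clopen in $\im(f\sigma)=\bigsqcup_i\im(f\sigma_i)$, with the $B_i$ providing the separating open sets.
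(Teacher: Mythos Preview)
Your proof is correct and follows exactly the approach the paper intends: the corollary is stated without proof, as an immediate consequence of \cref{Gluingasupperbound} and \cref{Gluingaslowerbound}, and you fill in precisely these details. Your extra care about the metrizability hypothesis in \cref{Gluingaslowerbound} versus the bare ``topological spaces'' in the corollary is well placed and your fix (via the remark after \cref{lem:ContUnion} about separation by open sets) is exactly right; the paper glosses over this point.
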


For any countable ordinal $\alpha$, we write $\sC_\alpha$ (resp. $\sC_{\leq \alpha}, \sC_{<\alpha}$) for the set of all functions of $\CB$-rank $\alpha$ (resp. $\leq\alpha,<\alpha$) in $\sC$.

For instance, $\sC_0$ consists only of the empty function, and $\sC_1$ is the class of locally constant functions.

A function of finite $\CB$-degree has finite image on the last $\CB$-derivative, so the previous corollary immediately gives the generalization to $\sC$ of \cite[Lemma 4.3]{carroy2013quasi}:

\begin{corollary}\label{FiniteDegreeAreFinGl}
Let $\alpha<\omega_1$ and $f\in\sC_{\alpha+1}$.
If the $\CB$-degree of $f$ is $n+1$, then $f\equiv\gl_{i\leq n}f_i$ for some sequence $(f_i)_{i\leq n}$ of simple functions in $\sC_{\alpha+1}$.
\end{corollary}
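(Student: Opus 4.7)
The plan is to exploit the finite degree hypothesis to obtain a finite clopen partition of the image of $f$, one clopen piece for each point of $f(\CB_\alpha(f))$, and then to invoke \cref{UsefulcriterionforequivFinGl} to conclude.

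First, since $f\equiv f\corestr{\im f}$, we may assume that the codomain of $f$ is exactly $\im f\subseteq \cN$. As $f$ has $\CB$-degree $n+1$, we can enumerate $f(\CB_\alpha(f))=\{y_0,\ldots,y_n\}$, with the $y_i$ pairwise distinct. Because $\im f$ is a separable metrizable zero-dimensional space, we can choose a clopen partition $\im f=\bigsqcup_{i\leq n} B_i$ with $y_i\in B_i$ for each $i\leq n$: start from pairwise disjoint clopen neighborhoods $V_0,\ldots,V_n$ of the $y_i$'s in $\cN$, replace $V_n$ by $\cN\setminus(V_0\cup\cdots\cup V_{n-1})$ to get a clopen partition of $\cN$, and intersect each piece with $\im f$.

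Next, set $f_i=f\corestr{B_i}$. Since $B_i$ is clopen in $\im f$ and $f$ is continuous, $f^{-1}(B_i)$ is open in $\dom f$, so by \cref{CBbasics0}\cref{CBbasicsfromJSL2} we have $\CB_\beta(f_i)=\CB_\beta(f)\cap f^{-1}(B_i)$ for every ordinal $\beta$. Taking $\beta=\alpha$ gives $\CB_\alpha(f_i)=f^{-1}(\{y_i\})\cap \CB_\alpha(f)$, which is nonempty (because $y_i\in f(\CB_\alpha(f))$) and on which $f_i$ is constant equal to $y_i$. It follows that $\CB_{\alpha+1}(f_i)=\emptyset$, so $f_i$ has $\CB$-rank exactly $\alpha+1$ and $\CB$-degree $1$, hence is simple.

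Finally, applying \cref{UsefulcriterionforequivFinGl} to the clopen partition $(B_i)_{i\leq n}$ of $\im f$ yields $f\equiv \gl_{i\leq n} f_i$, as desired. The main subtlety is merely verifying that the corestrictions $f\corestr{B_i}$ retain the full $\CB$-rank $\alpha+1$ of $f$; this is precisely what the open-set version of \cref{CBbasics0}\cref{CBbasicsfromJSL2} delivers once the partition is chosen to separate the finitely many points of $f(\CB_\alpha(f))$.
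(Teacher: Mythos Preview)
Your proof is correct and follows the same approach as the paper: take a finite clopen partition of the codomain separating the points of $f(\CB_\alpha(f))$ and apply \cref{UsefulcriterionforequivFinGl}. You have simply spelled out the details that the paper leaves implicit, including the verification via \cref{CBbasics0}\cref{CBbasicsfromJSL2} that each corestriction $f\corestr{B_i}$ is simple of $\CB$-rank exactly $\alpha+1$.
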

\begin{proof}
Take a finite clopen partition of the co-domain of $f$ such that each piece contains exactly one point of the image of $f$ on $\CB_\alpha(f)$ and apply the previous corollary.
\end{proof}

Given functions $f_0$ and $f_1$ in $\functionsonbaire$, we write $f_0\glbin f_1$ instead of $\gl_{i\in 2}f_i$ for the binary gluing operation.
For $n\leq \omega$ and $f\in \functionsonbaire$, we write $nf$ for $\gl_{i\in n} f$.
A direct application of \cref{Gluingasupperbound} gives:

\begin{fact}\label{Gluingcohomomorphism}~
Let $(f_i)_{i\in I}, (g_k)_{k\in K} \subseteq \functionsonbaire$. If there exists $\iota:I\to K$ with $f_i\leq g_{\iota(i)}$ for all $i\in I$, then $\gl_{i\in I}f_i\leq \gl_{k\in K} g_k$. 

In particular, if $f\in\functionsonbaire$ and $m\leq n\leq \omega$, then $mf\leq nf$.
\end{fact}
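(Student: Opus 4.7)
The plan is to apply the upper-bound criterion \cref{Gluingasupperbound} directly, by exhibiting a clopen partition of $\dom(\gl_{i\in I}f_i)$ indexed by $K$ each piece of which reduces to the corresponding $g_k$. For each $k\in K$ set
\[
A_k = \bigsqcup_{i\in\iota^{-1}(k)}(i)\conc\dom f_i.
\]
This is a countable disjoint union of sets that are clopen in $\dom(\gl_{i\in I}f_i) = \bigsqcup_{i\in I}(i)\conc\dom f_i$, and hence itself clopen there. Since $(\iota^{-1}(k))_{k\in K}$ partitions $I$, the family $(A_k)_{k\in K}$ is a clopen partition of $\dom(\gl_{i\in I}f_i)$.

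Next I would check that $(\gl_{i\in I}f_i)\restr{A_k}\leq g_k$ for every $k\in K$. Reading the hypothesis in the spirit of the \emph{in particular} clause (where one takes $\iota$ to be the inclusion $m\hookrightarrow n$ with $f_i=g_k=f$), $\iota$ is to be understood as injective, so every $\iota^{-1}(k)$ has at most one element. When $\iota^{-1}(k)=\emptyset$ the restriction is the empty function, which reduces to anything; when $\iota^{-1}(k)=\{i\}$ the restriction is homeomorphic to $f_i$ (via the obvious relabelling of the leading coordinate), and the hypothesised reduction $f_i\leq g_{\iota(i)}=g_k$ applies. Invoking \cref{Gluingasupperbound} then yields $\gl_{i\in I}f_i\leq\gl_{k\in K}g_k$, and the \emph{in particular} statement follows by specialising to the inclusion $m\hookrightarrow n$ with all $f_i$ and $g_k$ equal to $f$.

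I do not expect a genuine obstacle: the proof is essentially bookkeeping, using that the domain of a gluing may be reorganised into clopen blocks along any partition of the index set. The one subtlety worth flagging is that if $\iota$ is not injective then some $\iota^{-1}(k)$ contributes a multi-gluing $\gl_{i\in\iota^{-1}(k)}f_i$ which does not in general reduce to $g_k$ from the pointwise assumption $f_i\leq g_k$ alone; so injectivity of $\iota$ really is needed (and is the only non-trivial case used later via the \emph{in particular} specialisation).
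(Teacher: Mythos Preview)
Your approach is exactly the paper's: the proof there reads in full ``A direct application of \cref{Gluingasupperbound} gives'', and your clopen partition $(A_k)_{k\in K}$ is the obvious one making that application work.

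Your flag about injectivity is well taken and in fact sharper than the paper. The statement as printed omits the hypothesis that $\iota$ be injective, yet without it the conclusion fails: with $I=2$, $K=1$, $\iota$ constant and $f_0=f_1=g_0=\id_1$, one gets $2\,\id_1\not\leq\id_1$ (distinct $\CB$-degrees). Every invocation of the fact in the paper uses an injective $\iota$, so this is a harmless slip in the statement rather than a gap in the surrounding arguments; your reading of the hypothesis is the intended one.
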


\begin{definition}
If $\mathcal{F}$ is a set of functions, then we denote by $\FinGl{\mathcal{F}}$ the set of all finite gluings $\gl_{i< n} f_i$ for some finite sequence $(f_i)_{i<n}$ of functions in $\mathcal{F}$.
We say that a set $\mathcal{C}$ of functions is \emph{generated} by a set $\mathcal{F}$ if for all $g\in\mathcal{C}$ there exists $f\in \FinGl{\mathcal{F}}$ such that $g\equiv f$.

We say that $\mathcal{C}$ is \emph{finitely generated} if it generated by a finite set. 
\end{definition}

Remark that for $\mathcal{C}$ to be generated by $\mathcal{F}$, we do not require $\mathcal{F}$ to be a subset of $\mathcal{C}$. 
Note the empty function always belongs to $\FinGl{\mathcal{F}}$ as the gluing of the empty sequence and every element of $\FinGl{\mathcal{F}}$ takes the form $\gl_{i\leq k} n_i f_i$ where $k\in\N$, $n_0,\ldots,n_k\in \N$ and $f_0,\ldots,f_k\in \mathcal{F}$ are pairwise distinct. 

Given two quasi-orders $(P,\leq_P)$ and $(Q,\leq_Q)$, we say that a map $\varphi:P\to Q$ is a \emph{co-homomorphism} when for all $p,p'$ in $P$ we have
$\varphi(p)\leq_Q\varphi(p')$ implies $p\leq_P p'$. In such a case, if $Q$ is a \wqo{} (resp. a \bqo{}) then so is $P$ (\cite[Lemma 3.10]{wellbetterinbetween}).

Recall also that \wqo{}s (resp. \bqo{}s) are stable by finite products, and that all well-orders are \bqo{}s (\cite[Example 3.9 and Proposition 3.14]{wellbetterinbetween}).

\begin{proposition}\label{SecondstepforBQOthm}
Continuous reducibility is a \bqo{} on any finitely generated set of functions.
\end{proposition}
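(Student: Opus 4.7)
The plan is to exhibit a co-homomorphism from $\mathcal{C}$ into a known \bqo{}, and then invoke \cite[Lemma 3.10]{wellbetterinbetween} as stated just before the proposition. First, I would fix a finite generating set $\mathcal{G} = \{g_0, \ldots, g_{K-1}\}$ for $\mathcal{C}$. By the definition of "generated" recalled in the excerpt, every $f \in \mathcal{C}$ is continuously equivalent to some element of $\FinGl{\mathcal{G}}$, which takes the form $\gl_{i<K} n_i g_i$ for some tuple $(n_0, \ldots, n_{K-1}) \in \N^K$ (with $n_i = 0$ indicating that the $i$-th generator does not occur). Using the axiom of choice, I would select one such tuple $\varphi(f) \in \N^K$ for each $f \in \mathcal{C}$, defining a map $\varphi : \mathcal{C} \to \N^K$.

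The key step is to verify that $\varphi$ is a co-homomorphism when $\N^K$ is endowed with the coordinatewise order. Suppose $\varphi(f) \leq \varphi(f')$ and write $\varphi(f) = (n_i)_{i<K}$ and $\varphi(f') = (n'_i)_{i<K}$, so that $n_i \leq n'_i$ for each $i$. The "in particular" part of Fact~\ref{Gluingcohomomorphism} gives $n_i g_i \leq n'_i g_i$ for each $i$, and applying the first part of that same fact with $\iota$ the identity on $\{0, \ldots, K-1\}$ then yields $\gl_{i<K} n_i g_i \leq \gl_{i<K} n'_i g_i$. Combined with the chosen equivalences $f \equiv \gl_{i<K} n_i g_i$ and $\gl_{i<K} n'_i g_i \equiv f'$, this gives $f \leq f'$, establishing the co-homomorphism property.

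To conclude, I would observe that $\N$ is a well-order, hence a \bqo{}, and that finite products of \bqo{}s are \bqo{}s (both recalled just before the statement), so $\N^K$ with its coordinatewise order is a \bqo{}. The existence of the co-homomorphism $\varphi : \mathcal{C} \to \N^K$ then forces $\mathcal{C}$ to be a \bqo{} by \cite[Lemma 3.10]{wellbetterinbetween}. I do not foresee any real obstacle in this argument: it is essentially a clean packaging of the monotonicity of gluing (Fact~\ref{Gluingcohomomorphism}) together with the classical closure properties of \bqo{}s.
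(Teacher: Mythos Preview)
Your proposal is correct and follows essentially the same approach as the paper's own proof: both choose a representation $f\equiv \gl_{i} n_i g_i$ for each $f\in\mathcal{C}$, use \cref{Gluingcohomomorphism} to show that the resulting map into $\N^{K}$ (with coordinatewise order) is a co-homomorphism, and conclude since $\N^{K}$ is a finite product of well-orders, hence a \bqo{}. Your write-up is slightly more detailed in unpacking the use of \cref{Gluingcohomomorphism}, but the argument is the same.
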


\begin{proof}%\ynote{it may seem from this proof that we need AC to define the co-homorphism, but in fact the proof that $\mathcal{C}$ is \bqo{} is by contradiction and only requires countable choice. If there was bad multi-sequence $g$ in $\mathcal{C}$, we can choose for ever function $f$ in the (countable) range of $g$ an $s_f\in \N^{k+1}$, then composing $g$ with $f\mapsto s_f$ yields a bad multi-sequence in $\N^{k+1}$.}
Let $\mathcal{C}$ be generated by a finite set of functions $\mathcal{F}=\set{f_0,\ldots, f_k}$ for some $k\in\N$.
For all $f\in\mathcal{C}$ there is an $s_f\in\N^{k+1}$ such that $f\equiv \gl_{i\leq k}s_f(i)f_i$.
But then if $f,g$ in $\mathcal{C}$ satisfy $s_f(i)\leq s_g(i)$ for all $i\leq k$, then $f\leq g$ by \cref{Gluingcohomomorphism},
so $f\mapsto s_f$ is a co-homomorphism from $\mathcal{C}$
to the finite product of well-orders $\N^{k+1}$, which makes $\mathcal{C}$ a \bqo{}.
\end{proof}

The previous proposition motivates our strategy underlined in \cref{Introthm:LevelsAreFinitelyGenerated}.
To prove that each level $\sC_\alpha$ is a \bqo{}, we actually describe a finite set of functions that generates it in the above sense.
The level $\sC_0$ consists only of the empty function, so
the first interesting level is $\sC_1$, that of locally constant functions, to which we now turn.

We use several time in this paper the following well-known exercise.

\begin{fact}\label{InfiniteEmbedOmega}
Any infinite metrizable space contains an infinite discrete subspace.
\end{fact}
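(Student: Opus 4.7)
My plan is to dichotomize according to whether the infinite metrizable space $X$ has a non-isolated point. If every point of $X$ is isolated, then $X$ itself is discrete, and since $X$ is infinite, taking the whole space exhibits the desired infinite discrete subspace. So the non-trivial case is when $X$ contains a non-isolated point $x$; fix a compatible metric $d$ on $X$.

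Using that $x$ is non-isolated and that $X$ is metric, I construct by induction a sequence $(x_n)_{n\in\N}$ of pairwise distinct points of $X\setminus\{x\}$ with $d(x_n,x)<1/(n+1)$. At each step this is possible because the open ball around $x$ of radius $\min(1/(n+2), d(x_k,x)/2 \text{ for } k\leq n)$ meets $X\setminus\{x\}$ (by non-isolation of $x$) and thus provides a new point distinct from all previous ones. In particular $(x_n)_{n\in\N}$ converges to $x$.

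The key step is to verify that $D=\set{x_n}[n\in\N]$ is a discrete subspace of $X$. Fix $n\in\N$. Since $(x_m)_{m\neq n}$ still converges to $x$ and $X$ is Hausdorff, the only accumulation point of $\set{x_m}[m\neq n]$ is $x$, so its closure in $X$ equals $\set{x_m}[m\neq n]\cup\set{x}$. As the $x_m$ are pairwise distinct and $x\neq x_n$, this closed set avoids $x_n$; hence its complement $U$ is an open neighbourhood of $x_n$ with $U\cap D=\set{x_n}$. This shows $\set{x_n}$ is relatively open in $D$ for every $n$, so $D$ is infinite and discrete.

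The only real delicacy in this plan is the last verification: it is not automatic that a convergent sequence of distinct points gives a discrete subspace (one has to ensure isolation of \emph{each} term in $D$), but the Hausdorff uniqueness of limits combined with the explicit description of the closure of $\set{x_m}[m\neq n]$ handles it cleanly. No compactness, paracompactness or any of the heavier tools from \cref{sectionPrelim} are required.
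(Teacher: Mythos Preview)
Your proof is correct. The paper does not actually prove this fact: it is introduced as ``the following well-known exercise'' and left without proof, so there is no paper argument to compare against. Your dichotomy (all points isolated versus some non-isolated point) and the subsequent extraction of a convergent sequence whose range is discrete is one of the standard routes to this exercise; the verification that each $x_n$ is isolated in $D$ via the closure of $\set{x_m}[m\neq n]$ is clean and correct.
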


We close this section with the analysis of the locally constant functions up to continuous equivalence.

\begin{proposition}\label{LocallyConstantFunctions}
The class of locally constant functions $f:X\to Y$ from a second countable space $X$ to a metrizable space $Y$ is (finitely) generated by\footnote{For notational simplicity we adopt the convention that $1=\set{\iw{0}}$.} $\set{\id_{1},\id_\N}$. In fact, $f\equiv \id_{I}$ where $I$ is a discrete set with cardinality $|\im f|$. 
\end{proposition}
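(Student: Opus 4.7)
The plan is to prove the stronger statement that $f \equiv \id_I$ for $I$ a discrete space of cardinality $|\im f|$; finite generation then follows easily. First, $|\im f|$ is at most countable because $f$ is locally constant and hence the fibers $\{f^{-1}(\{y\}) : y \in \im f\}$ form a pairwise disjoint family of nonempty open sets in the second countable space $X$. Thus either $I$ is finite of some size $n$, in which case $\id_I$ is homeomorphic (hence continuously equivalent) to $n\id_1 = \gl_{i<n}\id_1 \in \FinGl{\{\id_1\}}$, or $I$ is countably infinite and $\id_I$ is homeomorphic to $\id_\N$. Either way $f \in \FinGl{\{\id_1, \id_\N\}}$ up to equivalence.

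For the reduction $f \leq \id_I$, I would enumerate $\im f = \{y_i\}_{i \in I}$ without repetition and observe that each fiber $f^{-1}(\{y_i\})$ is clopen in $X$: open because $f$ is locally constant, with open complement $\bigcup_{j \neq i}f^{-1}(\{y_j\})$. Therefore the labelling map $\sigma : X \to I$, $\sigma(x) = i$ iff $f(x) = y_i$, is continuous into the discrete space $I$, and $\tau : I \to \im f$, $\tau(i) = y_i$, is continuous since its domain is discrete. By construction $f = \tau \circ \id_I \circ \sigma$.

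The main obstacle is the converse $\id_I \leq f$, because $\im f$ need not be discrete as a subspace of $Y$—consider $X = \N \cup \{\infty\}$ discrete mapping onto $\{0\} \cup \{1/n : n \geq 1\} \subseteq \R$, which is locally constant but has non-discrete image at $0$. The idea is to pick $\sigma : I \to X$ so that $\im(f\sigma)$ lands inside a \emph{discrete} subspace of $\im f$. When $I$ is finite, any section $\sigma(i) \in f^{-1}(\{y_i\})$ works, because finite subsets of the metrizable (hence Hausdorff) space $Y$ are discrete. When $I$ is countably infinite, $\im f$ is an infinite metrizable space, so \cref{InfiniteEmbedOmega} furnishes an infinite discrete subspace $D = \{z_i\}_{i \in I} \subseteq \im f$; choosing $x_i \in f^{-1}(\{z_i\})$ and setting $\sigma(i) = x_i$ (continuous as $I$ is discrete) yields $\im(f\sigma) = D$, whence $\tau : D \to I$, $\tau(z_i) = i$, is continuous and $\id_I = \tau \circ f \circ \sigma$.
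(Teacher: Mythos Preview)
Your proof is correct and follows essentially the same approach as the paper's: both use the clopen fiber partition for the upper bound and \cref{InfiniteEmbedOmega} to extract a discrete subspace of $\im f$ for the lower bound in the infinite case. The only cosmetic difference is that the paper packages the two reductions through the gluing criteria \cref{Gluingasupperbound} and \cref{Gluingaslowerbound}, whereas you write out the reducing pairs $(\sigma,\tau)$ explicitly.
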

\begin{proof}
Observe first that any constant function on a non-empty topological space is continuously equivalent to $\id_{1}$.
Now if $f$ is locally constant, the sets $A_y=f^{-1}(\set{y})$ for $y\in\im(f)$ form a clopen partition of $\dom(f)$ such that $f\restr{A_y}\equiv \id_{1}$ for all $y\in \im f$.
By \cref{Gluingasupperbound}, we get $f\leq \gl_{y\in \im f} \id_{1}$ and note that since $X$ is second countable, $\im f$ is countable. 

If $\im f$ is finite of cardinality $n$, then it is discrete and so using \cref{Gluingaslowerbound} with $B_y=\{y\}$ for $y\in \im f$,
we get $n \id_{1} \leq f$. Therefore $f\equiv n  \id_{1}$ in this case.

If $\im f$ is infinite, being metrizable it contains a countably infinite discrete subspace by \cref{InfiniteEmbedOmega},
so we similarly obtain $\gl_{n\in\N} \id_{1} \leq f$ using \cref{Gluingaslowerbound}.
Therefore in this case $f\equiv \gl_{n\in\N} \id_{1} \equiv \id_{\N}$, which concludes the proof.
\end{proof}

%\begin{aynote}{Remark. Ça m'a frappé à quel point le gros de notre travail consiste à généraliser la preuve du cas localement constant. Il me semble que souligner cela peut aider le lecteur à comprendre ce qu'on veut faire. Je me suis essayé. Peut-être que ça peut faire une petite pause bienvenue dans l'exposition.}
In the sequel we introduce the necessary concepts which allows us to generalize the previous simple proof to all successor levels of $\sC$.
Crucially, we define \emph{centered functions} which generalizes constant functions in \cref{sectionCentered}.
Similarly to the unique image of a constant function, centered functions admit a special point in their image, their \emph{\cocenter{}}.
While there is, up to equivalence, only one constant function, we will show that there are, up to equivalence, only finitely many centered functions of a given $\CB$-rank 
\cref{finitenessofcenteredfunctions}.
We eventually prove in \cref{PreciseStructureCor1} that every function $f$ is locally centered.
%, when continuous reducibility is \bqo{} on functions strictly below $f$. While this gives us a partition of the domain by \cref{0dimanddisjointunion}, note that in the above proof we gathered all constant parts of $f$ with same image $y$ in $f\restr{A_y}$. Here $f\restr{A_y}$ is still equivalent to the constant function $\id_{\set{0}}$, however gathering several equivalent centered parts with same \emph{cocenter} leads to a more complex situation, which is dealt with in \cref{VerticalTheorem}. Just like we got $f\leq  \gl_{y\in \im f} \id_{\set{0}}=G$ above, with a more work we will be able to upper bound $f$ with the right gluing $G$ of functions constructed from centered parts of $f$. Note that to obtain the other inequality $G\leq f$ in the case of locally constant functions, we pick an infinite discrete set in the range of $f$ which allows us to use \cref{Gluingaslowerbound}. %Note that this is required to upper bound the gluing $G$ that we identified in the first step and obtain the desired equivalence. 
%A somewhat similar -- though more elaborate -- selection of cocenters will be performed notably in \cref{SolvableDecomposition}. 

%This sketch inspired from the case of locally constant functions aims at proving finite generation for successor levels of $\sC$.
In the next section, we study the general structure of $\sC$ which mainly establishes that the structure of limit levels is as simple as one could hope for and ensures that finite generation of all levels indeed entail that $\sC$ is \bqo{}.
%\end{aynote}

%\begin{arnote}{à ce sujet}
%Je suis tout à fait en faveur d'une petite pause dans l'exposition à ce stade!
%Cela étant, je trouve ton premier paragraphe trop difficile à lire pour quelqu'un qui n'a pas encore lu la section 4.
%En plus, je ne suis moi-même pas totalement certain de comprendre l'intuition forte que tu as sur la généralisation du cas constant...
%On en a déjà parlé, tu as remarqué quelque chose ici qui je dois l'avouer m'échappe encore!
%
%Je vais essayer de penser à une façon de reprendre ce que tu as fait, peut-être en plus synthétique. J'y reviendrai.
%\end{arnote}

% !TEX root = main.tex
\section{Pointed Gluing and the General Structure}\label{sectionPointedGluing}

After \cref{SecondstepforBQOthm}, we now need to argue that \cref{Introthm:LevelsAreFinitelyGenerated} implies \cref{Introthm:BQO}.
The key to do so is to develop the technology to increase the $\CB$-rank\footnote{As we have seen, gluing functions does not.}, which is the objective of this section.
This is the motivation for introducing the Pointed Gluing operation from \cite{carroy2013quasi}, and study its binding conditions in the same way we did for the Gluing operation.
This will be central in \cref{PreciseStructureFinal,sectionDoubleSucc},
and gives the generalization to $\sC$ of \cite[Theorem 5.2]{carroy2013quasi} that we obtain at the end of the present section.

Note that the pointed gluing operation is implicitly used in the theory of Wadge degrees since Wadge's thesis~\cite{phdwadge}, sometimes with slight variations (see for instance \cite[Section 5]{selivanovnew}).
It has been used explicitly in~\cite[Subsection 2.6]{CMRSWadge}

Recall that for a sequence $s\in\N^{<\N}$ we denote by $s^n$ for $n\in\N$ the concatenation of $s$ with itself $n$-many times.
We make the convention that $s^0$ is the empty sequence and $s^\infty$ is the infinite concatenation of $s$ with itself.

\begin{definition}
Given a sequence $(F_i)_{i\in \N}$ of subsets of $\cN$,
%we call \emph{support} of $(F_i)_{i\in \N}$ the set of natural numbers $i$ for which $F_i\neq\emptyset$.
we call \emph{pointed gluing} of $(F_i)_ {i\in \N}$ the set $\pgl(F_i)_{i\in \N}=\{\iw{0}\}\cup\bigcup_{i\in \N}(0)^i\conc(1)\conc F_i$.

\medskip

Given a sequence of functions $(f_i:A_i\rao B_i)_{i\in \N}$ in $\functionsonbaire$,
we call \emph{pointed gluing} of $(f_i)_{i\in \N}$, the map
 \begin{align*}
\pgl_{i\in \N}f_i: \pgl_{i\in \N}A_i &\longrightarrow\pgl_{i\in \N}B_i\\
 x &\longmapsto\begin{cases}
(0)^i\conc(1)\conc f_i(x') & \mbox{if }x=(0)^i\conc(1)\conc x'\\
\iw{0} & \mbox{otherwise.}
\end{cases}
\end{align*}
\end{definition}

%\ynote{Do we really need the notion of support? For now the only reference in the paper, apart from here, is a footnote in \cref{ssec:centered_def}.}
%We call \emph{support} of a sequence $(f_i)_i$ of functions the support of $(\dom(f_i))_i$. If $(f_i)_i$ has empty support, then $\pgl(f_i)_i$ is the constant function $\iw{0}\mapsto \iw{0}$.

\begin{fact}\label{BasicsOnPointedGluing}
The pointed gluing operation preserves countability, closure, compactness, Polishness of sets; continuity, injectivity, surjectivity, scatteredness of functions.

It commutes with the identity natural transformation, i.e. $\id_{\pgl_{i}X_i}=\pgl_{i}\id_{X_i}$.

The point $\iw{0}$ is a continuity point of $\pgl(f_i)_{i\in\N}$ for any sequence of functions $(f_i)_i$ in $\functionsonbaire$.
\end{fact}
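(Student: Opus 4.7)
The plan is to exploit the clear structure of the pointed gluing. The sets $C_i := N_{(0)^i(1)}$ form pairwise disjoint clopen subsets of $\cN$, the shift $\phi_i:x\mapsto (0)^i\conc(1)\conc x$ is a homeomorphism from $\cN$ onto $C_i$, and the restriction of $\pgl_{i\in\N}f_i$ to the clopen set $(0)^i\conc(1)\conc A_i$ is conjugate via $\phi_i$ to $f_i$. The point $\iw{0}$ is the only point of $\pgl_{i\in\N}A_i$ outside $\bigcup_i C_i$, and its basic neighborhoods are $N_{(0)^n}\cap \pgl_{i\in\N}A_i = \{\iw{0}\}\cup\bigcup_{i\geq n}(0)^i\conc(1)\conc A_i$. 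Once this decomposition is identified, countability of sets and injectivity/surjectivity of functions follow at once from disjointness, and commutation with the identity natural transformation is a direct calculation: on $\iw{0}$ both sides return $\iw{0}$, and on $(0)^i\conc(1)\conc x$ both return the same sequence.

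The core verification concerns the behavior at $\iw{0}$. For continuity of $\pgl_{i\in\N}f_i$ at $\iw{0}$, I would check that the preimage of $N_{(0)^n}$ contains the full neighborhood $N_{(0)^n}\cap \pgl_{i\in\N}A_i$: every image point arising from this neighborhood is either $\iw{0}$ itself or of the form $(0)^i\conc(1)\conc f_i(x')$ with $i\geq n$, which lies in $N_{(0)^n}$. Continuity away from $\iw{0}$ is immediate since $\pgl_{i\in\N}f_i$ is locally $\phi_i\circ f_i\circ\phi_i^{-1}$ on each clopen piece $C_i\cap \pgl_{i\in\N}A_i$.

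For the topological properties of sets, I would argue by the same pattern. Compactness: any open cover contains some $U\ni\iw{0}$, hence contains $N_{(0)^n}\cap\pgl_{i\in\N}F_i$ for some $n$, leaving only the finitely many compact pieces $(0)^i\conc(1)\conc F_i$ for $i<n$ to cover, which is easy. Closure: a point of $\cN$ outside $\pgl_{i\in\N}F_i$ is either of the form $(0)^i\conc(n)\conc\cdots$ with $n\geq 2$ (and then lies in the clopen $N_{(0)^i(n)}$, disjoint from $\pgl_{i\in\N}F_i$), or of the form $(0)^i\conc(1)\conc y'$ with $y'\notin F_i$ (and closure of $F_i$ provides a basic clopen neighborhood of $y'$ disjoint from $F_i$, whose image under $\phi_i$ is disjoint from $\pgl_{i\in\N}F_i$). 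Polishness is cleanest via the $G_\delta$ characterization of Polish subsets of $\cN$: writing
\[
\cN\setminus \pgl_{i\in\N}F_i = \bigcup_{i\in\N,\, n\geq 2}N_{(0)^i(n)} \;\cup\; \bigcup_{i\in\N}\phi_i(\cN\setminus F_i),
\]
each term is $F_\sigma$ (the first being clopen, the second being the image of an $F_\sigma$ under a homeomorphism onto a clopen subset), so the whole complement is $F_\sigma$ and $\pgl_{i\in\N}F_i$ is $G_\delta$.

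Finally, for scatteredness of $\pgl_{i\in\N}f_i$, given any non-empty $A\subseteq \pgl_{i\in\N}A_i$, either $A=\{\iw{0}\}$, in which case $A$ is a non-empty relatively open subset of itself on which $\pgl_{i\in\N}f_i$ is constant, or $A\cap C_i\neq\emptyset$ for some $i$. In the latter case $A\cap C_i$ is relatively open in $A$ and corresponds via $\phi_i^{-1}$ to a non-empty subset of $A_i$, so scatteredness of $f_i$ produces a non-empty relatively open subset of $A\cap C_i$ on which $f_i$ (hence $\pgl_{i\in\N}f_i$) is constant. I expect no real obstacle; the entire Fact is a matter of unpacking the clopen decomposition and carefully handling the unique accumulation point $\iw{0}$.
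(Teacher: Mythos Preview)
Your proposal is correct. The paper states this Fact without proof, treating it as a routine verification left to the reader, so there is no argument to compare against; your unpacking of the clopen decomposition $\{\iw{0}\}\cup\bigcup_i C_i$ together with the neighborhood analysis at $\iw{0}$ is precisely the kind of check the paper implicitly expects.
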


The following proposition generalizes \cite[Proposition 3.1]{carroy2013quasi}.

A sequence $(\alpha_n)_{n\in\N}$ of ordinals is called \emph{regular} when for all $m\in\N$ there exists a natural $n>m$ such that $\alpha_m\leq\alpha_n$, or
equivalently, when $\sup_n(\alpha_n+1)=\limsup_n(\alpha_n+1)$.

Recall that if $f$ is simple with $\CB(f)=\alpha+1$ the distinguished point of $f$ is the unique $y\in\im(f)$ satisfying $\set{y}=f(\CB_{\alpha}(f))$.

%\rnote{peut être avec le changement de $\sC$ on pourrait faire les rayons seulement dans $\cN$?}
%Given a $0$-dimensional separable metrizable space $B$ and a decreasing sequence $(U_n)_{n\in\N}$ of basic clopen neighborhood at $y\in B$, we define the \emph{$n$-th ray of $B$ at $y$} as the clopen set $\ray{B}{y,n}:=U_{n}\setminus U_{n+1}$. In such a case, if $f:A\to B$ is a function, the \emph{$n$-th ray of $f$ at $y$} is the function $\ray{f}{y,n}:=f\corestr{\ray{B}{y,n}}$.

%In general, these rays depend on the choice of a sequence of basic open neighborhoods for $y$,
%but if $B\subseteq\cN$ and $y\in B$ we always use the following a basic sequence of neighborhoods of $y$ given by $U_n=\nbhd{y}{n}\cap B$.

\begin{proposition}\label{CBrankofPgluingofregularsequence1}
Let $f=\pgl_{n\in\N} f_n$ for some sequence $(f_n)_{n\in\N}$ of scattered functions in $\functionsonbaire$. 
If $(\CB(f_n))_{n\in\N}$ is regular and $\alpha=\sup_{n\in\N}(\CB(f_n))$, then $\CB_\alpha(f)=\set{\iw{0}}$.
In particular, $f$ is simple of distinguished point $\iw{0}$, and $\CB(f)=\alpha+1$.
\end{proposition}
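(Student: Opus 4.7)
The plan is to analyze the local structure of $f$ around each point and carry out a transfinite induction.

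First, I would observe that the domain $\pgl_{n\in\N} A_n = \{(0)^\infty\} \cup \bigcup_n (0)^n\conc(1)\conc A_n$ is partitioned (outside $\{(0)^\infty\}$) by the relatively clopen sets $N_{(0)^n(1)} \cap \dom(f) = (0)^n\conc(1)\conc A_n$, and the restriction of $f$ to each such piece is homeomorphic to $f_n$ via the obvious "deshift" maps on domain and codomain. Since $\CB$-rank and $\CB$-derivatives are purely topological notions, we get $\CB_\beta(f\restr{N_{(0)^n(1)} \cap \dom(f)}) \cong \CB_\beta(f_n)$ for each ordinal $\beta$; combined with \cref{CBbasics0}(i) this yields
\[
\CB_\beta(f) \cap N_{(0)^n(1)} \;\cong\; \CB_\beta(f_n).
\]

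From this, since $\CB(f_n) \leq \alpha$ for every $n$ (so $\CB_\alpha(f_n) = \emptyset$ by scatteredness), we immediately obtain $\CB_\alpha(f) \subseteq \{(0)^\infty\}$. The heart of the argument is then to show the reverse inclusion $(0)^\infty \in \CB_\alpha(f)$, which I would prove by transfinite induction on $\beta \leq \alpha$, establishing $(0)^\infty \in \CB_\beta(f)$. The base and limit cases are routine (the latter from the definition $\CB_\lambda(f) = \bigcap_{\beta<\lambda}\CB_\beta(f)$). For the successor step $\beta+1 \leq \alpha$, the task is to show that $f\restr{\CB_\beta(f)}$ is not locally constant at $(0)^\infty$: every basic neighborhood $N_{(0)^m} \cap \dom(f)$ contains $N_{(0)^n(1)}$ for all $n \geq m$, and I need to exhibit some point of $\CB_\beta(f)$ in one of these sets with $f$-value different from $(0)^\infty$.

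The key step is to unpack regularity here. From $\beta < \alpha = \sup_n \CB(f_n)$, pick some $k$ with $\CB(f_k) > \beta$; then regularity lets me iteratively find arbitrarily large $n$ with $\CB(f_n) \geq \CB(f_k) > \beta$, in particular some $n \geq m$. For such $n$, $\CB_\beta(f_n) \neq \emptyset$, so via the identification above there exists $x \in \CB_\beta(f) \cap N_{(0)^n(1)}$; since $f(x) \in (0)^n \conc (1) \conc Y_n$ while $f((0)^\infty) = (0)^\infty$, we have $f(x) \neq f((0)^\infty)$, confirming non-local-constancy and giving $(0)^\infty \in \CB_{\beta+1}(f)$.

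Having established $\CB_\alpha(f) = \{(0)^\infty\}$, the remaining conclusions are quick: since $f$ restricted to a singleton is trivially locally constant, $\CB_{\alpha+1}(f) = \emptyset$, hence $\CB(f) = \alpha+1$; and $f(\CB_\alpha(f)) = \{(0)^\infty\}$ has cardinality $1$, so $f$ is simple with distinguished point $(0)^\infty$. The only subtle point I anticipate is making the regularity-to-cofinality argument clean — specifically, verifying that $\{n : \CB(f_n) > \beta\}$ is infinite for every $\beta < \alpha$, which is exactly what distinguishes the regular case from the general sequence.
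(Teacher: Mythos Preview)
Your proposal is correct and follows essentially the same approach as the paper's proof: identify $f\restr{N_{(0)^n(1)}}$ with $f_n$ via \cref{CBbasics0}\cref{CBbasicsfromJSL2}, use regularity to see that $\CB_\beta(f)\cap N_{(0)^n(1)}\neq\emptyset$ for infinitely many $n$ whenever $\beta<\alpha$, and conclude $(0)^\infty\in\CB_{\beta+1}(f)$. The only cosmetic difference is that you package this as an explicit transfinite induction on $\beta$, whereas the paper argues directly for each $\beta<\alpha$ (the closedness of $\CB_\beta(f)$ gives $(0)^\infty\in\CB_\beta(f)$ without needing the inductive hypothesis), but the content is identical.
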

\begin{proof}
Since $f_n\equiv f\restr{N_{(0)^n\conc(1)}}$, we have $\CB(f_n)=\CB(f\restr{N_{(0)^n\conc(1)}})$ by \cref{CBbasicsfromJSL-tp}.
If $\beta<\alpha$ then by regularity $\CB_\beta(f\restr{N_{(0)^n\conc(1)}})=\CB_\beta(f)\cap N_{(0)^n\conc(1)}$ is non-empty for infinitely many $n$,
which implies that $\iw{0}\in\CB_{\beta+1}(f)$. Therefore $\iw{0}\in\CB_{\alpha}(f)$.
Since $\CB_{\alpha}(f\restr{N_{(0)^n\conc(1)}})$ is empty for all $n$, it follows that $\CB_{\alpha}(f)=\set{\iw{0}}$.
\end{proof}

\begin{fact}\label{GluinglowerthanPgluing}
Given a sequence $(f_i)_{i\in \N}$ in $\functionsonbaire$,
we have $\gl_{i\in \N} f_i\leq \pgl_{i\in \N} f_i$.
\end{fact}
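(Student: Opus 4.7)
The plan is to invoke \cref{Gluingasupperbound}, which reduces the problem to exhibiting, for some clopen partition $(D_i)_{i\in\N}$ of $\dom(\gl_{i\in\N}f_i)$, a reduction of each $(\gl_{i\in\N} f_i)\restr{D_i}$ to $\pgl_{i\in\N} f_i$. The natural candidate is the partition $D_i = (i)\conc A_i$, where $A_i = \dom f_i$, coming from the definition of the gluing. On $D_i$ the gluing is homeomorphic to $f_i$, and $f_i$ is in turn homeomorphic to $(\pgl_{j\in\N} f_j)\restr{N_{(0)^i\conc(1)}}$, hence reduces to $\pgl_{j\in\N}f_j$ by restriction. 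Applying \cref{Gluingasupperbound} then yields $\gl_{i\in\N} f_i \leq \pgl_{i\in\N}f_i$.

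If a direct construction is preferred, one can write down the reducing pair explicitly. Define
\[
\sigma: \gl_{i\in\N} A_i \to \pgl_{i\in\N} A_i, \quad (i)\conc x \mapsto (0)^i\conc (1)\conc x,
\]
and
\[
\tau: \im(\pgl_{i\in\N} f_i \circ \sigma) \to \im(\gl_{i\in\N}f_i), \quad (0)^i\conc(1)\conc y\mapsto (i)\conc y.
\]
Continuity of $\sigma$ follows from the fact that $(N_{(i)})_{i\in\N}$ is a clopen partition of the domain on each piece of which $\sigma$ is simply a relabelling of prefixes. The domain of $\tau$ does not contain $\iw{0}$, so it decomposes along the clopen pieces $N_{(0)^i\conc (1)}\cap \im(\pgl f_i\circ \sigma)$ on each of which $\tau$ is again a relabelling of prefixes; thus $\tau$ is continuous by \cref{lem:ContUnion}. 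A one-line check of the identity $\tau \circ \pgl_{j\in\N}f_j\circ\sigma((i)\conc x)=(i)\conc f_i(x)=\gl_{j\in\N}f_j((i)\conc x)$ concludes.

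There is no real obstacle: the statement is designed precisely to exploit the fact that the pointed gluing contains a homeomorphic copy of each $f_i$ as its restriction to the clopen set $N_{(0)^i\conc (1)}$, and gluings are assembled from such clopen pieces. The only thing to be mildly careful with is that $\tau$ must be a continuous function defined only on $\im(\pgl_{i\in\N} f_i\circ\sigma)$; in particular the fixed point $\iw{0}$ of $\pgl_{i\in\N}f_i$ is not in this image, so the potential discontinuity of $\tau$ at $\iw{0}$ is irrelevant.
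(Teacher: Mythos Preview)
Your explicit construction in the second paragraph is correct and is essentially what the paper does. However, your invocation of \cref{Gluingasupperbound} in the first paragraph is a miscitation. That proposition characterizes when a function reduces \emph{to} a gluing $\gl_i g_i$: it requires a clopen partition $(D_i)$ of the domain together with reductions $f\restr{D_i}\leq g_i$, one $g_i$ per piece. It does not say that $h\leq g$ follows from a clopen partition of $\dom h$ with $h\restr{D_i}\leq g$ for all $i$; that general statement is false (take $h=\id_\N$, $g=\id_{1}$, and the partition into singletons).

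The paper instead applies \cref{Gluingaslowerbound} with $f=\pgl_{i\in\N}f_i$ and the relative clopen partition $B_i=N_{(0)^i\conc(1)}$ of the \emph{codomain}. Since $(\pgl_j f_j)\corestr{B_i}\equiv f_i$, that proposition yields $\gl_i f_i\leq \pgl_i f_i$ directly. Unwinding its proof produces exactly your $\sigma$ and $\tau$, so your second paragraph is really a by-hand verification of \cref{Gluingaslowerbound} in this instance; the content is the same, only the reference needs to be corrected.
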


\begin{proof}
Use \cref{Gluingaslowerbound} with $f=\pgl_{i\in \N} f_i$ and $B_i=N_{(0)^i\conc(1)}$ for ${i\in \N}$.
\end{proof}

We write $x_n\to x$ to say that a sequence $(x_n)_{n\in\N}$ of points in a topological space converges to a point $x$.
We recall the following useful sufficient criterion for continuity (\cite[Claim 3.2]{carroy2013quasi}).

\begin{lemma}\label{prop:sufficientcondforcont}
Let $A$ and $B$ be metrizable spaces and $f:A\rao B$.
Suppose that $U$ is an open subset of $A$ satisfying:
\begin{enumerate}
\item $f$ is continuous both on $U$ and on $A\setminus U$, and
\item for all $(x_n)_{n\in\N}$ in $U$, if $x_n\rao x\in A\setminus U$ then $f(x_n)\rao f(x)$.
\end{enumerate}
Then $f$ is continuous.
\end{lemma}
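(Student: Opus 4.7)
The plan is to exploit sequential continuity: since $A$ is metrizable, it suffices to show that whenever a sequence $(x_n)_{n\in\N}$ in $A$ converges to some $x\in A$, the image sequence $(f(x_n))_{n\in\N}$ converges to $f(x)$ in $B$. I will split into cases according to where the limit $x$ lives.

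First, suppose $x\in U$. Since $U$ is open in $A$, there exists $N\in\N$ such that $x_n\in U$ for all $n\geq N$. The continuity of $f\restr{U}$ at $x$ (hypothesis (i)) then gives $f(x_n)\to f(x)$.

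Next, suppose $x\in A\setminus U$. This is the slightly more delicate case, but it is handled by partitioning the sequence. Let $I=\set{n\in\N}[x_n\in U]$ and $J=\N\setminus I=\set{n\in\N}[x_n\in A\setminus U]$. The subsequence indexed by $J$ lies entirely in the closed set $A\setminus U$ and converges to $x\in A\setminus U$, so by continuity of $f\restr{A\setminus U}$ at $x$ (hypothesis (i)), $f(x_n)\to f(x)$ along $J$. The subsequence indexed by $I$ lies in $U$ and converges to $x\in A\setminus U$, so hypothesis (ii) gives $f(x_n)\to f(x)$ along $I$. Combining these two convergence statements (with respect to a fixed compatible metric on $B$), for every $\varepsilon>0$ one finds an index beyond which both subsequences remain within $\varepsilon$ of $f(x)$, hence the full sequence $(f(x_n))_{n\in\N}$ converges to $f(x)$.

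There is no real obstacle here: the only subtlety is to remember to treat the two subsequences in the case $x\in A\setminus U$ separately and then recombine them, which is why hypothesis (ii) is exactly tailored to bridge the two pieces of continuity provided by hypothesis (i). Metrizability of $A$ is used to reduce continuity to sequential continuity, and metrizability of $B$ is used to reassemble the two subsequences via an $\varepsilon$-argument.
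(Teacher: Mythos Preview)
Your proof is correct and follows essentially the same approach as the paper's: both reduce to sequential continuity via metrizability of $A$, handle $x\in U$ by openness, and for $x\in A\setminus U$ partition the index set into $I=\{n:x_n\in U\}$ and $J=\N\setminus I$, applying hypothesis (ii) on $I$ and continuity of $f\restr{A\setminus U}$ on $J$ before recombining. The paper's version is slightly more explicit about the subcases where $I$ or $J$ is finite, but this is a cosmetic difference.
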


\begin{proof}
Since $A$ is metrizable, we prove sequential continuity at every point $x\in A$. Take a sequence $x_n\rao x$, we need to prove that $f(x_n)\rao f(x)$.
Observe first that if $x\in U$ then a tail of $(x_n)_n$ is actually included in the open set $U$, so using continuity of $f\restr{U}$ we have $f(x_n)\rao f(x)$.

So suppose that $x\notin U$, and partition $\N$ in two pieces: $I=\set{n\in\N}[x_n\in U]$ and $J=\N\setminus I$.
If $I$ is finite, then continuity of $f\restr{A\setminus U}$ yields $f(x_n)\rao f(x)$.
Otherwise, let $V\ni f(x)$ be an open set, the second hypothesis gives $N\in \N$ such that for all $n\in I$, if $n>N$ then $f(x_n)\in V$.
If $J$ is finite, fix $m=\max\set{N, \max J}$ to get $f(x_n)\in V$ for all natural numbers $n>m$.
If $J$ is infinite, use continuity of $f\restr{A\setminus U}$ to get $N'\in \N$ such that $f(x_n)\in V$ for all $n\in J$ with $n>N'$.
Therefore $f(x_n)\in V$ for all natural numbers $n>\max\set{N,N'}$, which proves that $f(x_n)\rao f(x)$ in this case too.
\end{proof}

\subsection{Pointed gluing as an upper bound}

We say that a sequence of functions $(f_i)_{i\in\N}$ \emph{is reducible by (finite) pieces to} a sequence $(g_i)_{i\in\N}$ of functions
if there is a family $(I_n)_{n\in\N}$ of pairwise disjoint finite sets of $\N$ such that for all $n\in\N$ we have $f_n\leq\gl_{i\in I_n}g_i$.

For our analysis of the pointed gluing, we introduce the following way of partitioning a function at a given point of its codomain.

\begin{definition}
For $B\subseteq \cN$, $y\in \cN$ and $n\in \N$, the \emph{$n$-th ray of $B$ at $y$} is the clopen subset of $B$ defined by 
\[
\ray{B}{y,n}:=\{x\in B\mid y\restr{n}\segm x \text{ and } y\restr{n+1} \nsegm x\}.
\]
In such a case, if $f:A\to B$ is a function, the \emph{$n$-th ray of $f$ at $y$} is the function $\ray{f}{y,n}:=f\corestr{\ray{B}{y,n}}$.
\end{definition}

\begin{proposition}\label{Pgluingasupperbound}
Let $f\in \functionsonbaire$ be continuous %$A$ be a topological space, $B$ a 0-dimensional separable metrizable space
%$f:A\to B$ be a continuous function,
and $(g_i)_{i\in\N}$ be a sequence in $\functionsonbaire$.
If $y\in B$ and $(\ray{f}{y,j})_{j\in \N}$ is reducible by pieces to $(g_i)_{i\in \N}$,
then $f\leq\pgl_{i\in \N}g_i$.
\end{proposition}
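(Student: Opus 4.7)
The plan is to mimic the proof of the upper-bound version of Gluing (\cref{Gluingasupperbound}), but with an additional handling of the distinguished point $\iw{0}$ in the pointed gluing. Concretely, since $(\ray{f}{y,j})_j$ is reducible by pieces to $(g_i)_i$, there is a family $(I_n)_{n\in\N}$ of pairwise disjoint finite subsets of $\N$ with $\ray{f}{y,n}\leq \gl_{i\in I_n}g_i$ for every $n$. Applying \cref{Gluingasupperbound} to each such reduction, I obtain for every $n\in\N$ a clopen partition $(A_{n,i})_{i\in I_n}$ of $f^{-1}(\ray{B}{y,n})$ together with continuous reductions $(\sigma_{n,i},\tau_{n,i})$ from $f\restr{A_{n,i}}$ to $g_i$.

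Next, I would define the reduction $(\sigma,\tau)$ from $f$ to $\pgl_{i\in\N}g_i$ as follows. Observe first that $\set{y}$ is closed in $B$, so $f^{-1}(\set{y})$ is closed in $A$, and the rays $f^{-1}(\ray{B}{y,n})$ together with $f^{-1}(\set{y})$ form a partition of $A$ into relatively clopen pieces (over the rays) and a closed piece. Set
\[
\sigma(x)=\begin{cases} \iw{0} & \text{if } f(x)=y,\\ (0)^{i}\conc(1)\conc\sigma_{n,i}(x) & \text{if } x\in A_{n,i}\ (n\in\N,\ i\in I_n),\end{cases}
\]
and dually
\[
\tau(\iw{0})=y,\qquad \tau\bigl((0)^{i}\conc(1)\conc z\bigr)=\tau_{n,i}(z),
\]
where in the last formula $n$ is the unique index with $i\in I_n$ (well defined since the $I_n$ are pairwise disjoint). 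The equation $f=\tau\circ\pgl_i g_i\circ\sigma$ is then immediate by construction on each $A_{n,i}$ and on $f^{-1}(\set{y})$.

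The main work is continuity, and this is where I would invoke \cref{prop:sufficientcondforcont}. For $\sigma$, take $U=A\setminus f^{-1}(\set{y})=\bigsqcup_{n}\bigsqcup_{i\in I_n}A_{n,i}$; by \cref{lem:ContUnion} $\sigma\restr{U}$ is continuous, $\sigma$ is constant on $A\setminus U$, so only the boundary condition needs checking: if $x_k\to x$ with $f(x)=y$ and $x_k\in U$, then $f(x_k)\to y$ by continuity of $f$, hence the unique $n_k$ with $x_k\in f^{-1}(\ray{B}{y,n_k})$ tends to $\infty$, so $\sigma(x_k)=(0)^{i_k}\conc(1)\conc\sigma_{n_k,i_k}(x_k)$ has arbitrarily long prefixes of zeros and converges to $\iw{0}$. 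For $\tau$, the same lemma is applied with $U'=\im(\pgl_i g_i\circ\sigma)\setminus\set{\iw{0}}$: on $U'$ one uses \cref{lem:ContUnion} via the clopen partition induced by the basic clopens $N_{(0)^{i}\conc(1)}$, and the only nontrivial point is that $z_k\to\iw{0}$ forces $i_k\to\infty$, whence the unique $n_k$ with $i_k\in I_{n_k}$ also satisfies $n_k\to\infty$ (because $I_0\cup\cdots\cup I_m$ is finite for each $m$, by disjointness and finiteness of the pieces), so that $\tau(z_k)=f(x_k)$ for some $x_k\in A_{n_k,i_k}\subseteq f^{-1}(\ray{B}{y,n_k})\subseteq f^{-1}(N_{y\restr{n_k}})$, giving $\tau(z_k)\to y=\tau(\iw{0})$.

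The key obstacle is exactly this last convergence argument at $\iw{0}$, and it is the reason the hypothesis requires \emph{pairwise disjoint finite} pieces $I_n$: finiteness (or at least local finiteness of the $I_n$ in $\N$) is what guarantees that large indices $i$ force large ray indices $n$, and thus that images under $\tau$ accumulate at $y$. Once this is established, the verifications of $f=\tau\circ\pgl_i g_i\circ\sigma$ and of well-definedness follow directly from the construction, completing the proof.
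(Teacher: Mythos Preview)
Your proposal is correct and follows essentially the same approach as the paper: build $\sigma$ piecewise from the ray-wise reductions, send $f^{-1}(\{y\})$ to $\iw{0}$, and verify continuity at the distinguished points via \cref{prop:sufficientcondforcont}. One small point to tighten: in your $\sigma$-continuity step you assert that $n_k\to\infty$ forces $i_k\to\infty$ (``arbitrarily long prefixes of zeros''), but this uses the pairwise \emph{disjointness} of the $I_n$ (any fixed $m$ lies in at most one $I_n$, so only finitely many $I_n$ meet $[0,m]$), which you only invoke explicitly on the $\tau$ side; the paper makes this symmetric use of disjointness and finiteness explicit.
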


\begin{proof}
Let $f:A\to B$ in $\functionsonbaire$ be continuous and note that if $y\notin\im f$ then we can actually conclude by \cref{Gluingasupperbound}, so suppose that $y\in\im f$.
Fix a family $(I_n)_{n\in\N}$ witnessing the reduction by pieces, and for all $n\in\N$ a reduction $(\sigma_n, \tau_n)$ from $\ray{f}{y,n}$ to $\gl_{i\in I_n}g_i$.
Observe that for all $n\in\N$ we have $\sigma_n(x)=(i)\conc x'$ for some $i\in I_n$ and $x'\in \dom g_i$.
Since $f$ is continuous and the sets $\ray{B}{y,n}$ are clopen, the sets $A_n:=\dom(\ray{f}{y,n})=\dom(\sigma_n)$ form a clopen partition of $A\setminus f^{-1}(\set{y})$,
so $\tilde{\sigma}=\bigsqcup_n\sigma_n$ is a well-defined function on $A\setminus f^{-1}(\set{y})$.
Define $\sigma$ as follows: $\sigma(x)=\iw{0}$ if $f(x)=y$, and $\sigma(x)=(0)^i\conc(1)\conc x'$ when $\tilde{\sigma}(x)=(i)\conc x'$.

Let $I=\bigsqcup_{n\in\N} I_n$ and $\eta:I\to \N$ be such that $\eta(i)$ is the unique $n\in\N$ such that $i\in I_n$.
Define now $\tau$ as follows: $\tau(\iw{0})=y$, and $\tau(x)=\tau_{\eta(i)}((i)\conc x')$ when $x=(0)^i\conc(1)\conc x'$, $(i)\conc x'\in \im \tau_{\eta(i)}$ and $i\in I$.

As the sets $I_n$ are pairwise disjoint, $\eta$ is injective and $\tau$ is well-defined.
Note that, by construction, $\sigma$ is defined on all of $A$ and $f=\tau(\pgl_{i\in \N}g_i)\sigma$.
So it suffices to show that $\sigma$ and $\tau$ are continuous.

\medskip

We now use \cref{prop:sufficientcondforcont} with the open set $U=\dom f \setminus f^{-1}(\set{y})=\bigcup_{n\in\N}A_n$ to obtain continuity of $\sigma$:
as $\tilde{\sigma}$ is continuous by \cref{lem:ContUnion} so is $\sigma$ on $U$, hence it is enough to check the second condition.
Towards that goal, suppose that a sequence $(x_n)_n$ in $\dom f$ with $f(x_n)\neq y$ converges to $x$ with $f(x)=y$. To show that $\sigma(x_n)\rao\iw{0}$, we prove that given any $m\in\N$, we have $\sigma(x_n)\in N_{(0)^m}$ for sufficiently large $n$.

For all $n\in\N$, let $j_n\in\N$ be such that $f(x_n)\in \ray{B}{y,j_n}$. By definition of $\sigma$ we have $\sigma(x_n)\in N_{(0)^i}$ for some $i\in I_{j_n}$.
Now, as the sets $(I_{j_n})_{n\in\N}$ are nonempty and pairwise disjoint, we have $\min I_{j_n}\to\infty$ as $n\to \infty$.
So there exists $N\in \N$ such that for all $n\geq N$ we have $\min I_{j_n}\geq m$ and therefore $\sigma(x_n)\in N_{(0)^m}$.

We proceed similarly for $\tau$ using this time $U=\dom \tau \setminus\set{\iw{0}}$. Note that $\tau$ is continuous on $U$ by \cref{lem:ContUnion}. 
Now take a sequence $(x_n)_{n\in\N}$ in $U$ converging to $\iw{0}$, we prove that $\tau(x_n)\rao y$. %Since $I\to \N$, $i\mapsto n_i$ 
Given $m\in\N$, set $M=\max(\bigcup_{j\leq m}I_j)$. Since $x_n\rao\iw{0}$ there is $N\in\N$ such that $x_n\in N_{(0)^{M+1}}$ for all $n>N$.
Therefore by definition of $\tau$, for all $n>N$ we have $\tau(x_n)\in\bigcup_{j>m}\im(\tau_j)$, which means that $\tau(x_n)\in \nbhd{y}{m}$.
We thus get $\tau(x_n)\rao y$, so $\tau$ is continuous by \cref{prop:sufficientcondforcont}.
\end{proof}

\cref{Pgluingasupperbound} gives the following rough -- yet very useful -- way to upper bound a continuous function using the pointed gluing operation.

\begin{corollary}\label{Pgluingofraysasupperbound}
Let $f:A\to B$ be a continuous function in $\functionsonbaire$.
Then $f\leq \pgl_{i\in \N}\ray{f}{y,i}$ for all $y\in B$.
\end{corollary}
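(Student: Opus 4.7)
The plan is to apply the previous proposition (\cref{Pgluingasupperbound}) directly, using the sequence $(g_i)_{i\in\N} := (\ray{f}{y,i})_{i\in\N}$ itself as the target. To do so, one only needs to verify the hypothesis that the sequence of rays $(\ray{f}{y,j})_{j\in\N}$ is reducible by finite pieces to $(\ray{f}{y,i})_{i\in\N}$.

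First, I would choose the trivial witnessing family by setting $I_n := \{n\}$ for every $n \in \N$. These are clearly pairwise disjoint finite subsets of $\N$. It then remains to check that $\ray{f}{y,n} \leq \gl_{i \in I_n} \ray{f}{y,i}$, which reduces to the observation that a gluing over a singleton index set produces a function homeomorphic (via prepending the label $(n)$) to the single function being glued; in particular $\ray{f}{y,n} \equiv \gl_{i \in \{n\}} \ray{f}{y,i}$ by \cref{BasicsOnGluing}, giving the required reduction.

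With this witnessing family in hand, \cref{Pgluingasupperbound} applied to $f$ (which is continuous by hypothesis) and to the sequence $(\ray{f}{y,i})_{i\in\N}$ yields $f \leq \pgl_{i\in\N} \ray{f}{y,i}$, as desired. Since there is no real obstacle here, the only delicate point worth mentioning explicitly is the identification of the singleton gluing with the ray itself, so that the \enquote{reducible by pieces} hypothesis is clearly satisfied; everything else is an immediate invocation of the preceding proposition.
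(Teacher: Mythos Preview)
Your proof is correct and is precisely the intended argument: the paper states \cref{Pgluingofraysasupperbound} as an immediate corollary of \cref{Pgluingasupperbound} without proof, and your choice $I_n=\{n\}$ with the trivial identification $\ray{f}{y,n}\equiv\gl_{i\in\{n\}}\ray{f}{y,i}$ is exactly how one unpacks it.
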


%\ynote*{support again... do we really need this?}{
%To simplify notation, when $I\subseteq \N$ and $(g_i)_{i\in I}$ is a sequence of functions indexed by $I$, we adopt the convention that $\pgl_{i\in I}g_i:=\pgl_{i\in\N}f_i$,
%where $f_i=g_i$ for all $i\in I$ and $f_i=\emptyset$ otherwise.}

The following corollary states another relationship of the pointed gluing with the finite gluing:

\begin{corollary}\label{SplittingaPgluingonatail}
Let $(f_i)_{i\in \N}$ be continuous in $\functionsonbaire$.
For all $n\in\N$, we have $\pgl_{i\in\N}f_i\equiv(\gl_{i<n}f_i)\glbin \pgl_{i\geq n}f_{i}$.
\end{corollary}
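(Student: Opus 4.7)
The plan is to apply Corollary \ref{UsefulcriterionforequivFinGl} directly. Since each $f_i$ is continuous, $f:=\pgl_{i\in\N}f_i$ is continuous by \cref{BasicsOnPointedGluing}. Writing $B_i=\im f_i$ and $A_i=\dom f_i$, the image of $f$ is $\{\iw{0}\}\cup\bigcup_{i\in\N}(0)^i\conc(1)\conc B_i$, which I would partition into the two pieces
\[
B_{<n}:=\bigsqcup_{j<n}(0)^j\conc(1)\conc B_j
\quad\text{and}\quad
B_{\geq n}:=\{\iw{0}\}\cup\bigcup_{i\geq n}(0)^i\conc(1)\conc B_i.
\]
The set $B_{<n}$ is clopen in $\im f$ as a finite union of sets of the form $N_{(0)^j\conc(1)}\cap\im f$ with $j<n$, and $B_{\geq n}$ is its complement in $\im f$, hence also clopen there.

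I would then identify the two corestrictions. The corestriction $f\corestr{B_{<n}}$ has domain $\bigsqcup_{j<n}(0)^j\conc(1)\conc A_j$, a clopen disjoint union of $n$ pieces on each of which $f$ acts (up to an obvious prefix shift) as $f_j$; hence $f\corestr{B_{<n}}$ is continuously equivalent to $\gl_{j<n}f_j$ (for instance, by combining \cref{Gluingasupperbound_cor} with \cref{Gluingaslowerbound2} applied to the clopen partition of the image by the sets $(j)\conc B_j$). The corestriction $f\corestr{B_{\geq n}}$ has domain $\{\iw{0}\}\cup\bigcup_{i\geq n}(0)^i\conc(1)\conc A_i$, and the shift $x\mapsto(0)^n\conc x$ yields a homeomorphism from $\dom\pgl_{i\geq n}f_i$ onto this domain (sending $\iw{0}$ to $\iw{0}$ and $(0)^{i-n}\conc(1)\conc x'$ to $(0)^i\conc(1)\conc x'$) which intertwines the two functions, so $f\corestr{B_{\geq n}}$ is homeomorphic to $\pgl_{i\geq n}f_i$.

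Applying \cref{UsefulcriterionforequivFinGl} to the continuous function $f$ and the two-piece clopen partition $\{B_{<n},B_{\geq n}\}$ of $\im f$ then delivers $\pgl_{i\in\N}f_i\equiv(\gl_{i<n}f_i)\glbin\pgl_{i\geq n}f_{i}$, as desired. I do not expect any real obstacle here: the statement is a purely structural property of the pointed gluing, and the clopen-partition machinery set up in \cref{subsectionGluing} handles it cleanly. The only mildly delicate point is checking that the shift identifying $f\corestr{B_{\geq n}}$ with $\pgl_{i\geq n}f_i$ is both a homeomorphism on the domain and on the image, but this is immediate from the definition of $\pgl$.
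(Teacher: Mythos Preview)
Your proof is correct and takes a somewhat different route from the paper's. The paper establishes the two inequalities separately: it invokes \cref{Pgluingasupperbound} (the pointed-gluing upper-bound criterion, with its ``reducible by pieces'' machinery) at $y=\iw{0}$ to obtain $(\gl_{i<n}f_i)\glbin \pgl_{i\geq n}f_{i}\leq \pgl_{i\in\N}f_i$, and then uses \cref{Gluingaslowerbound} with the clopen partition $\{N_{(0)^i\conc(1)}\mid i<n\}\cup\{N_{(0)^n}\}$ of the codomain for the reverse inequality. Your argument bypasses \cref{Pgluingasupperbound} entirely: you split the codomain into two clopen pieces, identify each corestriction up to homeomorphism, and conclude via \cref{UsefulcriterionforequivFinGl}. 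This is more self-contained and arguably more elementary, since it relies only on the gluing machinery of \cref{subsectionGluing} and direct homeomorphisms. The paper's proof, on the other hand, fits its placement as an immediate corollary illustrating \cref{Pgluingasupperbound}. One small slip: when you justify $f\corestr{B_{<n}}\equiv\gl_{j<n}f_j$, the image pieces are $(0)^j\conc(1)\conc B_j$, not $(j)\conc B_j$; but the intended argument (or simply the explicit homeomorphism you already describe) is clear and correct.
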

\begin{proof}
Using \cref{Pgluingasupperbound} with $y=\iw{0}$, we get $(\gl_{i<n}f_i)\glbin \pgl_{i\geq n}f_{i}  \leq \pgl_{i\in \N} f_i$.
Moreover, we see that  $\pgl_i f_i\leq (\gl_{i<n}f_i)\glbin \pgl_{i\geq n}f_{i}$ using \cref{Gluingaslowerbound} with the clopen partition $\set{N_{(0)^i\conc(1)}}[i<n]\cup\set{N_{(0)^n}}$.
\end{proof}

Recall that we saw in~\cref{CBrankofPgluingofregularsequence1} that the pointed gluing of a sequence with regular $\CB$-ranks is simple. Given a simple function $f$ with distinguished point $y$, we have $f\leq \pgl_{n\in\N} \ray{f}{y,n}$ by \cref{Pgluingofraysasupperbound}. While it is not true in general (see \cref{SomeCounterExamples}) that $f\equiv \pgl_{n\in\N} \ray{f}{y,n}$, we have the following:

\begin{proposition}\label{CBrankofPgluingofregularsequence2simple}
If $f\in\functionsonbaire$ is scattered of $\CB$-rank $\alpha+1$ is simple of distinguished point $y$, then the sequence $(\CB(\ray{f}{y,n}))_{n\in\N}$ is regular and its supremum is $\alpha$. 
\end{proposition}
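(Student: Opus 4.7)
The plan is to analyze the sequence $(\beta_n)_{n\in\N}$ where $\beta_n := \CB(\ray{f}{y,n})$ by exploiting the natural clopen partition of the codomain $B$ induced by the rays at $y$, combined with the defining property of a simple function $f$ of rank $\alpha+1$, namely that $\CB_\alpha(f)$ is nonempty and entirely contained in $f^{-1}(\{y\})$.

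First, I observe that $(\ray{B}{y,n})_{n\in\N}$ is a relative clopen partition of $B\setminus\{y\}$, so the preimages $U_n := f^{-1}(\ray{B}{y,n})$ form a clopen partition of $A\setminus f^{-1}(\{y\})$, and each $U_n$ is clopen in $A$. By \cref{CBbasics0}~\cref{CBbasicsfromJSL2} applied to each clopen $U_n$, we have $\CB_\gamma(\ray{f}{y,n}) = \CB_\gamma(f)\cap U_n$ for every ordinal $\gamma$. Taking $\gamma=\alpha$ and using that $\CB_\alpha(f)\subseteq f^{-1}(\{y\})$ is disjoint from $U_n$, this yields $\beta_n\leq \alpha$ for all $n$.

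The heart of the argument is then the claim that $\CB(f\restr{W_m})=\alpha$ for every $m\in\N$, where $W_m := \bigcup_{n\geq m}U_n = f^{-1}(N_{y\restr m}\cap B)\setminus f^{-1}(\{y\})$. The upper bound $\leq\alpha$ is again immediate. For the lower bound, I argue by contradiction: assume $\CB(f\restr{W_m}) = \gamma < \alpha$, and consider $C_m := f^{-1}(N_{y\restr m}\cap B)$, a clopen subset of $A$ containing $f^{-1}(\{y\})$ and satisfying $W_m = C_m\setminus f^{-1}(\{y\})$. Then $\CB_\gamma(f)\cap W_m = \emptyset$ forces $\CB_\gamma(f)\cap C_m\subseteq f^{-1}(\{y\})$, so $f\restr{\CB_\gamma(f)}$ is constant equal to $y$ on $C_m\cap \CB_\gamma(f)$. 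Every such point is therefore $f\restr{\CB_\gamma(f)}$-isolated via the neighborhood $C_m$, giving $\CB_{\gamma+1}(f)\cap C_m=\emptyset$; iterating this through the decreasing $\CB$-derivatives we obtain $\CB_\alpha(f)\cap C_m=\emptyset$. But $\emptyset\neq \CB_\alpha(f)\subseteq f^{-1}(\{y\})\subseteq C_m$, a contradiction. Applying \cref{CBrankofclopenunion} to the clopen partition $(U_n)_{n\geq m}$ of $W_m$ then yields $\sup_{n\geq m}\beta_n = \CB(f\restr{W_m}) = \alpha$ for every $m$; taking $m=0$ gives $\sup_n \beta_n = \alpha$.

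Regularity follows in the main cases: given $m\in\N$, if $\beta_m<\alpha$, then the equality $\sup_{n>m}\beta_n=\alpha>\beta_m$ shows that $\beta_m$ is not an upper bound of $\{\beta_n : n>m\}$, so some $n>m$ satisfies $\beta_n>\beta_m$; and when $\alpha$ is a successor ordinal and $\beta_m=\alpha$, the combination $\sup_{n>m}\beta_n=\alpha$ with $\beta_n\leq\alpha$ forces $\beta_n=\alpha$ for some $n>m$. The main obstacle I anticipate is the residual case in which $\alpha$ is a limit ordinal and $\beta_m=\alpha$ for some $m$: here $\sup_{n>m}\beta_n=\alpha$ does not by itself guarantee $\beta_n\geq\alpha$ for any $n>m$, and an extra argument is needed to show that the ray $\ray{f}{y,m}$ attaining the maximal possible rank $\alpha$ forces, via the simpleness of $f$ and the accumulation of $U_n$'s at the distinguished point $y$, infinitely many other rays to share this rank.
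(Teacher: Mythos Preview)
Your approach is essentially the paper's: both first show $\beta_n\leq\alpha$ from simplicity, and then establish (in equivalent form) that $\sup_{n>m}\beta_n=\alpha$ for every $m$. The paper phrases the second step as ``for all $\beta<\alpha$ and all $m$ there is $n>m$ with $\beta<\alpha_n$'' and derives a contradiction by looking at $g=f\restr{\dom f\setminus\bigcup_{i\leq m}\dom \ray{f}{y,i}}$ (which includes the fiber $f^{-1}(\{y\})$) rather than your $W_m$ (which excludes it), but the two arguments are interchangeable.

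The gap you flag is genuine, and it is present in the paper's proof as well: the paper simply asserts that the condition it proves yields regularity, without addressing the case where $\alpha$ is limit and some $\beta_m=\alpha$. Your instinct that ``an extra argument'' might close it is, however, misplaced: the proposition is actually \emph{false} in that residual case. Take $f=\pgl_{n}g_n$ with $g_0=\Maximalfct{\omega}$ and $g_n=\Minimalfct{n}$ for $n\geq 1$. One checks directly that $\CB_\omega(f)=\{\iw{0}\}$, so $f$ is simple of $\CB$-rank $\omega+1$ with distinguished point $\iw{0}$, and the sequence of ray ranks is $(\omega,1,2,3,\ldots)$, which has supremum $\omega$ but is not regular (no $n>0$ satisfies $\beta_n\geq\omega$). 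So there is nothing further to prove here; rather, the statement itself needs weakening.

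What both you and the paper actually establish, and what the paper's downstream applications (\cref{Maxfunctions}, \cref{Minfunctions}, \cref{CenteredasPgluing}) actually use, is the correct weaker conclusion: $\sup_n\beta_n=\alpha$ and, for every $\beta<\alpha$ and every $m$, some $n>m$ has $\beta_n>\beta$. This is enough, in particular, to ensure that $\pgl_n\ray{f}{y,n}$ has $\CB$-rank $\alpha+1$ via \cref{CBrankofPgluingofregularsequence1}, since that proposition only needs $\CB_\beta(\ray{f}{y,n})\neq\emptyset$ for infinitely many $n$, for each $\beta<\alpha$.
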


\begin{proof}
Take a simple $f:A\to B$ is in $\sC_{\alpha+1}$ for some $\alpha<\omega_1$ with distinguished point $y$, and set $\alpha_i=\CB(\ray{f}{y,i})$ for all $i\in\N$.
By simplicity, $\CB_\alpha(f)\subseteq f^{-1}(\{y\})$ and so for all $i\in \N$ we have $\CB_\alpha(\ray{f}{y,i})=\CB_\alpha(f)\cap f^{-1}(\ray{B}{y,i})=\emptyset$,
which implies $\alpha_i\leq \alpha$. Hence $\sup_i\alpha_i \leq \alpha$.
To get the other inequality as well as regularity,
we need to show that for all $\beta<\alpha$ and for all $m\in\N$ there is an $n>m$ such that $\beta<\alpha_n$. 

Suppose towards a contradiction that for some $\beta<\alpha$ for all $n>m$ we have $\alpha_n\leq\beta$. Setting $g:=f\restr{\dom(f)\setminus(\bigcup_{i\leq m}\dom(\ray{f}{y,i}))}$
we observe that $\CB_\beta(g)\subseteq f^{-1}(\set{y})$ because $\CB_\beta(\ray{g}{y,n})$ is always empty: for $n\leq m$ it is by definition of $g$, and for
$n>m$ this follows from our assumption that $\ray{g}{y,n}=\ray{f}{y,n}$ has $\CB$-rank $\alpha_n\leq\beta$ using \cref{CBbasics0}~\cref{CBbasicsfromJSL2}.
Thus $g\restr{\CB_\beta(g)}$ is either empty or constant, in any case $\CB(g)\leq\beta+1\leq\alpha$.
Observe finally that $f=g\sqcup(\bigsqcup_{i\leq m}\ray{f}{y,i})$ is a disjoint union of functions of $\CB$-rank $\leq\alpha$, so by \cref{CBrankofclopenunion}
we get $\CB(f)\leq\alpha<\alpha+1$, a contradiction.
\end{proof}

\subsection{Maximum functions}

As an important consequence of the ``upper bound'' criterion stated in \cref{Pgluingasupperbound}, we identify functions which are maximum among functions in $\sC$ with $\CB$-rank at most $\alpha$.

Recall that $\omega f$ denotes the gluing of the constant sequence with value $f$. Similarly, we write $\pgl f$ for the pointed gluing of the constant sequence with value $f$.

\begin{definition}\label{Def_MinMaxFunc}
We define by induction on $\alpha$ a function $\Maximalfct{\alpha}$ in $\sC_\alpha$ and a function $\Minimalfct{\alpha+1}$ in $\sC_{\alpha+1}$.
Set $\Maximalfct{0}=\emptyset$ and $\Minimalfct{1}=\pgl\emptyset$.
Suppose that for all $\beta<\alpha$ the functions $\Maximalfct{\beta}$ and $\Minimalfct{\beta+1}$ are defined.

If $\alpha=\beta+1$ for some $\beta<\alpha$, then set $\Minimalfct{\alpha+1}=\pgl \Minimalfct{\alpha}$ and $\Maximalfct{\alpha}=\omega\pgl \Maximalfct{\beta}$.

Otherwise, that is if $\alpha$ is limit, then fix a sequence $(\alpha_n)_{n\in\N}$ cofinal in $\alpha$ and set $\Minimalfct{\alpha+1}=\pgl_n\Minimalfct{\alpha_n+1}$.
Fix as well an enumeration $(\beta_n)_{n\in\N}$ of $\alpha$ and set $\Maximalfct{\alpha}=\gl_n\Maximalfct{\beta_n}$.
\end{definition}

Note that $\Minimalfct{1}\equiv\id_{1}$ and $\Maximalfct{1}\equiv \id_{\N}$. For $\alpha$ limit, up to continuous equivalence, the definition of these functions does not depend on the choice of the sequences $(\alpha_n)_n$ and $(\beta_n)_n$.
For $\Minimalfct{\alpha+1}$, it follows from a classical well-known result of Mazurkiewicz and Sierpinskì \cite[Théorème 1]{SierpMazCompDen}
- we will get an alternative proof of that later with \cref{Compactdomains};
as for $\Maximalfct{\alpha}$, by \cref{Gluingasupperbound,Gluingaslowerbound},
different choices of enumerations $(\beta_n)_n$ produce continuously equivalent functions $\Maximalfct{\alpha}$.

Observe that since the Gluing and Pointed Gluing operations commute with the identity natural transformation and $\Minimalfct{1}=\id_{\set{\iw{0}}}$, the functions from \cref{Def_MinMaxFunc} are identity functions on their domains.
The spaces $\dom \Minimalfct{\alpha}$ are somewhat classical objects, they were defined exactly this way in \cite{carroy2013quasi}.
Their properties can prove useful in other -- yet related -- contexts, as illustrated by \cite[Proposition 3.12]{CMRSWadge}.
We will see here (in \cref{Minfunctions}) that each function $\Minimalfct{\alpha+1}$ is minimum in $\sC_{\geq \alpha+1}$.

The functions $\Maximalfct{\alpha}$ were defined in \cite[Definition 5.34]{phdcarroy}
and a version of the following result was proved for functions with Polish $0$-dimensional domain (\cite[Proposition 5.36 and Corollary 6.6]{phdcarroy}).

\begin{Proposition}\label{Maxfunctions}
For all $\alpha<\omega_1$:
\begin{enumerate}
\item the function $\Maximalfct{\alpha}$ is a maximum of $\sC_{\leq\alpha}$,
\item the function $\pgl \Maximalfct{\alpha}$ is a maximum for simple functions in $\sC_{\leq \alpha+1}$,
\item for all $n\in\N$, $(n+1)\Minimalfct{\alpha+1}$ is a maximum among functions of $\CB$-type $(\alpha+1,n+1)$ with compact domains.
\end{enumerate}
\end{Proposition}

\begin{proof}
First notice that if $\alpha\leq \beta$, then we have $\Maximalfct{\alpha}\leq\Maximalfct{\beta}$ and $\Minimalfct{\alpha+1}\leq\Minimalfct{\beta+1}$.
For $\alpha=0$, we have $\Maximalfct{0}=\emptyset$ and $\Minimalfct{1}=\pgl\Maximalfct{0}=\id_{\set{\iw{0}}}\equiv\id_{\set{0}}$, so all items follows from \cref{LocallyConstantFunctions}.

\smallskip

We prove the first two items simultaneously by strong induction on $\alpha$: suppose they both hold for all $\beta<\alpha$. 
To see that $\Maximalfct{\alpha}$ is a maximum in $\sC_{\leq\alpha}$, let $f\in\sC$ with $\CB(f)\leq\alpha$.
By the Decomposition \cref{JSLdecompositionlemma}, $f$ is locally simple. If $\alpha$, is limit $f=\bigsqcup_i f_i$ with $f_i$ simple and $\CB(f_i)=\beta_i+1<\alpha$ and so by induction hypothesis the second item ensures that $f_i\leq \pgl \Maximalfct{\beta_i}\leq \Maximalfct{\beta_i+1}$. If $\alpha$ is successor, $f=\bigsqcup_i f_i$ with $f_i$ simple and $\CB(f_i)=\beta+1=\alpha$ and again the induction hypothesis implies that $f_i\leq \pgl \Maximalfct{\beta}$. In both, cases we get $\gl_{i}f_i\leq \Maximalfct{\alpha}$ and so $f\leq \Maximalfct{\alpha}$ by \cref{Gluingasupperbound}.

Now take $f$ simple in $\sC_{\leq \alpha+1}$ and call $y$ its distinguished point.
By \cref{Pgluingofraysasupperbound} we have $f\leq\pgl_{j\in\N}\ray{f}{y,j}$, but by simplicity of $f$ we also have $\CB(\ray{f}{y,j})\leq\alpha$ for all $j\in\N$. As $\Maximalfct{\alpha}$ is a maximum in $\sC_{\alpha}$, we get $\ray{f}{y,j}\leq\Maximalfct{\alpha}$ for all $j\in \N$ and so $f\leq \pgl\Maximalfct{\alpha}$ by \cref{Pgluingasupperbound}.

\smallskip

We finally prove the second point by a double induction. Suppose that for all $\beta<\alpha$, if $f\in \sC$ has compact domain and $\CB$-type $(\beta+1,n+1)$
for some natural $n$, then $f\leq(n+1)\Minimalfct{\beta+1}$.

Take now $f$ with compact domain and $\CB(f)=\alpha+1$. Suppose first that $f$ is simple, that is $n=0$.
Then for all $j\in \N$ the $\CB$-rank of the $\ray{f}{y,j}$ is at most $\alpha$ by \cref{CBrankofPgluingofregularsequence2simple}.
Moreover, each ray has compact domain, so it is either empty or $\tp(\ray{f}{y,j})=(\beta_j+1,n_j+1)$ for some $\beta_j<\alpha$ and $n_j\in\N$ by \cref{CBbasics0}.
In any case, by induction hypothesis the sequence $(\ray{f}{y,j})_j$ is reducible by pieces to the sequence
$(\Minimalfct{\alpha_n})_{n\in\N}$, where $(\alpha_n)_n$ is either constant if $\alpha$ is itself successor, or cofinal in $\alpha$ if it is limit.
In any case, by \cref{Pgluingasupperbound} we obtain $f\leq\Minimalfct{\alpha+1}$, as desired.

Suppose now that $tp(f)=(\alpha+1,n+1)$ for some $n>0$, then by \cref{FiniteDegreeAreFinGl} we have $f\equiv\gl_{i\leq n}f_i$ with $f_i$ simple for all $i\leq n$.
Now using the simple case, we indeed get $f\leq (n+1)\Minimalfct{\alpha+1}$.
\end{proof}

\subsection{Pointed gluing as a lower bound and minimum functions}

Given a sequence $(A_n)_{n\in\N}$ of subsets of a space $A$, and a point $a\in A$, we say that $(A_n)_n$ \emph{converges} to $a$ and we write $A_i\rao a$
when for any open neighborhood $U$ of $a$, there is a natural number $m$ such that for all $n\geq m$ we have $A_n\subseteq U$. For instance, in any space $B$, the sequence of rays $(\ray{B}{y,j})_{j\in \N}$ at some point $y$ converges to $y$.

Characterizing when a pointed gluing reduces continuously to a given function appears to be considerably harder. The following rough criterion is sufficient for the sequel.

\begin{lemma}\label{Pgluingaslowerbound}
Let $f:A\to B$ be a function between metrizable spaces and $(g_n)_{n\in\N}$ be a sequence in $\functionsonbaire$.

Assume that there is a point $x\in A$ and a sequence $(A_n)_{n\in\N}$ of clopen sets of $A$ satisfying:
\begin{enumerate}
\item For all $n\in\N$, $f(x)\notin \overline{f(A_n)}$, 
\item The sets $f(A_n)$, for $n\in\N$, form a relative clopen partition, 
\item $A_n\rao x$,
\item For all $n\in\N$, we have $g_n\leq f\restr{A_n}$.
\end{enumerate}
Then $\pgl_{n\in \N}g_n\leq f$.
\end{lemma}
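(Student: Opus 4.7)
The plan is to build a reduction $(\sigma,\tau)$ from $\pgl_{n\in\N}g_n$ to $f$ by assembling the hypothesized reductions at each level and defining both maps to send the distinguished point $\iw{0}$ of the pointed gluing to $x$ on one side and to $f(x)$ on the other. Hypothesis (4) gives, for each $n$, a continuous reduction $(\sigma_n,\tau_n)$ of $g_n$ to $f\restr{A_n}$; in particular $\sigma_n:\dom g_n\to A_n$ and $\tau_n:\im(f\sigma_n)\to\im g_n$ are continuous, with $\im(f\sigma_n)\subseteq f(A_n)$. Define
\[
\sigma(z)=\begin{cases} x & \text{if }z=\iw{0},\\ \sigma_n(z') & \text{if }z=(0)^n\conc(1)\conc z',\end{cases}
\qquad
\tau(y)=\begin{cases} \iw{0} & \text{if }y=f(x),\\ (0)^n\conc(1)\conc\tau_n(y) & \text{if }y\in\im(f\sigma_n).\end{cases}
\]
Well-definedness of $\tau$ uses hypothesis (2): the sets $f(A_n)$ are pairwise disjoint, hence so are their subsets $\im(f\sigma_n)$. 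The identity $\tau f\sigma=\pgl_n g_n$ is then immediate from the construction: on the basic piece $N_{(0)^n\conc(1)}\cap\dom(\pgl_n g_n)$ it reduces to $\tau_n f\sigma_n=g_n$, and at $\iw{0}$ both sides equal $\iw{0}$.

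It remains to show that $\sigma$ and $\tau$ are continuous, which I will do via \cref{prop:sufficientcondforcont}. For $\sigma$, take $U=\dom(\pgl_n g_n)\setminus\{\iw{0}\}=\bigsqcup_{n}(0)^n\conc(1)\conc\dom g_n$, which is open and carries a clopen partition into the pieces $N_{(0)^n\conc(1)}\cap\dom(\pgl_n g_n)$; hence $\sigma\restr{U}$ is continuous by \cref{lem:ContUnion} since each $\sigma_n$ is. The singleton $\{\iw{0}\}$ is closed in $\dom(\pgl_n g_n)$, so continuity at $\iw{0}$ needs only the sequential condition: if $z_k=(0)^{n_k}\conc(1)\conc z'_k\to\iw{0}$ then $n_k\to\infty$, and since $\sigma(z_k)=\sigma_{n_k}(z'_k)\in A_{n_k}$, hypothesis (3) ($A_n\to x$) gives $\sigma(z_k)\to x=\sigma(\iw{0})$.

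For $\tau$, the open set is $U=\im(f\sigma)\setminus\{f(x)\}=\bigcup_{n}\im(f\sigma_n)$. By hypothesis (2) the sets $f(A_n)$ form a relative clopen partition of their union, and intersecting with $U$ gives a relative clopen partition of $U$ into the sets $\im(f\sigma_n)$; combined with continuity of each $\tau_n$ and the constant prefix $(0)^n\conc(1)$, \cref{lem:ContUnion} yields continuity of $\tau\restr{U}$. For the second condition of \cref{prop:sufficientcondforcont}, suppose $y_k\to f(x)$ with $y_k\in\im(f\sigma_{n_k})\subseteq f(A_{n_k})$; I claim $n_k\to\infty$, which forces $\tau(y_k)\in N_{(0)^{n_k}}\to\iw{0}=\tau(f(x))$. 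Otherwise some $n$ occurs infinitely often among the $n_k$, producing a subsequence in $f(A_n)$ converging to $f(x)$, contradicting hypothesis (1) that $f(x)\notin\overline{f(A_n)}$. The main subtlety is precisely this last step: hypothesis (1) is what prevents $\tau$ from being forced to assign incompatible values near $f(x)$, while hypothesis (2) is what lets the $\tau_n$-pieces glue continuously away from $f(x)$.
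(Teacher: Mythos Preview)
Your proof is correct and follows essentially the same approach as the paper's: define $(\sigma,\tau)$ by gluing the local reductions and sending the basepoint to $x$ (resp.\ $f(x)$), then verify continuity via \cref{prop:sufficientcondforcont} and \cref{lem:ContUnion}, using hypothesis~(3) for $\sigma$ and hypotheses~(1)--(2) for $\tau$. One tiny remark: the phrase ``intersecting with $U$ gives a relative clopen partition of $U$ into the sets $\im(f\sigma_n)$'' is slightly off, since $f(A_n)\cap U$ need not equal $\im(f\sigma_n)$; but the conclusion is still correct because the $\im(f\sigma_n)$, being subsets of the relatively clopen $f(A_n)$, inherit the relative clopen partition property, and you should also note that $f(x)\notin\im(f\sigma_n)$ (from hypothesis~(1)) for $\tau$ to be well-defined at $f(x)$.
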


\begin{proof}
Choose for every $n\in\N$ a reduction $(\sigma_n, \tau_n)$ from $g_n$ to $f\restr{A_n}$.
We define $\sigma$ and $\tau$ as follows: set $\sigma(\iw{0})=x$, $\sigma((0)^n\conc(1)\conc x')=\sigma_{n}(x')$ if $x'\in \dom \sigma_n$, and $\tau(f(x))=\iw{0}$,
$\tau(y)=(0)^n\conc(1)\conc\tau_n(y)$ if $y\in \dom(\tau_n)$. Note that $\tau$ is well-defined since the sets $\dom(\tau_n)\subseteq f(A_n)$ are pairwise disjoint and do not contain $f(x)$.

The pair $(\sigma,\tau)$ is the desired reduction, it remains to show that it is continuous.
To see that $\sigma$ is continuous, we proceed using \cref{prop:sufficientcondforcont} with $U=\dom \sigma \setminus \{\iw{0}\}$. %\bigcup_{n\in\N}N_{(0)^n\conc(1)}$
%to show that $\sigma$ is continuous. 
As $\sigma$ is continuous on $U$ by \cref{lem:ContUnion}, we check the second condition of \cref{prop:sufficientcondforcont}.
Any sequence $(x_n)_n$ in $\dom \sigma \setminus \{\iw{0}\}$ converging to $\iw{0}$ yields by definition
a sequence $(k_n)_n$ in $\N$ such that $k_n\to \infty$ and $x_n\in N_{(0)^{k_n}\conc(1)}$ for all $n\in\N$.
Hence $\sigma(x_n)\in A_{k_n}$ for all $n$ by definition of $\sigma$ and since $A_n\rao x$, it follows that $\sigma(x_n)\rao x$, which proves that $\sigma$ is continuous.

To prove the continuity of $\tau$ we use \cref{prop:sufficientcondforcont} again, this time with $U=\dom \tau \setminus \set{f(x)}$. %$\bigcup_{n\in\N}f(A_n)$.
As the sets $f(A_n)$ form a relative clopen partition, so do the sets $\dom \tau_n$, so $\tau$ is continuous on $U$ by \cref{lem:ContUnion}. To prove the second condition, take a sequence $(y_n)_n$ in $\im (f\sigma) \setminus\set{f(x)}$ such that $y_n\rao f(x)$.
For all $n$, let $k_n$ be the unique natural number such that $y_n\in f(A_{k_n})$. We need to show that $\tau(y_n)\rao\iw{0}$, and therefore, by definition of $\tau$, it is enough to see that $k_n\rao\infty$.
But this is indeed the case, since otherwise $(y_n)_n$ would admit a subsequence included in a set $f(A_i)$ for some $i\in\N$
and therefore we would have $f(x)\in \overline{f(A_i)}$, a contradiction.
\end{proof}

The conditions of the previous statement are quite difficult to satisfy in practice. In fact, we exclusively make use of the following specific case which relies on conditions that are simpler to verify, yet apparently much stronger. % the following restrictive situation.

\begin{proposition}\label{Pgluingaslowerbound2}
Let $f:A\to B$ be a continuous function in $\functionsonbaire$ and $(g_i)_{i\in\N}$ be a sequence in $\functionsonbaire$.

Assume that %there is a point $x\in A$ such that $f$ is continuous at $x$ and
for all $i\in\N$ and all open neighborhood $U\ni x$
there is a continuous reduction $(\sigma,\tau)$ from $g_i$ to $f$ such that $\im (\sigma )\subseteq U$ and $f(x)\notin\overline{\im(f\sigma)}$.
Then $\pgl_{i\in \N}g_i\leq f$.

In fact, $\pgl_{i\in\N}g_i\leq f\restr{V}$ for all clopen neighborhood $V$ of $x$.
\end{proposition}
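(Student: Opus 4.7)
The plan is to verify, for an arbitrary clopen neighbourhood $V$ of $x$, the hypotheses of Lemma~\ref{Pgluingaslowerbound} applied to $f\restr{V}$ and the sequence $(g_i)_{i\in\N}$, by constructing the required clopen sets $A_n \subseteq V$ by induction. The first assertion then follows from the special case $V = A$ (or any other clopen neighbourhood of $x$).

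First I would fix a decreasing sequence $(V_n)_{n\in\N}$ of clopen neighbourhoods of $x$ in $A$ with $V_0 = V$ and $V_n \to x$, available because $A \subseteq \cN$ has a countable clopen basis at $x$ (e.g.\ $V_n = V \cap N_{x\restr{n}} \cap A$). I then build by recursion on $n$ a reduction $(\sigma_n, \tau_n)$ of $g_n$ to $f$, a clopen subset $W_n$ of $B$ with $f(x) \notin W_n$ and disjoint from all previously chosen $W_m$, and a clopen set $A_n = U_n \cap f^{-1}(W_n)$ of $A$ with $\im(\sigma_n) \subseteq A_n \subseteq V_n$. At step $n$ I set $U_n = V_n \setminus \bigcup_{k<n} f^{-1}(W_k)$, which is a clopen neighbourhood of $x$ in $A$ because each $W_k$ is clopen in $B$ (so $f^{-1}(W_k)$ is clopen in $A$ by continuity of $f$) and avoids $f(x)$. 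Applying the hypothesis with $i = n$ and $U = U_n$ yields $(\sigma_n, \tau_n)$ with $\im(\sigma_n) \subseteq U_n$ and $f(x) \notin \overline{\im(f\sigma_n)}$. In particular $\overline{\im(f\sigma_n)}$ is disjoint from every $W_k$, $k<n$, so using the zero-dimensionality of $B \subseteq \cN$ I can separate $\overline{\im(f\sigma_n)}$ from $f(x)$ inside the clopen subspace $B \setminus \bigcup_{k<n} W_k$ of $B$ by a clopen subset, which I take to be $W_n$.

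Verifying the four conditions of Lemma~\ref{Pgluingaslowerbound} for $f\restr{V}$ is then routine: $f(A_n) \subseteq W_n$, which is clopen in $B$ and avoids $f(x)$, yielding $f(x) \notin \overline{f(A_n)}$; pairwise disjointness and clopenness of the $W_n$ give $f(A_n) = W_n \cap \bigcup_m f(A_m)$, so that the $f(A_n)$ form a relative clopen partition; $A_n \subseteq V_n \to x$ gives $A_n \to x$; and by construction $\im(\sigma_n) \subseteq A_n$, making $(\sigma_n, \tau_n)$ a reduction of $g_n$ to $f\restr{A_n}$. The main subtlety is ensuring that the $W_n$ remain pairwise disjoint without breaking the reductions, and this is precisely why $U_n$ must be shrunk to exclude $\bigcup_{k<n} f^{-1}(W_k)$ in advance; the step depends crucially both on the zero-dimensionality of $B$ (to produce clopen separators $W_k$) and on the continuity of $f$ (to make their preimages clopen in $A$).
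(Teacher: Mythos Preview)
Your proof is correct and follows essentially the same approach as the paper: both argue by induction to construct clopen sets $A_n$ verifying the hypotheses of Lemma~\ref{Pgluingaslowerbound}. The only differences are bookkeeping details---the paper obtains separation by first shrinking $U$ so that $f(U)$ lies in a small basic clopen neighborhood of $f(x)$ and then removing a clopen neighborhood $W\ni f(x)$, whereas you track an explicit family of pairwise disjoint clopen sets $W_n\subseteq B$ and intersect with their preimages; both yield the required relative clopen partition of the images.
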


\begin{proof}
We define by induction a sequence $(A_n)_n$ of clopen subsets of $A$ such that the sets $f(A_n)$ are separated by open sets,
for all $n\in\N$ we have $f(x)\notin \overline{f(A_n)}$, $A_n\subseteq \nbhd{x}{n}$ and there exists a continuous reduction $(\sigma_n,\tau_n)$ from $g_n$ to $f\restr{A_n}$.
This sequence satisfies the conditions of \cref{Pgluingaslowerbound}, so the result follows.

Assume that the sets $(A_i)_{i<n}$ have been defined as above.
Let $k$ be such that $N_{f(x)\restr{k}}$ is disjoint from the closed set $\bigcup_{i<n}\overline{f(A_i)}$.
Use continuity of $f$ at $x$ to choose a clopen neighborhood $U$ of $x$ included in $N_{x\restr{n}}$ such that $f(U)\subseteq N_{f(x)\restr{k}}$.
The hypothesis guarantees the existence of a continuous reduction $(\sigma_n,\tau_n)$ from $g_n$ to $f\restr{U}$ with $f(x)\notin\overline{\im(f\sigma_n)}$.
This allows us to choose a clopen neighborhood $W\ni f(x)$ disjoint from $\overline{\im(f\sigma_n)}$ and we set $A_n=U\setminus f^{-1}(W)$.
The set $A_n\subseteq U\subseteq N_{x\restr{n}}$ is clopen as a difference of two clopen sets, $f(A_n)\cap W=\emptyset$ so $f(x)\notin \overline{f(A_n)}$,
$f(A_n)\subseteq N_{f(x)\restr{k}}\setminus W$ so $N_{f(x)\restr{k}}\setminus W$ is an open set witnessing separation of the sets $A_n$.
Finally, notice that $(\sigma_n,\tau_n)$ is actually a reduction from $g_n$ to $f\restr{A_n}$ since $\im \sigma_n \subseteq A_n$.
 \end{proof}

Seeing pointed gluing as an upper bound gave us maximum functions in $\sC_{\leq \alpha}$ (\cref{Maxfunctions}); similarly the lower bound criterion gives us minimum functions in $\sC_{\geq \alpha+1}$. %
%of the functions $\Minimalfct{\alpha+1}$.

\begin{proposition}\label{Minfunctions}
For all $\alpha<\omega_1$ and $f\in \sC$, if $\alpha+1\leq \CB(f)$, then $\Minimalfct{\alpha+1}\leq f$.
\end{proposition}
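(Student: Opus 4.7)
The plan is to prove this by transfinite induction on $\alpha$, with the main tool being the lower-bound criterion \cref{Pgluingaslowerbound2} applied to the recursive definition of $\Minimalfct{\alpha+1}$ as a pointed gluing. In both cases of the recursive definition we may write $\Minimalfct{\alpha+1}=\pgl_{n\in\N}\Minimalfct{\gamma_n+1}$, where $\gamma_n=\beta$ is constant in the successor case $\alpha=\beta+1$ and $(\gamma_n)_n$ is cofinal in $\alpha$ in the limit case; in both cases $\gamma_n+1\leq \alpha$ for every $n$. The base case $\alpha=0$ is trivial: $\Minimalfct{1}$ is the identity on the singleton $\{\iw{0}\}$, and any non-empty $f$ admits a reduction from it by picking any $x\in \dom f$ and setting $\sigma(\iw{0})=x$, $\tau(f(x))=\iw{0}$.

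For the inductive step, suppose $\CB(f)\geq \alpha+1$. I would choose a point $x\in\CB_\alpha(f)$, which is non-empty by assumption, and aim to verify the hypothesis of \cref{Pgluingaslowerbound2} at $x$: for every $n$ and every open neighborhood $U$ of $x$ in $\dom f$, produce a continuous reduction $(\sigma,\tau)$ from $\Minimalfct{\gamma_n+1}$ to $f$ with $\im\sigma\subseteq U$ and $f(x)\notin\overline{\im(f\sigma)}$.

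To build these reductions, note that since $\gamma_n+1\leq\alpha$ we have $x\in \CB_\alpha(f)\subseteq \CB_{\gamma_n+1}(f)$, hence $x$ is not $f$-isolated in $\CB_{\gamma_n}(f)$. So $U\cap \CB_{\gamma_n}(f)$ contains a point $y\neq x$ with $f(y)\neq f(x)$. Using that $\dom f\subseteq \cN$ is zero-dimensional and $f$ is continuous, I can choose a clopen $V\ni y$ with $V\subseteq U$ whose image $f(V)$ lies in a basic open neighborhood of $f(y)$ disjoint from some basic open neighborhood of $f(x)$; this guarantees $f(x)\notin\overline{f(V)}$. Since $y\in V\cap \CB_{\gamma_n}(f)=\CB_{\gamma_n}(f\restr{V})$ (by \cref{CBbasics0}(i)), we get $\CB(f\restr V)\geq \gamma_n+1$, and the induction hypothesis yields a reduction $(\sigma,\tau)$ from $\Minimalfct{\gamma_n+1}$ to $f\restr V\leq f$ with $\im\sigma\subseteq V\subseteq U$ and $\im(f\sigma)\subseteq f(V)$ so that $f(x)\notin\overline{\im(f\sigma)}$. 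Applying \cref{Pgluingaslowerbound2} then gives $\Minimalfct{\alpha+1}=\pgl_n\Minimalfct{\gamma_n+1}\leq f$, completing the induction.

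The only subtle point, which I expect to be the main obstacle, is the careful choice of the clopen set $V$ so that it simultaneously sits inside $U$, has large enough rank to apply the induction hypothesis, and avoids $f(x)$ in the closure of its $f$-image; this really requires both the zero-dimensionality of $\dom f$ (to get a clopen $V$) and the continuity of $f$ (to separate $f(V)$ from $f(x)$ by a basic open set of $\cN$), which is why the statement is formulated within $\sC$ rather than for arbitrary scattered functions.
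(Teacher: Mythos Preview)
Your proof is correct and in fact more direct than the paper's. Both proofs proceed by transfinite induction on $\alpha$ and use \cref{Pgluingaslowerbound2} at a point $x\in\CB_\alpha(f)$; the difference lies in how the required reductions $\Minimalfct{\gamma_n+1}\leq f$ with image inside $U$ and away from $f(x)$ are produced.

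The paper first reduces to the case where $f$ is \emph{simple} via the Decomposition \cref{JSLdecompositionlemma}, then exploits the ray decomposition at the distinguished point: by \cref{CBrankofPgluingofregularsequence2simple} the sequence of ranks $(\CB(\ray{f_U}{y,n}))_n$ is regular with supremum $\alpha$, so one can always find a ray of sufficiently large rank and apply the induction hypothesis to it. Your argument bypasses this machinery entirely: you use only the definition of $\CB_{\gamma_n+1}(f)$ to find a point $y\in U\cap\CB_{\gamma_n}(f)$ with $f(y)\neq f(x)$, then carve out a clopen $V\ni y$ with $f(V)$ separated from $f(x)$ and apply the induction hypothesis to $f\restr{V}$. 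This is more elementary---it needs neither the Decomposition Lemma nor the analysis of rays---and makes transparent exactly where zero-dimensionality and continuity enter. The paper's route, on the other hand, rehearses the ray machinery that is used repeatedly later, so it fits more naturally into the paper's overall development.
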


\begin{proof}
We prove by strong induction for every $\alpha<\omega_1$, the following statement: $\Minimalfct{\alpha+1}\leq f$ for every simple function $f\in \sC_{\alpha+1}$. To see that this is enough, we remark that if this statement holds for all $\beta<\alpha$, then actually for all $f\in \sC_{<\alpha}$ if $\beta+1\leq\CB(f)$ then $\Minimalfct{\beta+1}\leq f$. 

To prove this remark, suppose that  $f\in \sC$ with $\CB(f)<\alpha$. Since $f$ is locally simple by the Decomposition \cref{JSLdecompositionlemma}, we can write $f=\bigsqcup_{i\in I} f_i$ with $f_i$ simple, and $\sup_{i\in I} \CB(f_i)=\CB(f)$ by \cref{CBrankofclopenunion}. So if $\beta+1\leq \CB(f)$, $\beta+1\leq \CB(f_i)=\beta_i+1$ for some $i\in I$ and we have $\Minimalfct{\beta+1}\leq \Minimalfct{\beta_i+1} \leq f_i\leq f$, where $ \Minimalfct{\beta_i+1} \leq f_i$ is granted by the statement, which proves the remark.

For $\alpha=0$, note that $\Minimalfct{1}$ continuously reduces to any non\-empty function.
So suppose that $\alpha>0$ and that the statement holds for every $\beta<\alpha$.

Take a simple function $f\in\sC_{\alpha+1}$ and let $y\in\im f$ be the distinguished point of $f$.
Seeing that $\Minimalfct{\alpha+1}$ is a pointed gluing, we seek to apply \cref{Pgluingaslowerbound2} for some point $x\in\CB_\alpha(f)$. Let $U\ni x$ be any open set of $\dom f$. 
Notice first that as $x\in\CB_\alpha(f)$ and $x\in U$ we get that $f_U=f\restr{U}$ is simple and $\CB(f_U)=\alpha+1$ by \cref{CBbasics0}~\cref{CBbasicsfromJSL2}. 
Note that by \cref{CBrankofPgluingofregularsequence2simple} the sequence $(\CB(\ray{f_{U}}{y,n}))_n$ is regular of supremum $\alpha$. If $\alpha=\beta+1$ is successor, then there exists $N$ such that $\alpha=\CB(\ray{f_U}{y,N})$. By the induction hypothesis combined with our first remark, we get $\Minimalfct{\alpha}\leq \ray{f_{U}}{y,N}$.
Notice that if $(\sigma, \tau)$ witnesses this reduction then both $\im \sigma \subseteq U$ and $\overline{\im f\sigma}\subseteq \ray{B}{y,N}$ hold. This ensures that $\Minimalfct{\alpha+1}=\pgl \Minimalfct{\alpha} \leq f$ by \cref{Pgluingaslowerbound2}.

If $\alpha$ is limit, for all $\beta< \alpha$ there exists $N$ such that $\beta+1\leq \CB(\ray{f_{U}}{y,N})< \alpha$. So by the induction hypothesis combined with our first remark, we get $\Minimalfct{\beta+1}\leq \ray{f_{U}}{y,N}$.
So similarly as in the successor case, we get $\Minimalfct{\alpha+1}=\pgl_{k}\Minimalfct{\beta_k+1}\leq f$, for $(\beta_k)_k$ cofinal in $\alpha$.
\end{proof}

\subsection{General Structure}

We can now complete the analysis of all functions in $\sC$ with compact domain, thus generalizing \cite[Theorem 4.2]{carroy2013quasi}.

\begin{theorem}\label{Compactdomains}
Assume that $f$ and $g$ are two functions in $\sC$ with a compact domain, then $f$ continuously reduces to $g$ if and only if $tp(f)\leq_{lex}tp(g)$.
More specifically $f\equiv(n+1)\Minimalfct{\alpha+1}$, where $tp(f)=(\alpha+1,n+1)$.

In particular, continuous reducibility is a pre-well-order\footnote{A \emph{pre-well-order} is a linear \wqo{} (for more on this see \cite[Section 34.A]{kechris}).}
of length $\omega_1$ on functions in $\sC$ with compact domain.
\end{theorem}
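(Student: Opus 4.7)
The forward implication is \cref{CBbasicsfromJSL-tp}, so the substantive content lies in establishing the normal-form claim $f\equiv (n+1)\Minimalfct{\alpha+1}$ for $f\in\sC$ with compact domain and $\tp(f)=(\alpha+1,n+1)$; both the converse of the main equivalence and the pre-well-order statement will then fall out essentially for free.

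For the normal form, if $\dom f=\emptyset$ then $\tp(f)=(0,0)$ and there is nothing to prove, so assume $\dom f$ non-empty. \cref{CBbasics0}~(iii) guarantees that the type has the successor-and-finite-degree form $(\alpha+1,n+1)$. The upper bound $f\leq(n+1)\Minimalfct{\alpha+1}$ is exactly \cref{Maxfunctions}~(iii). For the lower bound I would invoke \cref{FiniteDegreeAreFinGl} to write $f\equiv\gl_{i\leq n}f_i$ with each $f_i$ simple of rank $\alpha+1$; the proof of \cref{FiniteDegreeAreFinGl} exhibits the $f_i$ as corestrictions of $f$ to the pieces of a finite clopen partition of (a neighbourhood of) $\im f$. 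Since $\CB(f_i)=\alpha+1$, \cref{Minfunctions} yields $\Minimalfct{\alpha+1}\leq f_i$ for each $i$, and then \cref{Gluingaslowerbound} applied to this clopen partition delivers $(n+1)\Minimalfct{\alpha+1}=\gl_{i\leq n}\Minimalfct{\alpha+1}\leq f$.

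Granted the normal form, suppose $\tp(f)=(\alpha+1,n+1)\leq_{lex}(\beta+1,m+1)=\tp(g)$ (the case $\tp(f)=(0,0)$ being vacuous). It suffices to check $(n+1)\Minimalfct{\alpha+1}\leq(m+1)\Minimalfct{\beta+1}$. If $\alpha=\beta$, then $n\leq m$ and \cref{Gluingcohomomorphism} yields the bound component-wise. The case $\alpha<\beta$ is slightly trickier because there is no a priori relation between $n$ and $m$; I would bypass this by the chain
\[
(n+1)\Minimalfct{\alpha+1}\leq\omega\Minimalfct{\alpha+1}\leq\pgl\Minimalfct{\alpha+1}=\Minimalfct{\alpha+2}\leq\Minimalfct{\beta+1}\leq(m+1)\Minimalfct{\beta+1},
\]
whose inequalities use in order \cref{Gluingcohomomorphism}, \cref{GluinglowerthanPgluing}, the recursive definition of $\Minimalfct{\alpha+2}$, the monotonicity of $\Minimalfct{-}$ recorded at the opening of the proof of \cref{Maxfunctions} (since $\alpha+2\leq\beta+1$), and \cref{Gluingcohomomorphism} once more.

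The pre-well-order statement then follows immediately: $\tp$ induces an order isomorphism between the quotient of the considered class by continuous equivalence and the set $\set{(0,0)}\cup\set{(\alpha+1,n+1)}[\alpha<\omega_1,\,n\in\N]$ under $\leq_{lex}$. Every such type is realised by a compact-domain function: the empty function for $(0,0)$ and $(n+1)\Minimalfct{\alpha+1}$ for $(\alpha+1,n+1)$, since pointed and finite gluings preserve compactness by \cref{BasicsOnPointedGluing} and \cref{BasicsOnGluing}. The resulting linear order has order type $1+\omega\cdot\omega_1=\omega_1$. The only point requiring any thought in the whole argument is the detour through $\Minimalfct{\alpha+2}$ in the mixed-rank case; everything else is a direct application of a previously established lemma.
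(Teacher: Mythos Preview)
Your proof is correct and follows essentially the same approach as the paper's: establish the normal form via \cref{Maxfunctions}, \cref{Minfunctions}, and \cref{FiniteDegreeAreFinGl}, then read off the $\leq_{lex}$ characterisation. You are simply more explicit than the paper in the cross-rank case $\alpha<\beta$ with $n>m$, where your detour through $\Minimalfct{\alpha+2}$ spells out what the paper compresses into ``by \cref{Gluingcohomomorphism}'' together with the well-ordering of the $\Minimalfct{\gamma}$.
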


\begin{proof}
By definition for $\alpha<\beta$ we have $\Minimalfct{\alpha+1}\leq\Minimalfct{\beta+1}$ and since $\CB(\Minimalfct{\alpha+1})=\alpha+1<\beta+1=\CB(\Minimalfct{\beta+1})$
we get $\Minimalfct{\beta+1}\not\leq\Minimalfct{\alpha+1}$, so the functions $\Minimalfct{\alpha+1}$ for $\alpha<\omega_1$ form a well-order of length $\omega_1$.

Therefore by \cref{Gluingcohomomorphism} it is enough to prove that for $f\in\sC$ with compact domain then $f\equiv(n+1)\Minimalfct{\alpha+1}$, where $tp(f)=(\alpha+1,n+1)$.
Now by \cref{FiniteDegreeAreFinGl} we only have to show it for simple functions, which follows from \cref{Maxfunctions,Minfunctions}.
\end{proof}

In particular we obtain the promised alternative proof that the definition of $\Minimalfct{\alpha+1}$ does not depend on the choice of the cofinal sequence when $\alpha$ is limit.

The results on maximum and minimum functions can be leveraged to obtain important information about the structure of continuous reducibility on the whole of $\sC$.
As announced, this generalizes \cite[Theorem 5.2]{carroy2013quasi} and we (still) refer to it as the General Structure Theorem.

% !TEX root = main.tex
\usetikzlibrary{decorations.pathreplacing}

\begin{figure}[ht]
\centering

\tikzset{every picture/.style={line width=0.75pt}} %set default line width to 0.75pt        

\begin{tikzpicture}[remember picture, x=0.5pt,y=0.5pt,xshift=0,% shift={(current page.west)},
 every node/.append style={anchor=center, draw=none, inner sep=3pt, rounded corners=3pt},
]%minimum size=12pt,

\tikzset{%
  show curve controls/.style={
    postaction={
      decoration={
        show path construction,
        curveto code={
          \draw [blue] 
            (\tikzinputsegmentfirst) -- (\tikzinputsegmentsupporta)
            (\tikzinputsegmentlast) -- (\tikzinputsegmentsupportb);
          \fill [red, opacity=0.5] 
            (\tikzinputsegmentsupporta) circle [radius=.5ex]
            (\tikzinputsegmentsupportb) circle [radius=.5ex];
        }
      },
      decorate
}}}
\tikzset{
    fnode/.style={anchor=center, draw, inner sep=3pt, rounded corners=3pt
        },
      edraw/.style={fill=gray!50,draw=none}
}
\def\scale{1.}
\def\xshift{0cm}
\def\X{70}
\def\O{\X}
\def\o{30}
\def\y{60}
\def\r{0.3}
\def\e{2}
\def\last{3}
\def\base{1.35}

\coordinate(0) at (0,0);
\coordinate(1) at (0.5*\X,0) ;  
\coordinate(l0) at (1.6*\X,0); 

%\foreach \i in {1,2,...,5}{
%	\coordinate(l\i) at ({(2+\i)*\X+\i*\o},{(1+0.1*\i)*\y}); 
%	}

\foreach \i in {1,2,...,\last}{
%\coordinate(l\i) at (2*\X+2*\O,{(1+0.007*\O)*\y});%({(1+\i)*\X+(1+0.5*\i)*\o},{(1+0.1*\i)*\y}); 
\coordinate (k\i)  at ({\X+\O+10},0);
%\coordinate (k\i)  at ({2*\X+\i*\X},0);
\pgfmathparse{(\base^\i)*\X+\i*\o}
 \xdef\O{\pgfmathresult}
 \coordinate(l\i) at (\X+\O+10,{(1+0.003*\O)*\y});
}

\xdef\lastp{\the\numexpr(\last)+1}

\foreach \i in {\last} %,\last+2}
{
\pgfmathparse{(\base^(\i-1))*\X+(\i-1)*\o}
\xdef\O{\pgfmathresult}
%\pgfmathsetmacro{\j}{\the\numexpr(\i)+1};
\coordinate(k\i) at ({\X+\O},0);
}
%1,2,3
%2,3,5
%0,1,3
%1*\X + \i*\X +(\i-1)*\X

\node (temp) at ($(l\last)-(k\last)$) {};
\pgfgetlastxy{\Xt}{\Yt}
\def\Xlim{}
\pgfmathsetmacro{\Xlim}{\scale*\Xt+\scale*\Yt}

\node (l) at ($(l\last)+(\scale*\Yt,-\scale*\Yt)$) {};
%\node[inner sep=1pt] (d2) at ($(l\last)!0.5!(l)$) {$\ddots$};

%\node (d) at ($(k\last)!0.5!(l)$) {$\cdots$};
%\node[fnode](k4) at (k4) {$\Minimalfct{5}$};
%
%\node (d) at ($(k4)!0.5!(l)$) {$\cdots$};

%

\fill[fill=gray!50,draw=none] ($(1)!0.5!(l0)$) ellipse ({0.75*\X} and {\r*\X});
\def\Rtemp{0}
\def\Atemp{0}

\foreach \i in {1,2,...,\last}{
	\coordinate (temp) ($(k\i)-(l\i)$);
	\pgfgetlastxy{\Xt}{\Yt}
	\pgfmathsetmacro{\Rtemp}{veclen(\scale*\Xt+\xshift,\scale*\Yt)}
	\pgfmathsetmacro{\Atemp}{atan2(\scale*\Yt,\scale*\Xt+\xshift)}
  	\draw[edraw, rotate=\Atemp] ($(k\i)!0.5!(l\i)$) ellipse ({15+\Rtemp} and {\r*\X});
	}

\node[fnode](0) at (0) {$\Maximalfct{0}$} ;
\node[fnode](1) at (1)   {$\Minimalfct{1}$};
\node[fnode](l0) at (l0) {$\Maximalfct{1}$};

\draw (1) -- (l0) node[midway, fill=gray!50, inner sep=0.5pt] {\scriptsize $\sC_1$};

\foreach \i in {1,2,...,\last}{
\def\j{0}
\pgfmathsetmacro{\j}{\the\numexpr(\i)+1}
\node[fnode](l\i) at (l\i) {$\Maximalfct{\j}$};
\node[fnode](k\i) at (k\i)   {$\Minimalfct{\j}$};
\draw (k\i) -- (l\i) node[midway, fill=gray!50, inner sep=1pt] {\scriptsize $\sC_\j$};
}
\xdef\lastpp{\the\numexpr(\last)+2}

%\node[fnode](k\lastp) at (k\lastp) {$\Minimalfct{\lastpp}$};

\node[fnode](l) at (l)   {$\Maximalfct{\omega}$};

\draw  (0) -- (1);
\draw (l0) -- (k1);
\draw (l1) -- (l2) -- (l3); %-- (l4) -- (l5);
\draw (k1) -- (k2) -- (k3);%-- (k4);% -- (k5);
%\draw  (k\last) -- (d) -- (l);

\draw  (k\last) -- ($(k\last)!0.42!(l)$);
\draw[dotted]($(k\last)!0.46!(l)$)  --($(k\last)!0.54!(l)$);
\draw ($(k\last)!0.58!(l)$) -- (l);

\draw  (l\last) -- ($(l\last)!0.4!(l)$);
\draw[dotted]($(l\last)!0.45!(l)$)  --($(l\last)!0.55!(l)$);
\draw ($(l\last)!0.6!(l)$) -- (l);
%\draw  (l\last) -- (d2) -- (l);

\draw[-] (l1) .. controls ([yshift=-35, xshift=5] l1) and ([xshift=-20, yshift=10] k3) .. (k3);
%\draw[-] (l2) .. controls ([yshift=-80, xshift=5] l2) and ([xshift=-100, yshift=-40] k5) .. (k5);

%%%%%%%%%%%%%%
%%%% limit case %%%%
%%%%%%%%%%%%%

%\begin{scope}[yshift=-100]
\def\xshift{0cm}
\def\yshift{0}
\def\O{\X}
\def\o{50}
\def\Y{-180}
\def\r{0.4}

%\coordinate(0) at (0,);
%\coordinate(1) at (0.5*\X,0) ;  
\coordinate(ll0) at (1.3*\X,\Y); 

%\foreach \i in {1,2,...,5}{
%	\coordinate(l\i) at ({(2+\i)*\X+\i*\o},{(1+0.1*\i)*\y}); 
%	}

\foreach \i in {1,2,...,\last}{
%\coordinate(l\i) at (2*\X+2*\O,{(1+0.007*\O)*\y});%({(1+\i)*\X+(1+0.5*\i)*\o},{(1+0.1*\i)*\y}); 
\coordinate (kk\i)  at ({\X+\O},\Y);
%\coordinate (k\i)  at ({2*\X+\i*\X},0);
\pgfmathparse{(\base^\i)*\X+\i*\o}
 \xdef\O{\pgfmathresult}
 \coordinate(ll\i) at (\X+\O,{(1+0.003*\O)*\y+\Y});
}

\xdef\lastp{\the\numexpr(\last)+1}

%\foreach \i in {\lastp} %,\last+2}
%{
%\pgfmathparse{(\base^(\i-1))*\X+(\i-1)*\o}
%\xdef\O{\pgfmathresult}
%%\pgfmathsetmacro{\j}{\the\numexpr(\i)+1};
%\coordinate(kk\i) at ({\X+\O},\Y);
%}
%1,2,3
%2,3,5
%0,1,3
%1*\X + \i*\X +(\i-1)*\X

\node (temp) at ($(ll\last)-(kk\last)$) {};
\pgfgetlastxy{\Xt}{\Yt}
\node (ll) at ($(ll\last)+(\Yt,-\scale*\Yt)$) {};
%\node (dd2) at ($(ll\last)!0.5!(ll)$) {$\ddots$};

%\node (dd) at ($(kk\last)!0.5!(ll)$) {$\cdots$};

%
\def\Rtemp{0}
\def\Atemp{0}

\foreach \i in {1,2,...,\last}{
	\coordinate (temp) ($(kk\i)-(ll\i)$);
	\pgfgetlastxy{\Xt}{\Yt}
	\pgfmathsetmacro{\Rtemp}{veclen(\scale*\Xt+\xshift,\scale*\Yt+\yshift)}
	\pgfmathsetmacro{\Atemp}{atan2(\scale*\Yt+\yshift,\scale*\Xt+\xshift)}
  	\draw[edraw, rotate=\Atemp] ($(kk\i)!0.5!(ll\i)$) ellipse ({15+\Rtemp} and {\r*\X});
	}

%\node[fnode](0) at (0) {$\Maximalfct{0}$} ;
%\node[fnode](1) at (1)   {$\Minimalfct{1}$};
\node[fnode](ll0) at (ll0) {$\Maximalfct{\lambda}$};

%\draw (1) -- (l0) node[midway, fill=gray!50, inner sep=1pt] {\scriptsize $\sC_1$};

\foreach \i in {1,2,...,\last}{
\def\j{0}
\pgfmathsetmacro{\j}{\the\numexpr(\i)}
\node[fnode](ll\i) at (ll\i) {$\Maximalfct{\lambda+\j}$};
\node[fnode](kk\i) at (kk\i)   {$\Minimalfct{\lambda+\j}$};
\draw (kk\i) -- (ll\i) node[midway, fill=gray!50, inner sep=1pt] {\scriptsize $\sC_{\lambda+\j}$};
}
\xdef\lastpp{\the\numexpr(\last)+2}

%\node[fnode](kk\lastp) at (kk\lastp) {$\Minimalfct{\lambda+4}$};

\node[fnode](ll) at (ll)   {$\Maximalfct{\lambda+\omega}$};

%\draw  (0) -- (1);
\draw (ll0) -- (kk1);
\draw (ll1) -- (ll2) -- (ll3); %-- (l4) -- (l5);
\draw (kk1) -- (kk2) -- (kk3);%-- (kk4);% -- (k5);
%\draw  (kk\last) -- (dd) -- (ll);
%\draw  (ll\last) -- (dd2) -- (ll);

\draw  (kk\last) -- ($(kk\last)!0.42!(ll)$);
\draw[dotted]($(kk\last)!0.47!(ll)$)  --($(kk\last)!0.53!(ll)$);
\draw ($(kk\last)!0.58!(ll)$) -- (ll);

\draw  (ll\last) -- ($(ll\last)!0.4!(ll)$);
\draw[dotted]($(ll\last)!0.46!(ll)$)  --($(ll\last)!0.54!(ll)$);
\draw ($(ll\last)!0.6!(ll)$) -- (ll);

\node(dddots) at ($(ll)+(\X,0)$) {$\cdots$};
\draw (ll) -- (dddots);

\draw[-] (ll1) .. controls ([yshift=-35, xshift=5] ll1) and ([xshift=-20, yshift=10] kk3) .. (kk3);
%\draw[-] (l2) .. controls ([yshift=-80, xshift=5] l2) and ([xshift=-100, yshift=-40] k5) .. (k5);
%\end{scope}

%\draw[-] (l).. controls ([xshift=100] l) and ([xshift=-200] ll0) .. (ll0);
%\draw[-] (l).. controls ([xshift=20] l) and ([xshift=20] $(l)-(0,30)$) .. ($(l)-(0,30)$) .. controls ([xshift=-20] $(l)-(0,30)$) and ([xshift=30,yshift=20]$(ll0)+(0,100)$) ..  ($(ll0)+(0,100)$)  .. controls ([xshift=-30,yshift=-20]$(ll0)+(0,100)$) and ([xshift=-30] ll0) .. (ll0);

\coordinate(c1) at  ($(l)-(310,30)$);
\coordinate(c2) at ($(c1)-(80,80)$);
%\coordinate(c31) at ($(c2)-(0,0)$);
\coordinate(c3) at ($(c2)-(0,22)$);

\draw[-] (l).. controls ([xshift=20] l) and ([xshift=20] $(l)-(0,30)$) .. ($(l)-(0,30)$) .. controls ([xshift=-20] $(l)-(0,30)$) and ([xshift=30] c1) ..  (c1)  .. controls ([xshift=-20] c1) and ([yshift=20] c2) ..  (c2);%.. controls ([yshift=-20] c2) and ([yshift=5] c31) .. (c31);
\draw[dotted] (c2) -- (c3);
\draw (c3) .. controls ([yshift=-5]c3) and ([xshift=-15] ll0) .. (ll0);
% -|[out=-90,in=90] (ll0);
%\node[inner sep=5pt, anchor=center, fill=white] at (c3) {$\vdots$};

\end{tikzpicture}

\caption{The general structure of continuous reducibility on $\sC$.}
\end{figure}
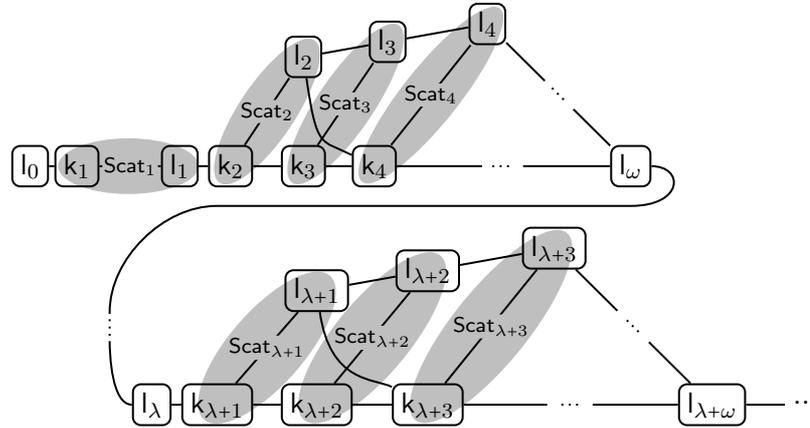

\begin{theorem}[General Structure]\label{JSLgeneralstructure}
For all functions $f$ and $g$ in $\sC$, and all ordinals $\lambda<\omega_1$ limit or null, we have:
\begin{enumerate}
\item If $\CB(f)\leq \CB(g)=\lambda$, then $f\leq g$.
\item For all $n\in \N$, $\CB(f)=\lambda+n$ and $\lambda+2n+1\leq \CB(g)$ imply $f\leq g$.
\end{enumerate}
In particular, $2\CB(f)<\CB(g)$ implies $f\leq g$.
\end{theorem}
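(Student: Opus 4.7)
The plan is to prove, by strong transfinite induction on $\alpha=\CB(f)$, the unified statement $\Maximalfct{\alpha}\leq g$ whenever $\CB(g)\geq d(\alpha)$, where $d(\alpha)=\alpha$ if $\alpha$ is limit or null and $d(\lambda+n)=\lambda+2n+1$ for $n\geq 1$. Since $f\leq\Maximalfct{\CB(f)}$ by \cref{Maxfunctions}, and since $\CB(g)\geq d(\CB(f))$ is implied by the hypothesis of each of the two items (using, for item~1, that $\lambda'<\lambda$ limit forces $\lambda'+\omega\leq\lambda$, so $d(\beta)<\lambda$ for every $\beta<\lambda$), this claim yields the theorem at once. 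The base $\alpha=0$ is trivial, and for $\alpha>0$ the crucial feature to be exploited is that $d(\beta)<\alpha$ for every $\beta<\alpha$ when $\alpha$ is limit, so IH supplies $\Maximalfct{\beta}\leq h$ on any corestriction $h$ of $g$ of rank at least $d(\beta)$.

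For $\alpha=\lambda$ a limit ordinal, write $\Maximalfct{\lambda}\equiv\gl_k\Maximalfct{\beta_k}$ for some cofinal sequence $(\beta_k)_k$ in $\lambda$. By \cref{Gluingaslowerbound} it suffices to produce pairwise disjoint clopens $(V_k)_k$ of $\cN$ such that $\CB(g\corestr{V_k})\geq d(\beta_k)$ for every $k$: IH then gives $\Maximalfct{\beta_k}\leq g\corestr{V_k}$ and the rel clopen partition $(V_k\cap\im g)_k$ of a subset of $\im g$ yields $\gl_k\Maximalfct{\beta_k}\leq g$. To construct the $V_k$'s I analyse the pruned tree $T=\{s\in\N^{<\N}:N_s\cap\im g\neq\emptyset\}$ via the rank $r(s)=\CB(g\corestr{N_s\cap\im g})$, which satisfies $r(s)=\sup_n r(s\conc(n))$ by \cref{CBrankofclopenunion}. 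Starting at the empty sequence (where $r=\CB(g)\geq\lambda$) and descending through children of rank $\lambda$, one of two things happens: either the descent reaches a node whose children all have ranks $<\lambda$ but cofinal in $\lambda$, in which case incomparable branches $N_{s\conc(n)}\cap\im g$ yield the desired $V_k$'s after extracting a cofinal subsequence; or the descent continues indefinitely along an infinite branch, producing a point $y\in\overline{\im g}$ at which rank $\lambda$ is preserved in every neighborhood, and one restarts the argument inside a strictly smaller clopen. Termination after countably many restarts is guaranteed by the fact that $\im g$ is a countable metrizable scattered space (since $g\in\sC$).

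For $\alpha=\lambda+n+1$ with $\CB(g)\geq\lambda+2n+3$ the target is $\Maximalfct{\alpha}=\omega\pgl\Maximalfct{\lambda+n}$. By \cref{Gluingaslowerbound} it suffices to find pairwise disjoint clopens $(V_k)_k$ of $\cN$ with $\pgl\Maximalfct{\lambda+n}\leq g\corestr{V_k}$ for each $k$. To obtain the $V_k$'s, pick infinitely many distinct points in the set $g(\CB_{\lambda+2n+2}(g))$, which is nonempty as $\CB(g)\geq\lambda+2n+3$, and wrap them by pairwise disjoint clopens of $\cN$; then $\CB(g\corestr{V_k})\geq\lambda+2n+3$. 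For each $V_k$, pick $x_k\in\CB_{\lambda+2n+2}(g)\cap g^{-1}(V_k)$ and invoke \cref{Pgluingaslowerbound2}: any clopen neighborhood $W\ni x_k$ contained in $g^{-1}(V_k)$ still has $\CB(g\restr{W})\geq\lambda+2n+2>d(\lambda+n)=\lambda+2n+1$, so IH applied to $\Maximalfct{\lambda+n}$ (rank $\lambda+n<\alpha$) delivers the embeddings of $\Maximalfct{\lambda+n}$ near $x_k$ required by that lemma, with image suitably separated from $g(x_k)$ by the $0$-dimensionality of $\cN$.

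The main obstacle is the limit case, and specifically the item-1 sub-case where $\CB(f)=\CB(g)=\lambda$: unlike the successor setting, no point of $g$ sits in a derivative of order $\lambda$, so one cannot straightforwardly apply \cref{Pgluingaslowerbound2}. The technical heart of the argument therefore lies in the tree-descent above, together with its recursive refinement when the $\CB$-rank of $g$ concentrates near a single accumulation point of $\im g$—ensuring, thanks to the countable scatteredness of $\im g$, that the procedure eventually delivers the required antichain of clopens with cofinal ranks.
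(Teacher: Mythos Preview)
Your overall inductive scheme is reasonable and close in spirit to the paper's, but both your successor step and your limit step contain a genuine error.

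\textbf{Successor step.} You assert that $g(\CB_{\lambda+2n+2}(g))$ contains infinitely many distinct points. It need not: take $g=\Minimalfct{\lambda+2n+3}$, which is simple of rank exactly $\lambda+2n+3$, so $g(\CB_{\lambda+2n+2}(g))$ is a singleton and you cannot place your clopens $V_k$. The paper sidesteps this by not embedding directly into $g$ at all: it proves the purely combinatorial inequality $\Maximalfct{\lambda+n}\leq\Minimalfct{\lambda+2n+1}$ by a short induction on $n$ (using only $\omega h\leq\pgl h$ from \cref{GluinglowerthanPgluing} and monotonicity of $\pgl$ via \cref{Pgluingasupperbound}), and then quotes \cref{Minfunctions} for $\Minimalfct{\lambda+2n+1}\leq g$. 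Your direct route can be salvaged—one should work with $g(\CB_{\lambda+2n+1}(g))$ instead, which \emph{is} infinite near any point of $\CB_{\lambda+2n+2}(g)$ by continuity of $g$—but that is not what you wrote, and executing it correctly is more work than the paper's two-line factorisation.

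\textbf{Limit step.} Your termination argument asserts that $\im g$ is a scattered space ``since $g\in\sC$''. That is false: scatteredness of a function says nothing about scatteredness of its image (the paper notes this in the introduction—any bijection $\N\to\Q$ lies in $\sC_1$, and gluing it with functions of increasing rank produces $g\in\sC_\lambda$ with $\Q$ embedded in $\im g$). So your recursive ``restart in a strictly smaller clopen'' need not terminate. The paper's argument is both simpler and correct, with no recursion: set $T=\{s\in\N^{<\N}:\CB(g\corestr{N_s})=\lambda\}$ and split on whether $[T]$ is infinite or finite. If $[T]$ is infinite, \cref{InfiniteEmbedOmega} yields an infinite discrete subset of $[T]$, hence pairwise incomparable $s_k$ with $\CB(g\corestr{N_{s_k}})=\lambda$. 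If $[T]$ is finite, the $\sqsubseteq$-minimal elements of $\N^{<\N}\setminus T$ are pairwise incomparable with ranks $<\lambda$, and these ranks are cofinal in $\lambda$ (otherwise $\CB_\beta(g)\subseteq g^{-1}([T])$ for some $\beta<\lambda$, forcing $\CB(g)\leq\beta+1<\lambda$).
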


\begin{proof}
Proceed by induction on $\lambda<\omega_1$. Suppose that the theorem is proven for all $\alpha<\lambda$ with $\lambda$ limit or null.

\smallskip

We first prove the second item. As $f\leq\Maximalfct{\lambda+n}$ by \cref{Maxfunctions} and $\Minimalfct{\lambda+2n+1}\leq g$ by \cref{Minfunctions}, it is enough to prove that
$\Maximalfct{\lambda+n}\leq \Minimalfct{\lambda+2n+1}$ for all $n\in\N$, and to do so we proceed by induction on $n\in\N$.
For $n=0$, if $\lambda=0$ then $\Maximalfct{0}=\emptyset\leq\Minimalfct{1}$, so we can suppose that $\lambda\neq0$.
Take $(\alpha_n)_n$ cofinal in $\lambda$ and $(\beta_n)_n$ an enumeration of $\lambda$ then, by induction hypothesis, for some injection $p:\N\rao\N$ we have
$\Maximalfct{\alpha_n}\leq\Minimalfct{\beta_{p(n)}+1}$ so by \cref{Gluingasupperbound,GluinglowerthanPgluing} we get
\(\Maximalfct{\lambda}\equiv\gl_n\Maximalfct{\alpha_n}\leq\gl_n\Minimalfct{\beta_n+1}\leq\pgl_n\Minimalfct{\beta_n+1}\equiv\Minimalfct{\lambda+1}.\)

If now $\alpha=\lambda+(n+1)$ then using the induction hypothesis, \cref{Pgluingasupperbound,GluinglowerthanPgluing} we see that
$\Maximalfct{\alpha}\equiv\omega\pgl\Maximalfct{\lambda+n}\leq\pgl\pgl\Minimalfct{\lambda+2n+1}\equiv\Minimalfct{\lambda+2n+3}= \Minimalfct{\lambda+2(n+1)+1}$.

\smallskip

To see the first item, take a function $g:A\to B$ of $\CB$-rank $\lambda$. By \cref{Maxfunctions} $f\leq\Maximalfct{\lambda}$ for any $f\in \sC_{\leq \alpha}$, so it is enough to show that if $\Maximalfct{\lambda}\leq g$.
If $\lambda=0$ then $g$ is the empty function, so suppose that $\lambda$ is limit. 

We are going to find a sequence $(s_n)_{n\in\N}\subseteq\N^{<\N}$ of finite sequences pairwise incomparable for the prefix relation such that the sequence $(\CB(g\corestr{N_{s_n}}))_n$
is either constant equal to $\lambda$ or strictly below $\lambda$ and cofinal in $\lambda$. Thanks to the induction hypothesis, an application of \cref{Gluingaslowerbound}
to the (pairwise disjoint) clopen sets $(N_{s_n})_n$ allows then to conclude.

Consider the tree $T=\set{s\in\N^{<\N}}[\CB(g\corestr{N_s})=\lambda]$, notice that $T\neq\emptyset$ because it contains at least the empty sequence.
If $[T]$ is infinite then an application of \cref{InfiniteEmbedOmega} allows to find the desired sequence, so we can suppose that $[T]$ is finite.
Let $F$ be the set of $\sqsubset$-minimal elements of $\N^{<\N}\setminus T$. Then $\set{\CB(g\corestr{N_s})}[ s\in F]$ is a subset of $\lambda$ and we claim that it is cofinal in $\lambda$, which allows us to find the desired sequence. Towards a contradiction assume that for some $\beta<\lambda$ we have $\CB(g\corestr{N_s})<\beta$ for all $s\in F$. Then, by \cref{CBbasics0}~\cref{CBbasicsfromJSL2},  $\CB_{\beta}(g)\cap g^{-1}(N_s)=\emptyset$ for all $s\in F$ and so $\CB_{\beta}(g)\subseteq g^{-1}([T])$. But as $[T]$ is finite, we have $\CB_{\beta+1}(g)=\empty$ and so $\CB(g)\leq \beta+1$, a contradiction.
\end{proof}

\begin{remark}
Note that we actually have $\Maximalfct{n}\leq \Minimalfct{2n}$ for all $n\in\N$ and so the General Structure~\cref{JSLgeneralstructure} for functions of finite $\CB$-ranks is slightly stronger: for every $f,g$ with finite $\CB$-rank, $2\CB(f)\leq \CB(g)$ implies $f\leq g$.
\end{remark}

As a consequence we can finally prove that \cref{Introthm:LevelsAreFinitelyGenerated} implies \cref{Introthm:BQO,Introthm:BQOonScat}.
When a \bqo{} is also a partial order we call it a \emph{better partial order}.
We will use that a sum of \bqo{} indexed by a better partial order is itself still a \bqo{} (\cite[(9.14)]{simpsonbqo}).
The following result has the exact same proof as \cite[Theorem 5.4]{carroy2013quasi}.

\begin{proposition}\label{FGgivesBQO_2}
If $\sC_{\beta}$ is \bqo{} for all $\beta<\alpha$, then $\sC_{<\alpha}$ is \bqo{}.
In particular, \cref{Introthm:LevelsAreFinitelyGenerated} implies \cref{Introthm:BQO,Introthm:BQOonScat}.
\end{proposition}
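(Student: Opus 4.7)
The plan is to decompose $\sC_{<\alpha}$ according to the limit-ordinal floor of the Cantor--Bendixson rank and invoke the sum theorem for bqos \cite[(9.14)]{simpsonbqo}. Let $L=\set{\lambda<\alpha}[\lambda=0\text{ or }\lambda\text{ limit}]$, a countable well-order and therefore a better partial order. Every $\beta<\alpha$ decomposes uniquely as $\beta=\lambda+n$ with $\lambda\in L$ and $n\in\N$, giving a partition
\[
\sC_{<\alpha}=\bigsqcup_{\lambda\in L}\mathcal{B}_\lambda,\qquad\mathcal{B}_\lambda:=\bigcup_{\{n\in\N\,:\,\lambda+n<\alpha\}}\sC_{\lambda+n}.
\]

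The first step is cross-block reducibility. For $\lambda<\lambda'$ in $L$ one has $\lambda'\geq\lambda+\omega$, so for any $f\in\mathcal{B}_\lambda$ of rank $\lambda+n$ and any $g\in\mathcal{B}_{\lambda'}$ the chain
\[
f\leq\Maximalfct{\lambda+n}\leq\Minimalfct{\lambda+2n+1}\leq g
\]
yields $f\leq g$, combining \cref{Maxfunctions}, the inequality established in the proof of the second item of \cref{JSLgeneralstructure}, and \cref{Minfunctions} (applicable since $\CB(g)\geq\lambda'>\lambda+2n+1$). Hence the lex sum of the blocks $(\mathcal{B}_\lambda)_{\lambda\in L}$ over the bpo $L$ is contained in continuous reducibility on $\sC_{<\alpha}$. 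Granted that each block is bqo, the sum theorem delivers bqoness of this lex sum, and since a bad multisequence for the finer continuous-reducibility order would be bad for the contained coarser lex sum, $\sC_{<\alpha}$ is bqo.

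The main obstacle is proving bqoness of each block $\mathcal{B}_\lambda=\bigcup_n\sC_{\lambda+n}$. Within a block the General Structure only gives $f\leq g$ for $f\in\sC_{\lambda+n}$, $g\in\sC_{\lambda+m}$ when $m\geq 2n+1$, rather than simply when $n<m$, so a lex sum indexed by $(\N,\leq)$ is too coarse. My plan is to apply the sum theorem instead with the partial order $(\N,\leq^*)$ defined by $n\leq^* m\iff n=m$ or $m\geq 2n+1$. Elementary verifications show it is countable, well-founded, and antichain-finite (any antichain containing $n$ is contained in the finite interval $[\lceil n/2\rceil,2n]$); the delicate point, implicit in the cited \cite[Theorem 5.4]{carroy2013quasi}, is that $(\N,\leq^*)$ is actually a bpo, which can be established by a direct minimal-bad-array argument. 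Granted this, the lex sum of the levels $\sC_{\lambda+n}$ indexed by $(\N,\leq^*)$ is bqo by the sum theorem and, by the second item of \cref{JSLgeneralstructure}, is contained in continuous reducibility on $\mathcal{B}_\lambda$, yielding bqoness of the block.

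For the \enquote{in particular} part, under the hypothesis of \cref{Introthm:LevelsAreFinitelyGenerated} each $\sC_\beta$ with $\beta<\omega_1$ is finitely generated, hence bqo by \cref{SecondstepforBQOthm}; applying the main statement with $\alpha=\omega_1$ yields bqoness of $\sC=\sC_{<\omega_1}$, and \cref{FirststepforBQOthm} then delivers \cref{Introthm:BQO,Introthm:BQOonScat}.
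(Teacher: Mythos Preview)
Your proof is correct and follows essentially the same approach as the paper. Your partial order $(\N,\leq^*)$ with $n\leq^* m\iff n=m$ or $m\geq 2n+1$ is exactly the paper's $(\N,\leq^\bullet)$ with $m\leq^\bullet n\iff m=n$ or $2m<n$; your two-stage decomposition (first into blocks $\mathcal{B}_\lambda$ indexed by $L$, then within blocks by $(\N,\leq^*)$) is precisely what the paper does in one step by defining $\leq^\bullet$ directly on $\omega_1$ and observing that it is the sum of copies of $(\N,\leq^\bullet)$ along the well-order of countable limit ordinals. The only point worth flagging is that the key fact that $(\N,\leq^*)$ is a \emph{better} partial order (not merely \wqo{}) is genuinely nontrivial: the paper does not attempt a minimal-bad-array argument but simply cites \cite[Example 1.6]{CarroyYPFromWell}, and you should do likewise rather than leave it as an exercise.
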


\begin{proof}
Consider the partial order $(\N, \leq^\bullet)$, where 
\[
m\leq^\bullet n \Lglra m=n \text{ or }2m<n.
\]
It is immediate to see that $(\N,\leq^\bullet)$ is a partial order and a \wqo{} and it is  \bqo{} by \cite[Example 1.6]{CarroyYPFromWell}. 
Similarly, we define a partial order $\leq^\bullet$ on $\omega_1$ by $\alpha_0\leq^\bullet \alpha_1$ if and only if $\alpha_0=\alpha_1$ or $2\alpha_0<\alpha_1$. As any ordinal $\alpha$ can be uniquely written as $\alpha=\lambda+n$ with $\lambda$ limit or null and $n\in\N$ and $2\alpha=\lambda+2n$, we have: 
\[
\lambda_0+n_0 \leq^\bullet \lambda_1+n_1 \Lglra \lambda_0<\lambda_1\mbox{ or } \bigl(\lambda_0=\lambda_1\mbox{ and } n_0\leq^\bullet n_1 \bigr).
\]
As this partial order is the sum of the \bqo{} $(\N,\leq^\bullet)$ along the well-ordered set of countable limit ordinals, it is \bqo{}.
Finally, consider the sum $S_{<\alpha}=\sum_{\beta<\alpha} \sC_{\beta}$ of the levels $\sC_\beta$, quasi-ordered by continuous reducibility, along the better partial order $\leq^\bullet$. By our hypothesis, $\sC_\beta$ is \bqo{} for all $\beta<\alpha$, so $S_{<\alpha}$ is \bqo{} too. 
 
Now observe that by the General Structure \cref{JSLgeneralstructure}, the mapping $h:\sC_{<\alpha}\to S_{<\alpha}$ given by $h(f)= (\CB(f),f)$ is a co-homomorphism. Namely, we have $f\leq g$ whenever $h(f)\leq h(g)$ in $S$, which by definition is equivalent to 
\[
\CB(f)<^\bullet \CB(g) \text{ or } (\CB(f)=\CB(g) \text{ and } f\leq g).
\]
Therefore, it follows that continuous reducibility is \bqo{} on $\sC_{<\alpha}$.

In particular, assuming \cref{Introthm:LevelsAreFinitelyGenerated} holds, then $\sC_\alpha$ is \bqo{} for all $\alpha<\omega_1$ by \cref{SecondstepforBQOthm}.  So $\sC$ is \bqo{} (\cref{Introthm:BQOonScat}) and hence \cref{Introthm:BQO} follows by \cref{FirststepforBQOthm}. 

\end{proof}

The General Structure \cref{JSLgeneralstructure} has many interesting consequences, we point out two of them that will reveal useful in the sequel.

\begin{corollary}\label{ConsequencesGeneralStructureThm}
Let $\lambda$ be a limit ordinal.
\begin{enumerate}
\item If $(f_n)_{n\in\N}$ is in $\sC_{<\lambda}$, then $\pgl_nf_n\leq\Minimalfct{\lambda+1}$,
and if moreover $(\CB(f_n))_n$ is regular and $\sup_n(\CB(f_n))=\lambda$, then $\pgl_n(f_n)\equiv \Minimalfct{\lambda+1}$.
\item If $\CB(f)\geq \lambda+2$, then $\pgl\Maximalfct{\lambda}\leq f$,\label{ConsequencesGeneralStructureThm2}
\end{enumerate}
\end{corollary}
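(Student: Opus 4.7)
The strategy is to apply the bounds already established in this section: \cref{Pgluingasupperbound} for the upper bound in (1) and \cref{Pgluingaslowerbound2} for (2), calling on the General Structure \cref{JSLgeneralstructure} at the critical step.

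For (1), since the continuous-equivalence class of $\Minimalfct{\lambda+1}$ does not depend on the choice of cofinal sequence, I may write $\Minimalfct{\lambda+1}\equiv\pgl_k\Minimalfct{\alpha_k+1}$ for a strictly increasing cofinal sequence $(\alpha_k)_{k\in\N}$ in $\lambda$. A straightforward unfolding of the definitions shows that the $n$-th ray of $\pgl_i f_i$ at $\iw{0}$ is homeomorphic to $f_n$. For each $n$ we have $\CB(f_n)<\lambda$, hence $2\CB(f_n)<\lambda$ since $\lambda$ is a limit ordinal; by strict increasingness the set $\{k:\alpha_k\geq 2\CB(f_n)\}$ is cofinite, so one can inductively pick pairwise distinct $k_n$ for which \cref{JSLgeneralstructure} grants $f_n\leq\Minimalfct{\alpha_{k_n}+1}$. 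The singletons $I_n=\{k_n\}$ witness the reduction-by-pieces hypothesis, and \cref{Pgluingasupperbound} concludes $\pgl_n f_n\leq\Minimalfct{\lambda+1}$. For the equivalence under the regularity assumption, \cref{CBrankofPgluingofregularsequence1} gives $\CB(\pgl_n f_n)=\lambda+1$, and the reverse inequality follows from \cref{Minfunctions}.

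For (2), I would apply \cref{Pgluingaslowerbound2} to the constant sequence $g_i=\Maximalfct{\lambda}$ at some point $x\in\CB_{\lambda+1}(f)$, which is nonempty because $\CB(f)\geq\lambda+2$. Given any clopen neighborhood $V$ of $x$, the fact that $x\in\CB_\lambda(f)\setminus I(f,\CB_\lambda(f))$ yields some $y\in V\cap\CB_\lambda(f)$ with $f(y)\neq f(x)$; continuity of $f$ and the clopen basis of $\cN$ then allow us to shrink to a clopen $V'\ni y$, $V'\subseteq V$, whose image lies in a basic clopen $W\ni f(y)$ avoiding $f(x)$. Since $y\in\CB_\lambda(f\restr{V'})$, we have $\CB(f\restr{V'})\geq\lambda+1$, so \cref{JSLgeneralstructure}(2) (with $n=0$) produces a reduction $\Maximalfct{\lambda}\leq f\restr{V'}$. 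Any witnessing $(\sigma,\tau)$ satisfies $\im\sigma\subseteq V'\subseteq V$ and $\overline{\im(f\sigma)}\subseteq\overline{W}=W\not\ni f(x)$, which are exactly the remaining hypotheses of \cref{Pgluingaslowerbound2}, yielding $\pgl\Maximalfct{\lambda}\leq f$.

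The only real subtlety is arranging the pairwise distinctness of the $(k_n)_n$ in (1), which I handle by working with a strictly increasing cofinal sequence (permissible since $\Minimalfct{\lambda+1}$ is defined only up to continuous equivalence). Apart from this, both items reduce directly to the operation-specific bounds of this section combined with the General Structure Theorem.
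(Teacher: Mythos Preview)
Your proof is correct. Part~(1) is essentially identical to the paper's argument.

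For part~(2) you take a different route: you apply \cref{Pgluingaslowerbound2} directly at a point $x\in\CB_{\lambda+1}(f)$, locating nearby points of $\CB$-rank $\geq\lambda$ with distinct images and invoking \cref{JSLgeneralstructure} to reduce $\Maximalfct{\lambda}$ into each small neighborhood. This is valid, but it essentially re-runs the mechanism behind the proof of \cref{Minfunctions}. The paper instead exploits \cref{Minfunctions} as a black box: since $\Maximalfct{\lambda}\leq\Minimalfct{\lambda+1}$ by \cref{JSLgeneralstructure}, monotonicity of the pointed gluing (via \cref{Pgluingasupperbound}) gives $\pgl\Maximalfct{\lambda}\leq\pgl\Minimalfct{\lambda+1}=\Minimalfct{\lambda+2}$, and then $\Minimalfct{\lambda+2}\leq f$ follows immediately from \cref{Minfunctions}. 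The paper's argument is shorter and avoids reopening the Cantor--Bendixson analysis; your argument is more self-contained but duplicates work already packaged in \cref{Minfunctions}.
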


\begin{proof} 
Fix any increasing cofinal sequence $(\alpha_n)_{n\in\N}$ in $\lambda$. There is an increasing sequence $(k_n)_{n\in\N}\in\cN$ such that $2\CB(f_n)\leq\alpha_{k_n}$ for all $n$.
So by \cref{JSLgeneralstructure} we have $f_n\leq \Minimalfct{\alpha_{k_n}+1}$ and in turn $\pgl_nf_n\leq\Minimalfct{\lambda+1}$ by \cref{Pgluingasupperbound}.
If moreover $(\CB(f_n))_n$ is regular and $\sup_n(\CB(f_n))=\lambda$, then by \cref{CBrankofPgluingofregularsequence1} $\CB(\pgl_nf_n)=\lambda+1$,
so by \cref{Minfunctions} $\Minimalfct{\lambda+1}$, being a minimum, it reduces to $\pgl_nf_n$.

For the second item, by \cref{JSLgeneralstructure} we have $\Maximalfct{\lambda}\leq \Minimalfct{\lambda+1}$ and so $\pgl \Maximalfct{\lambda}\leq \pgl \Minimalfct{\lambda+1} =\Minimalfct{\lambda+2}$. Since $\CB(f)\geq\lambda+2$, we have $\Minimalfct{\lambda+2}\leq f$ by \cref{Minfunctions} and so $\pgl \Maximalfct{\lambda}\leq f$, as desired.

For the last point, suppose towards a contradiction that they are equivalent and let $\Minimalfct{\lambda+1}=\pgl_n\Minimalfct{\alpha_n+1}$ for $(\alpha_n)_n$ cofinal in $\lambda$. By \cref{Rigidityofthecocenter}, it follows that $\Maximalfct{\lambda}\leq \gl_{n<M} \Minimalfct{\alpha_n+1}$ for some $M\in \N$, but then $\CB(\Maximalfct{\lambda})=\lambda\leq \sup_{n<M}(\alpha_n +1)<\lambda$. 
\end{proof}

{\cref{LocallyConstantFunctions} gives us the structure of $\sC_1$ (in other words, locally constant functions), and
the General Structure \cref{JSLgeneralstructure} tells us that functions with same limit rank are all continuously equivalent.
The next cases are those of $\sC_2$, and $\sC_{\lambda+1}$ for limit $\lambda$.
Interestingly, these two cases have more in common than just being the next ones to deal with,
see \cref{cor:CenteredSucessor,FGatsuccessoroflimit,OptimalityOfGenerators,qu:isomorphismofthelevels}.

% !TEX root = main.tex

\section{Centered functions}\label{sectionCentered}

At this point in the paper, one could wonder if it is possible to use combinations of all the results shown already and analyze all functions in $\sC$
using their decomposition into simple functions guaranteed by \cref{JSLdecompositionlemma}.

As we saw in \cref{CBrankofPgluingofregularsequence1}, the pointed gluing of a sequence of functions is simple when the corresponding sequence of $\CB$-ranks is regular. Moreover simple functions can be decomposed in a sequence of rays at the distinguished point with regular $\CB$-ranks.
However, the latter sequence may not always be regular for continuous reducibility and the pointed gluing of this sequence of rays may not be equivalent to the simple function. % \ynote*{ajout de la référence à l'exemple}{(see \cref{infiniteWedgeOfOnes,SomeCounterExamples})}.

Scattered functions which are equivalent to a pointed gluing of some $\leq$-regular sequence play an important role in our analysis and enjoy a natural alternative definition.
We call them centered functions and we now study them in detail.

This section culminates with the results that the \bqo{} character of continuous reducibility implies that centered functions are ubiquitous.
Finally, we show that finite generation at levels of $\sC$ ensures a finite number of centered functions of any given rank, which is a crucial step in our proof.

\subsection{Definition and characterization}\label{ssec:centered_def}

\begin{definition}
Let $A$ be a topological space. A \emph{center} for a function $g:A\to B$ is a point $x\in A$ such that for every neighbourhood $U$ of $x$ we have $g\leq g\restr{U}$.
We say that a function $g:A\to B$ is \emph{centered} if it admits a center.
\end{definition}

The functions $\Minimalfct{\alpha+1}$ and $\pgl\Maximalfct{\alpha}$ for all $\alpha<\omega_1$ are examples of centered functions ($\iw{0}$ is a center). More generally
saying that a sequence $(f_i)_{i\in \N}$ of functions is \emph{regular} if $\{j\in \N \mid f_i \leq f_j\}$ is infinite%\footnote{In particular, the support of $(f_i)_i$ is either empty or infinite.}
for all $i\in \N$, we have

\begin{fact}\label{Pgluingofregulariscentered}
If $(f_i)_{i\in\N}$ is a regular sequence in $\functionsonbaire$, then $\iw{0}$ is a center for $\pgl_i f_i$.
\end{fact}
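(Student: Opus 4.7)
The plan is to verify directly that $\iw{0}$ meets the definition of a center of $g := \pgl_i f_i$. Since the clopen sets $U_n := N_{(0)^n}\cap \dom g$ for $n \in \N$ form a neighbourhood basis at $\iw{0}$, and since $g\restr{V} \leq g\restr{U}$ whenever $V \subseteq U$, it will suffice to show $g \leq g\restr{U_n}$ for every $n \in \N$. A first observation is that stripping the initial $(0)^n$ from each sequence yields a homeomorphism between $g\restr{U_n}$ and $\pgl_{k\in\N} f_{n+k}$, so the task will reduce to exhibiting a continuous reduction $g \leq \pgl_{k\in\N} f_{n+k}$.

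For this reduction, I plan to apply \cref{Pgluingasupperbound} with $f := g$, $y := \iw{0}$, and target sequence $(f_{n+k})_{k\in\N}$. Unfolding the definitions shows that the $j$-th ray of $g$ at $\iw{0}$ is homeomorphic to $f_j$ (it is the corestriction of $g$ to the codomain piece $(0)^j\conc(1)\conc \im f_j$), so the hypothesis of \cref{Pgluingasupperbound} reduces to showing that the sequence $(f_j)_{j\in\N}$ is reducible by pieces to $(f_{n+k})_{k\in\N}$. Regularity is precisely what is needed here: for each $j$ the set $\{k \in \N : f_j \leq f_{n+k}\}$ is infinite, being a cofinite subset of the infinite set $\{m \in \N : f_j \leq f_m\}$ granted by regularity. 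A greedy construction enumerating $\N$ will then produce pairwise distinct naturals $k_0, k_1, \ldots$ with $f_j \leq f_{n+k_j}$ for every $j$, and the singletons $I_j := \{k_j\}$ form the required family of pairwise disjoint finite subsets of $\N$.

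I do not anticipate any serious obstacle: the argument is essentially routine bookkeeping combining regularity with the already-established \cref{Pgluingasupperbound}, which is consistent with the result being stated as a mere Fact. The only mildly delicate point is the identification of the rays of the pointed gluing at $\iw{0}$ with the functions being glued, which follows immediately from the definition of rays once one notes that the $j$-th ray of the codomain of $g$ at $\iw{0}$ is exactly $(0)^j\conc(1)\conc B_j$, where $B_j$ denotes the codomain of $f_j$.
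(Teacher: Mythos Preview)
Your proof is correct, but it proceeds by a different route from the paper's. The paper applies \cref{Pgluingaslowerbound2} directly with $f = g$, $x = \iw{0}$ and $(g_i)_i = (f_i)_i$: to conclude $g \leq g\restr{V}$ for every clopen $V \ni \iw{0}$, it suffices to check that for every such $V$ and every $m$, some reduction $f_m \leq g$ lands inside $V$ with image in the codomain bounded away from $\iw{0}$. Regularity supplies a large $n$ with $N_{(0)^n} \subseteq V$ and $f_m \leq f_n$, and the obvious copy of $f_n$ sitting in the $n$-th slot of $g$ gives the required reduction. Your approach instead identifies $g\restr{U_n}$ with the tail pointed gluing $\pgl_k f_{n+k}$ and invokes the \emph{upper} bound criterion \cref{Pgluingasupperbound} to reduce $g$ to this tail via a reduction by pieces built from regularity. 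Both arguments are short and natural; the paper's saves the intermediate identification by exploiting a lemma whose conclusion already quantifies over all clopen neighbourhoods of the chosen point, while yours makes the tail-shift structure of the pointed gluing more explicit and would work just as well even if \cref{Pgluingaslowerbound2} were not available.
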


\begin{proof}
Set $f:=\pgl_if_i$.
By \cref{Pgluingaslowerbound2}, it is enough to show that for every clopen neighbourhood $U$ of $\iw{0}$ and every $n\in \N$,
there exists a continuous reduction $(\sigma,\tau)$ from $f_n$ to $f$ such that $\im \sigma\subseteq U$ and $\iw{0}\notin\overline{\im(f\sigma)}$.

So let $U\ni \iw{0}$ be clopen and $m\in \N$. By regularity of $(f_i)_i$ we can find $n$ large enough such that $N_{(0)^n}\subseteq U$ and $f_m\leq f_n$, which clearly induces the desired reduction.
\end{proof}

The property of being centered is stable under continuous equivalence as a result of the following fact.

\begin{fact}\label{Centerinvariance}
Let $f,g$ be functions between topological spaces.
Suppose that $x$ is a center for $f$ and that $(\sigma,\tau)$ continuously reduces $f$ to $g$.
\begin{enumerate}
\item For every neighborhood $U$ of $\sigma(x)$, we have $f\leq g\restr{U}$.\label{Centerinvariance1}
\item If moreover $g\equiv f$, then $\sigma(x)$ is a center for $g$.\label{Centerinvariance2}
\item If $(A_i)_{i\in I}$ is an open covering of $\dom g$, then there exists $i\in I$ with $f\leq g\restr{A_i}$. \label{Centerinvariance3}
%\item If $(A_i)_{i\in I}$ is an open covering of $\dom(g)$ for some $I$, then there is an $i\in I$ with $f\leq g\restr{A_i}$.
\end{enumerate}
\end{fact}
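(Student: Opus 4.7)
The three items follow fairly directly from the definition of center together with the stability of continuous reducibility under restrictions. I expect the real content to be entirely in item~(1), from which (2) and (3) follow in one line each.

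For item~(1), the plan is to transport the neighborhood $U$ of $\sigma(x)$ back to a neighborhood of $x$ via $\sigma$, then use that $x$ is a center. Concretely, since $\sigma$ is continuous, $V:=\sigma^{-1}(U)$ is an open subset of $\dom f$ containing $x$. Because $x$ is a center for $f$, we have $f\leq f\restr{V}$. On the other hand, the reduction $(\sigma,\tau)$ of $f$ to $g$ restricts to a reduction of $f\restr{V}$ to $g\restr{U}$: take the pair $(\sigma\restr{V},\tau\restr{\im(g\restr{U}\circ\sigma\restr{V})})$, and note that $\sigma(V)\subseteq U$ by construction, while $\tau g \sigma(v)=f(v)$ for every $v\in V$. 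Transitivity of $\leq$ then yields $f\leq g\restr{U}$.

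For item~(2), I would simply combine (1) with the hypothesis $g\leq f$: given any neighborhood $U$ of $\sigma(x)$ in $\dom g$, item~(1) gives $f\leq g\restr{U}$, and then $g\leq f\leq g\restr{U}$, which is exactly the statement that $\sigma(x)$ is a center for $g$.

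For item~(3), I would note that $\sigma(x)\in\dom g$, so $\sigma(x)\in A_i$ for at least one $i\in I$. Since $A_i$ is open in $\dom g$, it is a neighborhood of $\sigma(x)$, and item~(1) immediately gives $f\leq g\restr{A_i}$.

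The only place where I anticipate needing to be careful is in item~(1): one must verify that the corestriction of $\tau$ to the image of $g\restr{U}\circ\sigma\restr{V}$ is well-defined and continuous, and that the composition indeed recovers $f\restr{V}$. Both follow mechanically from the definition of reduction and from continuity of $\tau$, so this is routine rather than an obstacle.
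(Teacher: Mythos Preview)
Your proof is correct and follows essentially the same approach as the paper's: pull back $U$ via $\sigma$, use centeredness to get $f\leq f\restr{\sigma^{-1}(U)}$, then restrict the given reduction; items (2) and (3) follow from (1) exactly as you indicate. The only cosmetic difference is that the paper writes the witness pair as $(\sigma\corestr{U},\tau)$ rather than spelling out the corestriction of $\tau$.
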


\begin{proof}
For the first item, if $U$ is a neighbourhood of $\sigma(x)$, then by continuity of $\sigma$, $\sigma^{-1}(U)$ is a neighbourhood of $x$.
We have $f\restr{\sigma^{-1}(U)}\leq g\restr{U}$ as witnessed by $(\sigma\corestr{U},\tau)$ and $f\leq f\restr{\sigma^{-1}(U)}$ since $x$ is a center for $f$, so $f\leq g\restr{U}$.

If moreover $g\leq f$ then by transitivity $g\leq g\restr{U}$ holds. So in this case $\sigma(x)$ is a center for $g$. This gives the second point.

If now $(A_i)_i$ covers $\dom(g)$, then $\sigma(x)\in A_i$ for some $i\in I$, but as $A_i$ is open, using the first point we obtain the third one.
\end{proof}

We next establish an important interaction between the existence of centers and the property of being scattered. 

\begin{proposition}\label{scatteredhavecocenter}
Suppose that $f:A\to B$ is centered with $A$ metrizable and $B$ Hausdorff.
%Suppose that $A$ is metrizable, $B$ is Hausdorff, and $f:A\rao B$ is centered. % and continuous.
Then $f$ is scattered if and only if all centers have the same image by $f$.

Moreover when $f$ is scattered, then it is simple and any center of $f$ is mapped to its distinguished point.
\end{proposition}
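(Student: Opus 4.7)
My plan is to establish the forward implication together with the moreover clause in one stroke, and then to derive the backward implication contrapositively.

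\emph{Forward direction and moreover.} Assume $f$ is centered at $x$ and scattered. Since $x$ is a center, $f \equiv f\restr{U}$ for every open $U \ni x$, so by \cref{CBbasicsfromJSL-tp} we have $\CB(f) = \CB(f\restr{U})$. Combined with the second item of \cref{CBbasics0}, this rules out $\CB(f)$ being a limit ordinal, hence $\CB(f) = \alpha + 1$ for some $\alpha$. Using \cref{CBbasics0}~\cref{CBbasicsfromJSL2}, the same identity forces $\CB_\alpha(f\restr{U}) = \CB_\alpha(f) \cap U \neq \emptyset$ for every open $U \ni x$, so $x$ belongs to the closed set $\CB_\alpha(f)$. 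Since $\CB_{\alpha+1}(f) = \emptyset$, the point $x$ is $f\restr{\CB_\alpha(f)}$-isolated, and I fix an open $U \ni x$ on which $f\restr{U \cap \CB_\alpha(f)}$ is constant equal to $f(x)$. Applying \cref{CBbasicsfromJSL}~\cref{CBbasicsfromJSL1} to any reduction $(\sigma, \tau)$ of $f$ to $f\restr{U}$ gives $\sigma(\CB_\alpha(f)) \subseteq \CB_\alpha(f) \cap U$, hence $f\sigma$ is constantly $f(x)$ on $\CB_\alpha(f)$. Thus $f(z) = \tau(f\sigma(z)) = \tau(f(x))$ for every $z \in \CB_\alpha(f)$, and the special case $z = x$ yields $\tau(f(x)) = f(x)$. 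Therefore $f$ is constant equal to $f(x)$ on $\CB_\alpha(f)$, which means $f$ is simple with distinguished point $f(x)$. Finally, the same argument applied to any other center $x'$ yields $x' \in \CB_\alpha(f)$, hence $f(x') = f(x)$.

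\emph{Backward direction.} I argue contrapositively: assume $f$ is not scattered and construct a center $y$ with $f(y) \neq f(x)$. Set $K = \ker_{\CB}{f}$. Then $K \neq \emptyset$ and $f\restr{K}$ is nowhere locally constant by definition of the perfect kernel. Applying \cref{CBbasicsfromJSL}~\cref{CBbasicsfromJSL1} to a reduction witnessing $f \leq f\restr{U}$ gives $\sigma(K) \subseteq K \cap U$, so $K \cap U \neq \emptyset$ for every open $U \ni x$; by closedness of $K$, $x \in K$. I would then adapt the Cantor-scheme construction from the proof of \cref{prop:nlc_implies_nonscattered}, performed inside the nowhere locally constant function $f\restr{K}$ and based at $x$, to build a topological embedding $\phi : \Q \to K$ with $\phi(0) = x$ and $f\phi$ a topological embedding. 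The key remaining step is to show that each $\phi(q)$ is a center of $f$: for a neighborhood $V$ of $\phi(q)$, the preimage $\phi^{-1}(V)$ is a neighborhood of $q$ in $\Q$, and exploiting that every point of $\Q$ is a center of $\id_{\Q}$ together with the local self-similarity of $\Q$, I would transport, via $\phi$ and the centeredness of $x$, a reduction $f \leq f\restr{V}$. Granted this, any $q \neq 0$ provides a center $\phi(q)$ of $f$ with $f(\phi(q)) \neq f(\phi(0)) = f(x)$ by injectivity of $f\phi$, contradicting the assumption that all centers have the same image.

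The main obstacle I expect is the transport argument in the backward direction: making it rigorous requires carefully combining the centeredness of $x$, the injectivity of $\phi$, and the homogeneity of $\Q$ to produce a reduction $f \leq f\restr{V}$ for any neighborhood $V$ of $\phi(q)$, and it is where the assumptions of metrizability of $A$ and Hausdorffness of $B$ play their role.
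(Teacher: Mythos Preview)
Your forward direction is correct and essentially the paper's argument. The paper packages the recurrent ideas into two preliminary remarks --- (i) a center which is $f$-isolated forces $f$ to be constant, and (ii) any center lies in every nonempty $\CB_\alpha(f)$ and remains a center of $f\restr{\CB_\alpha(f)}$ --- which streamlines the bookkeeping, but the content matches yours.

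For the converse you attempt the contrapositive ``not scattered $\Rightarrow$ there exist two centers with different images'', and here there is a genuine gap. You build an embedding $\phi:\Q\to K$ with $\phi(0)=x$ and $f\phi$ an embedding, then assert each $\phi(q)$ is a center of $f$. But being a center is a \emph{global} property: you need $f\leq f\restr{V}$ for every neighbourhood $V$ of $\phi(q)$, and nothing in your construction controls $f$ outside $\im\phi$, or even outside $K$. The ``transport'' you sketch would have to convert a reduction $f\leq f\restr{U}$ (for $U\ni x$) into a reduction $f\leq f\restr{V}$ (for $V\ni\phi(q)$), and there is no mechanism for this --- the witness $\sigma$ of the former bears no relation to $\phi$. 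Moreover, the proposition does not assume $f$ is continuous, so even the Cantor-scheme step borrowed from \cref{prop:nlc_implies_nonscattered}, which uses continuity to shrink balls with controlled image, is not directly available.

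The paper's final paragraph goes via the other contrapositive: assuming there are two centers $x_0,x_1$ with $f(x_0)\neq f(x_1)$, it uses remarks (i) and (ii) to show by induction that $x_0,x_1\in\CB_\alpha(f)$ for all $\alpha$, hence the perfect kernel is nonempty and $f$ is not scattered. This sidesteps the need to \emph{construct} a new center --- though, as you may notice, it is logically the contrapositive of the forward implication rather than of the converse.
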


\begin{proof}
Assume that $x$ is a center for a function $f$, we start by making two remarks. First, $x$ is $f$-isolated if and only if $f$ is constant. Indeed, if $f\restr{U}$ is constant on a neighborhood $U$ of $x$, then the whole $f$ is constant since we have $f\leq f\restr{U}$.

Second, for all ordinal $\alpha$ if $\CB_\alpha(f)\neq \emptyset$ then $x\in \CB_\alpha(f)$ and $x$ is a center for $f\restr{\CB_\alpha(f)}$. To see this, note that for every neighborhood $U$ of $x$ we have $f\leq f\restr{U}$ via some $(\sigma,\tau)$, hence $\sigma(\CB_\alpha(f))\subseteq \CB_\alpha(f))\cap U$ and $f\restr{\CB_\alpha(f)}\leq f\restr{\CB_\alpha(f)\cap U}$ by \cref{CBbasicsfromJSL}. So if $\CB_\alpha(f)$ is nonempty, then so is $\CB_\alpha(f)\cap U$ for all neighborhood $U$ of $x$. It follows that $x$ belongs to the closed set $\CB_\alpha(f)$ and it is a center for $f\restr{\CB_\alpha(f)}$. 

Now assume that $f$ is scattered with $\gamma=\CB(f)$, then by \cref{prop:nlc_implies_nonscattered} we have $\CB_\gamma(f)=\emptyset$. By our second remark, all centers of $f$ belong to $\CB_{\beta}(f)$ for all $\beta<\gamma$. In particular, we must have $\gamma=\alpha+1$ and all centers of $f$ belong to $\CB_\alpha(f)$. Now all centers of $f$ are centers of $f\restr{\CB_\alpha(f)}$ and they are $f\restr{\CB_\alpha(f)}$-isolated. So by our first remark, $f$ is constant on $\CB_\alpha(f)$, hence $f$ is simple and all centers have the same image.

Finally, suppose now that $x_0$ and $x_1$ are centers for $f$ such that $f(x_0)\neq f(x_1)$.
Then using our two previous remarks, we see by induction that $x_0,x_1\in\CB_\alpha(f)$ for all $\alpha$. Therefore $f$ has nonempty perfect kernel and so it is not scattered by \cref{scatterediffemptykernel}.
\end{proof}

Therefore, in particular, if $f\in\sC$ is scattered, then there is a unique point $y$ in its image such that for every center $x$ for $f$ we have $f(x)=y$.
We call this point the {\emph{\cocenter{}}} of $f$.
The previous proposition has the following fundamental consequence which highlights a strong form of rigidity of centered functions with respect to continuous reducibility.

\begin{proposition}\label{Rigidityofthecocenter}
Let $f,g\in \sC$ be centered with \cocenter{} $y_f$ and $y_g$ respectively. 
Assume that $f\equiv g$ and that $(\sigma,\tau)$ is a continuous reduction from $f$ to $g$. Then the following holds:
\begin{enumerate}
\item $\tau(y_g)=y_f$,
\item for all $n\in\N$ we have $y_g\notin\overline{g\sigma(\dom \ray{f}{y_f,n})}$,
\item for all $m,n\in\N$ there is $M\geq m$ such that $\ray{f}{y_f,n}\leq \gl_{i=m}^{M}\ray{g}{y_g,i}$, % and
\item $(\ray{f}{y_f,n})_{n\in\N}$ is reducible by finite pieces to $(\ray{g}{y_g,n})_{n\in\N}$.
\end{enumerate}
\end{proposition}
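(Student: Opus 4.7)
My plan is to prove the four items in sequence, with (iii) being the substantive step and (iv) an immediate corollary. By \cref{scatteredhavecocenter}, both $f$ and $g$ are simple, and any center $x_f$ of $f$ satisfies $f(x_f)=y_f$, while any center $x_g$ of $g$ satisfies $g(x_g)=y_g$. For item (i), I would fix a center $x_f$ for $f$ and invoke \cref{Centerinvariance}~\cref{Centerinvariance2} to conclude that $\sigma(x_f)$ is a center for $g$, so $g(\sigma(x_f)) = y_g$; the identity $f = \tau g \sigma$ then gives $\tau(y_g) = \tau g \sigma(x_f) = f(x_f) = y_f$.

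For item (ii), I would argue by contradiction, using the topological fact that $\ray{\cN}{y_f, n} = \nbhd{y_f}{n} \setminus \nbhd{y_f}{n+1}$ is closed in $\cN$ and does not contain $y_f$. If $y_g$ were in $\overline{g\sigma(\dom \ray{f}{y_f, n})}$, continuity of $\tau$ combined with (i) would place $y_f = \tau(y_g)$ in $\overline{f(\dom \ray{f}{y_f, n})} \subseteq \ray{\cN}{y_f, n}$, a contradiction.

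Item (iii) is the main step and exploits centeredness of $g$ to shift the image of the reduction close to $y_g$ before invoking (ii). Fixing $m, n \in \N$, I would observe that $V_m := g^{-1}(\nbhd{y_g}{m})$ is an open neighborhood of $x_g$, so $g \leq g\restr{V_m}$; composing a witness of this with $(\sigma,\tau)$ yields a new reduction $(\bar\sigma, \bar\tau)$ from $f$ to $g$ satisfying $g\bar\sigma(\dom f) \subseteq \nbhd{y_g}{m}$. Applying (ii) to $(\bar\sigma, \bar\tau)$ produces some $M \geq m$ such that $g\bar\sigma(\dom \ray{f}{y_f, n})$ is disjoint from $\nbhd{y_g}{M+1}$, so that
\[g\bar\sigma(\dom \ray{f}{y_f, n}) \subseteq \nbhd{y_g}{m} \setminus \nbhd{y_g}{M+1} = \bigcup_{i=m}^{M}\ray{\cN}{y_g, i}.\]
This exhibits $\ray{f}{y_f, n} \leq g\corestr{\bigcup_{i=m}^{M}\ray{\cN}{y_g, i}} = \bigsqcup_{i=m}^{M}\ray{g}{y_g, i} \leq \gl_{i=m}^{M}\ray{g}{y_g, i}$, the final step being \cref{Gluingasupperbound_cor}. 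Item (iv) then follows by applying (iii) inductively with $m_0 = 0$ and $m_{n+1} = M_n + 1$, yielding pairwise disjoint finite intervals $I_n = [m_n, M_n]$ that witness the reduction by pieces.

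The main obstacle is item (iii): centeredness of $g$ must be genuinely leveraged to relocate the image of the reduction arbitrarily deep into neighborhoods of $y_g$, since (ii) alone would only provide the weaker bound $\gl_{i=0}^{M}\ray{g}{y_g, i}$, which is insufficient to produce the pairwise disjoint index sets required for the reduction by pieces in (iv).
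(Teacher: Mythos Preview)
Your proof is correct and follows essentially the same approach as the paper. The only cosmetic differences are that in (ii) you phrase the contradiction via closures while the paper uses a sequential argument, and in (iii) you explicitly compose reductions to push the image into $\nbhd{y_g}{m}$ whereas the paper directly chooses a new reduction $(\sigma',\tau')$ witnessing $f\leq g\restr{U}$ for $U$ a neighborhood of the center $\sigma(x)$ of $g$; both routes are equivalent.
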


\begin{proof}
First item. Let $x$ be a center for $f$. Since $f\equiv g$, $\sigma(x)$ is a center for $g$ by \cref{Centerinvariance} and so $g\sigma(x)=y_g$ by \cref{scatteredhavecocenter}.
Hence $\tau(y_g)= \tau g\sigma(x_f)=f(x_f)=y_f$ as desired. 

\smallskip

Second item: suppose not, then there is a sequence $(x_i)_{i\in \N}\subseteq\dom \ray{f}{y_f,n}$ with $g\sigma(x_i)\to y_g$, so $f(x_i)=\tau g\sigma(x_i) \to \tau(y_g)=y_f$ by continuity of $\tau$.
But by definition of rays $f(x_i)\notin \nbhd{y_f}{n+1}$ for all $i\in\N$, so $(f(x_i))_i$ cannot converge to $y_f$, a contradiction.

\smallskip

Third item: let $m,n\in\N$, use the continuity of $g$ to get $U\ni \sigma(x)$ open with $g(U)\subseteq N_{y_g\restr{m}}$.
Since $\sigma(x)$ is a center for $g$, $g\equiv g\restr{U}$ and we can choose a continuous reduction $(\sigma',\tau')$ witnessing $f\leq g\restr{U}$. %$g\leq g\restr{U}$ and we can assume that $(\sigma,\tau)$ actually reduces $f$ to $g\restr{U}$.
By the previous item, there exists $M>m$ such that $\nbhd{y_g}{M+1}$ is disjoint from $\overline{g\sigma'(\dom \ray{f}{y_f,n})}$. This shows that $(\sigma',\tau')$ restricts to the desired reduction.
\smallskip

Last item: By a recursive application of the previous item.
\end{proof}

Given a finite set $F$ of functions, we define $\pgl F=\pgl \iw{\gl F}$, and
we write $f\leq \FinGl{F}$ to say that $f\leq g$ for some $g\in \FinGl{F}$.

\begin{corollary}\label{ResidualCorestrictionOfCentered}
Suppose that $f:A\to B$ in $\functionsonbaire$, $G\subseteq \sC$ finite and $f\equiv \pgl G$. Then $f$ is centered and for every open set $V\subseteq B$ which excludes its \cocenter{}, we have $f\corestr{V}\leq \FinGl{G}$.
% $y$ denote its cocenter and $V\subseteq B$ open with $y\notin V$.
%Let $f:A\to B$ be a centered function with \cocenter{} $y\in B$ and $V\subseteq B$ open with $y\notin V$.
%If $f\equiv \pgl G$ for some finite set $G\subseteq \sC$, then $f\corestr{V}\leq \FinGl{G}$. 
\end{corollary}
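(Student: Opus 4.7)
My plan is to exploit the rigidity of $\pgl G$ as a pointed gluing of the constant (hence regular) sequence $\iw{\gl G}$: by \cref{Pgluingofregulariscentered} the point $\iw 0$ is a center of $\pgl G$, so \cref{Centerinvariance}\cref{Centerinvariance2} immediately gives that $f$ is centered, and the first item of \cref{Rigidityofthecocenter} tells us that, for any continuous reduction $(\sigma,\tau)$ witnessing $f\leq \pgl G$, the point $\tau(\iw 0)$ is the \cocenter{} $y_f$ of $f$.

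Now fix such a reduction and an open set $V\subseteq B$ with $y_f\notin V$. I will extract from the continuity of $\tau$ at $\iw 0$ an integer $n\in\N$ such that $\sigma$ sends $f^{-1}(V)$ outside $N_{(0)^n}$. Indeed, $\tau^{-1}(V)$ is open in $\im(\pgl G\sigma)$ and avoids $\iw 0$ since $\tau(\iw 0)=y_f\notin V$; as $(N_{(0)^n}\cap\dom(\pgl G))_{n\in\N}$ is a neighbourhood basis of $\iw 0$ in $\dom(\pgl G)$, it follows that some $N_{(0)^n}\cap\im(\pgl G\sigma)$ is disjoint from $\tau^{-1}(V)$, which yields the claim.

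To conclude, I will use the observation that $\pgl G$ restricted to $\dom(\pgl G)\setminus N_{(0)^n}$ is, directly from the definition of the pointed gluing, homeomorphic to the finite gluing $n\cdot \gl G=\gl_{i<n}\gl G$, which obviously belongs to $\FinGl{G}$. Restricting $\sigma$ to the subspace $f^{-1}(V)$ and correspondingly restricting $\tau$ to $\im(\pgl G\sigma)\setminus N_{(0)^n}$ then furnishes a continuous reduction from $f\corestr{V}$ to $n\cdot\gl G$, as desired. The only care needed is in the middle step, where one passes between neighbourhoods of $\iw 0$ in $\im(\pgl G\sigma)$ with its subspace topology and those in the ambient space $\dom(\pgl G)$; beyond that I do not anticipate any serious obstacle.
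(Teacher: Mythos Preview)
There is a genuine gap in your middle step. You infer from ``$\tau^{-1}(V)$ is open in $\im((\pgl G)\sigma)$ and avoids $\iw{0}$'' that some basic neighborhood $N_{(0)^n}\cap\im((\pgl G)\sigma)$ of $\iw{0}$ is disjoint from it. This inference is invalid: an open set missing a point can still accumulate there, and continuity of $\tau$ at $\iw{0}$ only produces neighborhoods mapping into \emph{open} neighborhoods of $y_f$, whereas $B\setminus V$ is merely closed. In fact the statement itself fails for general open $V$: take $G=\{\Minimalfct{1}\}$, $f=\pgl G$ (the identity on a convergent sequence), and $V=\im f\setminus\{\iw{0}\}$; then $V$ is open in $B$, $y_f=\iw{0}\notin V$, but $f\corestr{V}\equiv \id_{\N}$ is not below any $n\Minimalfct{1}\in\FinGl{G}$.

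The paper's own proof has precisely the same gap (it asserts $V\subseteq\bigcup_{n<M}\ray{B}{y,n}$ without justification, which amounts to $y\notin\overline{V}$). Both arguments become correct under the extra hypothesis $y_f\notin\overline{V}$---in particular when $V$ is clopen, which is how the corollary is actually used later (in the proof of \cref{DiagonalTheorem}, where the relevant set is $B\setminus\ray{B}{\bar{y},k}$). Aside from this shared oversight, your route differs mildly from the paper's: you work directly on the $\pgl G$ side using only item~(i) of \cref{Rigidityofthecocenter}, whereas the paper invokes item~(iv) to bound each ray $\ray{f}{y_f,n}$ of $f$ individually. Your version is a touch more direct; both are equally sound once the hypothesis on $V$ is strengthened.
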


\begin{proof}
Suppose that $f\equiv g=\pgl G$ and let $(\sigma,\tau)$ witness $f\leq g$. By \cref{Pgluingofregulariscentered}, $g(\iw{0})=\iw{0}$ is the \cocenter{} of $g$, so $f$ is centered by \cref{Centerinvariance} and $y=\tau(\iw{0})$ is the \cocenter{} of $f$. Therefore, $(\ray{f}{y,n})_n$ is reducible by finite pieces to $(\ray{g}{\iw{0},n})_n=\iw{\gl G}$ by \cref{Rigidityofthecocenter}.
So for all $n\in\N$ we get $\ray{f}{y,n}\leq\FinGl G$ and if $V\subseteq B$ is open with $V\notni y$ then there exists $M\in\N$ with $V\subseteq \bigcup_{n<M}\ray{B}{y,n}$. Hence, it follows that $f\corestr{V}\leq \bigsqcup_{n<M}\ray{f}{y,n}\leq  \FinGl{G}$, as desired.
\end{proof}

The following theorem ties centered functions in $\sC$ to the pointed gluing operation, as promised. 

\begin{theorem}\label{CenteredasPgluing}
Let $f:A\rao B$ be in $\sC$.
\begin{enumerate}
\item If $f$ is centered with \cocenter{} $y$, then $f\equiv\pgl_{n\in \N}\ray{f}{y,n}$, \label{CenteredasPgluing_def} 
\item The function $f$ is centered if and only if $f\equiv\pgl_{i\in\N} f_i$ for some monotone (or regular) sequence $(f_i)_{i\in\N}$ of functions .\label{CenteredasPgluing_mono}
\end{enumerate}
In particular, if $f$ is centered with \cocenter{} $y$, then $f$ is simple with distinguished point $y$ and $\CB(f)=(\sup_n\CB(\ray{f}{y,n}))+1$.
\end{theorem}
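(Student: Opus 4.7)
For item (1), I plan to verify the easy inequality $f \leq \pgl_{n \in \N}\ray{f}{y,n}$ by invoking \cref{Pgluingofraysasupperbound}, and to establish the converse by applying \cref{Pgluingaslowerbound2} at a center $x$ of $f$ (for which $f(x)=y$ by \cref{scatteredhavecocenter}). Concretely, for fixed $n \in \N$ and any open neighbourhood $U$ of $x$, the centeredness of $f$ will provide a continuous reduction $(\sigma_1,\tau_1)$ of $f$ to $f\restr{U}$; composing $\sigma_1$ with the inclusion $\iota : \dom \ray{f}{y,n} \hookrightarrow \dom f$ will yield a reduction of $\ray{f}{y,n}$ to $f$ whose domain image lies in $U$. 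The remaining condition of \cref{Pgluingaslowerbound2}, namely $y \notin \overline{\im f\sigma_1\iota}$, will follow from the rigidity of the \cocenter{}: since $x \in U$ also makes $x$ a center of $f\restr{U}$ with image $y$, \cref{Rigidityofthecocenter} forces $\tau_1(y) = y$. Because $\tau_1(\im f\sigma_1\iota) \subseteq \im \ray{f}{y,n} \subseteq \ray{B}{y,n}$, and $\ray{B}{y,n}$ is disjoint from the open neighbourhood $\nbhd{y}{n+1}$ of $y$, continuity of $\tau_1$ at $y$ prevents any sequence in $\im f\sigma_1\iota$ from converging to $y$.

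For item (2), the right-to-left implication is immediate from \cref{Pgluingofregulariscentered} (any regular pointed gluing is centered at $\iw{0}$) together with \cref{Centerinvariance}, which transports centeredness through continuous equivalence. For the forward direction, given $f$ centered with \cocenter{} $y$, I propose the explicitly monotone sequence
\[
f_i \;:=\; f\corestr{\bigcup_{n<i}\ray{B}{y,n}} \;=\; \bigsqcup_{n<i}\ray{f}{y,n},
\]
which satisfies $f_i \leq f_{i+1}$ for every $i$ as a restriction along a clopen inclusion. The equivalence $f \equiv \pgl_i f_i$ then splits in two halves. On the one hand, item (1) gives $f \equiv \pgl_n \ray{f}{y,n}$, and since each $\ray{f}{y,n}$ is a clopen summand of $f_{n+1}$, the pairwise disjoint singleton pieces $I_n := \{n+1\}$ furnish via \cref{Pgluingasupperbound} the reduction $\pgl_n \ray{f}{y,n} \leq \pgl_i f_i$, hence $f \leq \pgl_i f_i$. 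On the other hand, $\pgl_i f_i \leq f$ will follow from \cref{Pgluingaslowerbound2} by exactly the same composed-reduction scheme as in item (1): given a center $x$ of $f$ and an open $U \ni x$, the composition of the centeredness reduction $(\sigma_1,\tau_1)$ with the inclusion $\dom f_i \hookrightarrow \dom f$ produces a reduction of $f_i$ to $f$ with image in $U$; since $\im f_i \subseteq \bigcup_{n<i}\ray{B}{y,n}$ is disjoint from the clopen neighbourhood $\nbhd{y}{i}$ of $y$, the rigidity argument of item (1) again excludes $y$ from the closure of the image.

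The ``In particular" addendum collects direct consequences of what is already proved: \cref{scatteredhavecocenter} shows that a scattered centered function is simple with distinguished point equal to its \cocenter{}; combining item (1) with \cref{CBrankofPgluingofregularsequence2simple} (the sequence of ranks of rays at the distinguished point of a simple function is regular with supremum $\CB(f)-1$) and \cref{CBrankofPgluingofregularsequence1} (a pointed gluing of a regular $\CB$-rank sequence has rank equal to $\sup + 1$) then gives the formula $\CB(f) = (\sup_n \CB(\ray{f}{y,n}))+1$. The principal technical obstacle throughout is verifying the avoidance condition $y \notin \overline{\im f\sigma}$ demanded by \cref{Pgluingaslowerbound2}; this is settled uniformly in both items by invoking the rigidity of the \cocenter{} \cref{Rigidityofthecocenter} to force $\tau_1(y) = y$ and then exploiting continuity against a target set bounded away from $y$.
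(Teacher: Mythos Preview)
Your proof is correct. For item~(1) it follows the paper's approach; the only difference is that where the paper invokes \cref{Rigidityofthecocenter} item~(2) directly to obtain $y\notin\overline{\im f\sigma_1\iota}$, you re-derive that same conclusion via item~(1) ($\tau_1(y)=y$) together with continuity of $\tau_1$---a correct but slightly longer detour.

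For item~(2) your route is genuinely different and simpler. The paper builds its monotone sequence by recursively applying \cref{Rigidityofthecocenter} item~(3): it chooses pairwise disjoint finite sets $I_n\subseteq\N$ so that $f_n:=\gl_{i\in I_n}\ray{f}{y,i}$ is monotone and $\ray{f}{y,n}\leq f_n$, then uses two applications of \cref{Pgluingasupperbound} to get $\pgl_n f_n\equiv\pgl_n\ray{f}{y,n}$. You bypass that recursion entirely by taking the trivially monotone partial corestrictions $f_i=\bigsqcup_{n<i}\ray{f}{y,n}$, obtaining $f\leq\pgl_i f_i$ with a single \cref{Pgluingasupperbound} call (singleton pieces $\{n+1\}$) and $\pgl_i f_i\leq f$ by recycling the \cref{Pgluingaslowerbound2} argument from item~(1), the avoidance condition holding because $\overline{f\sigma_1(\dom f_i)}=\bigcup_{n<i}\overline{f\sigma_1(\dom\ray{f}{y,n})}$ is a finite union excluding $y$. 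Your approach avoids \cref{Rigidityofthecocenter} item~(3) altogether; the paper's construction has the mild advantage that its $f_n$ are gluings over \emph{disjoint} index sets (so both directions go through \cref{Pgluingasupperbound}), but your argument is shorter.
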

%\ynote{check if this result holds for a scattered $f\in\functionsonbaire$!}
%\rnote{Les fonctions $f_i$ du second point étaient supposées continues avant. Le renforcement de \cref{Pgluingaslowerbound2} nous permet plus fort. Yann: ok, mais comme on suppose $f\in\sC$, les $f_i$ sont forcément continues. Une raison pour l'hypothèse de continuité dans cette section, c'est qu'elle semblait nécessaire pour montrer l'unicité du co-centre, mais ce n'est plus le cas. À voir si ça change quelque chose.}

\begin{proof}
Assume that $f\in \sC$ is centered with \cocenter{} $y$.
For \cref{CenteredasPgluing_def}, by \cref{Pgluingofraysasupperbound} we know that $f\leq\pgl_n\ray{f}{y,n}$, so it remains to prove $f\geq\pgl_n\ray{f}{y,n}$.
To do so we apply \cref{Pgluingaslowerbound2} with $x$ a center for $f$ and let $U$ be a neighborhood of $x$: by centeredness there is a continuous reduction $(\sigma,\tau)$ from $f$ to $f\restr{U}$.
By \cref{Rigidityofthecocenter}, for all $n\in\N$ we have $f(x)=y\notin\overline{f\sigma(f^{-1}(\ray{B}{y,n}))}$, so $(\sigma,\tau)$ restricts to witness the required reductions $\ray{f}{y,n}\leq f$.

\smallskip

The reverse implication of \cref{CenteredasPgluing_mono} follows from \cref{Pgluingofregulariscentered} and \cref{Centerinvariance}.
Next we show that $f\equiv \pgl_i f_i$ for some monotone sequence $(f_i)_i$, establishing the forward implication.

By \cref{Rigidityofthecocenter}, for all $m,n\in\N$ there exists $m'>m$ such that $\ray{f}{y,n}\leq \gl_{i=m}^{m'}\ray{f}{y,i}$.
We can recursively apply this fact to get a pairwise disjoint family of finite sets $(I_n)_n$ such that, setting $f_n=\gl_{i\in I_n}\ray{f}{y,i}$,
the sequence $(f_n)_n$ is monotone, and for all $n\in\N$ we have $\ray{f}{y,n}\leq f_n$.
Moreover $(f_n)_n$ is clearly reducible by pieces to $(\ray{f}{y,n})_n$, so by a double application of \cref{Pgluingasupperbound}, we have $\pgl_nf_n\equiv\pgl_n\ray{f}{y,n}$. 

\smallskip

Finally, by \cref{scatteredhavecocenter} the distinguished point of $f$ is also its \cocenter{} $y$ and so $\CB(f)=(\sup_n \CB(\ray{f}{y,n})) +1$ by \cref{CBrankofPgluingofregularsequence2simple}.
\end{proof}

\subsection{Centered functions and structure of continuous reducibility}

Here we collect some important consequences of the structure of continuous reducibility on centered functions.

The first result shows that if continuous reducibility is \bqo{} up to a certain level of $\sC$, then every function at the next level is locally centered.

Recall that \bqo{} is stable under (the taking of) infinite sequences (see for instance \cite[Theorem 3.18]{wellbetterinbetween}):
if $(Q,\leq_Q)$ is \bqo{}, then the class $Q^\N$ of infinite sequences of elements of $Q$ is also \bqo{} when ordered as follows.
Given $s,t$ in $Q^\N$ say $s\leq_{Q^\N}t$ when there is a strictly increasing function $\iota:\N\to\N$ satisfying $s(x)\leq_Qt(\iota(x))$.

\begin{theorem}\label{LocalCenterednessFromBQO}
For all $\alpha<\omega_1$, if $\sC_{<\alpha}$ is a \bqo{}\footnote{For those interested, $2$-\bqo{} seems to be enough. The curious reader is sent to \cite[Definition 4.1]{wellbetterinbetween}.},
then every function in $\sC_\alpha$ is locally centered.
\end{theorem}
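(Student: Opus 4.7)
My strategy is to reduce to a single core case and then use the \bqo{} hypothesis to extract a regular subsequence of rays witnessing centeredness. Given $f \in \sC_\alpha$ and $x \in \dom f$, I begin by disposing of the easy situations. If $x$ is $f$-isolated, then $f\restr{f^{-1}(\{f(x)\})}$ is constant, hence centered, on a clopen neighborhood of $x$. Otherwise, let $\beta \leq \alpha$ be the least ordinal with $x \notin \CB_\beta(f)$; then $U := \dom f \setminus \CB_\beta(f)$ is an open neighborhood of $x$ with $\CB(f\restr{U}) \leq \beta$ by \cref{CBbasics0}~\cref{CBbasicsfromJSL2}. If $\beta < \alpha$, then $f\restr{U} \in \sC_{<\alpha}$, and applying the theorem inductively at $\beta$ (using that $\sC_{<\beta} \subseteq \sC_{<\alpha}$ remains \bqo{}) yields a smaller open $V \ni x$ with $f\restr{V}$ centered. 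The only remaining case is $\beta = \alpha$, forcing $\alpha = \gamma + 1$ to be a successor and $x \in \CB_\gamma(f)$. The Decomposition~\cref{JSLdecompositionlemma}, together with the fact that $x$ is isolated in $\CB_\gamma(f)$ (since $\CB_{\gamma + 1}(f) = \emptyset$), then allows me to pass to a clopen neighborhood $U_0$ of $x$ on which $f\restr{U_0}$ is simple of rank $\gamma + 1$ with $\CB_\gamma(f\restr{U_0}) = \{x\}$ and distinguished point $y := f(x)$.

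The heart of the proof is to show that $x$ is then a center for $f\restr{U_0}$ (possibly after a further shrinking). Consider the rays $g_m := \ray{f\restr{U_0}}{y, m}$ for $m \in \N$: each lies in $\sC_{<\alpha}$, and by \cref{CBrankofPgluingofregularsequence2simple} the sequence $(\CB(g_m))_m$ is regular with supremum $\gamma$. Invoking the \bqo{} hypothesis---together with its closure under the $\N$-sequence construction, which is precisely the $2$-\bqo{} content flagged in the footnote---I extract an infinite non-decreasing subsequence $(g_{m_k})_k$ whose CB-ranks remain cofinal in $\gamma$ (or equal to $\gamma$ when $\gamma$ is a successor or null), so that the extracted sequence is itself regular. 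Setting $h := \pgl_k g_{m_k}$, by \cref{Pgluingofregulariscentered} and \cref{CBrankofPgluingofregularsequence1} the function $h$ is centered with \cocenter{} $\iw{0}$ and has CB-rank exactly $\alpha$.

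The conclusion will then follow from $f\restr{U_0} \equiv h$ together with \cref{Centerinvariance}. The upper bound $f\restr{U_0} \leq \pgl_m g_m$ is given by \cref{Pgluingofraysasupperbound}, and \cref{Pgluingasupperbound}, combined with the fact that every $g_m$ is dominated by some $g_{m_k}$ (a consequence of the \wqo{} structure on $\sC_{<\alpha}$, after refining the subsequence), yields $\pgl_m g_m \leq h$. Conversely, $h \leq f\restr{U_0}$ is produced by \cref{Pgluingaslowerbound2} applied at $x$: for each $k$ and each open $V \ni x$ in $U_0$, the function $f\restr{V}$ remains simple of rank $\alpha$ with $\CB_\gamma(f\restr{V}) = \{x\}$ and distinguished point $y$, so its own rays at $y$ form a regular sequence of supremum $\gamma$ in $\sC_{<\alpha}$, and a renewed appeal to the \bqo{} hypothesis dominates $g_{m_k}$ by a finite gluing of such rays of $f\restr{V}$---providing a reduction into $V$ whose image avoids $y$. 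The main obstacle I foresee is exactly this last dominance step: it must hold uniformly as $V$ shrinks toward $\{x\}$, and this uniformity is what forces the use of the full \bqo{} strength rather than plain \wqo{}, since the usual perfect-subsequence extraction becomes delicate as the rays of $f\restr{V}$ accumulate near the center.
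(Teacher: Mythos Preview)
Your plan has a real gap, and it is not only the uniformity issue you flag at the end: the upper bound already fails. You assert that ``every $g_m$ is dominated by some $g_{m_k}$'' after refining the monotone subsequence, but a non-decreasing subsequence of a \wqo{}-valued sequence need not be cofinal in it. Concretely, take $\gamma=2$, let $g=\omega\Minimalfct{2}$ and $g'=\pgl\Maximalfct{1}$ (incomparable in $\sC_2$), and build $f$ so that $\CB_\gamma(f\restr{U_0})=\{x\}$, $\ray{f\restr{U_0}}{y,0}\equiv g\glbin g'$, $\ray{f\restr{U_0}}{y,m}\equiv g$ for $m\geq 1$, with the $g'$-piece supported on $U_0\setminus N_{x\restr{1}}$. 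Any non-decreasing subsequence of the rays must skip the $0$-th ray, so $h=\pgl g$; but $g_0=g\glbin g'\nleq\FinGl{\{g\}}$ since $g'$ is centered and $g'\nleq g$, hence $f\restr{U_0}\nleq h$. Enlarging the subsequence to retain $g_0$ does not help either: the lower bound then asks you to reduce $g'$ inside arbitrarily small $V\ni x$, yet for $V\subseteq N_{x\restr{1}}$ every ray of $f\restr{V}$ is $\equiv g$ and $g'\nleq kg$ for any $k$. So no single choice of subsequence can make both directions work on a fixed $U_0$.

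The missing idea, which the paper provides, is to track how the \emph{entire} ray-sequence evolves as the neighbourhood shrinks, and to stabilise it before choosing $U$. For each $n$, pass to a regular tail $\rho_n=(\ray{f}{y,i}\restr{N_{x\restr{n}}})_{i\geq j_n}$; then $(\rho_n)_n$ is decreasing in $(\sC_{<\alpha})^\N$ under the domination order. This is where the \bqo{} hypothesis is genuinely used: it makes $(\sC_{<\alpha})^\N$ a \wqo{}, so the decreasing sequence stabilises at some $m$, i.e.\ $\rho_m\equiv_{(\sC_{<\alpha})^\N}\rho_n$ for all $n\geq m$. Now set $U=N_{x\restr{m}}\setminus f^{-1}(\bigcup_{i<j_m}\ray{B}{y,i})$. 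The rays of $f\restr{U}$ are exactly $\rho_m$, which is regular (upper bound: $f\restr{U}\leq\pgl\rho_m$), and for every smaller $N_{x\restr{n}}$ the equivalence $\rho_m\leq\rho_n$ supplies, for each ray of $f\restr{U}$, a reduction into $N_{x\restr{n}}$ landing in a single ray of $B$ at $y$ (lower bound via \cref{Pgluingaslowerbound2}). Your ``renewed appeal to the \bqo{} hypothesis'' was pointing at the right place, but the mechanism is this stabilisation of sequences of sequences, not a further extraction within a fixed $U_0$.
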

\begin{proof}
By strong induction on $\alpha$. 

\textbf{$\alpha=0$:} The empty function, the only element of $\sC_{0}$, is trivially locally centered (although not centered).

\textbf{$\alpha$ limit:} Let $f\in \sC_{\alpha}$. Since $f$ has limit $\CB$-rank, it is locally in $\sC_{<\alpha}$. Therefore by induction hypothesis, it is locally centered.

\textbf{$\alpha$ successor:} Let $f:A\to B$ in $\sC_{\alpha}$. By the Decomposition \cref{JSLdecompositionlemma}, $f$ is locally simple,
so without loss of generality we can suppose that $f$ is simple and that $f\in\functionsonbaire$ by \cref{RepresentationforFunctions}. We let $\bar{y}$ be the distinguished point of $f$
and write $\ray{B}{i}$ and $\ray{f}{i}$ for $\ray{B}{\bar{y},i}$ and $\ray{f}{\bar{y},i}$ respectively. Note that we have $\CB(\ray{f}{i})< \alpha$ for all $i\in \N$.
Let $x\in A$, we show that it admits a neighbourhood $U$ such that $f\restr{U}$ is centered.

If there exists $s\segs x$ such that $\CB(f\restr{N_s})<\CB(f)$ then, by induction hypothesis, we are done.
Otherwise, for all $s\segs x$ we have $\CB(f\restr{N_s})=\CB(f)$.
Writing $\alpha=\beta+1$, \cref{CBbasics0}~\cref{CBbasicsfromJSL2} ensures that $N_s\cap \CB_{\beta}(f)\neq \emptyset$ for all $s\segs x$,
so $x$ belongs to the closed set $\CB_{\alpha}(f)$. Hence, we have $f(x)=\bar{y}$ and each function $f\restr{N_s}$ is simple.

Hence, for each $n$, the sequence of rays $(\ray{f}{i}\restr{N_{x\restr{n}}})_{i\in\N}$ takes value in $\sC_{<\alpha}$.
As $\sC_{<\alpha}$ is \wqo{} under continuous reducibility, we can choose by induction a non-decreasing sequence $(j_n)_n$ in $\N$
such that the sequence of functions $\rho_n=(\ray{f}{i}\restr{N_{x\restr{n}}})_{i\geq j_n}$ is regular for all $n$ (\cite[Proposition 2.9]{yann2017towardsbetter}).
Note that $m<n$ implies $\rho_m\geq_{(\sC_{<\alpha})^\N} \rho_n$, since $N_{x\restr{m}}\supseteq N_{x\restr{n}}$ implies $\ray{f}{i}\restr{N_{x\restr{m}}}\geq \ray{f}{i}\restr{N_{x\restr{n}}}$ for all $i\geq j_n$,
so $(\rho_n)_n$ is decreasing in $(\sC_{<\alpha})^\N$.
Since $\sC_{<\alpha}$ is \bqo{}, $(\sC_{<\alpha})^\N$ is \wqo{} and so there exists $m$ such that for all $n>m$ we have $\rho_m\equiv_{(\sC_{<\alpha})^\N} \rho_n$.
Define $U=N_{x\restr{m}}\setminus f^{-1}(\bigcup_{i<{j_m}}\ray{B}{i})$, we show that $f\restr{U}\equiv \pgl \rho_m$. 
Since $\rho_m$ is regular, $\pgl \rho_m$ is centered by \cref{CenteredasPgluing} and so will be $f\restr{U}$ by \cref{Centerinvariance}.

The fact that $f\restr{U}\leq \pgl \rho_m$ follows from \cref{Pgluingofraysasupperbound}. 
To show that $\pgl \rho_m \leq f\restr{U}$ using \cref{Pgluingaslowerbound2}, it is enough to show that for every $i\geq j_m$ and every $n>m$,
there exists $(\sigma,\tau)$ that continuously reduces $f^{(i)}\restr{N_{x\restr{m}}}$ to $f\restr{U}$ such that
$\im \sigma \subseteq N_{x\restr{n}}$ and $\bar{y}\notin \overline{\im f\restr{U} \sigma}$.
This is possible since $\rho_m\leq \rho_n$ and so for every $i\geq j_m$ there exists $i'\geq j_n\geq j_m$ with $f^{(i)}\restr{N_{x\restr{m}}}\leq f^{(i')}\restr{N_{x\restr{n}}}$, as desired.
\end{proof}

Pointed gluing behaves rather specifically in a finitely generated class, as pointed out by the following proposition that we use repeatedly in the sequel.

\begin{Proposition}\label{FinitegenerationandPgluing}
Let $F\subseteq \functionsonbaire$ be finite and $(f_i)_{i\in\N}$ be a sequence in $\functionsonbaire$.
\begin{enumerate}
\item If $f_i\leq\FinGl{F}$ for all $i\in\N$, then $\pgl_if_i\leq \pgl F$.
\item If for all $f\in F$ and all $i\in\N$ there is $j\geq i$ such that $f\leq f_j$, then $\pgl F\leq\pgl_if_i$.
\end{enumerate}
\end{Proposition}
\begin{proof}
For both points, we build a reduction by pieces and then use \cref{Pgluingasupperbound}.

For the first item: by hypothesis for all $n\in \N$ there is an integer $k_n$ such that $f_n\leq k_n F$. Set then $K_n=\sum_{i<n}k_i$, and $I_n=\left[K_{n}, K_{n+1}\right)$ for all $n\in\N$.
This is a family of pairwise disjoint finite subsets of $\N$ witnessing a reduction by pieces from $(f_i)_i$ to $\iw{\gl F}$.

For the second item, we build by induction a reduction by pieces.
Given $n\in\N$, suppose that we have built a family $(I_m)_{m<n}$ of pairwise disjoint finite subsets of $\N$ such that for all $m<n$ we have $\gl F\leq\gl_{i\in I_m}f_i$.
Set $j=\max(\bigcup_{m<n}I_m)+1$ and use the hypothesis to fix an injective function $\iota:F\rao[j,\infty)$ such that for all $g\in F$ we have $g\leq f_{\iota(g)}$.
Setting $I_n=\iota(F)$, which is finite since $F$ is, yields the desired reduction by pieces. %we have $\gl F\leq \gl_{i\in I_n}f_i$.
\end{proof}

The next theorem shows how finitely generated levels guarantee a neat representation of centered functions. Let us write $\sC_{[\lambda,\lambda+n]}= \bigcup_{i\leq n} \sC_{\lambda +i}$ for $\lambda$ limit or null and $n\in\N$.

%The next theorem shows that -- except for $\Minimalfct{\lambda+1}$ -- centered functions are obtained as pointed gluing of a finite set of g from that finite generation at each Cantor\---Bendixson rank allows us to characterize centered functions in terms of these generators. 

\begin{theorem}\label{finitenessofcenteredfunctions}
Let $\lambda$ be zero or a limit ordinal and $n\in\N$.
Assume that $\sC_{[\lambda,\lambda+n]}$ is generated by some finite set $F$.
Then for every centered function $g\in\sC_{[\lambda,\lambda+n+1]}$ either $g\equiv \Minimalfct{\lambda+1}$ or there exists a non empty $G\subseteq F$ such that $g\equiv\pgl G$. 

In particular, there are finitely many centered functions up to equivalence in $\sC_{\lambda+n+1}$.
\end{theorem}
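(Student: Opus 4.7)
The plan is to use the representation of centered functions as pointed gluings from \cref{CenteredasPgluing}. I would write $\CB(g) = \beta + 1$ with $\beta \in [\lambda, \lambda + n]$ (since $\beta+1$ is a successor while $\lambda$ is $0$ or a limit), let $y$ denote the \cocenter{} of $g$, and represent $g \equiv \pgl_i r_i$ where $r_i = \ray{g}{y,i}$, with $\sup_i \CB(r_i) = \beta$. First I would handle the degenerate case where every ray lies in $\sC_{<\lambda}$: this forces $\beta = \lambda > 0$, and combining \cref{ConsequencesGeneralStructureThm}(1) with \cref{Minfunctions} immediately gives $g \equiv \Minimalfct{\lambda+1}$.

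Otherwise at least one ray has rank $\geq \lambda$, and the regularity of $(\CB(r_i))_i$ given by \cref{CBrankofPgluingofregularsequence2simple} ensures that $S := \{i : \CB(r_i) \geq \lambda\}$ is infinite (this also holds trivially when $\lambda = 0$). I would then inductively build a partition of $\N$ into finite consecutive intervals $(I_k)_k$ such that each $I_k$ meets $S$, guaranteeing $f_k := \gl_{i \in I_k} r_i \in \sC_{[\lambda,\lambda+n]}$, and such that $(f_k)_k$ is non-decreasing for $\leq$. The monotonicity is obtained via item (3) of \cref{Rigidityofthecocenter} applied to the identity reduction of $g$: given $I_k$, for each $i \in I_k$ I pick pairwise disjoint finite subsets of $\N$ above $\max I_k$ whose gluings dominate $r_i$, then extend them together with the next available element of $S$ into a single consecutive interval $I_{k+1}$. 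Reducibility by pieces in both directions then gives $g \equiv \pgl_k f_k$, and each $f_k$ is equivalent to some finite gluing $\gl_{h \in F} n_{k,h} h$ by the finite-generation hypothesis on $F$.

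Finally I would set $G := \{h \in F : h \leq f_k \text{ for some } k\}$, which by monotonicity of $(f_k)_k$ coincides with $\{h \in F : h \leq f_k \text{ for infinitely many } k\}$. The inequality $\pgl G \leq g$ follows from \cref{FinitegenerationandPgluing}(2). For the reverse, whenever $n_{k,h} \geq 1$ we have $h \leq f_k$ hence $h \in G$, so $f_k \leq \FinGl G$ for every $k$ and \cref{FinitegenerationandPgluing}(1) gives $g \leq \pgl G$. If $G$ turns out to be empty, then every $f_k$ is empty, which through the construction forces every $r_i$ to be empty; this is only consistent with $\lambda = 0$ and yields $g \equiv \Minimalfct 1 = \Minimalfct{\lambda+1}$. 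The ``In particular'' clause is then immediate since there are at most $2^{|F|}+1$ possible equivalence classes. The main obstacle will be the simultaneous realization of monotonicity of $(f_k)_k$ and the large-rank constraint $\CB(f_k) \geq \lambda$ in the grouping step, since \cref{Rigidityofthecocenter} only provides reducibility by pieces rather than direct dominations between individual rays.
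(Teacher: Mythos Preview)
Your argument is correct and follows the same overall strategy as the paper: represent $g$ as the pointed gluing of a $\leq$-monotone sequence lying in $\sC_{[\lambda,\lambda+n]}$, express each term as a finite gluing from $F$, and extract $G$ via \cref{FinitegenerationandPgluing}. The only real difference is how the monotone sequence is produced. The paper simply invokes \cref{CenteredasPgluing}~\cref{CenteredasPgluing_mono}, which already hands you a $\leq$-monotone $(g_i)_i$ with $g\equiv\pgl_i g_i$; since $\leq$-monotonicity makes $(\CB(g_i))_i$ non-decreasing, one then just drops an initial segment to ensure the remaining $g_i$ lie in $\sC_{[\lambda,\lambda+n]}$. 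This dissolves the ``main obstacle'' you flag at the end: the rank constraint comes for free once the sequence is monotone, so there is no need to enforce both conditions simultaneously during the grouping. Your route---starting from the rays via \cref{CenteredasPgluing}~\cref{CenteredasPgluing_def} and regrouping them by hand into consecutive blocks using \cref{Rigidityofthecocenter}---essentially re-derives the monotone representation of \cref{CenteredasPgluing}~\cref{CenteredasPgluing_mono} inside the present argument; it works, but it duplicates effort. Two further minor differences: your case split (all rays in $\sC_{<\lambda}$ versus not) lines up directly with the hypothesis of \cref{ConsequencesGeneralStructureThm}, whereas the paper splits on whether $\sup_i\CB(g_i)$ equals or exceeds $\lambda$; and your set $G=\{h\in F: \exists k\ h\leq f_k\}$ may be larger than the paper's $\bigcup_i\{h: n_{i,h}>0\}$, but both choices give the same $\pgl G$ up to equivalence.
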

\begin{proof}
Let $g \in \sC_{[\lambda,\lambda+n+1]}$ be centered, so of successor $\CB$-rank by \cref{CenteredasPgluing}.
In particular, $g\not\equiv\Maximalfct{\lambda}$, so $\lambda<\CB(g)\leq \lambda +n+1$.

By \cref{CenteredasPgluing}, there is a $\leq$-monotone sequence $(g_i)_i$ such that $g\equiv\pgl_ig_i$ and
for every $i$ we have $\CB(g_i)<\CB(g)\leq  \lambda +n+1$ and $(\sup_i \CB(g_i))+1=\CB(g)>\lambda$. In particular, we have $\sup_i \CB(g_i) \geq \lambda$.
Assume first that $\sup_i \CB(g_i) =\lambda$. If $\lambda=0$, then $g\equiv \Minimalfct{1}= \pgl \emptyset$. Otherwise $\lambda$ is limit and $\Minimalfct{\lambda+1} \equiv \pgl_i g_i \equiv g$ by \cref{ConsequencesGeneralStructureThm}. 

Assume now that $\sup_i \CB(g_i) > \lambda$. By monotonicity there exists $j\in \N$ such that $\CB(g_i)\geq \lambda$ for all $i\geq j$, and using monotonicity again
we get $g\equiv\pgl_{i\geq j} g_i$ by two applications of \cref{Pgluingasupperbound}.
Now for every $i\geq j$, since $g_i \in \sC_{[\lambda,\lambda+n]}$ we can choose a way of writing $g_i\equiv\gl_{f\in F}n_{i,f} f$  and set $G_i=\{f\in F \mid n_{i,f}>0\}$.
We define $G=\bigcup_{i\geq j}G_i$ and show that $g\equiv\pgl G$. Note that $G$ cannot be empty, since this would imply $g_i= \emptyset$ for all $i\geq j$ and so $\sup_i \CB(g_i)=0$, a contradiction.  

By definition of $G$, $g_i\in \FinGl{G}$ for all $i\geq j$. Moreover, for all $f\in G$ we have $f\in G_i$ for some $i\geq j$, hence $f\leq g_i$ and so $f\leq g_{k}$ for all $k\geq i$ by monotonicity of $(g_i)_{i\geq j}$. Using \cref{FinitegenerationandPgluing}, we obtain $g\equiv \pgl G$.
\end{proof}

Here is the first common point between $\sC_2$ and $\sC_{\lambda+1}$ for all $\lambda$ limit that we announced.

\begin{corollary}\label{cor:CenteredSucessor}
Let $\lambda<\omega_1$ be either equal to 1 or infinite limit.
Then, up to continuous equivalence, there are two centered functions in $\sC_{\lambda+1}$: $\Minimalfct{\lambda+1}$ and $\pgl\Maximalfct{\lambda}$. Moreover, $\Minimalfct{\lambda+1}<\pgl\Maximalfct{\lambda}$ holds.
\end{corollary}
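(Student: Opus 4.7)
The plan is to apply \cref{finitenessofcenteredfunctions}, enumerate the resulting pointed gluings, and then separate the two named functions by a rigidity argument on their cocenters.

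To set up the hypothesis of \cref{finitenessofcenteredfunctions} I split on the two cases. For $\lambda=1$ I take parameters $(0,1)$ in that theorem: since $\sC_0=\{\emptyset\}$ and $\sC_1$ is generated by $\{\id_1,\id_\N\}$ by \cref{LocallyConstantFunctions}, the class $\sC_{[0,1]}$ is generated by $F:=\{\id_1,\id_\N\}$. For $\lambda$ infinite limit I take parameters $(\lambda,0)$: by the first item of the General Structure \cref{JSLgeneralstructure}, every element of $\sC_\lambda$ is equivalent to $\Maximalfct{\lambda}$, so $\sC_{[\lambda,\lambda]}$ is generated by $F:=\{\Maximalfct{\lambda}\}$. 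In both cases, \cref{finitenessofcenteredfunctions} then yields that every centered $g\in\sC_{\lambda+1}$ is equivalent to $\Minimalfct{\lambda+1}$ or to $\pgl G$ for some non-empty $G\subseteq F$.

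I next enumerate the non-empty subsets of $F$. For $\lambda$ infinite limit only $G=\{\Maximalfct{\lambda}\}$ arises and yields $\pgl\Maximalfct{\lambda}$. For $\lambda=1$, the three non-empty subsets of $F$ yield $\pgl\{\id_1\}\equiv\pgl\Minimalfct{1}=\Minimalfct{2}$ (by definition of $\Minimalfct{2}$), $\pgl\{\id_\N\}\equiv\pgl\Maximalfct{1}$, and $\pgl\{\id_1,\id_\N\}=\pgl(\id_1\glbin\id_\N)\equiv\pgl\id_\N\equiv\pgl\Maximalfct{1}$, using that $\id_1\leq\id_\N$ implies $\id_1\glbin\id_\N\equiv\id_\N$ (by \cref{Gluingcohomomorphism}) and that pointed gluings of equivalent constant sequences are equivalent (a direct application of \cref{Pgluingasupperbound} in both directions). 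Thus, up to equivalence, the centered functions in $\sC_{\lambda+1}$ lie in $\{\Minimalfct{\lambda+1},\pgl\Maximalfct{\lambda}\}$. That both are in fact centered elements of $\sC_{\lambda+1}$ follows from \cref{Pgluingofregulariscentered} together with \cref{CBrankofPgluingofregularsequence1} applied to their defining regular sequences.

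Finally I show $\Minimalfct{\lambda+1}<\pgl\Maximalfct{\lambda}$. The inequality $\leq$ comes from the second item of \cref{Maxfunctions}: $\Minimalfct{\lambda+1}$ is simple of CB-rank $\lambda+1$, and $\pgl\Maximalfct{\lambda}$ is maximum among simple functions in $\sC_{\leq\lambda+1}$. The main obstacle is strictness, and I expect to dispose of it by mimicking the last paragraph of the proof of \cref{ConsequencesGeneralStructureThm}. Assume for contradiction that $\pgl\Maximalfct{\lambda}\equiv\Minimalfct{\lambda+1}$. By \cref{Pgluingofregulariscentered} and \cref{scatteredhavecocenter}, both functions are centered with cocenter $\iw{0}$, and their rays at $\iw{0}$ are respectively the constant sequence $(\Maximalfct{\lambda})_n$ and the sequence $(\Minimalfct{\alpha_n+1})_n$ from the definition of $\Minimalfct{\lambda+1}$ (with $\alpha_n=0$ for $\lambda=1$, and $(\alpha_n)_n$ cofinal in $\lambda$ with $\alpha_n+1<\lambda$ for $\lambda$ infinite limit). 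The fourth item of \cref{Rigidityofthecocenter} then provides $m\leq M$ in $\N$ with $\Maximalfct{\lambda}\leq\gl_{i=m}^{M}\Minimalfct{\alpha_i+1}$. For $\lambda$ infinite limit, the right-hand side has CB-rank $\sup_{m\leq i\leq M}(\alpha_i+1)<\lambda$ (by the second item of \cref{BasicsOnGluing}), contradicting $\CB(\Maximalfct{\lambda})=\lambda$ via \cref{CBbasicsfromJSL-tp}. For $\lambda=1$ the right-hand side is $\equiv\id_I$ for a finite discrete set $I$, while $\Maximalfct{1}\equiv\id_\N$, so the first item of \cref{LinkswithWadgeEmbedMeas} would require $\N$ to embed in $I$, which is impossible.
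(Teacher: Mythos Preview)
Your proof is correct and follows the same route as the paper: invoke \cref{finitenessofcenteredfunctions} with the appropriate generating set, enumerate the pointed gluings of non-empty subsets, and then separate $\Minimalfct{\lambda+1}$ from $\pgl\Maximalfct{\lambda}$ via \cref{Rigidityofthecocenter}. One minor slip: for $\lambda=1$ with parameters $(0,1)$ the theorem's escape clause is $\Minimalfct{1}$, not $\Minimalfct{\lambda+1}=\Minimalfct{2}$, but this option is vacuous by $\CB$-rank and your enumeration of the $\pgl G$'s already recovers $\Minimalfct{2}$ as $\pgl\{\id_1\}$, so the argument stands.
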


\begin{proof}
We can use \cref{finitenessofcenteredfunctions} since the hypothesis is valid thanks to \cref{LocallyConstantFunctions} for $\lambda=1$, and to \cref{JSLgeneralstructure} for $\lambda$ limit.

In case $\lambda=1$, any centered function in $\sC_2$ is equivalent to $\pgl G$ where $G$ is either $\set{\Minimalfct{1}}$, $\set{\Maximalfct{1}}$ or $\set{\Minimalfct{1},\Maximalfct{1}}$.
Therefore we get $\Minimalfct{2}$ and $\pgl\set{\Minimalfct{1},\Maximalfct{1}}\equiv\pgl\Maximalfct{1}$ by \cref{FinitegenerationandPgluing}, as wanted.

In case $\lambda$ is limit the result is straightforward since the only possible $G$ is $\set{\Maximalfct{\lambda}}$. 

For the last part, note that $\Minimalfct{\lambda+1}\leq \pgl\Maximalfct{\lambda}$ and suppose towards a contradiction that equivalence holds. If $\lambda=1$, then using \cref{Rigidityofthecocenter} we get  $\Maximalfct{1}=\id_\N \leq n \id_1=n \Minimalfct{1}$ for some $n\in\N$, a contradiction. If $\lambda$ is limit, let $\Minimalfct{\lambda+1}=\pgl_n\Minimalfct{\alpha_n+1}$ for $(\alpha_n)_n$ cofinal in $\lambda$. By \cref{Rigidityofthecocenter} again, it follows that $\Maximalfct{\lambda}\leq \gl_{n<M} \Minimalfct{\alpha_n+1}$ for some $M\in \N$, but then $\CB(\Maximalfct{\lambda})=\lambda\leq \sup_{n<M}(\alpha_n +1)<\lambda$, a contradiction again. 
\end{proof}

%We will see with \cref{OptimalityatSuccessorofLimit} that these two functions are not equivalent.

\subsection{Simple functions at successors of limit levels}

\begin{proposition}\label{Simpleiffcoincidenceofcocenters}
Let $f$ be in $\sC$, assume that $f=\bigsqcup_{i\in\N} f_i$ for some sequence $(f_i)_{i\in\N}$ of centered functions and set $I=\set{n\in\N}[\CB(f_n)=\sup_i\CB(f_i)]$.
\begin{enumerate}
\item $\CB(f)$ is successor if and only if $I\neq\emptyset$,
\item The $\CB$-degree of $f$ is the cardinality of the set of \cocenter{}s of the functions $f_i$ for $i$ in $I$.
\end{enumerate}
In particular, $f$ is simple if and only if $I\neq\emptyset$ and for all $n\in I$ the \cocenter{}s of the functions $f_n$ coincide with the distinguished point of $f$.
\end{proposition}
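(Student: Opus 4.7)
The plan is to reduce everything to the identity $\CB(f) = \sup_{i} \CB(f_i)$ from \cref{CBrankofclopenunion}, combined with the fact that a scattered centered function is simple of successor $\CB$-rank whose distinguished point coincides with its \cocenter{} (\cref{CenteredasPgluing} and \cref{scatteredhavecocenter}). Set $\alpha = \sup_{i\in\N} \CB(f_i)$, so that $\CB(f) = \alpha$.

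For the first equivalence, if $I$ is nonempty, pick any $n \in I$: then $\CB(f) = \CB(f_n)$ is a successor ordinal because $f_n$ is scattered and centered, hence simple. Conversely, if $\alpha = \beta + 1$ is a successor, then since it is the supremum of a set of ordinals, some $\CB(f_i)$ must actually attain it (otherwise all $\CB(f_i) \leq \beta$ would force $\alpha \leq \beta$), so $I \neq \emptyset$.

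For the second point, the case $I = \emptyset$ is handled immediately by definition of the $\CB$-degree (which equals $0$ when $\CB(f)$ is limit or null). So assume $\CB(f) = \beta + 1$. The key step is to compute $\CB_\beta(f)$ piecewise along the clopen partition $(\dom f_i)_i$: by \cref{CBbasics0}~\cref{CBbasicsfromJSL2} we have $\CB_\beta(f_i) = \CB_\beta(f) \cap \dom f_i$ for every $i$, and summing over $i$ gives $\CB_\beta(f) = \bigsqcup_{i\in\N} \CB_\beta(f_i)$. For $i \notin I$ we have $\CB(f_i) \leq \beta$, so $\CB_\beta(f_i) = \emptyset$; for $i \in I$, $f_i$ is simple of $\CB$-rank $\beta+1$ with distinguished point equal to its \cocenter{} $y_i$ (by \cref{scatteredhavecocenter}), so $f_i(\CB_\beta(f_i)) = \{y_i\}$. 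Therefore $f(\CB_\beta(f)) = \set{y_i}[i \in I]$, whose cardinality is by definition the $\CB$-degree of $f$.

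The final ``in particular'' assertion then drops out: $f$ is simple iff its $\CB$-degree is $1$, which by the first two points is equivalent to $I \neq \emptyset$ together with $\set{y_i}[i\in I]$ being a singleton, and in that case its unique element is precisely the distinguished point of $f$. No serious obstacle is expected in this argument; it is essentially bookkeeping once the characterization of scattered centered functions as simple (with \cocenter{} equal to distinguished point) is in hand.
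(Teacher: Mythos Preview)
Your proof is correct and follows essentially the same approach as the paper's own proof: both use \cref{CBrankofclopenunion} to get $\CB(f)=\sup_i\CB(f_i)$, then \cref{CBbasics0}~\cref{CBbasicsfromJSL2} to compute $\CB_\beta(f)=\bigsqcup_i\CB_\beta(f_i)$, and \cref{scatteredhavecocenter}/\cref{CenteredasPgluing} to identify each $f_i(\CB_\beta(f_i))$ with the cocenter $\{y_i\}$ for $i\in I$. Your forward direction of the first item (a successor supremum must be attained) is a slightly more direct ordinal argument than the paper's, which instead deduces $I\neq\emptyset$ from nonemptiness of $\CB_\alpha(f)=\bigsqcup_n\CB_\alpha(f_n)$, but the two are equivalent and equally short.
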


\begin{proof}
Observe first that by \cref{CBrankofclopenunion} we have $\CB(f)=\sup_i\CB(f_i)$.

To see the first item, suppose that $\CB(f)=\alpha+1$ for some $\alpha<\omega_1$. By \cref{CBbasics0}~\cref{CBbasicsfromJSL2} we have $\CB_\alpha(f)=\bigsqcup_n\CB_\alpha(f_n)$.
Since $\CB_\alpha(f)$ is non-empty, so must be $\CB_\alpha(f_n)$ for some $n$, which means that $\CB(f_n)=\alpha+1$ and $n\in I\neq\emptyset$.
If now $I\neq\emptyset$, then for any $n\in I$ we have $\CB(f_n)=\sup_i\CB(f_i)=\CB(f)$ and by \cref{CenteredasPgluing} $\CB(f_n)$ is successor, hence $\CB(f)$ is too.

\smallskip

For the second point, if $\CB(f)$ is limit then our convention is that the $\CB$-degree of $f$ is $0$, which is also the cardinality of the set of \cocenter{}s of functions in $I$, since $I=\emptyset$ by the first point. Suppose now that $\CB(f)=\alpha+1$ for some $\alpha<\omega_1$ and so $I\neq\emptyset$.
For all $n\in I$ the function $f_n$, being centered, is simple of distinguished point its \cocenter{} $y_n$ by \cref{CenteredasPgluing}.
Since by \cref{CBbasics0}~\cref{CBbasicsfromJSL2} we have $\CB_\alpha(f)=\bigsqcup_i\CB_\alpha(f_i)$ and $\CB_{\alpha}(f_i)=\emptyset$ if $i\notin I$, we actually have
$\CB_\alpha(f)=\bigsqcup_{n\in I}\CB_\alpha(f_n)$, and $f(\CB_\alpha(f))=f(\bigsqcup_{n\in I}\CB_\alpha(f_n))=\set{y_n}[n\in I]$, which proves our point.
\end{proof}

\begin{theorem}\label{simplefunctionslambda+1damuddafuckaz}
Let $\lambda$ be limit or 1. Assume that continuous reducibility is \bqo{} on $\sC_{<\lambda}$.
Any simple function $f\in\sC_{\lambda+1}$ is continuously equivalent to one of $\Minimalfct{\lambda+1},\Minimalfct{\lambda+1}\gl\Maximalfct{\lambda}$ or $\pgl\Maximalfct{\lambda}$.
\end{theorem}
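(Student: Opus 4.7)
The plan is to leverage the local centeredness of $f$ coming from the hypothesis, together with the classification of centered functions in $\sC_{\lambda+1}$ provided by \cref{cor:CenteredSucessor}. First I note that $\sC_{\leq\lambda}$ is \bqo{}: by hypothesis $\sC_{<\lambda}$ is \bqo{}, while $\sC_\lambda$ is a single $\equiv$-class (for $\lambda$ infinite limit, by \cref{JSLgeneralstructure}) or $\sC_1$ is finitely generated by $\{\Minimalfct{1},\Maximalfct{1}\}$ (for $\lambda=1$, by \cref{LocallyConstantFunctions} and \cref{SecondstepforBQOthm}); closure of \bqo{} under finite unions gives the conclusion. Applying \cref{LocalCenterednessFromBQO} at level $\lambda+1$ then makes $f$ locally centered, and \cref{0dimanddisjointunion} yields a clopen decomposition $f=\bigsqcup_{i\in I}f_i$ with each $f_i$ centered (hence simple by \cref{CenteredasPgluing}). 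Setting $J=\{j\in I:\CB(f_j)=\lambda+1\}$, \cref{Simpleiffcoincidenceofcocenters} gives $J\neq\emptyset$ and each $f_j$ with $j\in J$ has cocenter equal to the distinguished point $y$ of $f$, so by \cref{cor:CenteredSucessor} every such $f_j$ is equivalent to $\Minimalfct{\lambda+1}$ or to $\pgl\Maximalfct{\lambda}$. If some $f_{j_0}\equiv\pgl\Maximalfct{\lambda}$, then $\pgl\Maximalfct{\lambda}\equiv f_{j_0}\leq f$, whereas $f\leq\pgl\Maximalfct{\lambda}$ by \cref{Maxfunctions}, giving $f\equiv\pgl\Maximalfct{\lambda}$.

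Otherwise all $f_j$, $j\in J$, are equivalent to $\Minimalfct{\lambda+1}$. I then split $f=g\sqcup h$ where $g=f\restr{\bigsqcup_{j\in J}\dom f_j}$ and $h=f\restr{\bigsqcup_{k\notin J}\dom f_k}$ (both restrictions to clopen sets, since in the partition $(\dom f_i)_i$ the union $\bigsqcup_{j\in J}\dom f_j$ is the complement of the open set $\bigsqcup_{k\notin J}\dom f_k$). Since $\CB(h)\leq\lambda$, \cref{Maxfunctions} gives $h\leq\Maximalfct{\lambda}$. The key sub-claim is that $g\equiv\Minimalfct{\lambda+1}$: one direction is immediate from any single $f_j\equiv\Minimalfct{\lambda+1}$, and the other direction is obtained by representing $\Minimalfct{\lambda+1}\equiv\pgl_n\Minimalfct{\alpha_n+1}$ with $(\alpha_n)_n$ cofinal in $\lambda$ (constantly zero when $\lambda=1$), partitioning $\N$ into $|J|$-many infinite subsets each still cofinal, and embedding each copy $f_j$ into the sub-pointed-gluing of $\Minimalfct{\lambda+1}$ over its assigned subset, sending all $|J|$-many centers to $\iw{0}$. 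Granting this, \cref{Gluingasupperbound_cor} gives $f\leq g\glbin h\leq\Minimalfct{\lambda+1}\glbin\Maximalfct{\lambda}$.

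To decide between $\Minimalfct{\lambda+1}$ and $\Minimalfct{\lambda+1}\glbin\Maximalfct{\lambda}$, I analyze the rays $r_n=\ray{f}{y,n}\in\sC_{\leq\lambda}$. If every $r_n$ has small enough complexity---$\CB(r_n)<\lambda$ when $\lambda$ is limit, or finite image when $\lambda=1$---then $\pgl_n r_n\leq\Minimalfct{\lambda+1}$ by \cref{ConsequencesGeneralStructureThm} (respectively by \cref{FinitegenerationandPgluing} applied with $F=\{\Minimalfct{1}\}$), and \cref{Pgluingofraysasupperbound} yields $f\leq\pgl_n r_n\leq\Minimalfct{\lambda+1}$, hence $f\equiv\Minimalfct{\lambda+1}$. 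Otherwise some $r_{n_0}$ is equivalent to $\Maximalfct{\lambda}$ (rank $\lambda$ for limit $\lambda$ is a single $\equiv$-class, and a locally constant function with infinite image is $\equiv\id_{\N}=\Maximalfct{1}$ for $\lambda=1$); taking the clopen partition of $\im f$ given by $U_1=\ray{\im f}{y,n_0}$ and $U_0=\im f\setminus U_1$, the cocorestriction $f\corestr{U_0}$ contains the preimage of $y$ hence has rank $\lambda+1$, so $\Minimalfct{\lambda+1}\leq f\corestr{U_0}$ by \cref{Minfunctions}, while $f\corestr{U_1}=r_{n_0}\equiv\Maximalfct{\lambda}$, and \cref{Gluingaslowerbound2} delivers $\Minimalfct{\lambda+1}\glbin\Maximalfct{\lambda}\leq f\corestr{U_0}\glbin f\corestr{U_1}\leq f$, giving $f\equiv\Minimalfct{\lambda+1}\glbin\Maximalfct{\lambda}$. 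The main obstacle is the sub-claim $g\equiv\Minimalfct{\lambda+1}$ of the second paragraph, whose reduction requires a careful construction that simultaneously sends many distinct centers to $\iw{0}$ while preserving continuity at that accumulation point.
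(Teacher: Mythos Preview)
Your overall architecture matches the paper's: decompose $f$ into centered pieces via \cref{LocalCenterednessFromBQO}, use \cref{cor:CenteredSucessor}, dispose of the $\pgl\Maximalfct{\lambda}$ case, then aim for $f\leq\Minimalfct{\lambda+1}\glbin\Maximalfct{\lambda}$ and finish with the ray analysis. The gap is exactly where you flag it: the sub-claim $g\equiv\Minimalfct{\lambda+1}$ is \emph{false} in general, and the construction you sketch produces a discontinuous $\tau$.

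Here is a concrete obstruction. With $\lambda$ limit and $(\alpha_n)_n$ cofinal in $\lambda$, one can build $f_j\equiv\Minimalfct{\lambda+1}$, all with \cocenter{} $y$, such that $\ray{f_j}{y,0}\equiv\Minimalfct{\alpha_j+1}$ (just permute the ray indices of $\Minimalfct{\lambda+1}$ so that the piece of rank $\alpha_j+1$ lands in the $0$-th ray of $B$). Then $g=\bigsqcup_j f_j$ has $\CB(\ray{g}{y,0})=\sup_j(\alpha_j+1)=\lambda$. Now suppose $(\sigma,\tau)$ reduced $g$ to $\Minimalfct{\lambda+1}$. Every point of $\CB_\lambda(g)$ is sent by $\sigma$ into $\CB_\lambda(\Minimalfct{\lambda+1})=\{\iw{0}\}$, so $\tau(\iw{0})=y$. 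Hence $\tau^{-1}(\ray{B}{y,0})$ is clopen in $\dom\tau$ and avoids $\iw{0}$, so it lies in $\bigcup_{n<N}N_{(0)^n(1)}$ for some $N$. But then $(\sigma,\tau)$ restricts to a reduction $\ray{g}{y,0}\leq\gl_{n<N}\Minimalfct{\alpha_n+1}$, contradicting $\CB(\ray{g}{y,0})=\lambda$. So $g\not\leq\Minimalfct{\lambda+1}$; in fact $g\equiv\Minimalfct{\lambda+1}\glbin\Maximalfct{\lambda}$ here, even though \emph{every} centered piece is $\equiv\Minimalfct{\lambda+1}$ and there are no low-rank pieces at all. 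In your sketched reduction this manifests as follows: reducing $f_j$ into its assigned sub-pointed-gluing forces $\sigma_j$ to send the rank-$(\alpha_j+1)$ part into rays of index $\geq$ some $n_j$ with $\alpha_{n_j}\geq\alpha_j$, so $n_j\to\infty$; the corresponding $\tau_j$ then maps points arbitrarily close to $\iw{0}$ back into the fixed $0$-th ray $\ray{B}{y,0}$, and $\tau$ is discontinuous at $\iw{0}$.

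The paper obtains the upper bound $f\leq\Minimalfct{\lambda+1}\glbin\Maximalfct{\lambda}$ by a different clopen split of $\dom f$, not along $J$ versus $I\setminus J$ but \emph{along the diagonal}: fixing an enumeration of the pieces, one sets $C_1$ to consist, within each ray $\ray{B}{y,j}$, of only the contributions of the first $j{+}1$ pieces, and $C_0$ the rest. Then $\ray{(f\restr{C_1})}{y,j}$ is a \emph{finite} disjoint union of functions each of rank $<\lambda$ (this is where $\CB(\ray{f_i}{y,j})<\lambda$ for every individual piece is used), hence of rank $<\lambda$, giving $f\restr{C_1}\leq\Minimalfct{\lambda+1}$ via \cref{Pgluingofraysasupperbound}; and $f\restr{C_0}$ has rank $\leq\lambda$ so $f\restr{C_0}\leq\Maximalfct{\lambda}$. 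Your lower-bound paragraph (the ray dichotomy) is fine and essentially the same as the paper's; it is only the upper bound that needs this diagonal trick instead of the naive split.
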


\begin{proof}
Since we assume that $\leq$ is \bqo{} on $\sC_{<\lambda}$, by the General Structure \cref{JSLgeneralstructure} it is also \bqo{} on $\sC_{\leq\lambda}$. So by
\cref{LocalCenterednessFromBQO} we can write $f=\bigsqcup_{i\in I} f_i:A_i\to B$ with $(f_i)_{i\in I}$ a sequence of centered functions. As $f$ is simple, $f\leq \pgl\Maximalfct{\lambda}$ by \cref{Maxfunctions}, so  if $f_i\equiv\pgl\Maximalfct{\lambda}$ for some $i\in I$, then $f\equiv\pgl\Maximalfct{\lambda}$.

Since $\pgl\Maximalfct{\lambda}$ and $\Minimalfct{\lambda+1}$ are the only two centered functions in $\sC_{\lambda+1}$ by \cref{cor:CenteredSucessor},
we can suppose that for all $n\in I$ if $\CB(f_n)>\lambda$ then $f_n\equiv\Minimalfct{\lambda+1}$.
By \cref{Simpleiffcoincidenceofcocenters} we have $f_i\equiv \Minimalfct{\lambda+1}$ for at least one $i\in I$, and
the distinguished point $\bar{y}$ of $f$ is the \cocenter{} of $f_i$ for any $f_i\equiv\Minimalfct{\lambda+1}$.
In the rest of this proof, all rays are taken with respect to $\bar{y}$. 
By simplicity $\CB_\lambda(f)\subseteq f^{-1}(\set{\bar{y}})$, so $\CB(\ray{f}{n})\leq\lambda$ for all $n\in\N$.
Also we note that $\CB(\ray{f_i}{j})<\lambda$ for all $i,j\in \N$, {which follows from \cref{Rigidityofthecocenter} when $f_i\equiv\Minimalfct{\lambda+1}$.}

If $\CB(\ray{f}{n})<\lambda$ for all $n\in \N$, then $f \leq \pgl_n \ray{f}{n} \leq \Minimalfct{\lambda+1}$ by \cref{Pgluingofraysasupperbound,ConsequencesGeneralStructureThm} and so $f \equiv \Minimalfct{\lambda+1}$.

\smallskip

So we can suppose that $\CB(\ray{f}{n})=\lambda$ for some $n\in\N$. Fix $W=\ray{B}{n}$, we have $\bar{y}\notin W$, $\Maximalfct{\lambda}\leq f\corestr{W}$
by \cref{JSLgeneralstructure}, and $\Minimalfct{\lambda+1} \leq f\corestr{B\setminus W}$ by centeredness of $\Minimalfct{\lambda+1}$.
Since $W$ is clopen we have $f\corestr{B\setminus W}\gl f\corestr{W}\equiv f$ by \cref{UsefulcriterionforequivFinGl}, so $\Minimalfct{\lambda+1}\gl\Maximalfct{\lambda}\leq f$.

We show that $f\leq\Minimalfct{\lambda+1}\gl\Maximalfct{\lambda}$ also holds in that case.
To do so we are going to find a clopen partition $A=C_0\sqcup C_1$ satisfying
 $f\restr{C_0}\leq \Maximalfct{\lambda}$ and $f\restr{C_1}\leq \Minimalfct{\lambda+1}$, which by \cref{Gluingasupperbound}
 gives \(f\leq\Minimalfct{\lambda+1}\gl\Maximalfct{\lambda}\).

%\ynote*{}{Viewing $f$ outside of $f^{-1}(\bar{y})$ as partitioned into an infinite matrix $\ray{f_i}{j}$, $(i,j)\in I\times \N$, we split $f$ ``along the diagonal''. }%We split $f$ ``along the diagonal'';
We split $f$ ``along the diagonal''. For all $j\in \N$, let $g_j=\bigsqcup_{i\leq j} \ray{f_i}{j}$ and $h_{j}=\bigsqcup_{i> j} \ray{f_i}{j}$.
Consider the clopen sets $C^{j}_i=\dom (\ray{f_i}{j})=A_i\cap f^{-1}(\ray{B}{j})$ for all $i,j$. Let $C_{0}=\bigcup\{C^{j}_i\mid i > j\}$ and its complement $C_1=A\setminus C_0$. Since $\CB(\ray{f}{j}_i)<\lambda$ for all $i,j$ we have $\CB(h_j)\leq\lambda$,
so $f\restr{C_0}\leq\Maximalfct{\lambda}$ by \cref{Maxfunctions}.
For all $j$, we also have $g_j=\bigsqcup_{i\leq j}\ray{f_i}{j}$ and so $\CB(g_j)<\lambda$. Since the functions $g_j$ correspond to the rays of $f\restr{C_1}$ at $\bar{y}$, a combination of \cref{Pgluingofraysasupperbound,ConsequencesGeneralStructureThm} then yields
$f\restr{C_1}\leq\pgl_j g_j \leq\Minimalfct{\lambda+1}$.

Note that $C_0$ is open as a union of clopen sets. To see that $C_1$ is open too, note that $C_1=\bigcup_i (A_i\setminus C_0)$ and that $A_i\setminus C_0=\bigcup_{j\leq i} C^{j}_i$ is clopen for all $i$, which concludes the proof.
\end{proof}

Moreover, we show in \cref{OptimalityatSuccessorofLimit} that in fact $\Minimalfct{\lambda+1}<\Minimalfct{\lambda+1}\gl\Maximalfct{\lambda}< \pgl\Maximalfct{\lambda}$. Finally, note that
the Decomposition \cref{JSLdecompositionlemma} yields the following corollary.

\begin{corollary}\label{finitedegreedamuddafuckaz}
For $\lambda$ limit or 1, if continuous reducibility is \bqo{} on $\sC_{<\lambda}$, then
the set of functions in $\sC_{\lambda+1}$ that have finite degree is finitely generated by the set $\set{\Maximalfct{\lambda},\Minimalfct{\lambda+1},\pgl\Maximalfct{\lambda}}$.
\end{corollary}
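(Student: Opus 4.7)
The plan is to observe that the statement follows from an immediate assembly of three prior results: the Decomposition Lemma (\cref{JSLdecompositionlemma}), its finite-degree refinement (\cref{FiniteDegreeAreFinGl}), and the classification of simple functions at $\sC_{\lambda+1}$ just obtained in \cref{simplefunctionslambda+1damuddafuckaz}. No new argument is needed; the work is essentially bookkeeping.

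Here is how I would lay out the steps. Fix any $f\in\sC_{\lambda+1}$ of finite $\CB$-degree. Since $\CB(f)=\lambda+1$ is a successor, the degree of $f$ is some $n+1\geq 1$. By \cref{FiniteDegreeAreFinGl}, there exist simple functions $f_0,\ldots,f_n\in\sC_{\lambda+1}$ such that $f\equiv\gl_{i\leq n}f_i$. Next, invoke the hypothesis that $\leq$ is \bqo{} on $\sC_{<\lambda}$ to apply \cref{simplefunctionslambda+1damuddafuckaz}: each $f_i$ is continuously equivalent to one of the three functions $\Minimalfct{\lambda+1}$, $\Minimalfct{\lambda+1}\glbin\Maximalfct{\lambda}$, or $\pgl\Maximalfct{\lambda}$.

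The last step is the trivial observation that $\Minimalfct{\lambda+1}\glbin\Maximalfct{\lambda}$ is by definition already a (binary) gluing of elements of $F:=\set{\Maximalfct{\lambda},\Minimalfct{\lambda+1},\pgl\Maximalfct{\lambda}}$, while $\Minimalfct{\lambda+1}$ and $\pgl\Maximalfct{\lambda}$ lie in $F$ directly. Hence each simple piece $f_i$ is continuously equivalent to an element of $\FinGl{F}$, and therefore so is their finite gluing $f\equiv\gl_{i\leq n}f_i$ (using for instance \cref{Gluingcohomomorphism}, which ensures that replacing each $f_i$ by an equivalent finite gluing of elements of $F$ yields an equivalent finite gluing of elements of $F$). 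Thus $f$ is generated by $F$, as required.

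There is essentially no obstacle: the entire content of the corollary is already carried by \cref{simplefunctionslambda+1damuddafuckaz}, and the Decomposition Lemma together with \cref{FiniteDegreeAreFinGl} reduces the general finite-degree case to the simple one. The only mild care needed is to remember that $\Minimalfct{\lambda+1}\glbin\Maximalfct{\lambda}$ belongs to $\FinGl{F}$ by definition, so that the three possibilities output by \cref{simplefunctionslambda+1damuddafuckaz} collapse to finite gluings of the two generators $\Minimalfct{\lambda+1}$ and $\Maximalfct{\lambda}$ together with (optionally) one copy of $\pgl\Maximalfct{\lambda}$.
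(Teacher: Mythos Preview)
Your proof is correct and follows the same approach as the paper. The paper's own ``proof'' is simply the one-line remark that the Decomposition \cref{JSLdecompositionlemma} yields the corollary from \cref{simplefunctionslambda+1damuddafuckaz}; your version is a more detailed unpacking of this, and your explicit invocation of \cref{FiniteDegreeAreFinGl} (rather than the Decomposition Lemma proper) is arguably the cleaner citation, since it is precisely the statement that a finite-degree function is equivalent to a finite gluing of simple pieces.
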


% !TEX root = main.tex

\section{Precise structure} \label{PreciseStructureFinal}

We have now reached the point where we need to define a finite set of generators, for that we require a third and last operation.

\subsection{The wedge operation}\label{subsectionWedge}

Finite gluing and pointed gluing allow us to generate all functions with compact domains up to continuous equivalence by \cref{Compactdomains}.
The infinite gluing allows us to build functions which map non converging sequences to non converging sequences.
With the pointed gluing, we can build functions which send converging sequences to converging sequences.

The \emph{wedge operation}, that we now introduce, allows us to construct functions exhibiting two new ``continuous'' behaviours on sequences that appear to be essential for functions with non compact domains.
First, several sequences converging to distinct limits can be mapped to converging sequences that share the same limit.
Second, a non converging sequence can be mapped to a converging sequence.
While gluing and pointed gluing naturally arise from operations on spaces, this new operation is fundamentally an operation on functions, with no obvious counterpart for spaces.

\begin{definition}
Given a natural number $k\in \N$ and functions $(f_i)_{i\leq k+1}$ in $\functionsonbaire$,
we define a function $f:=\fctwedge (f_0,\ldots,f_{k}\mid f_{k+1})$ called the \emph{wedge} of $(f_0,\ldots,f_{k}\mid f_{k+1})$ as follows.

Let $A_i= \pgl \dom f_i$, for $i\leq k$, $A_{k+1+i}=\dom f_{k+1}$ for all $i\in\N$, and $B_j= \gl_{i\leq k+1}\im f_i$ for all $j\in \N$.
The domain of $\fctwedge (f_0,\ldots,f_{k}\mid f_{k+1})$ is $\gl_{i} A_i$ and its range is $\pgl_j B_j$.
Then we set for all $i,j$ in $\N$:
\begin{align*}
f ((i)\conc \iw{0})&=\iw{0}\mbox{ if }i\leq k \\
 f((i)\conc(0)^j\conc(1)\conc x)&=(0)^j\conc(1)\conc(i)\conc f_i(x)\mbox{ if }i\leq k \\
  f((k+1+i)\conc x)&=(0)^i\conc(1)\conc(k+1)\conc f_{k+1}(x)
\end{align*}
\end{definition}

\begin{figure}
\centering
\begin{tikzpicture} [thick, scale=0.35]
\def \HOne{2.5}
\def \VOne{1.5}
\def \HTwo{5} 
\def \LHOne{8}
\def \LVOne{7}
\def \LVTwo{1}
\def \LHTwo{1}
\def\Angle{0}

\def\Second{20}

\foreach \r in {0,1}
\draw (\Angle:\HOne*\r)-- ++(90+\Angle :\LVOne);

\draw (0,-1) -- (\Angle:\HOne*0);

\foreach \r  in {1,...,4}
\draw (0,-1) ..controls +({0.5\r},0) and +(0,-0.8).. (\Angle:\HOne*\r);

\node at ($({\HOne*5},{\VOne*0})+({\LHTwo},1)$) {$\cdots$};

\foreach \r  in {0,1}
\foreach \s in {0,1,2}
{
\pgfmathparse{20*\s}
\edef\c{\pgfmathresult}
\draw[fill=black!\c!white] ($({\HOne*\r},{\VOne*\s})+({\LHTwo},1)$) ellipse (0.8 and 0.5);   
\draw ($({\HOne*\r},{\VOne*\s})+(0,0.5)$) -- ($({\HOne*\r},{\VOne*\s})+({\LHTwo},0.5)$);

\node at ($({\HOne*\r},{\LVOne})+({\LHTwo},0)$)  {$\vdots$};
}

\foreach \r  in {2,3,4}
{
\pgfmathparse{\r-2}
\edef \s{\pgfmathresult}
\pgfmathparse{20*\s}
\edef\c{\pgfmathresult}
\edef \s{0}

\draw[fill=black!\c!white] ($({\HOne*\r},{\VOne*\s})+({\LHTwo},1)$) ellipse (0.8 and 0.5);   
\draw  (\Angle:\HOne*\r)-- ($({\HOne*\r},{\VOne*\s})+(0,0.5)$) -- ($({\HOne*\r},{\VOne*\s})+({\LHTwo},0.5)$);

}

\def \anglesecond{25}
\draw (\Second,-1)-- ++ (\anglesecond :16); 

\foreach \r  in {0,1,...,2}
\foreach \s in {0,1,...,2}
{
\pgfmathparse{\HTwo*\r}
\edef\x{\pgfmathresult}
\pgfmathparse{180-35*\s}
\edef\y{\pgfmathresult}
\pgfmathparse{20*\r}
\edef\c{\pgfmathresult}
\draw ($(\Second,-1)+(\anglesecond :\x)$)-- ++(\y:2.5);
\draw[fill=black!\c!white] ($(\Second,-1)+(\anglesecond :\x)+(\y:2.5)+(0,0.7)$) ellipse (0.5 and 0.7);
}
\node at ($(\Second,-1)+(\anglesecond :{3*\HTwo})+(110:1)+(0,0.7)$) {$\cdots$};

\draw[->] ($({\HOne*0},{\VOne*2})+({\LHTwo},1)$) ..controls +(4,4) and +(-4,4).. ($(\Second,-1)+(\anglesecond :{2*\HTwo})+(180:2.5)+(0,0.7)$) node[midway,above] {$f_0$};
\draw[->] ($({\HOne*1},{\VOne*1})+({\LHTwo},1)$) ..controls +(4,3) and +(-4,2).. ($(\Second,-1)+(\anglesecond :{\HTwo})+(145:2.5)+(0,0.7)$) node[midway,above] {$f_1$};
\draw[->] ($({\HOne*2},{\VOne*0})+({\LHTwo},1)$) ..controls +(3,2) and +(-4,2).. ($(\Second,-1)+(\anglesecond :0)+(110:2.5)+(0,0.7)$) node[midway,above] {$g$};

\end{tikzpicture}
\caption{The wedge operation $\fctwedge(f_0,f_1\mid g)$.} 
\label{fig:Wedge}
\end{figure}

\begin{remark}\label{rem:infinitewedge}
This finitary operation is closely related to the following infinitary operation.%\rnote{ajouter ici l'exemple que l'on a supprimé avant?}
Given an infinite matrix of functions $f_{i,j}:A_{i,j}\to B_{i,j}$, ${(i,j)\in\N\times \N}$, we can define $\fctwedge(f_{i,j}):\gl_j \pgl_i A_{i,j} \to \pgl_i \gl_j B_{i,j}$ by
\begin{align*}
(j)\conc \iw{0}&\longmapsto\iw{0}\\
 (j)\conc(0)^i\conc(1)\conc x&\longmapsto(0)^i\conc(1)\conc(j)\conc f_{i,j}(x)
\end{align*}
Alternatively, we can first form the function $w=\gl_j \pgl_{i} f_{i,j}$ whose codomain is given by $\gl_j \pgl_{i} B_{i,j}$.
Naturally, each $\pgl_i B_{i,j}$ is a pointed space with basepoint $p_j=\iw{0}$ and their \emph{wedge sum}\footnote{See for instance \cite[p.153]{MR0957919}.} is defined through
the topological quotient $\pi:\gl_j  \pgl_i B_{i,j} \to \bigvee_j \pgl_i B_{i,j}$ with respect to the equivalence relation $\{((j)\conc p_j,(j')\conc p_{j'})\mid j,j'\in \N\}$.
Then, we can define $\fctwedge(f_{i,j})$ by $\pi w$, which explains our choice of name for this operation.

Observe that our finitary operation $\fctwedge (f_0,\ldots,f_{k}\mid f_{k+1})$ can be viewed as the following particular case of the infinitary wedge. Each of first $k+1$ functions $f_0,\ldots, f_k$ are repeated infinitely in each of the first $k+1$ columns, $(f_{i,j})_j=\iw{f_i}$ for $i=0,\ldots, k$, hence we informally refer to them as ``the verticals''.
The function $f_{k+2}$ is repeated infinitely on the diagonal of the remainder of the matrix, say $f_{i,i}=f_{k+2}$ for all $i>k$, so we informally call it ``the diagonal''. All other entries are filled with the empty function.
\end{remark}

Note that for any function $f$ we have $\pgl f \equiv \fctwedge (f| \emptyset)$.

\begin{fact}\label{BasicfactsWedge}
Let $k\in\N$, $(f_i)_{i\leq k+1}$ be a sequence in $\functionsonbaire$, and set $f:=\fctwedge(f_0\ldots f_k\mid f_{k+1})$.
\begin{enumerate}
\item The wedge operation preserves countability and Polishness of domain and range, surjectivity, finite-to-oneness, and continuity of functions.
\item If $k>0$, then $f$ is not injective.
\item If $f_i\in\sC$ for all $i\leq k+1$ then so does $f$, and 
\[\CB(f)=\max\big(\set{\CB(f_i)+1}[i\leq k]\cup\set{\CB(f_{k+1})}\big).\]
\end{enumerate}
\end{fact}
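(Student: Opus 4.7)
The plan is to exploit the natural clopen decomposition of the domain: $\dom f = \gl_i A_i = \bigsqcup_i (i)\conc A_i$, where each $(i)\conc A_i$ is clopen in $\dom f$. A direct inspection of the defining formulas shows that $f\restr{(i)\conc A_i}$ is homeomorphic to $\pgl f_i$ for $i\leq k$, and to $f_{k+1}$ for $i\geq k+1$ (the relabelings on domain and codomain are obvious bijections that restrict to homeomorphisms onto their images in $\pgl_j B_j$). This observation will reduce every assertion to already-established properties of the gluing and pointed gluing operations.

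For item (1): countability and Polishness of domain and range follow by iterated application of \cref{BasicsOnGluing} and \cref{BasicsOnPointedGluing}, since $\dom f$ and $\im f$ are countable gluings or pointed gluings of such objects. Surjectivity: if each $f_i$ is surjective, then computing $\im f$ from the definition gives $\{\iw{0}\}\cup\bigcup_{j\in\N}(0)^j\conc(1)\conc\gl_{i\leq k+1}\im f_i = \pgl_j B_j$. Finite-to-oneness: the preimage of $\iw{0}$ equals $\set{(i)\conc\iw{0}}[i\leq k]$ (size $k+1$), the preimage of $(0)^j\conc(1)\conc(i)\conc y$ with $i\leq k$ is the image under $x\mapsto (i)\conc(0)^j\conc(1)\conc x$ of $f_i^{-1}(\{y\})$, and the preimage of $(0)^j\conc(1)\conc(k+1)\conc y$ is the image under $x\mapsto(k+1+j)\conc x$ of $f_{k+1}^{-1}(\{y\})$; so finite-to-oneness of each $f_i$ yields finite-to-oneness of $f$. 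Continuity: each restriction $f\restr{(i)\conc A_i}$ is continuous as a homeomorphic copy of $\pgl f_i$ or $f_{k+1}$, both of which are continuous by \cref{BasicsOnPointedGluing}; since the family $((i)\conc A_i)_i$ forms a relative clopen partition, \cref{lem:ContUnion} gives continuity of $f$.

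For item (2): if $k>0$, then $(0)\conc\iw{0}$ and $(1)\conc\iw{0}$ are two distinct points of the domain both mapped to $\iw{0}$, so $f$ is not injective.

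For item (3): scatteredness of $f$ reduces, through the clopen partition $f=\bigsqcup_i f\restr{(i)\conc A_i}$, to scatteredness of each piece; by \cref{BasicsOnPointedGluing} pointed gluing preserves scatteredness, so each $\pgl f_i$ and $f_{k+1}$ is scattered, hence so is $f$. For the $\CB$-rank, \cref{CBrankofclopenunion} yields
\[
\CB(f)=\sup_i \CB(f\restr{(i)\conc A_i})=\sup\Big(\set{\CB(\pgl f_i)}[i\leq k]\cup\{\CB(f_{k+1})\}\Big).
\]
Finally, by \cref{CBrankofPgluingofregularsequence1} applied to the (trivially regular) constant sequence, $\CB(\pgl f_i)=\CB(f_i)+1$, and since the first set is finite, the supremum is the claimed maximum. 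The only mildly delicate point is the continuity argument in item (1), but once the clopen decomposition is in place it reduces entirely to continuity of pointed gluings already noted in \cref{BasicsOnPointedGluing}.
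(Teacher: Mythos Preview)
Your proof is correct; the paper states this fact without proof, treating it as a routine verification. Your clopen decomposition of the domain into pieces homeomorphic to $\pgl f_i$ (for $i\leq k$) and to $f_{k+1}$ (for $i\geq k+1$) is exactly the intended unpacking, and the reductions to \cref{BasicsOnGluing}, \cref{BasicsOnPointedGluing}, \cref{lem:ContUnion}, \cref{CBrankofclopenunion}, and \cref{CBrankofPgluingofregularsequence1} are all valid.
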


As with our other operations on functions, we now to look for upper and lower bound criteria.
We can first adapt the ``upper bound'' sufficient criterion \cref{Pgluingasupperbound} to get an upper bound criterion for the wedge operation.

\begin{proposition}\label{Wedgeasupperbound}
%Let $A$ be a topological space, $B$ a 0-dimensional separable metrizable space, $f:A\rao B$ be a continuous function, $k\in\N$,
%and $(f_i)_{i\leq k+1}$ in $\functionsonbaire$.
Let $f:A\to B$ in $\functionsonbaire$ be continuous and $(f_i)_{i\leq k+1}\subseteq \functionsonbaire$ for $k\in \N$.
Suppose that there exist $y\in B$ and a clopen partition $(A_i)_{i\in\N}$ of $A$ with the following properties:
\begin{enumerate}
\item For all $i\leq k$, $(\ray{(f\restr{A_i})}{y,j})_{j\in \N}$ is reducible by pieces to $\iw{f_i}$,
\item $(f\restr{A_i})_{i>k}$ is reducible by pieces to $\iw{f_{k+1}}$,
\item $f(A_i)\rao y$.
\end{enumerate}
Then $f\leq\fctwedge (f_0,\ldots,f_{k}\mid f_{k+1})$.
\end{proposition}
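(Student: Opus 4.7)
Plan. The strategy is to adapt the proof of \cref{Pgluingasupperbound} to the bipartite structure of the wedge: the $k+1$ vertical pieces absorb the ray decompositions of $f\restr{A_i}$ at $y$ for $i \leq k$, while the diagonal absorbs the pieces $A_i$ for $i > k$ directly. From hypothesis (1), for each $i \leq k$ I fix pairwise disjoint finite subsets $(I_n^{(i)})_{n \in \N}$ of $\N$ together with reductions $(\sigma_n^{(i)}, \tau_n^{(i)})$ witnessing $\ray{(f\restr{A_i})}{y, n} \leq \gl_{m \in I_n^{(i)}} f_i$; from hypothesis (2), I fix pairwise disjoint finite subsets $(J_n)_{n \in \N}$ of $\N$ together with reductions $(\sigma_n^{d}, \tau_n^{d})$ witnessing $f\restr{A_{k+1+n}} \leq \gl_{m \in J_n} f_{k+1}$. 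Let $\eta^{(i)}(m)$ be the unique $n$ with $m \in I_n^{(i)}$ (where defined) and $\eta'(m)$ the unique $n$ with $m \in J_n$.

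Writing $g = \fctwedge(f_0, \ldots, f_k \mid f_{k+1})$, I define $\sigma : A \to \dom g$ piecewise on the clopen partition $(A_i)_i$: for $x \in A_i$ with $i \leq k$, set $\sigma(x) = (i) \conc \iw{0}$ if $f(x) = y$, and if $f(x) \in \ray{B}{y, n}$ with $\sigma_n^{(i)}(x) = (m) \conc x'$, set $\sigma(x) = (i) \conc (0)^m \conc (1) \conc x'$; for $x \in A_{k+1+n}$ with $\sigma_n^{d}(x) = (m) \conc x'$, set $\sigma(x) = (k+1+m) \conc x'$. Then $\tau$ is defined on $\im(g \sigma)$ by $\tau(\iw{0}) = y$, $\tau((0)^m \conc (1) \conc (i) \conc z) = \tau_{\eta^{(i)}(m)}^{(i)}((m) \conc z)$ for $i \leq k$, and $\tau((0)^m \conc (1) \conc (k+1) \conc z) = \tau_{\eta'(m)}^{d}((m) \conc z)$. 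Pairwise disjointness of the $I_n^{(i)}$ (respectively the $J_n$) makes $\tau$ well-defined, and a direct check gives $f = \tau g \sigma$.

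For continuity of $\sigma$: on each $A_i$ with $i \leq k$, I apply \cref{prop:sufficientcondforcont} to the open set $U = A_i \setminus f^{-1}(\{y\})$, following the proof of \cref{Pgluingasupperbound}. Continuity on $U$ is granted by \cref{lem:ContUnion}; for the second condition, if $x_n \to x \in f^{-1}(\{y\})$ with $x_n \in U$ and $f(x_n) \in \ray{B}{y, j_n}$, then $j_n \to \infty$ by continuity of $f$, hence $\min I_{j_n}^{(i)} \to \infty$ by pairwise disjointness, so $\sigma(x_n) \to (i) \conc \iw{0}$. On each $A_{k+1+n}$, $\sigma$ is built directly from the continuous $\sigma_n^{d}$; and since $(A_i)_i$ is a clopen partition of $A$, any sequence converging in $A$ is eventually trapped in one $A_i$, giving continuity of $\sigma$ on all of $A$.

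The main obstacle is continuity of $\tau$ at the apex $\iw{0}$, which is precisely where hypothesis (3) plays its role. Away from $\iw{0}$, $\tau$ is continuous by \cref{lem:ContUnion}. Given $z_n \to \iw{0}$ in $\dom \tau$, write $z_n = (0)^{m_n} \conc (1) \conc (i_n) \conc w_n$ with $m_n \to \infty$ and $i_n \leq k+1$. Passing to subsequences, assume all $i_n$ are equal. If $i_n = i \leq k$ for all $n$, then $\tau(z_n) \in \ray{B}{y, \eta^{(i)}(m_n)}$; pairwise disjointness and finiteness of the $I_n^{(i)}$ force $\eta^{(i)}(m_n) \to \infty$, and so $\tau(z_n) \to y$ since $\ray{B}{y, j} \to y$. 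If $i_n = k+1$ for all $n$, then $\tau(z_n) \in f(A_{k+1+\eta'(m_n)})$; the $(J_n)_n$ being pairwise disjoint finite sets forces $\eta'(m_n) \to \infty$, and hypothesis (3) then yields $\tau(z_n) \to y$. Applying \cref{prop:sufficientcondforcont} with $U = \dom \tau \setminus \{\iw{0}\}$ concludes the proof.
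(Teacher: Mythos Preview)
Your proof is correct and follows essentially the same approach as the paper's. The only difference is that where the paper invokes \cref{Pgluingasupperbound} as a black box to obtain reductions $(\sigma^V_i,\tau^V_i)$ from $f\restr{A_i}$ to $\pgl f_i$ for each $i\leq k$, you unpack that argument and work directly with the ray-level reductions $(\sigma_n^{(i)},\tau_n^{(i)})$; the resulting $\sigma$ and $\tau$, and the continuity arguments via \cref{lem:ContUnion} and \cref{prop:sufficientcondforcont}, are the same.
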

\begin{proof}
Use the hypotheses to fix a family $(I_n)_n$ of pairwise disjoint finite subsets of $\N$ and reductions $(\sigma^D_n,\tau^D_n)$ from $f\restr{A_n}$ to $\gl_{i\in I_n} f_{k+1}$ for all $n>k$,
and to get continuous reductions $(\sigma^V_i,\tau^V_i)$ from $f\restr{A_i}$ to $\pgl f_i$ with $\tau^V_i(\iw{0})=y$ using \cref{Pgluingasupperbound}, for all $i\leq k$. 

Define $\sigma$ as follows: 
\begin{align*}
\sigma(x)=&(i)\conc\sigma^V_i(x) && \text{if $x\in A_i$ for some $i\leq k$,}\\
\sigma(x)=&\sigma^D_i(x) && \text{if $x\in A_i$ for some $i>k$.}
\end{align*}

Define $\tau$ as follows:
\begin{align*}
\tau(\iw{0})=&y\\
\tau((0)^j\conc(1)\conc(i)\conc x)=&\tau^V_i((0)^j\conc(1)\conc x)&&\text{if $i\leq k$ and $j\in \N$,}\\
\tau((0)^{i}\conc(1)\conc(k+1)\conc x)=&\tau^D_{n}((i)\conc x) && \text{if $i\in I_{n}$.}
\end{align*} 
By construction $(\sigma, \tau)$ is a reduction from $f$ to $\fctwedge (f_0,\ldots,f_{k}\mid f_{k+1})$. The function $\sigma$ is continuous as a disjoint union of continuous maps by \cref{lem:ContUnion}.
To see that $\tau$ is continuous, note that $\tau$ is continuous on $U=\dom \tau \setminus \{\iw{0}\}$ again by \cref{lem:ContUnion}.
To use \cref{prop:sufficientcondforcont}, we claim that for every sequence $(x_n)_n$ in $U$ that converges to $\iw{0}$ we have $\tau(x_n)\to y$.
If $(x_n)$ is included in $U_i=\bigcup_j N_{(0)^j\conc(1)\conc(i)}$ for some $i\leq k$, then continuity of $\tau_i^{V}$ ensures that $\tau(x_n)\to \tau^V_i(\iw{0})=y$.
If now $(x_n)_n$ is included in $U_{k+1}=\bigcup_{i} N_{(0)^i\conc(1)\conc(k+1)}$ then $\tau(x_n)\to y$ because we assumed that $f(A_i)\to y$.
Since the sets $U_0,\ldots U_{k},U_{k+1}$ form a finite partition of $U$, the claim follows.
\end{proof}

As a corollary, we obtain that, up to continuous equivalence, any function of the form $\fctwedge (f_0,\ldots,f_{k}\mid g)$ is equivalent to one where the verticals $\set{f_i}[i\leq k]$ form an antichain for continuous reducibility\footnote{We go further in \cref{WedgeReducedForm} and prove the existence of a reduced form for the wedge of continuous functions}. To state this corollary, we use the \emph{domination} order on sets of functions, which will also be used in the sequel. For $F,G$ set of functions we say that $F$ is \emph{dominated} by $G$  if for all $f\in F$ there is $g\in G$ with $f\leq g$. Note that if $G$ is finite, then $G$ is equivalent for domination to any maximal antichain of maximal elements of $G$.
When no confusion is possible, we simply denote by $F\leq G$ for domination and $f\leq G$ instead of $\set{f}\leq G$.

\begin{corollary}\label{cor:wedgeSets}
Assume that $f_0,\ldots,f_k, h_0,\ldots, h_l, g \in \functionsonbaire$ are continuous. If $\set{f_i}[i\leq k]$ is equivalent to $\set{h_j}[j\leq l]$ for domination, then
\[\fctwedge(f_0,\ldots,f_k\mid g)\equiv\fctwedge(h_{0},\ldots,h_{l}\mid g).\]
\end{corollary}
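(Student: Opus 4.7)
The plan is to apply the upper bound criterion \cref{Wedgeasupperbound} twice, exploiting the symmetry of the domination-equivalence hypothesis. Set $F=\fctwedge(f_0,\ldots,f_k\mid g)$ and $H=\fctwedge(h_0,\ldots,h_l\mid g)$. It suffices to prove $F\leq H$: the reverse inequality then follows by exchanging the two tuples of verticals.

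To show $F\leq H$, I would pick a function $\iota:\{0,\ldots,k\}\to\{0,\ldots,l\}$ with $f_i\leq h_{\iota(i)}$ for every $i\leq k$, which exists by the hypothesis. Take $y=\iw{0}$ in the codomain of $F$, which is the basepoint of its pointed gluing structure, and define the clopen partition $(A'_p)_{p\in\N}$ of $\dom F$ by
\begin{align*}
A'_p&=\bigcup_{\substack{i\leq k\\ \iota(i)=p}}(N_{(i)}\cap\dom F)\text{ for }p\leq l,\\
A'_{l+1+j}&=N_{(k+1+j)}\cap\dom F\text{ for }j\in\N.
\end{align*}
Intuitively, the first $l+1$ pieces regroup the verticals of $F$ according to which vertical of $H$ they will reduce to, and the remaining pieces carry the diagonal copies of $g$ inside $F$ one by one.

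Next I would verify the three conditions of \cref{Wedgeasupperbound} with respect to the data $(h_0,\ldots,h_l\mid g)$, $y$, and $(A'_p)_p$. For $p\leq l$, unwinding the definition of the wedge shows that $\ray{(F\restr{A'_p})}{y,j}$ is, up to a canonical homeomorphism, the finite gluing $\gl_{i:\iota(i)=p}f_i$. By $f_i\leq h_p$ and \cref{Gluingcohomomorphism}, this reduces to $n_ph_p$ where $n_p=|\iota^{-1}(p)|$. Cutting $\N$ into consecutive blocks of length $n_p$ then gives a family of pairwise disjoint finite sets witnessing reducibility by pieces of the ray sequence to $\iw{h_p}$; the case $n_p=0$ makes $A'_p$ empty and is vacuously handled. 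For $p>l$, each $F\restr{A'_p}$ is homeomorphic to $g$, so $(F\restr{A'_p})_{p>l}$ is reducible by pieces to $\iw g$ using singleton index sets. Finally $F(A'_{l+1+j})\subseteq N_{(0)^j\conc(1)\conc(k+1)}$, which shrinks to $\{y\}$, so $F(A'_p)\to y$ as $p\to\infty$.

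The step requiring the most care is the identification of $\ray{(F\restr{A'_p})}{y,j}$ with $\gl_{i:\iota(i)=p}f_i$: it is a purely combinatorial unwinding of the explicit definition of the wedge, but it is what allows the domination hypothesis $f_i\leq h_{\iota(i)}$ to propagate to a concrete bound in the sense demanded by \cref{Wedgeasupperbound}. Once this is done, the criterion yields $F\leq H$, and a symmetric argument with the roles of $(f_i)$ and $(h_j)$ exchanged gives $H\leq F$, establishing the desired equivalence.
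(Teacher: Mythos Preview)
Your proposal is correct and follows essentially the same approach as the paper: pick a function $\iota$ (the paper calls it $\phi$) witnessing domination, group the verticals of $F$ according to $\iota$, and apply \cref{Wedgeasupperbound} at $y=\iw{0}$. You are in fact more careful than the paper's own terse proof, which omits the diagonal pieces $A'_{l+1+j}$ and the verification of the convergence condition.
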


\begin{proof}
To see that the left hand side wedge, denoted by $f$, reduces to right hand side wedge,
for each $i\leq k$ let $\phi(i)=\min \set{j\leq l}[f_i\leq h_j]$ and set $A_j=\bigcup \set{N_{(i)}}[\phi(i)=j]$ for $j\leq l$.
Notice that the sets $A_j$, $j\leq l$ partition $\dom f$ and for $y=\iw{0}$ we have $\ray{(f\restr{A_j})}{y,n}\equiv \gl_{i\in\phi^{-1}(j)} f_i \leq |\phi^{-1}(j)| h_j$ for all $j\leq l$. 
\end{proof}

Finding \enquote{natural} sufficient conditions for when a wedge is a lower bound for a function $f$ has proven to be quite tricky,
in particular to make sure that the map $\tau$ is well-defined. So far, we have not been able to find a condition resembling that of \cref{Pgluingaslowerbound}.
We can however generalize \cref{Pgluingaslowerbound2} which enjoys much stronger hypotheses.
In these specific cases we can \enquote{disjointify} enough pieces of the reduction to make sure that their union is well-defined.

\begin{lemma}[Disjointification Lemma]\label{DisjointificationLemma}
Let $f:A\to B$ in $\functionsonbaire$ be continuous and $(f_i)_{i\leq k+1}\subseteq \functionsonbaire$ for $k\in \N$.
%Let $A$ and $B$ be 0-dimensional separable metrizable spaces, $f:A\to B$, 
%$k\in\N$, and $(f_i)_{i\leq k+1}$ in $\functionsonbaire$.

Suppose that there exist $y\in\im f$ and $(x_i)_{i\leq k}$ in $f^{-1}(\{y\})$ such that 
\begin{enumerate}
%\item $x_i$ is a continuity point of $f$ for all $i\leq k$,
\item for every $i\leq k$, for every open $U\ni x_i$ there exists $(\sigma,\tau)$ reducing $f_i$ to $f$
with $\im(\sigma)\subseteq U$ and $y\notin \overline{\im(f\sigma)}$, and \label{disjoint_condition1}
\item for every open $V\ni y$, there exists $(\sigma,\tau)$ reducing $f_{k+1}$ to $f$ with $\im(f\sigma)\subseteq V$ and $y\notin\overline{\im(f\sigma)}$.\label{disjoint_condition2}
\end{enumerate}
Then $\fctwedge (f_0,\ldots,f_{k}\mid f_{k+1})\leq f$.
\end{lemma}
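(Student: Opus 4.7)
The approach mirrors the proof of \cref{Pgluingaslowerbound2}, but now we must assemble simultaneously $k+1$ ``vertical'' families of reductions plus one ``diagonal'' family, all with images living in the same codomain $B$. Concretely, I would construct, by recursion, for each vertical index $i\leq k$ and each level $j\in\N$ a reduction $(\sigma^i_j,\tau^i_j)$ of $f_i$ to $f$, and for each $n\in\N$ a reduction $(\sigma^D_n,\tau^D_n)$ of $f_{k+1}$ to $f$. The final maps will be defined by $\sigma((i)\conc\iw{0})=x_i$ for $i\leq k$, $\sigma((i)\conc(0)^j\conc(1)\conc x')=\sigma^i_j(x')$, $\sigma((k+1+n)\conc x')=\sigma^D_n(x')$, and dually $\tau(y)=\iw{0}$, $\tau=(0)^j\conc(1)\conc(i)\conc\tau^i_j(\cdot)$ on $\im(f\sigma^i_j)$, $\tau=(0)^n\conc(1)\conc(k+1)\conc\tau^D_n(\cdot)$ on $\im(f\sigma^D_n)$.

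I would enumerate all ``slots'' $(i,j)$ and $(D,n)$ in a single sequence $(s_m)_{m\in\N}$ in which both the vertical level $j$ and the diagonal level $n$ tend to infinity. At stage $m$ we have already placed finitely many pieces $P_0,\ldots,P_{m-1}\subseteq B$, each with $y\notin\overline{P_\ell}$. Choose $k_m\in\N$ large enough that the clopen basic neighborhood $W_m=N_{y\restr{k_m}}$ is disjoint from $\overline{P_0}\cup\cdots\cup\overline{P_{m-1}}$ and additionally $k_m\geq m$. If $s_m=(i,j)$ is a vertical slot, use continuity of $f$ at $x_i$ to pick an open $U\ni x_i$ with $U\subseteq f^{-1}(W_m)\cap N_{x_i\restr{j}}$, then apply hypothesis~\cref{disjoint_condition1} with this $U$ to obtain $(\sigma^i_j,\tau^i_j)$ such that $\im\sigma^i_j\subseteq U$ (so in particular $\im(f\sigma^i_j)\subseteq W_m$) and $y\notin\overline{\im(f\sigma^i_j)}$. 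If $s_m=(D,n)$ is the diagonal slot, apply hypothesis~\cref{disjoint_condition2} with $V=W_m$ to obtain $(\sigma^D_n,\tau^D_n)$ with $\im(f\sigma^D_n)\subseteq W_m$ and $y\notin\overline{\im(f\sigma^D_n)}$. Set $P_m$ to be the resulting image in $B$ and proceed.

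The key consequence of this bookkeeping is that for all $m\neq m'$ we have $P_m\cap\overline{P_{m'}}=\emptyset$: if $m<m'$ the later $W_{m'}$ was chosen disjoint from $\overline{P_m}$, while $P_m\subseteq W_m$ was chosen disjoint from $\overline{P_{m'}}$ for $m'<m$. This ``disjointification'' is what makes $\tau$ well-defined and guarantees that $\tau$, restricted to each $P_m$, admits an open neighborhood in $\im(f\sigma)$ on which it agrees with the continuous $\tau^i_j$ or $\tau^D_n$, hence is continuous at each point other than $y$. Continuity of $\tau$ at $y$ reduces to showing that for every $M\in\N$ the preimage $\tau^{-1}(N_{(0)^M})$ is a relative neighborhood of $y$: indeed, only finitely many pieces are placed at slots whose level index is $<M$, and the union of their closures misses $y$, so some $N_{y\restr{K}}$ is contained in the preimage. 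Continuity of $\sigma$ at each special point $(i)\conc\iw{0}$ follows from the inclusion $\im\sigma^i_j\subseteq N_{x_i\restr{j}}$ which forces $\sigma^i_j(x)\to x_i$ as $j\to\infty$; continuity on each $A_i$ or $A_{k+1+n}$ is inherited from the pieces via \cref{lem:ContUnion}.

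The main difficulty will be the careful interleaving of four constraints: pairwise disjointness of infinitely many image pieces in $B$, shrinking of images toward $y$ for $\tau$-continuity, shrinking of vertical domain images toward $x_i$ for $\sigma$-continuity at the special points, and handling vertical and diagonal slots under different hypotheses. Using the clopen basis $\{N_{y\restr{k}}\}$ of $\cN$ as the nested neighborhoods $W_m$ is what makes the disjointness and continuity arguments tractable, exactly as in the pointed-gluing case of \cref{Pgluingaslowerbound2}.
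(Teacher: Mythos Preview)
Your proposal is correct and follows essentially the same strategy as the paper's proof: recursively place the required reductions into shrinking neighborhoods of $y$ so that their images form a relative clopen partition away from $y$, then assemble $(\sigma,\tau)$ and check continuity via \cref{prop:sufficientcondforcont}. The only cosmetic difference is bookkeeping: the paper constructs all $k+2$ reductions at once at each stage $n$ (landing in the annulus $V_n\setminus V_{n+1}$) and afterwards re-indexes via a bijection $\N\to(k+2)\times\N$, whereas you enumerate the slots one at a time and rely on the clopenness of the basic neighborhoods $W_m=N_{y\restr{k_m}}$ to get both directions of $P_m\cap\overline{P_{m'}}=\emptyset$.
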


\begin{proof} We simultaneously define by induction on $n$, a sequence $(V_n)_n\subseteq B$ of open neighborhoods of $y$ and sequences $(U_{i,n})_n\subseteq A$ of open neighborhoods of $x_i$ for each $i\leq k$ satisfying the following conditions:
\begin{enumerate}[label=\arabic*)]
\item $V_n\rao y$ in $B$,
\item $U_{i,n}\rao x_i$ in $A$, and
\item  for all $n\in\N$, for all $i\leq k+1$, there is a continuous reduction $(\sigma_{i,n},\tau_{i,n})$ from $f_i$ to $f\corestr{V_n\setminus V_{n+1}}$,
satisfying $\im(\sigma_{i,n})\subseteq U_{i,n}$ in case $i\leq k$.
\end{enumerate}

Set $V_0=B$ and $U_{i,0}=A$ for all $i\leq k$.
Fix $n\in\N$ and suppose that all relevant sets are built for $m\leq n$.
For all $i\leq k$, use \cref{disjoint_condition1} on $U_{i,n}\cap f^{-1}(V_n)$ (that is an open set containing $x_i$) to find reductions $(\sigma_i,\tau_i)$ from $f_i$ to $f\corestr{V_n}$.
Use \cref{disjoint_condition2} on $V_n$ to find a reduction $(\sigma_{k+1},\tau_{k+1})$ from $f_{k+1}$ to $f\corestr{V_n}$.
Together, \cref{disjoint_condition1,disjoint_condition2} guarantee that $y$ is not in the closed set $\bigcup_{i\leq k+1}\overline{\im(f\sigma_i)}$, so
we can find a natural number $N$ such that $V_{n+1}:=\nbhd{y}{N}$ is disjoint from $\bigcup_{i\leq k+1}\overline{\im(f\sigma_i)}$.
But by continuity of $f$ at $x_i$, for all $i\leq k$ since $f(x_i)=y\in V_{n+1}$ there exists a natural number $N_i\geq n$ such that $U_{i,n+1}=\nbhd{x_i}{N_i}\subseteq U_{i,n}\cap f^{-1}(V_{n+1})$.
This finishes the inductive construction.

Fix now a bijection $\iota:\N\rao(k+1)\times\N$ and for all $m\in\N$ write $\iota(m)=(\iota_0(m),\iota_1(m))$. For all $m\in\N$,
pick a continuous reduction $(\sigma_{m},\tau_{m})$ from $f_{\iota_0(m)}$ to $f\corestr{V_{m}\setminus V_{m+1}}$ as guaranteed by the above construction.

Name $\varphi$ the inverse of $\iota$ and define $(\sigma,\tau)$ as follows:
\begin{align*}
\sigma((i)\conc\iw{0})&=x_i, \\
\sigma((i)\conc(0)^n\conc(1)\conc x)&=\sigma_{\varphi(i,n)}(x), &&\text{for all $i\leq k$ and all $n\in\N$},\\
\sigma((k+1+n)\conc x)&=\sigma_{\varphi(k+1,n)}(x), &&\text{for all $n\in\N$}.
\end{align*}
Also, we let $\tau(y)=\iw{0}$ and if $z\in \dom \tau_m \subseteq V_m\setminus V_{m+1}$
we set 
\[
\tau(z)=(0)^{\iota_1(m)}\conc (1)\conc (\iota_0(m))\conc \tau_m(z).
\]

We claim that the pair $(\sigma,\tau)$ is a reduction from $g:=\fctwedge (f_0,\ldots,f_k\mid f_{k+1})$ to $f$. By construction we have $g=\tau f\sigma$.
To see that $\sigma$ is continuous, we use \cref{prop:sufficientcondforcont} with $U=\dom g \setminus \set{(i)\conc \iw{0}}[i\leq k]$.
As $\sigma$ is continuous on $U$ by \cref{lem:ContUnion}, we show the second condition of \cref{prop:sufficientcondforcont}.
Take $(x_n)_n$ in $U$ converging to some $(i)\conc \iw{0}$. Therefore there exists $M$ such that for all $n\geq M$ we can write $x_n=(i)\conc(0)^{k_n}\conc(1) \conc x'_n$,
with $k_n\to \infty$. Therefore $\sigma(x_n)=\sigma_{\varphi(i,k_n)}(x'_n)\in U_{i,k_n}$ for all $n\geq M$ and since $k_n\to \infty$ and $U_{i,k_n}\to x_i$ we get $\sigma(x_n)\to x_i=\sigma((i)\conc\iw{0})=\sigma(x)$, as desired.

Continuity of $\tau$ similarly follows from \cref{prop:sufficientcondforcont}, this time applied to the open set $\dom \tau \setminus \set{y}=\bigsqcup_m \dom \tau_m$ by remarking that $\iota_1(m_k)\to \infty$ when $m_k\to \infty$. 
\end{proof}

\subsection{Finite generation at successors of limits}

As a warm-up for the general case, we can now apply the new operation to complete the analysis of $\sC_{\lambda+1}$ when $\lambda$ is 1 or limit.
This will allow us to see all the ingredients of the proof of the general case in a simplified setting.

We need following elaboration on \cref{InfiniteEmbedOmega}. 

\begin{lemma}\label{InfiniteEmbedOmegaStronger}
Let $n\in \N$ and $X_0,\ldots, X_n$ be infinite subsets of a metrizable space $B$.
Then there are pairwise disjoint infinite sets $Y_i\subseteq X_i$ for all $i\leq n$ such that $\bigcup_{i\leq n} Y_i$ is discrete.
\end{lemma}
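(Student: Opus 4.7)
The plan is to give a direct (non-inductive) proof by extracting subsets of the $X_i$ with controlled accumulation behavior in $B$, after fixing a compatible metric $d$.

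First, using the infinitude of each $X_i$ and the fact that there are only finitely many of them, I would greedily choose a doubly-indexed family $(x_{i,k})_{i \leq n,\, k \in \N}$ of pairwise distinct points with $x_{i,k} \in X_i$; replacing each $X_i$ by $\{x_{i,k} : k \in \N\}$, I may assume that the $X_i$ are pairwise disjoint. Next, applying \cref{InfiniteEmbedOmega} to each $X_i$, I would extract an infinite discrete subspace $E_i \subseteq X_i$ and enumerate it as an injective sequence $(e_{i,k})_{k \in \N}$. Then one of two mutually exclusive cases occurs: either (a) this sequence admits no convergent subsequence in $B$, so $E_i$ is closed discrete in $B$ and I set $D_i := E_i$; or (b) it admits a subsequence converging to some $p_i \in B$, in which case, by discreteness of $E_i$, one necessarily has $p_i \notin E_i$, and I take $D_i \subseteq E_i$ to be the extracted subsequence. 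The crucial observation is that in both cases the set of accumulation points of $D_i$ in $B$ is either empty (case a) or equal to $\{p_i\}$ (case b).

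Finally, let $P := \{p_i : i \text{ falls in case (b)}\}$, a finite set, and set $Y_i := D_i \setminus P$. Each $Y_i$ remains infinite because only finitely many points are removed, and the $Y_i$ remain pairwise disjoint since the $X_i$ are. To verify that $Y := \bigcup_{i \leq n} Y_i$ is discrete, I would use the well-known fact that in a first-countable (in particular, metric) space the set of accumulation points of a finite union equals the union of the sets of accumulation points. Thus for any $w \in Y_i$, $w$ is an accumulation point of $Y$ if and only if $w$ is an accumulation point of some $Y_j$; but the set of accumulation points of $Y_j \subseteq D_j$ is contained in $\{p_j\}$ (if $j$ is in case (b)) or is empty (if $j$ is in case (a)), and by construction $w \notin P$. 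Therefore $w$ is isolated in $Y$, showing that $Y$ is discrete.

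The main subtlety lies in case (b) of the extraction step: one must ensure that the limit of the convergent subsequence lies \emph{outside} the extracted sequence, which is precisely what the discreteness of $E_i$ guarantees. The rest of the argument is a routine verification exploiting metrizability to equate accumulation points with limits of sequences, together with the elementary combinatorial observation that excising the finite set $P$ kills all the potential accumulation points of the $Y_j$ inside the union.
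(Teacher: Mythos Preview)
Your proof is correct and takes a genuinely different route from the paper's.

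The paper argues by induction on $n$: the inductive step splits into the case where some pair $X_i,X_j$ intersects infinitely (merge them, apply the hypothesis, then re-split) and the case where all pairwise intersections are finite (disjointify, apply the hypothesis to the first $n$ sets, adjoin a discrete $Y_{n+1}\subseteq X_{n+1}$, and then repeatedly shrink the $Y_i$ and $Y_{n+1}$ to kill any residual accumulation between them).

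Your approach is direct and avoids induction entirely: after disjointifying the $X_i$, you prune each one to a set $D_i$ with at most one accumulation point in $B$ (either closed discrete, or a sequence converging to a single point $p_i\notin D_i$), and then simply delete the finite set $P$ of possible limit points from every $D_i$. The verification that the union is discrete is then a one-line computation with closures of finite unions. This is arguably cleaner than the paper's argument, whose iterative shrinking step in the second case requires some care to see that it terminates and actually achieves $Y_i\cap\overline{Y_{n+1}}=\emptyset$ after each pass. Your method also makes the role of metrizability more transparent, namely through the sequential characterization of accumulation points used to set up the (a)/(b) dichotomy.
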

\begin{proof}
We prove the statement by induction on $n$. If $n=0$, this is simply \cref{InfiniteEmbedOmega}.
So assume it holds for all collections of size $n$ and let $X_0,\ldots, X_{n+1}$ be infinite subsets of $B$. We distinguish two cases.

First case: there exist distinct $i,j\leq n+1$ such that $X_{i}\cap X_{j}$ is infinite.
Reindexing if necessary, assume $j=n+1$ and define $\tilde{X}_{i}=X_{i}\cap X_{n+1}$ and $\tilde{X}_{k}=X_k$ otherwise.
We can apply the induction hypothesis to $\tilde{X}_0,\ldots \tilde{X}_n$ to obtain pairwise disjoint infinite sets $\tilde{Y}_i\subseteq \tilde{X}_{i}$, for $i\leq n$, whose union is discrete.
To obtain the desired sets for the original collection, it suffices to set $Y_j=\tilde{Y}_j$ for $j\neq i$ and further partition $\tilde{Y}_{i}$ into two infinite sets $Y_i$ and $Y_{n+1}$.

Second case: for all $i,j\leq n+1$ the set $X_i\cap X_j$ is finite. We can make them disjoint: set $\tilde{X}_i=X_i\setminus\bigcup_{j\neq i}X_j$ for all $i \leq n$. 
Apply the induction hypothesis to $\tilde{X}_i$ for $i\leq n$ to get infinite pairwise disjoint sets $Y_i\subseteq \tilde{X}_i$, $i\leq n$ with a discrete union.
Now pick a discrete $Y_{n+1}\subseteq \tilde{X}_{n+1}$ by \cref{InfiniteEmbedOmega} and note that the sets $Y_i$ for $i\leq n+1$ are infinite pairwise disjoint discrete sets. 
If $Y=\bigcup_{i\leq n+1} Y_i$ is discrete as well, we are done.
Otherwise, there exists $i\leq n$ such that either $Y_i\cap \overline{Y_{n+1}}\neq \emptyset$ or $\overline{Y_i}\cap Y_{n+1}\neq \emptyset$.  
Suppose that the former holds, choose $y\in Y_i\cap \overline{Y_{n+1}}$ and an open neighborhood $V$ of $y$ such that $V\cap Y_i= \{y\}$ (since $Y_i$ is discrete).
Note that $Y_{n+1}\cap V$ is infinite and so we can shrink these two sets as follows $Y_{n+1}:=Y_{n+1}\cap V$ and $Y_i:=Y_i\setminus \set{y}$.
Both $Y_i$ and $Y_{n+1}$ are infinite and now $Y_i\cap \overline{Y_{n+1}}=\emptyset$.
The latter case is similar and since $n$ is finite, we can apply this procedure repeatedly until we obtain the desired family $(Y_i)_{i\leq n+1}$.
\end{proof}

As an application, we show that if for infinitely many points $y\in \im f$ we can reduce a single function $g$ to corestrictions of $f$ on arbitrary small neighborhood of $y$, then we have $\omega g \leq f$. We will use the stronger statement which involves a finite set $G$ in place of $G$ in \cref{sectionDoubleSucc}.
Given a (finite) set $G$ of functions in $\functionsonbaire$, we write $\omega G$ for $\omega(\gl G)$.

\begin{lemma}\label{Intertwinereductions}
Let $f:A\to B$ between metrizable spaces and $G\subseteq\functionsonbaire$ be finite.
Suppose that for all $g\in G$ there are infinitely many points $y\in B$ such that for all neighbourhood $V$ of $y$ we have $g\leq f\corestr{V}$.
Then $\omega G \leq f$.
\end{lemma}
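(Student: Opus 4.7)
The plan is to apply \cref{Gluingaslowerbound} twice, reducing the proof of $\omega G = \gl_{n\in\N}\gl G \leq f$ to the construction of a doubly-indexed family $(W_{n,g})_{n\in\N,\,g\in G}$ of pairwise disjoint open subsets of $B$ such that $g\leq f\corestr{W_{n,g}}$ for all $n\in\N$ and $g\in G$. Indeed, given such a family, for each fixed $n$ the sets $(W_{n,g})_{g\in G}$ are pairwise disjoint open, hence form a relative clopen partition of $V_n := \bigcup_{g\in G} W_{n,g}$, so that $\gl G\leq f\corestr{V_n}$ by \cref{Gluingaslowerbound}; and the sets $(V_n)_{n\in\N}$ being themselves pairwise disjoint open, a second application of \cref{Gluingaslowerbound} yields $\omega G\leq f$.

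The key geometric input is \cref{InfiniteEmbedOmegaStronger}. For each $g\in G$, let $Y_g\subseteq B$ denote the infinite set of points $y$ whose existence is guaranteed by the hypothesis, and assume without loss of generality that $Y_g$ is countably infinite. Applying \cref{InfiniteEmbedOmegaStronger} to the finite family $(Y_g)_{g\in G}$, I obtain pairwise disjoint infinite sets $Z_g\subseteq Y_g$ such that $Z := \bigcup_{g\in G} Z_g$ is discrete as a subspace of $B$.

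The next step is to expand the countable discrete set $Z$ into a family of pairwise disjoint open neighborhoods, which is possible since $B$ is metrizable. Fixing a compatible metric $d$ on $B$, for each $z\in Z$ the quantity $\delta_z := \inf\{d(z,z') : z'\in Z\setminus\{z\}\}$ is strictly positive because $z$ is isolated in $Z$, and setting $W_z := \{x\in B : d(x,z) < \delta_z/3\}$ produces an open neighborhood of $z$. A standard triangle-inequality argument shows that the family $(W_z)_{z\in Z}$ is pairwise disjoint: for $z\neq z'$ with $\delta_z\leq\delta_{z'}$, any common point $x\in W_z\cap W_{z'}$ would satisfy $d(z,z')\leq d(z,x)+d(x,z') < 2\delta_{z'}/3$, contradicting $d(z,z')\geq\delta_{z'}$.

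To conclude, I enumerate each $Z_g$ as $\{z_{g,n}\}_{n\in\N}$ and set $W_{n,g} := W_{z_{g,n}}$. The resulting family consists of pairwise disjoint open subsets of $B$, and since $z_{g,n}\in Y_g$ and $W_{n,g}$ is a neighborhood of $z_{g,n}$, the hypothesis on $Y_g$ directly yields $g\leq f\corestr{W_{n,g}}$, completing the proof. The only subtle step is the metric expansion argument, as \cref{InfiniteEmbedOmegaStronger} only delivers discreteness of $Z$ as a subspace rather than separation by pairwise disjoint open sets; this is precisely where the metrizability assumption on $B$ is essential.
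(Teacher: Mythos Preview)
Your proof is correct and follows essentially the same approach as the paper's: both apply \cref{InfiniteEmbedOmegaStronger} to the sets of witness points, use metrizability of $B$ to separate the resulting discrete set by pairwise disjoint open sets, and conclude via \cref{Gluingaslowerbound}. Your version is more explicit about the metric separation argument (which the paper simply asserts) and about splitting the application of \cref{Gluingaslowerbound} into two steps; the paper also briefly handles the trivial case $G=\emptyset$, which you omit but which causes no difficulty.
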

\begin{proof}
If $G$ is empty, then $\omega G=\emptyset$ which reduces to any function, so suppose that $G\neq\emptyset$. 
For each $g\in G$, let $X_g$ be the set of points $y\in B$ such that for all neighborhood $V\ni y$ we have $g\leq f\corestr{V}$. %, and set $W=\bigcup_{g\in G}W_g$.
Since the sets $X_g$ are infinite subsets of the metrizable space $B$ and $G$ is finite, we can apply \cref{InfiniteEmbedOmegaStronger} to get an injective map $G\times \N\to B$, $(g,n)\mapsto y^g_n$ with discrete image such that  $y^g_n\in X_g$ for all $g\in G$ and all $n\in \N$. As $B$ is metrizable, we can find pairwise disjoint open sets $V^g_n$ of $B$ such that $y^g_n\in V^g_n$ for all $g\in G$ and $n\in\N$. Therefore $\omega G \leq f$ holds by \cref{Gluingaslowerbound}.
\end{proof}

\begin{lemma}\label{Intertwinereductionsforomegacentered}
%Let $A,B$ be topological spaces \ynote*{ajout! Nécessaire pour \cref{InfiniteEmbedOmega} et pour obtenir des clopens! J'ai enlevé la référence à $U$, car on peut directement appliquer le lemme avec $f\restr{U}$}{with $B$ metrizable $0$-dimen\-sional,} $f:A\to B$, and $G\subseteq\functionsonbaire$ be finite.
%Suppose that for all $g\in G$ there are infinitely many points $y\in B$ such that for all \yerror{this makes sense only if $B$ is $0$-dim!!!}{clopen} $V\ni y$ we have $g\leq f\corestr{V}$.
%Then $\omega G \leq f$.
Let $f\in\functionsonbaire$ be continuous and $G\subseteq\functionsonbaire$ be a finite set of centered functions.
If $\omega g \leq f$ for all $g\in G$, then $\omega G \leq f$. 
Moreover, if $f=\bigsqcup_{i=0}^{n} f_i$ for some $n\in\N$, then
%In particular, \yerror*{Nécessaire pour la preuve!!!}{if $f=\bigsqcup_{i=0}^{n} f_i$ is continuous, then:}
\begin{enumerate}
%\item if $\omega g \leq f$ for all $g\in G$, then $\omega G \leq f$, 
\item if $g$ is centered and $\omega g\leq f$, then $\omega g\leq f_i$ for some $i\leq n$,
\item if $\lambda$ is limit and $\Maximalfct{\lambda}\leq f$, then $\Maximalfct{\lambda} \leq f_i$ for some $i\leq n$.
\end{enumerate}
\end{lemma}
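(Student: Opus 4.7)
The plan is to deduce all three claims from \cref{Intertwinereductions}, using centeredness to extract, from a reduction $\omega g \leq f$, infinitely many ``landing points'' in $\im f$ where local reductions of $g$ into $f$ exist. The key preliminary observation is: fix $g\in G$ centered with center $x_g$ and a reduction $(\sigma,\tau)$ of $\omega g$ into $f$. Since $\tau f\sigma = \omega g$ maps $N_{(k)}$ into $N_{(k)}$, the sets $f\sigma(N_{(k)})$ lie in the pairwise disjoint sets $\tau^{-1}(N_{(k)})$; in particular, the points $z_k:=f\sigma((k)\conc x_g)$ are pairwise distinct. Moreover, restricting $(\sigma,\tau)$ to $N_{(k)}$ and using $\omega g\restr{N_{(k)}}\equiv g$ yields a continuous reduction of $g$ to $f$ in which $x_g$ is mapped to $\sigma((k)\conc x_g)$, so \cref{Centerinvariance}\cref{Centerinvariance1} gives $g\leq f\restr{U}$ for every neighborhood $U$ of $\sigma((k)\conc x_g)$. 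Combined with continuity of $f$, this upgrades to $g\leq f\corestr{V}$ for every neighborhood $V$ of $z_k$ in the codomain of $f$.

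For the main statement, carrying out this construction for each $g\in G$ produces, for each $g$, an infinite set $\set{z^g_k}[k\in\N]\subseteq \im f$ with the required local reducibility property. Since the codomain of $f$ is metrizable, \cref{Intertwinereductions} applies and yields $\omega G\leq f$.

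For the first \emph{moreover} item, assume $\omega g\leq f = \bigsqcup_{i=0}^{n}f_i$ with $g$ centered. The points $\sigma((k)\conc x_g)$, $k\in\N$, lie in the clopen partition $(\dom f_i)_{i\leq n}$ of $\dom f$, so by the pigeonhole principle there exist $j\leq n$ and an infinite $K\subseteq\N$ such that $\sigma((k)\conc x_g)\in \dom f_j$ for all $k\in K$. Since $\dom f_j$ is open, the neighborhoods $U$ in the preliminary argument can be taken inside $\dom f_j$, yielding $g\leq f_j\restr{U}$; then, for every neighborhood $V$ of $z_k$ in the codomain of $f_j$, we obtain $g\leq f_j\corestr{V}$. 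Applying \cref{Intertwinereductions} to the singleton $\set{g}$ and $f_j$ (still with metrizable codomain) concludes $\omega g\leq f_j$.

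For the second \emph{moreover} item, no landing point argument is needed: from $\Maximalfct{\lambda}\leq f$ we have $\CB(f)\geq\lambda$ by \cref{CBbasicsfromJSL-tp}, and $\CB(f)=\max_{i\leq n}\CB(f_i)$ by \cref{CBrankofclopenunion}, so some $\CB(f_j)\geq\lambda$. If $\CB(f_j)=\lambda$ (limit), then $f_j\equiv\Maximalfct{\lambda}$ by the General Structure \cref{JSLgeneralstructure}\cref{CBbasicsfromJSL1}; if $\CB(f_j)\geq \lambda+1$, then $\Minimalfct{\lambda+1}\leq f_j$ by \cref{Minfunctions} and $\Maximalfct{\lambda}\leq\Minimalfct{\lambda+1}$ by the General Structure again, so $\Maximalfct{\lambda}\leq f_j$ in either case. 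The only real obstacle is the bookkeeping around the local reducibility argument producing the distinct landing points $z_k$; once those are in hand, the rest of the proof follows smoothly from \cref{Intertwinereductions} and already-established structural results.
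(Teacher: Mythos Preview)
Your proof of the main statement and the first \emph{moreover} item is correct and essentially identical to the paper's: both extract the landing points $f\sigma((k)\conc c_g)$, observe they are pairwise distinct via the $\tau^{-1}(N_{(k)})$'s, use \cref{Centerinvariance} to get local corestricted reductions, and conclude via \cref{Intertwinereductions} (with a pigeonhole step for item~(i)).

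For the second \emph{moreover} item your approach differs genuinely, and it has a gap. You argue via $\CB$-ranks: $\Maximalfct{\lambda}\leq f$ should force $\CB(f)\geq\lambda$, then $\CB(f)=\max_i\CB(f_i)$ singles out some $f_j$ to which $\Maximalfct{\lambda}$ reduces by \cref{Minfunctions} or the General Structure theorem. But every tool you invoke---\cref{CBbasicsfromJSL-tp}, \cref{CBrankofclopenunion}, \cref{Minfunctions}, \cref{JSLgeneralstructure}---requires the functions involved to be \emph{scattered}, whereas the lemma only assumes $f\in\functionsonbaire$ continuous. Concretely, take $f_0=\id_\Q$ and $f_1$ locally constant: then $\Maximalfct{\lambda}\leq f_0\sqcup f_1$ holds (since $\Maximalfct{\lambda}\leq\id_\Q$), yet $\CB(f_0)=0$ and $\CB(f_1)=1$, so your rank computation produces no $f_j$ with $\CB(f_j)\geq\lambda$, even though the conclusion $\Maximalfct{\lambda}\leq f_0$ is trivially true.

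The paper's proof avoids this by paralleling the landing-point argument: write $\Maximalfct{\lambda}\equiv\gl_{k}\Minimalfct{\beta_k}$ for an increasing cofinal sequence $(\beta_k)_k$, push the center $\iw{0}$ of each $\Minimalfct{\beta_k}$ through a reduction $(\sigma,\tau)$, pigeonhole to place infinitely many in one $\dom f_j$, and observe that the corresponding subsequence $(\beta_k)_{k\in X}$ is still cofinal, so $\gl_{k\in X}\Minimalfct{\beta_k}\equiv\Maximalfct{\lambda}\leq f_j$ follows as in the first two parts. Your structural shortcut is cleaner when $f\in\sC$---which is the case in every application of the lemma in the paper---but as the lemma is stated, only the center-based argument covers the general case.
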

\begin{proof}
Take a center $c_g$ for $g\in G$ and a continuous reduction $(\sigma,\tau)$ from $\omega g$ to $f$. The set $\set{f\sigma((n)\conc c_g)}[n\in\N]$ is infinite and by \cref{Centerinvariance} we have $g\leq f\restr{V}$ whenever $V$ is a neighbourhood of $\sigma((n)\conc c_g)$. So by continuity of $f$, for every $n$ and every neighborhood $U$ of $f\sigma((n)\conc c_g)$ it follows that $g\leq f\corestr{U}$. This implies that $\omega G\leq f$ by \cref{Intertwinereductions}.

For the second part, take a center $c_g$ for $g$ and a continuous reduction $(\sigma,\tau)$ from $\omega g$ to $f$. As $f=\bigsqcup_{i\leq n}f_i$, there exists $i\leq n$ such that $\dom f_i$ contains $\sigma((n)\conc c_g)$ for infinitely many $n$. We conclude as before. Finally for $\lambda$ limit, using \cref{Gluingcohomomorphism} and \cref{JSLgeneralstructure} notice that for every sequence $(\beta_k)_{k\in \N}$ cofinal in $\lambda$, we have $\Maximalfct{\lambda}\equiv \gl_{k\in \N} \Minimalfct{\beta_k}$. Choose an increasing sequence $(\beta_k)_{k\in \N}$ cofinal in $\lambda$ and recall that $\iw{0}$ is a center for each $\Minimalfct{\beta_k}$. If $(\sigma,\tau)$ witness $\gl_{k\in\N} \Minimalfct{\beta_k}\leq \bigsqcup_{i\leq n}f_i$, then there exists $i\leq n$ such that $X=\{k\in\N \mid \sigma((n)\conc \iw{0}) \in \dom f_i\}$ is infinite. Note that the subsequence $(\beta_k)_{k\in X}$ is cofinal in $\lambda$, so $\Maximalfct{\lambda}\equiv \gl_{k\in X} \Minimalfct{\beta_k}\leq f_i$, as before. 
\end{proof}

We are now ready to show that $\sC_{\lambda+1}$ is finitely generated, assuming that $\sC_{<\lambda}$ is \bqo{} under continuous reducibility.  We first isolate the only situation where the wedge operation is necessary.% as it will be used explicitly in the sequel. 

\begin{lemma}\label{Diagonalforlambda+1}
Suppose that $f=\bigsqcup_{n\in\N} f_n:A_n\to B$ for simple functions $f_n\in\sC_{\lambda+1}$ with pairwise distinct distinguished points $y_n$. Assume that the following hold: 
\begin{enumerate}
\item $f_0\equiv \pgl \Maximalfct{\lambda}$,
\item $f_n\leq \Minimalfct{\lambda+1}\glbin \Maximalfct{\lambda}$ whenever $n>0$, and
\item $(y_n)_{n>0}$ converges to $y_0$. 
\end{enumerate}
Then for all clopen neighborhood $U$ of $y_0$ we have $f\leq \fctwedge(\Maximalfct{\lambda}\mid \Minimalfct{\lambda+1})\leq f\corestr{U}$.
\end{lemma}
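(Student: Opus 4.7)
The plan is to prove the two inequalities separately: the upper bound $f \leq \fctwedge(\Maximalfct{\lambda}\mid \Minimalfct{\lambda+1})$ via \cref{Wedgeasupperbound}, and the lower bound $\fctwedge(\Maximalfct{\lambda}\mid \Minimalfct{\lambda+1}) \leq f\corestr{U}$ via the Disjointification \cref{DisjointificationLemma}.

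For the upper bound, the naive partition $(A_n)_{n\in\N}$ does not directly satisfy condition~(3) of \cref{Wedgeasupperbound} (``$f(A_n) \to y_0$''), since the hypothesis only gives $y_n \to y_0$, not $\im f_n \to y_0$. The remedy will be to refine the partition. I will fix a decreasing basis $(V_k)_{k\in\N}$ of clopen neighborhoods of $y_0$ in $B$ with $\bigcap_k V_k = \{y_0\}$, set $k_n = \max\{k : y_n \in V_k\}$ (so $k_n \to \infty$), and split each $A_n$ ($n>0$) into the clopen pieces $A_n^{\text{near}} = A_n \cap f^{-1}(V_{k_n})$ and $A_n^{\text{far}} = A_n \setminus A_n^{\text{near}}$. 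Since $A_n^{\text{far}}$ avoids the preimage of the distinguished point $y_n$ of $f_n$, the restriction $f_n\restr{A_n^{\text{far}}}$ has rank $\leq\lambda$ by \cref{CBbasics0}~\cref{CBbasicsfromJSL2}. I will then absorb all the far parts into a single clopen vertical piece $A_0' = A_0 \cup \bigcup_{n>0} A_n^{\text{far}}$ (clopen because its complement $\bigcup_{n>0} A_n^{\text{near}}$ is open), and apply \cref{Wedgeasupperbound} to the partition $(A_0', A_1^{\text{near}}, A_2^{\text{near}}, \ldots)$. The conditions then follow: each ray of $f\restr{A_0'}$ at $y_0$ has rank $\leq\lambda$ (by \cref{CBrankofPgluingofregularsequence2simple} applied to $f_0$ together with \cref{CBrankofclopenunion}) and hence reduces to $\Maximalfct{\lambda}$ via \cref{Maxfunctions}; each $f_n\restr{A_n^{\text{near}}} \leq \Minimalfct{\lambda+1}\glbin\Maximalfct{\lambda} \leq 2\Minimalfct{\lambda+1}$ using $\Maximalfct{\lambda}\leq\Minimalfct{\lambda+1}$ (\cref{JSLgeneralstructure} for limit $\lambda$, and its subsequent remark for $\lambda=1$), giving reducibility by pieces; and $f(A_n^{\text{near}}) \subseteq V_{k_n} \to \{y_0\}$.

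For the lower bound, I will apply \cref{DisjointificationLemma} with $y = y_0$ and $x_0$ a center for $f_0 \equiv \pgl\Maximalfct{\lambda}$; this center satisfies $f_0(x_0) = y_0 \in U$ by \cref{scatteredhavecocenter}. The diagonal condition of \cref{DisjointificationLemma} is handled directly: given any open $W \ni y_0$, the convergence $y_n \to y_0$ together with the distinctness of the $y_n$ lets me pick $n > 0$ with $y_n \in W \cap U$ and $y_n \neq y_0$, and then a clopen $N \subseteq W \cap U$ with $y_n \in N$ and $y_0 \notin \overline{N}$; since $y_n$ is preserved as the distinguished point, $f_n\corestr{N}$ still has rank $\lambda+1$ by \cref{CBbasics0}~\cref{CBbasicsfromJSL2}, so $\Minimalfct{\lambda+1} \leq f_n\corestr{N}$ by \cref{Minfunctions}, with image in $N \subseteq W$ whose closure misses $y_0$. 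The hard step will be the vertical condition of \cref{DisjointificationLemma}: to reduce $\Maximalfct{\lambda}$ into $f\corestr{U}$ with image in any prescribed open $V \ni x_0$ and with $y_0 \notin \overline{\im(f\sigma)}$. A naive strategy that distributes the components of $\Maximalfct{\lambda} \equiv \gl_n\Maximalfct{\beta_n}$ among infinitely many distinct rays of $f_0$ at $y_0$ fails when $\lambda$ is limit, because an infinite union of rays accumulates at $y_0$. My plan is to leverage \cref{Rigidityofthecocenter}: after shrinking $V$ so that $f(V)\subseteq U$ (possible by continuity since $f(x_0)=y_0\in U$), centeredness of $x_0$ gives $f_0 \equiv f_0\restr{V \cap A_0}$; item~(3) of \cref{Rigidityofthecocenter} applied to any reduction realizing this equivalence guarantees that a single ray of $\pgl\Maximalfct{\lambda}$ at its cocenter---which is equivalent to $\Maximalfct{\lambda}$---reduces to a \emph{finite} gluing $\gl_{i=m}^M \ray{(f_0\restr{V \cap A_0})}{y_0, i}$ of rays, which in turn reduces to $f_0\restr{V \cap A_0}$ by \cref{Gluingaslowerbound2}. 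Choosing $m$ large ensures the finite union of rays sits inside $U$, and since each individual ray $\ray{B}{y_0,i}$ is clopen and disjoint from $y_0$, so is their finite union, providing the required closure property.
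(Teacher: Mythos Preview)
Your proposal is correct and follows essentially the same strategy as the paper's proof: \cref{Wedgeasupperbound} for the upper bound via a refined clopen partition that absorbs the ``far'' parts of each $A_n$ into the vertical piece, and the Disjointification \cref{DisjointificationLemma} for the lower bound using a center of $f_0$ and the convergence $y_n\to y_0$. The only noteworthy differences are cosmetic: for the upper bound, the paper further splits each $f_n\corestr{B_{k_n}}$ via \cref{Gluingasupperbound} so that the diagonal pieces reduce to $\Minimalfct{\lambda+1}$ rather than $2\Minimalfct{\lambda+1}$, but your coarser bound works equally well for reducibility by pieces; for the vertical condition in the lower bound, the paper invokes item~(2) of \cref{Rigidityofthecocenter} (image of a single ray avoids the cocenter) whereas you invoke item~(3) (ray reduces to a finite gluing of rays)---both yield the needed closure property $y_0\notin\overline{\im(f\sigma)}$.
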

\begin{proof}
To prove that $\fctwedge(\Maximalfct{\lambda}\mid\Minimalfct{\lambda+1})\leq f\corestr{U}$, we use the Disjointification \cref{DisjointificationLemma} with $y=y_0$. Note that $\pgl\Maximalfct{\lambda}\equiv f_0$ is centered with \cocenter{} $y_0$ and so we can choose a reduction $(\sigma,\tau)$ witnessing $\pgl\Maximalfct{\lambda}\leq f_0\corestr{U}$. We choose $x=\sigma(\iw{0})$. To verify the first condition, let $W$ be any clopen neighborhood of $x$ in $\dom f$, let $W'=W\cap  f^{-1}(U)$. We have $\pgl\Maximalfct{\lambda}\geq f_0\restr{W'}$ and since $x\in W'$ we have $\pgl\Maximalfct{\lambda}\leq f_0\restr{W'}$ by \cref{Centerinvariance}. So picking a witness for the latter reduction, we get the desired reduction $\Maximalfct{\lambda}\leq f_0\corestr{U}$ whose $\sigma$ satisfy $\im \sigma \subseteq W$ and $y_0\notin \overline{\im f\sigma}$ by \cref{Rigidityofthecocenter}. 
For the second condition, let $V$ be a neighborhood of $y_0$ in $U$ and note that there exists $n>0$ with $y_n\in V$, since $(y_n)_{n>0}$ converges to $y_0$. Then choosing a clopen neighborhood $W$ of $y_n$ in $V$, note that $\CB(f\corestr{W})=\lambda+1$ so $\Minimalfct{\lambda+1}\leq f\corestr{W}$ by \cref{Minfunctions}, which grants us with the desired reduction.

Next, we show that $f\leq \fctwedge(\Maximalfct{\lambda}\mid\Minimalfct{\lambda+1})$ using \cref{Wedgeasupperbound}.
Fix a clopen neighborhood basis $(B_k)_k$ at $y_0$ and let $\ray{B}{k}$, $k\in \N$, denote the corresponding clopen rays.
Notice that for $n>0$ we have $y_n\in B_{k_n}$ for some $k_n$ and $f_n\corestr{B_{k_n}} \leq \Minimalfct{\lambda+1} \gl \Maximalfct{\lambda}$. So by \cref{Gluingasupperbound} we can pick a clopen subset $A^1_n$ of $f_n^{-1}(B_{k_n})$ such that $f\restr{A^1_n}\leq\Minimalfct{\lambda+1}$ and $A^0_n=A_n \setminus A^1_n$ satisfy $\CB(f\restr{A^0_n})\leq \lambda$. Note that $f(A^1_n)\subseteq B_{k_n}$ and $k_n\to \infty$ as $n\to \infty$ since $(y_n)_{n>0}$ converges to $y_0$, so $f(A^1_n)$ converges to $y_0$.

Setting $A^0=\bigcup_{n>0}A^0_n$ we obtain a clopen set of $\dom f$ with $f\restr{A^0}\leq\Maximalfct{\lambda}$ by \cref{CBrankofclopenunion}. We define a clopen partition of $\dom f$ by setting $\tilde{A}_0= A_0 \cup A^0$ and $\tilde{A}_n=A^1_n$ for $n>0$. Note that the rays of $f\restr{\tilde{A}_0}$ at $y_0$ all have $\CB$-rank $\leq \lambda$ and so they are all reducible to $\Maximalfct{\lambda}$. Moreover $f\restr{\tilde{A_n}}\leq \Minimalfct{\lambda+1}$ for all $n>0$ and since $f(\tilde{A}_n)_{n>0}$ converges to $y_0$, as desired.
\end{proof}

\begin{theorem}\label{FGatsuccessoroflimit}
Let $\lambda$ be limit or 1. Suppose that continuous reducibility is \bqo{} on $\sC_{<\lambda}$.
Then $\sC_{\lambda+1}$ is generated by the finite set 
\[\set{\Maximalfct{\lambda},\Minimalfct{\lambda+1},\pgl\Maximalfct{\lambda},\omega\Minimalfct{\lambda+1},
\fctwedge(\Maximalfct{\lambda}\mid\Minimalfct{\lambda+1}),\Maximalfct{\lambda+1}}.\]
\end{theorem}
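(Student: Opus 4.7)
The plan is to reduce the theorem to two sub-cases of infinite $\CB$-degree. By \cref{finitedegreedamuddafuckaz}, every $f\in\sC_{\lambda+1}$ of finite $\CB$-degree is already a finite gluing of $\set{\Maximalfct{\lambda},\Minimalfct{\lambda+1},\pgl\Maximalfct{\lambda}}$, so it suffices to treat the case where $f$ has infinite $\CB$-degree. Applying the Decomposition \cref{JSLdecompositionlemma} in tandem with \cref{simplefunctionslambda+1damuddafuckaz} (valid because $\sC_{<\lambda}$ is \bqo{}), write $f=\bigsqcup_{n\in\N}f_n$ where every piece $f_n$ of $\CB$-rank $\lambda+1$ is equivalent to one of $a:=\Minimalfct{\lambda+1}$, $b:=\Minimalfct{\lambda+1}\glbin\Maximalfct{\lambda}$, or $c:=\pgl\Maximalfct{\lambda}$, and every piece of rank $\leq\lambda$ is $\leq d:=\Maximalfct{\lambda}$. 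Let $J_c$ index the $c$-pieces, $J_1$ the $a$- and $b$-pieces, and write $y_n$ for the distinguished point of $f_n$ when $n\in J_c\cup J_1$.

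The first case is $|J_c|=\infty$: \cref{InfiniteEmbedOmega} extracts an infinite discrete subset of $\set{y_n}[n\in J_c]$, which supports pairwise disjoint clopen neighbourhoods and yields $\omega c=\Maximalfct{\lambda+1}\leq f$ by \cref{Gluingaslowerbound}; combining with the maximality of $\Maximalfct{\lambda+1}$ (\cref{Maxfunctions}) gives $f\equiv\Maximalfct{\lambda+1}$. Otherwise $|J_c|<\infty$ and we enumerate the $c$-distinguished points $y^*_1,\ldots,y^*_k$. Exploiting zero-dimensionality of $\im f\subseteq\cN$, choose a clopen partition $\im f=V_0\sqcup V_1\sqcup\cdots\sqcup V_k$ with $y^*_i\in V_i$ for $i\geq 1$, small enough that each $V_i$ falls into one of two sub-cases: either every smaller clopen neighbourhood of $y^*_i$ still contains some $y_n$ with $n\in J_1$ (``accumulation''), or $V_i$ is disjoint from $\set{y_n}[n\in J_1]$ (``no-accumulation''). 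By \cref{UsefulcriterionforequivFinGl}, $f\equiv\gl_{i=0}^k f\corestr{V_i}$, so it suffices to express each $f\corestr{V_i}$ as a finite gluing of generators.

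For $V_0$: no $c$-distinguished point lies there, so \cref{ResidualCorestrictionOfCentered} limits every $c$-piece's contribution to $\leq d$; each $J_1$-piece with $y_n\in V_0$ reduces to $a$ (by centeredness of $a$, or via $b\leq 2a$), and the rest contribute $\leq d$, so $f\corestr{V_0}$ is bounded above by $\omega a\gl d$, while a lower bound $\omega a\leq f\corestr{V_0}$ (whenever enough $J_1$-pieces land in $V_0$) comes from \cref{InfiniteEmbedOmega} applied to their distinguished points plus \cref{Gluingaslowerbound}. For $V_i$ in the no-accumulation sub-case, $f\corestr{V_i}$ is simple of $\CB$-type $(\lambda+1,1)$ (the $c$-piece dominates, all else residual), hence $\equiv c$ by \cref{simplefunctionslambda+1damuddafuckaz}. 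For $V_i$ in the accumulation sub-case, apply \cref{Diagonalforlambda+1} to the sub-function consisting of the $c$-piece at $y^*_i$ together with an accumulating sub-sequence of $J_1$-pieces to get $\fctwedge(d\mid a)\leq f\corestr{V_i}$; for the matching upper bound, build a partition suitable for \cref{Wedgeasupperbound}.

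The hard part will be the upper bound in the accumulation sub-case. Beyond the $c$-piece at $y^*_i$ and the accumulating $J_1$-pieces, $f\corestr{V_i}$ also harbours residuals of other $c$-pieces (those centered in $V_j$, $j\neq i$) and residuals of $J_1$-pieces whose distinguished points fall outside $V_i$; each such residual is $\leq d$ by \cref{ResidualCorestrictionOfCentered}. The key step is to construct a clopen partition of $\dom f\corestr{V_i}$ whose zeroth piece bundles the $c$-piece with all these residuals, so that its rays at $y^*_i$ remain $\leq d$, while the subsequent pieces are shrinking corestrictions of the accumulating $J_1$-pieces -- each reducing to $a$ (unchanged for centered $a$-pieces, and reducing to $a$ for a $b$-piece once restricted to a small enough neighborhood missing its $d$-component) -- arranged so that their images converge to $y^*_i$. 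Once this bookkeeping is settled, the hypotheses of \cref{Wedgeasupperbound} are verified, giving $f\corestr{V_i}\leq\fctwedge(d\mid a)$ and hence $f\corestr{V_i}\equiv\fctwedge(d\mid a)$, which completes the proof.
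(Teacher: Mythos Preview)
Your approach follows the paper's outline closely through the reduction to infinite $\CB$-degree and the case $|J_c|=\infty$. The departure is in how you handle $|J_c|<\infty$: you partition the codomain into $V_0\sqcup V_1\sqcup\cdots\sqcup V_k$ and analyze each $f\corestr{V_i}$ separately via a per-$y^*_i$ dichotomy (accumulation vs.\ no-accumulation). The paper instead first asks a \emph{global} question: does some clopen $U\supseteq\{y^*_1,\ldots,y^*_k\}$ leave infinitely many $y_n$ ($n\in J_1$) outside? If yes, then $f\equiv k\cdot\pgl\Maximalfct{\lambda}\glbin\omega\Minimalfct{\lambda+1}$ with no wedge needed; only if no (so all but finitely many $y_n$ lie near $\{y^*_i\}$) does one pass to \cref{Diagonalforlambda+1}, and in that case the required convergence of the $y_n$ to the relevant $y^*_i$ is automatically guaranteed.

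Your accumulation sub-case has a genuine error, not just missing bookkeeping. You claim $f\corestr{V_i}\leq\fctwedge(\Maximalfct{\lambda}\mid\Minimalfct{\lambda+1})$ and propose to verify the convergence hypothesis of \cref{Wedgeasupperbound} by taking ``shrinking corestrictions'' of the $J_1$-pieces landing in $V_i$. But nothing ensures that $\{y_n:n\in J_1,\ y_n\in V_i\}$ converges to $y^*_i$; these points may accumulate at some other $z\in V_i$. When that happens your construction breaks: any corestriction of a $J_1$-piece with $y_n$ bounded away from $y^*_i$ either keeps $y_n$ in its image (so images cannot converge to $y^*_i$) or drops it (leaving a rank-$(\lambda+1)$ leftover that ruins the rays of your $A_0$). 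Worse, the claimed inequality is actually false in this situation: one checks that $\pgl\Maximalfct{\lambda}\glbin\omega\Minimalfct{\lambda+1}\leq f\corestr{V_i}$ using a clopen $W\ni z$ disjoint from $y^*_i$, while $\pgl\Maximalfct{\lambda}\glbin\omega\Minimalfct{\lambda+1}\nleq\fctwedge(\Maximalfct{\lambda}\mid\Minimalfct{\lambda+1})$ (any relatively clopen set in the wedge's image containing the unique point $\iw{0}$ must swallow a tail of the diagonal images). A concrete instance: take $y^*_1=\iw{0}$ and place $J_1$-pieces at $(0)^m\conc(1)\conc(0)^l\conc(1)\conc\iw{0}$ for all $m,l$; every clopen $V_1\ni y^*_1$ contains a full column (fixed $m$, varying $l$) accumulating at $(0)^m\conc(1)\conc\iw{0}\neq y^*_1$. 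The paper's preliminary global split is precisely what rules out this scenario before the wedge is invoked.
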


\begin{proof}
By \cref{finitedegreedamuddafuckaz}, we can assume that $f:A\to B$ in $\sC_{\lambda+1}$ has infinite $\CB$-degree.
By the Decomposition \cref{JSLdecompositionlemma}, $f$ is locally simple of $\CB$-rank $\lambda+1$. {Note that a disjoint union of simple functions of $\CB$-rank $\lambda+1$ with same distinguished point is again simple of $\CB$-rank $\lambda+1$.} So we can write $f=\bigsqcup_{n\in\N}f_n$ with each $f_n:A_n\to B$ simple of $\CB$-rank $\lambda+1$ with distinguished point $y_n$ so that the points $y_n$, $n\in \N$, are pairwise distinct.  

Since $\Minimalfct{\lambda+1}\leq f_n\corestr{V}$ for all $n\in\N$ and neighbourhood $V\ni y_n$ by \cref{Minfunctions}, we obtain $\omega\Minimalfct{\lambda+1}\leq f$ by \cref{Intertwinereductionsforomegacentered}.

We partition $\N=N_0\sqcup N_1$ with $N_1=\set{n\in\N}[f_n\equiv\pgl\Maximalfct{\lambda}]$ and set $Y_i=\set{y_n}[n\in N_i]$, $i=0,1$. We distinguish several cases:
%Consider now $I=\set{i\in \N}[f_i\equiv\pgl\Maximalfct{\lambda}]$.

\begin{descThm}
\item[$N_1$ is infinite] By centeredness of $\pgl\Maximalfct{\lambda}$ we can use \cref{Intertwinereductionsforomegacentered} again and get $\Maximalfct{\lambda+1}\leq f$.
As we know that $f\leq\Maximalfct{\lambda+1}$ by \cref{Maxfunctions}, we obtain $f\equiv\Maximalfct{\lambda+1}$ in this case.

\item[$N_1$ is empty] Then by \cref{simplefunctionslambda+1damuddafuckaz}, for all $i$
we actually have either $f_i\equiv\Minimalfct{\lambda+1}$ or $f_i\equiv\Minimalfct{\lambda+1}\gl\Maximalfct{\lambda}$, but in both cases $f_i\leq\Minimalfct{\lambda+1}\gl\Maximalfct{\lambda}\leq 2 \Minimalfct{\lambda+1}$. Therefore, we obtain using \cref{Gluingasupperbound}:
\[\omega\Minimalfct{\lambda+1}\leq f\leq\gl_if_i\leq\omega(\Minimalfct{\lambda+1}\glbin\Maximalfct{\lambda})\leq\omega(2\Minimalfct{\lambda+1})\equiv\omega\Minimalfct{\lambda+1},\]
so in that case $f\equiv\omega\Minimalfct{\lambda+1}$ and we are also done.

\item[$N_1$ is finite and non empty] Suppose first that for some clopen $U\supseteq Y_1$, the set of $n\in N_0$ such that $y_n\notin U$ is infinite,
then $\omega\Minimalfct{\lambda+1}\leq f\corestr{B\setminus U}$ by \cref{Intertwinereductionsforomegacentered} and $|N_1|\cdot\pgl\Maximalfct{\lambda}\leq f\corestr{U}$ by centeredness and \cref{Gluingaslowerbound}, so
\[f\leq\gl_nf_n\leq (\gl_{n\in N_1}f_{n})\glbin(\gl_{n\in N_0}f_n)\leq|N_1|\cdot\pgl\Maximalfct{\lambda}\glbin\omega\Minimalfct{\lambda+1}\leq f\corestr{U}\glbin f\corestr{B\setminus U}\leq f,\]
and in this case $f\equiv|N_1|\pgl\Maximalfct{\lambda}\gl\omega\Minimalfct{\lambda+1}$, so we are done.

Otherwise, any clopen $U\supseteq Y_1$ contains all but finitely many of the points $y_n$, $n\in N_0$.  
In this case, choose pairwise disjoint clopen sets $U_i\ni y_i$, $i\in N_1$, such that for all $i\in N_1$ the set $P_i=\set{n\in \N }[y_n\in U_i]$ is either infinite or equal to $\set{i}$. 
Since $Y_1$ is included in the clopen set $\bigcup_{i\in I}U_i$, it follows that $R=\N\setminus \bigcup_i P_i$ is finite. By adding $R$ to some infinite $P_i$, we get that $(P_i)_{i\in I}$ is a finite partition of $\N$ and we define $f^{i}=\bigsqcup_{n\in P_i} f_n$.

If $P_i=\set{i}$, then 
\[f^{i}\leq \pgl\Maximalfct{\lambda} \leq f\corestr{U_i},\]
since $f^{i}=f_i\equiv \pgl\Maximalfct{\lambda}$ and $\pgl\Maximalfct{\lambda}$ is centered.

If $P_i$ is infinite, note that the sequence $(y_n)_{n\in P_i\setminus \{i\}}$ converges to $y_i$ and so $f^{i}=\bigsqcup_{n\in P_i} f_n$ falls in the case presented in \cref{Diagonalforlambda+1}. Therefore we obtain $f^{i}\leq \fctwedge(\Maximalfct{\lambda}\mid\Minimalfct{\lambda+1}) \leq f\corestr{U_i}$ in this case.

Now it follows from \cref{Gluingasupperbound} and \cref{Gluingaslowerbound}, that
\[\gl_{i\in I}f\corestr{U_i}\leq f\leq\gl_{i\in I}f^i,\]
so in this last case the function is a finite gluing of $ \fctwedge(\Maximalfct{\lambda}\mid\Minimalfct{\lambda+1})$ and $\pgl\Maximalfct{\lambda}$.
\end{descThm}
\end{proof}

\subsection{The generators}\label{subsectionGenerators}

We write $ \mathcal{P}^+(F)$ for the set of non-empty subsets of a set $F$.
If $F$ is a set of functions, we define $\pgl\mathcal{P}^+(F):=\set{\pgl F'}[\emptyset\neq F'\subseteq F]$ and $\omega\set{F}:=\set{\omega f}[f\in F]$.
For (finite) sets of functions $F_i$ with $i\leq k+1$ we set $\fctwedge(F_0,\ldots,F_k\mid F_{k+1})=\fctwedge(\gl F_0,\ldots,\gl F_k\mid \gl F_{k+1})$.

We now define for every $\alpha<\omega_1$ a set $\centered{\alpha}$ of centered functions and a set $\generator{\alpha}$ of generators. 
We set the base case and the limit cases as follows:
\begin{descThm}
\item[Base case] Let $\centered{0}=\emptyset$, $\generator{0}=\emptyset$, and
$\centered{1}=\set{\Minimalfct{1}}$,
\item[Limit case $\lambda$] Let $\centered{\lambda}=\emptyset$, $\generator{\lambda}=\set{\Maximalfct{\lambda}}$, and
$\centered{\lambda+1}=\set{\Minimalfct{\lambda+1}, \pgl \Maximalfct{\lambda}}$.
\end{descThm}
Then for every $\lambda$ limit or null, we define $\centered{\lambda+n+1}$ and $\generator{\lambda+n}$ by induction on $\N$:
\begin{descThm}
\item[For $n>0$] We let 
\[\centered{\lambda+n+1}=\centered{\lambda+n}\cup \pgl \mathcal{P}^+( \centered{\lambda+n}\cup\omega\set{\centered{\lambda+n}} ),\]
and we define the set of generators at ${\lambda+n}$ by $g\in \generator{\lambda+n}$ if 
\begin{enumerate}
\item $g\in \centered{\lambda+n}$, or 
\item $g\in\omega\set{\centered{\lambda+n}}$, or 
\item $g=\fctwedge(F_0,\ldots, F_k \mid F_{k+1})$ for some distinct $F_0, \ldots F_k \in\mathcal{P}^+(\generator{\lambda+n-1})$ and some $F_{k+1}\subseteq\centered{\lambda+n}$.
\end{enumerate}
We call generators of this last type \emph{wedge generators}.
\end{descThm}

Note that by our convention $\generator{0}=\emptyset$ generates the only function of null $\CB$-rank, namely the empty function.

\begin{fact}\label{AlreadyKnownGenerators}
\begin{enumerate}
\item We have $\generator{1}=\{\Minimalfct{1}, \omega \Minimalfct{1}\}$,
\item For $\lambda$ limit, the set $\generator{\lambda+1}$ is equivalent (for domination) to 
\[\set{\Maximalfct{\lambda},\Minimalfct{\lambda+1},\pgl\Maximalfct{\lambda},\omega\Minimalfct{\lambda+1},
\fctwedge(\Maximalfct{\lambda}\mid\Minimalfct{\lambda+1}),\Maximalfct{\lambda+1}}.\]
\end{enumerate}
\end{fact}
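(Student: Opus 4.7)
The plan is to unfold the inductive definitions of $\centered{}$ and $\generator{}$ at the two ranks in question and then match the resulting explicit lists with the claimed sets.

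For Item 1, I would observe that $\generator{1}$ is produced by the inductive clause applied with $\lambda = 0$ and $n = 1$. The contributions from $\centered{1} = \{\Minimalfct{1}\}$ and from $\omega\{\centered{1}\}$ immediately give $\Minimalfct{1}$ and $\omega\Minimalfct{1}$. Since $\generator{0} = \emptyset$ has no non-empty subset, no wedge generator can be formed, which yields the equality on the nose.

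For Item 2 with $\lambda$ limit, I would first unpack $\centered{\lambda+1} = \{\Minimalfct{\lambda+1}, \pgl\Maximalfct{\lambda}\}$ and $\generator{\lambda} = \{\Maximalfct{\lambda}\}$. Because $\mathcal{P}^+(\generator{\lambda}) = \{\{\Maximalfct{\lambda}\}\}$ is a singleton, the wedge clause forces $k = 0$, fixes the vertical to $F_0 = \{\Maximalfct{\lambda}\}$, and lets $F_1$ range over the four subsets of $\centered{\lambda+1}$. Hence $\generator{\lambda+1}$ consists of the eight functions
\[
\Minimalfct{\lambda+1},\ \pgl\Maximalfct{\lambda},\ \omega\Minimalfct{\lambda+1},\ \omega\pgl\Maximalfct{\lambda},\ \fctwedge(\Maximalfct{\lambda}\mid\emptyset),\ \fctwedge(\Maximalfct{\lambda}\mid\Minimalfct{\lambda+1}),\ \fctwedge(\Maximalfct{\lambda}\mid\pgl\Maximalfct{\lambda}),\ \fctwedge(\Maximalfct{\lambda}\mid\Minimalfct{\lambda+1}\glbin\pgl\Maximalfct{\lambda}).
\]

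To collapse this list to the target set $T$, I would apply three observations. First, $\fctwedge(\Maximalfct{\lambda}\mid\emptyset) \equiv \pgl\Maximalfct{\lambda}$ by the remark stated just before \cref{BasicfactsWedge}. Second, by the very definition of $\Maximalfct{\lambda+1}$ one has $\omega\pgl\Maximalfct{\lambda} = \Maximalfct{\lambda+1}$. Third, the two remaining wedges $\fctwedge(\Maximalfct{\lambda}\mid\pgl\Maximalfct{\lambda})$ and $\fctwedge(\Maximalfct{\lambda}\mid\Minimalfct{\lambda+1}\glbin\pgl\Maximalfct{\lambda})$ have $\CB$-rank exactly $\lambda+1$ by \cref{BasicfactsWedge}, so by \cref{Maxfunctions} they are dominated by the maximum $\Maximalfct{\lambda+1}$ of $\sC_{\leq\lambda+1}$. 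This gives $\generator{\lambda+1}\leq T$ for domination. The converse direction is immediate: every element of $T$ other than $\Maximalfct{\lambda}$ already lies (up to equivalence) in $\generator{\lambda+1}$, while $\Maximalfct{\lambda} \leq \Minimalfct{\lambda+1}$ follows from the General Structure \cref{JSLgeneralstructure}.

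I do not expect any genuine obstacle here: the argument is really a bookkeeping unwinding of the definitions, and the only mildly nontrivial point is invoking the maximum property of $\Maximalfct{\lambda+1}$ in $\sC_{\leq\lambda+1}$ to absorb the two wedge generators with non-empty diagonal. No inductive step and no new structural lemma are needed.
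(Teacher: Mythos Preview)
Your proposal is correct and follows essentially the same approach as the paper: unwind the definitions to list $\generator{\lambda+1}$ explicitly, then collapse the redundant wedge generators. The only cosmetic difference is that the paper records the slightly stronger fact that whenever $\pgl\Maximalfct{\lambda}\in F$ one has $\fctwedge(\Maximalfct{\lambda}\mid F)\equiv\Maximalfct{\lambda+1}$ (not just $\leq$), but since only domination is at stake your one-sided bound via \cref{Maxfunctions} is perfectly adequate.
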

\begin{proof}
Note that $\centered{0}=\emptyset$, $\generator{0}=\emptyset$, and $\centered{1}=\{\Minimalfct{1}\}$, so $\generator{1}=\{\Minimalfct{1},\omega \Minimalfct{1}\}$.
For the second item, if $\lambda<\omega_1$ is limit, we have $\centered{\lambda+1}=\set{\Minimalfct{\lambda+1},\pgl\Maximalfct{\lambda}}$ and note that if $\pgl \Maximalfct{\lambda}\in F$ then $\fctwedge(\Maximalfct{\lambda}\mid F)\equiv\Maximalfct{\lambda+1}$.
%\ynote*{Do we need the case $\lambda=1$?}{If $\lambda=1$, then $\centered{2}=\set{\Minimalfct{1}, \pgl \Minimalfct{1}, \pgl \omega \Minimalfct{1}}=\set{\Minimalfct{1},\Minimalfct{2},\pgl \Maximalfct{1}}$. Recall that $\Maximalfct{1}=\omega \Minimalfct{1}$. Note that $\fctwedge(\Maximalfct{1}\mid F)\equiv \fctwedge(\Maximalfct{1}\mid F\setminus\{\Minimalfct{1}\})$ and similarly $\fctwedge(\Maximalfct{1}\mid F)\equiv \Maximalfct{2}$ whenever $\pgl \Maximalfct{1}\in F$. Finally note that $\Minimalfct{1}$ is not needed, since $f\glbin \Minimalfct{1}\equiv f$ for all scattered functions with infinite range (more detailed needed).}
\end{proof}

Here are a few useful facts on the combinatorics of generators.
The notation $\sC_{[\lambda,\alpha]}$ stands for $\sC_{\leq \alpha}\cap\sC_{\geq\lambda}$

\begin{proposition}\label{BasicsOnGenerators}
Let $\alpha=\lambda+n$ be a countable ordinal with limit or null and $n\in\N$.
\begin{enumerate}
\item We have $\centered{\lambda+n}\subseteq\centered{\lambda+n+1}$ and $\generator{\lambda+n}\subseteq\generator{\lambda+n+1}$.
\item The sets $\centered{\alpha}$ and $\generator{\alpha}$ are included in $\sC_{[\lambda,\alpha]}$.
\item The sets $\centered{\alpha}$ and $\generator{\alpha}$ are finite. 
\item For all wedge generator $f\in\generator{\alpha}$ there are $F,H\subseteq\centered{\alpha}$ such that $\{\omega H\}\cup F\leq f$ and $f\leq(\gl F)\glbin \omega H$. \label{BasicsOnGenerators_boundingwedge}
\item For all $g\in \generator{\alpha}$, $\omega g$ is equivalent to $\omega H$ for some $H\subseteq \centered{\alpha}$.
\item If $f_n\in\FinGl{\generator{\alpha}}$ for all $n\in\N$, then $\gl_{n\in\N}f_n$ is equivalent to a function in $\FinGl{\generator{\alpha}}$.\label{trickforfinalproof}
\end{enumerate}
\end{proposition}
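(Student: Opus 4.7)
The plan is to prove the six items by simultaneous induction on $n$ for fixed $\lambda$, equivalently by induction on the ordinal $\alpha=\lambda+n$. Items 1--3 are essentially bookkeeping. Item 1 is immediate from the inductive definition: $\centered{\lambda+n}\subseteq\centered{\lambda+n+1}$ by construction, and $\generator{\lambda+n}\subseteq\generator{\lambda+n+1}$ because every generator at level $\lambda+n$ (centered, $\omega$-of-centered, or wedge) remains of the same type at level $\lambda+n+1$. Item 2 is a $\CB$-rank calculation using \cref{CBrankofPgluingofregularsequence1}, \cref{BasicsOnGluing}, and \cref{BasicfactsWedge}. Item 3 follows inductively: $\centered{\lambda+n+1}$ is built by applying $\pgl$ to non-empty subsets of the finite set $\centered{\lambda+n}\cup\omega\{\centered{\lambda+n}\}$, and $\generator{\lambda+n}$ is finite because each wedge generator is determined by finitely many finite data.

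The technical heart of the proposition is item 4. Given a wedge generator $f=\fctwedge(F_0,\ldots,F_k\mid F_{k+1})$ with $F_0,\ldots,F_k\in\mathcal{P}^+(\generator{\lambda+n-1})$ and $F_{k+1}\subseteq\centered{\lambda+n}$, I first use the natural clopen partition of $\dom f$ into its $k+1$ verticals and its diagonal together with \cref{Gluingasupperbound} to obtain $f\leq \gl_{i\leq k}\pgl(\gl F_i)\glbin \omega(\gl F_{k+1})$. To replace each $\pgl(\gl F_i)$ by a function in $\centered{\lambda+n}$, I invoke the induction hypothesis for item 4 at level $\lambda+n-1$: every wedge generator $g\in F_i$ satisfies $g\leq (\gl F_g)\glbin \omega H_g$ with $F_g,H_g\subseteq\centered{\lambda+n-1}$. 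Substituting $F_g\cup\{\omega H_g\}$ for each such $g$ in $F_i$ produces $F_i^+\subseteq\centered{\lambda+n-1}\cup\omega\{\centered{\lambda+n-1}\}$ with $\gl F_i\leq \gl F_i^+$, so $c_i:=\pgl(\gl F_i^+)\in\centered{\lambda+n}$ by construction. Taking $F=\{c_0,\ldots,c_k\}$ and $H=F_{k+1}$ yields the upper bound. The lower bound $\omega H=\omega(\gl F_{k+1})\leq f$ is immediate from the wedge structure; for $c_i\leq f$, the induction hypothesis \emph{lower bound} $F_g\cup\{\omega H_g\}\leq g$ ensures every element of $F_i^+$ is $\leq f_i=\gl F_i$, so $\gl F_i^+\leq |F_i^+|\cdot f_i$, and an application of \cref{FinitegenerationandPgluing} gives $c_i\leq \pgl(|F_i^+|\cdot f_i)\leq \pgl f_i\leq f$.

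Items 5 and 6 are corollaries of items 3 and 4. For item 5, $g\in\centered{\alpha}$ gives $H=\{g\}$; $g=\omega c\in\omega\{\centered{\alpha}\}$ gives $H=\{c\}$ via $\omega\omega c\equiv \omega c$; for a wedge generator $g$, the pair $(F,H')$ from item 4 yields $H=F\cup H'$, since $\omega g\leq \omega((\gl F)\glbin\omega H')\equiv \omega F\glbin \omega H'\equiv \omega(F\cup H')$ while $F\cup\{\omega H'\}\leq g$ combined with $\omega\omega\equiv\omega$ gives $\omega(F\cup H')\leq \omega g$; the case $\alpha=\lambda$ limit is trivially handled via $\omega\Maximalfct{\lambda}\equiv\Maximalfct{\lambda}$. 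For item 6, by finiteness of $\generator{\alpha}$ each $f_n$ can be written as $\gl_{g\in\generator{\alpha}}m_{n,g}\cdot g$ with $m_{n,g}\in\N$; rearranging gives $\gl_n f_n\equiv \gl_{g\in\generator{\alpha}}(\sum_n m_{n,g})\cdot g$, where each summand with finite multiplicity belongs to $\FinGl{\generator{\alpha}}$, and each summand with infinite multiplicity is an $\omega g$ which by item 5 equals $\omega H\equiv \gl_{c\in H}\omega c$, a finite gluing of elements of $\generator{\alpha}$.

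The main obstacle is the double direction of item 4: the upper bound requires replacing each wedge generator in $F_i$ by a larger centered-plus-$\omega$-centered expression while the lower bound requires showing that the corresponding pointed gluing is still dominated by the original $\pgl f_i$. This is where the absorption principle $\pgl(Mf_i)\leq\pgl f_i$ from \cref{FinitegenerationandPgluing} plays the crucial role, letting the two substitutions happen without breaking either inequality.
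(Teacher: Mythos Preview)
Your proof is correct and follows the same overall route as the paper for all six items. The one substantive difference is in item 4: the paper simply takes $F=\{\pgl F_i\mid i\leq k\}$ and $H=F_{k+1}$ and checks the inequalities, whereas you do additional work to ensure $F\subseteq\centered{\alpha}$. Your care is warranted: since each $F_i\in\mathcal{P}^+(\generator{\alpha-1})$ may contain wedge generators (which lie neither in $\centered{\alpha-1}$ nor in $\omega\{\centered{\alpha-1}\}$), the function $\pgl F_i$ is not in general a literal element of $\centered{\alpha}$. Your inductive unwrapping --- replacing each wedge $g\in F_i$ by the functions $F_g\cup\{\omega h:h\in H_g\}$ furnished by item 4 at level $\alpha-1$, and then using \cref{FinitegenerationandPgluing} both for $\pgl(\gl F_i)\leq c_i$ and for the absorption $c_i\leq\pgl(|F_i^+|\cdot f_i)\leq\pgl f_i$ --- is exactly what makes the paper's one-line argument rigorous. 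One cosmetic point: to land in $\omega\{\centered{\alpha-1}\}$ you should take the individual functions $\omega h$ for $h\in H_g$ rather than the single function $\omega H_g$; this does not affect the bounds since $\omega H_g\equiv\gl_{h\in H_g}\omega h$.

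A small caveat on item 5: your treatment of $\alpha=\lambda$ limit via $\omega\Maximalfct{\lambda}\equiv\Maximalfct{\lambda}$ does not actually produce an $H\subseteq\centered{\lambda}=\emptyset$. In fact item 5 as stated fails for $\alpha=\lambda$ limit (the only candidate $H=\emptyset$ gives $\omega H=\emptyset$); this is a minor slip in the paper's statement rather than in your argument, and is harmless since items 5--6 are only ever invoked at successor $\alpha$.
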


\begin{proof}\Needspace*{2\baselineskip}
\begin{enumerate}
\item By induction, using the definition of the sets $\centered{\alpha}$ and $\generator{\alpha}$.
\item Use \cref{CBrankofPgluingofregularsequence1} and \cref{BasicfactsWedge}.
\item It follows from the finiteness of the power set of a finite set and the fact that, in the definition of $\generator{\alpha}$, the requirement $F_i\neq F_j$
for wedge generators forces the sequence of sets $F_i$ to be finite, thus yielding finiteness of $\generator{\alpha}$.
\item Take $g=\fctwedge(F_0,\ldots, F_k \mid F_{k+1})$ for $k\in\N$ with $F_i\in\mathcal{P}^+(\generator{\alpha-1})$ for $i\leq k$ and $F_{k+1}\subseteq\centered{\alpha}$.
Set $F=\set{\pgl F_i}[i\leq k]$ and $H=F_{k+1}$. We have $F\leq g$, $\omega H\leq g$ and $g\leq\gl F\glbin \omega H$ by \cref{Gluingasupperbound_cor}.

\item In case $g$ is a wedge generator, take $F,H\subseteq \centered{\alpha}$ given by the previous point. By \cref{Gluingasupperbound,Gluingaslowerbound2} we get $\omega g\equiv \omega (F\cup H)$.

\item Up to continuous equivalence we can suppose that all $f_n$ are actually in $\generator{\alpha}$.
Note that by finiteness of $\generator{\alpha}$ there is $N\in\N$ and a partition $I_0,\ldots,I_N$ of $\N$ such that for all $n\leq N$ and $i,j\in I_n$
we have $f_i=f_j=g_n$ for some $g_n\in\generator{\alpha}$.
Now if $I_n$ is infinite, we have $\gl_{i\in I_n}f_i\equiv \omega g_n$, which is equivalent to a finite gluing of functions in $\omega \{\centered{\alpha}\}$ by the previous point.
\qedhere
\end{enumerate}
\end{proof}

Our goal is to prove:

\begin{theorem}[Precise Structure]\label{PreciseStructureThm}
For all $\alpha<\omega_1$, every function in $\sC_\alpha$ is continuously equivalent to a finite gluing of functions in $\generator{\alpha}$.
\end{theorem}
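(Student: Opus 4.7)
The proof proceeds by transfinite induction on $\alpha<\omega_1$. The base case $\alpha=0$ is immediate; the case $\alpha=1$ is \cref{LocallyConstantFunctions}; for $\alpha=\lambda$ infinite limit, the General Structure \cref{JSLgeneralstructure} collapses $\sC_\lambda$ to the single equivalence class of $\Maximalfct{\lambda}=\gl\generator{\lambda}$; and the case $\alpha=\lambda+1$ with $\lambda$ equal to $1$ or infinite limit is \cref{FGatsuccessoroflimit}, whose \bqo{} hypothesis is supplied by the induction assumption together with \cref{SecondstepforBQOthm} and \cref{FGgivesBQO_2}, and whose set of generators generates the same class as $\generator{\lambda+1}$ by \cref{AlreadyKnownGenerators}.

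The remaining and central case is the double-successor step $\alpha=\lambda+n+1$ with $\lambda$ limit or null and $n\geq 1$. By the induction hypothesis, each $\sC_\beta$ with $\beta\leq\lambda+n$ is finitely generated, hence \bqo{} under continuous reducibility by \cref{SecondstepforBQOthm}, so $\sC_{<\lambda+n+1}$ is \bqo{} by \cref{FGgivesBQO_2}. Consequently \cref{LocalCenterednessFromBQO} yields that every $f\in\sC_{\lambda+n+1}$ is locally centered. Combining \cref{RepresentationforFunctions}, \cref{0dimanddisjointunion}, and the Decomposition \cref{JSLdecompositionlemma}, I will decompose $f=\bigsqcup_{i\in\N}f_i$ into a countable family of centered pieces, which I classify as \emph{top-rank} when $\CB(f_i)=\lambda+n+1$ and \emph{lower-rank} otherwise. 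By \cref{finitenessofcenteredfunctions}, each top-rank piece is equivalent to some element of $\centered{\lambda+n+1}$, and by the induction hypothesis together with the inclusion $\generator{\lambda+n}\subseteq\generator{\lambda+n+1}$ from \cref{BasicsOnGenerators}, the disjoint union of any countable family of lower-rank pieces is equivalent to a finite gluing of elements of $\generator{\lambda+n+1}$.

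To assemble these pieces into a single element of $\FinGl{\generator{\lambda+n+1}}$, I will analyse how the distinguished points of the top-rank pieces sit in $\im f$. Using metrizability, \cref{InfiniteEmbedOmegaStronger}, and the finiteness of $\centered{\lambda+n+1}$, I will partition $\im f$ into finitely many clopen pieces, each exhibiting one of three patterns: (i) finitely many top-rank pieces, with no further accumulation at their distinguished points, giving a finite gluing from $\centered{\lambda+n+1}\cup\generator{\lambda+n}$; (ii) infinitely many copies of a fixed type $h\in\centered{\lambda+n+1}$ appearing in pairwise disjoint clopen neighborhoods, in which case \cref{Intertwinereductions} and \cref{Intertwinereductionsforomegacentered} provide a copy of $\omega h\in\omega\set{\centered{\lambda+n+1}}$; (iii) the distinguished points of infinitely many pieces converge to a single point $y\in\im f$, producing a wedge generator $\fctwedge(F_0,\dots,F_k\mid F_{k+1})$ whose verticals $F_j\in\mathcal{P}^+(\generator{\lambda+n})$ arise from the finitely many distinct representations $\pgl F_j$ of top-rank centered pieces whose cocenter converges to $y$ (via \cref{CenteredasPgluing}), and whose diagonal $F_{k+1}\subseteq\centered{\lambda+n+1}$ absorbs the top-rank types occurring only finitely often at $y$ together with all lower-rank pieces accumulating to $y$. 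The required upper bound on each local piece will be supplied by \cref{Wedgeasupperbound}, and the matching lower bound by the Disjointification \cref{DisjointificationLemma}, using the centeredness of top-rank pieces to guarantee reductions into arbitrarily small neighborhoods of $y$.

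The main obstacle is a rigorous implementation of case (iii) and its interaction with (i) and (ii) in the presence of several accumulation points: in particular one must match the wedge produced to the exact form prescribed in \cref{subsectionGenerators}, with distinct $F_j\in\mathcal{P}^+(\generator{\lambda+n})$ and $F_{k+1}\subseteq\centered{\lambda+n+1}$. Distinctness will be arranged by \cref{cor:wedgeSets}, which collapses equivalent verticals, and membership of the verticals in $\mathcal{P}^+(\generator{\lambda+n})$ will come from writing each top-rank centered piece as $\pgl F$ with $F\subseteq\generator{\lambda+n}$, as granted by \cref{finitenessofcenteredfunctions}. Once the local contribution over each clopen piece of $\im f$ has been identified with an element of $\generator{\lambda+n+1}$, \cref{UsefulcriterionforequivFinGl} reassembles them into $f\equiv\gl_jg_j$ with $g_j\in\generator{\lambda+n+1}$, and \cref{BasicsOnGenerators}~\cref{trickforfinalproof} collapses this potentially infinite gluing into a finite one in $\FinGl{\generator{\lambda+n+1}}$, completing the induction.
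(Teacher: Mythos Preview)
Your induction structure and base/limit/successor-of-limit cases are exactly those of the paper, and your high-level picture of the double-successor step is correct in spirit: decompose into centered pieces, group them by their behaviour in the codomain, and recognise the three generator types. But the double-successor step as written is a sketch with a genuine gap, and the paper devotes all of \cref{sectionDoubleSucc} to filling it.

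The central difficulty you acknowledge but do not resolve is that your trichotomy (i)/(ii)/(iii) is not exhaustive and the pieces do not combine cleanly. Concretely: many centered pieces may share the \emph{same} \cocenter{} $y$ (this is unavoidable since $f$ is locally centered, not locally centered with distinct \cocenter{}s), and then your case~(iii) hypothesis ``distinguished points converge to $y$'' is already satisfied vacuously by those pieces themselves. The paper calls this the \emph{pseudo-centered} situation and needs the Vertical \cref{VerticalTheorem} to handle it: a disjoint union of copies of the same centered function $g\equiv\pgl G$ with common \cocenter{} is not in general $\equiv g$, but rather $\equiv g\glbin w$ for some $w\leq g$ built from $\omegaregular{\alpha}$. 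This is where the finer structure of the rays and the notion of \emph{$\omegaregular{}$-regularity} enter; your proposal has no counterpart.

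Relatedly, your plan to ``partition $\im f$ into finitely many clopen pieces'' does not match the decomposition you actually have, which is of the domain. The paper instead refines the domain partition to a \emph{fine $c$-partition} (\cref{DefFine}), which requires dissolving lumps (\cref{lemma:RefiningBy1}) and gobbling up pieces of rank $<\lambda$ (\cref{lem:gobblingLessThanLambda}), then passes to a \emph{countable} family of clopen sets in the codomain via the \emph{solvable decomposition} (\cref{SolvableDecomposition}). Each resulting solvable piece is handled by \cref{FiniteGenerationForSolvable}, which itself splits off an $\omega H$-part and reduces the rest to the \emph{strongly solvable} case treated by the Diagonal \cref{DiagonalTheorem}. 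Only after this does one reassemble via \cref{Gluingasupperbound_cor,Gluingaslowerbound2} and collapse with \cref{BasicsOnGenerators}~\cref{trickforfinalproof}. Your invocation of \cref{UsefulcriterionforequivFinGl} at the reassembly stage is also off: that result needs a clopen partition of the codomain with $f\corestr{B_i}\equiv g_i$, which you have not produced; the paper's reassembly in \cref{FGatdoublesuccessors} is more delicate, using $f_U\leq g_U\leq f_U\corestr{U}$ for each $U$ in the countable family. In short, the missing idea is the fine-$c$-partition/solvable machinery that tames the interaction between shared \cocenter{}s and accumulation; without it, case~(iii) cannot be made rigorous.
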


For $\alpha<\omega_1$, we call \emph{finite generation at level $\alpha$} the statement:
\begin{descThm}
\item[$\FG(\alpha)$] Every function in $\sC_\alpha$ is equivalent to a finite gluing of functions in $\generator{\alpha}$.
\end{descThm}
The notations $\FG(<\alpha)$ and $\FG(\leq\alpha)$ stand for the conjunction of $\FG(\beta)$ for all $\beta<\alpha$ and for all $\beta\leq\alpha$, respectively.
We close this section by stating the results that we have already established about the statements $\FG(\alpha)$.

\begin{proposition}\label{FGconsequences} We have $\FG(0)$, $\FG(1)$ and $\FG(\lambda)$ for every $\lambda$ limit. 
Moreover for every countable ordinal $\alpha=\lambda+n$, with $\lambda$ limit or null and $n<\omega$, we have
\begin{enumerate}
\item $\FG(<\lambda)$ implies $\FG(\lambda+1)$.
\item $\FG(\leq\alpha)$ implies that every function in $\sC_{[\lambda,\alpha]}$ is a finite gluing of functions in $\generator{\alpha}$.
\item $\FG(<\alpha)$ implies that continuous reducibility is a \bqo{} on $\sC_{< \alpha}$.
\item If $\FG(<\alpha)$ holds then every function in $\sC_\alpha$ is locally centered.
\item If $\FG(<\alpha)$ holds then every centered function in $\sC_{[\lambda,\alpha]}$ is equivalent to some function in $\centered{\alpha}$. In particular, it is equivalent either to $\Minimalfct{\lambda+1}$ or to $\pgl G$ for some $G\subseteq \centered{\alpha-1}\cup \omega\set{\centered{\alpha-1}}$.  
\end{enumerate}
\end{proposition}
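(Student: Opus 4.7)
The plan is to prove the assertions in the order they are stated, since later items depend on earlier ones: first the three independent base cases, then the straightforward consequences (ii)--(iv), then the successor-of-limit case (i), and finally the most delicate item (v).

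For the base cases, $\FG(0)$ holds vacuously as $\sC_0=\{\emptyset\}$ and the empty function is the gluing of the empty sequence of elements of $\generator{0}=\emptyset$. Then $\FG(1)$ follows from \cref{LocallyConstantFunctions}, which shows that $\sC_1$ is generated by $\{\id_1,\id_\N\}$, a set equivalent to $\generator{1}=\{\Minimalfct{1},\omega\Minimalfct{1}\}$ per \cref{AlreadyKnownGenerators}. For $\FG(\lambda)$ with $\lambda$ an infinite limit, the General Structure \cref{JSLgeneralstructure} asserts that any two functions in $\sC_\lambda$ are continuously equivalent, so every function in $\sC_\lambda$ is equivalent to the single generator $\Maximalfct{\lambda}\in\generator{\lambda}$.

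I would then prove (iii), (iv), (ii), (i) in quick succession. For (iii), each $\FG(\beta)$ entails that $\sC_\beta$ is finitely generated hence \bqo{} by \cref{SecondstepforBQOthm}, so \cref{FGgivesBQO_2} gives \bqo{} on $\sC_{<\alpha}$. Item (iv) is an immediate application of \cref{LocalCenterednessFromBQO} to the \bqo{} supplied by (iii). For (ii), by \cref{BasicsOnGenerators} the inclusion $\generator{\beta}\subseteq\generator{\alpha}$ holds for all $\beta$ with $\lambda\leq\beta\leq\alpha$ (same limit part), so every finite gluing of elements of $\generator{\beta}$ is also a finite gluing of elements of $\generator{\alpha}$, covering the whole interval. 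For (i), applying (iii) to $\alpha=\lambda+1$ gives that $\sC_{<\lambda}$ is \bqo{}, and \cref{FGatsuccessoroflimit} then produces a finite generating set for $\sC_{\lambda+1}$ which by \cref{AlreadyKnownGenerators} is equivalent to $\generator{\lambda+1}$.

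The main obstacle is (v). Assuming $\FG(<\alpha)$, item (ii) applied to $\alpha-1$ shows that $\generator{\alpha-1}$ is a finite generating set for $\sC_{[\lambda,\alpha-1]}$. For $\alpha=\lambda+1$, the characterization follows directly from \cref{cor:CenteredSucessor}, since both $\Minimalfct{\lambda+1}$ and $\pgl\Maximalfct{\lambda}$ lie in $\centered{\lambda+1}$ by definition. For $\alpha\geq\lambda+2$, \cref{finitenessofcenteredfunctions} applies with $F=\generator{\alpha-1}$ and yields that every centered $g\in\sC_{[\lambda,\alpha]}$ is equivalent either to $\Minimalfct{\lambda+1}$ or to $\pgl G$ for some non-empty $G\subseteq\generator{\alpha-1}$. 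The remaining and most delicate step is to replace each wedge generator in such a $G$: for every wedge generator $f\in G$, \cref{BasicsOnGenerators}~\cref{BasicsOnGenerators_boundingwedge} provides sets $F_f,H_f\subseteq\centered{\alpha-1}$ such that $F_f\cup\{\omega H_f\}\leq f\leq(\gl F_f)\glbin\omega H_f$. Applying \cref{FinitegenerationandPgluing} in both directions, these substitutions replace $G$ by a new set $G'\subseteq\centered{\alpha-1}\cup\omega\{\centered{\alpha-1}\}$ with $\pgl G\equiv\pgl G'$; by the recursive definition of $\centered{\alpha}$ we then have $\pgl G'\in\centered{\alpha}$, establishing both the main claim and its ``in particular'' formulation. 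The subtle point is to carefully track that the hypotheses of \cref{FinitegenerationandPgluing} are met in both directions, which is where the two-sided bound from \cref{BasicsOnGenerators}~\cref{BasicsOnGenerators_boundingwedge} plays its essential role.
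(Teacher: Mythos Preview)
Your proposal is essentially correct and follows the paper's approach: the base cases, items (ii)--(iv), and item (i) are handled exactly as in the paper, and your wedge-replacement argument in (v) via \cref{FinitegenerationandPgluing} is equivalent to the paper's direct use of \cref{Pgluingasupperbound} (the former being proved from the latter).

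There is one small but genuine gap. In (v) you split into the case $\alpha=\lambda+1$, handled via \cref{cor:CenteredSucessor}, and the case $\alpha\geq\lambda+2$, handled via \cref{finitenessofcenteredfunctions}. But \cref{cor:CenteredSucessor} is stated only for $\lambda$ equal to $1$ or an infinite limit, so your $\alpha=\lambda+1$ case leaves $\alpha=1$ (i.e.\ $\lambda=0$) uncovered. You also omit the vacuous case $\alpha=\lambda$ limit, where there are no centered functions in $\sC_\lambda$. The paper avoids both issues by treating all successor $\alpha$ uniformly: \cref{finitenessofcenteredfunctions} already applies with $F=\generator{\alpha-1}$ as soon as $\alpha\geq\lambda+1$ (for $\alpha=1$ one takes $F=\generator{0}=\emptyset$, forcing the centered function to be $\Minimalfct{1}$), so there is no need to invoke \cref{cor:CenteredSucessor} separately. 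A minor imprecision in (i): applying (iii) at $\lambda+1$ yields \bqo{} on $\sC_{<\lambda+1}$, not $\sC_{<\lambda}$; of course this is stronger and suffices, or you can simply apply (iii) at $\lambda$.
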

\begin{proof}
Note that \cref{JSLgeneralstructure} trivially implies $\FG(\lambda)$ for $\lambda$ limit or null. Using \cref{AlreadyKnownGenerators}, $\FG(1)$ amounts to \cref{LocallyConstantFunctions}. 
\begin{enumerate}
\item Note that $\FG(< \lambda)$ implies that $\sC_{<\lambda}$ is \bqo{} by \cref{FGgivesBQO_2}, and so $\FG(\lambda+1)$ follows from \cref{FGatsuccessoroflimit,AlreadyKnownGenerators}. 
\item Recall that $\generator{\lambda+k}\subseteq \generator{\lambda+k+1}$ for all $k<\omega$, so $G_{\alpha}=\bigcup_{k\leq n} G_{\lambda+k}$.
\item This follows from \cref{SecondstepforBQOthm,FGgivesBQO_2}, 
\item This follows from the previous item combined with \cref{LocalCenterednessFromBQO}.
\item Observe that if $\alpha$ is limit, there are no centered functions in $\sC_\alpha$ and so the statement is vacuously true.
So suppose that $\alpha$ is a successor ordinal.
Since by $\FG(<\alpha)$ the set $\generator{\alpha-1}$ finitely generates $\sC_{[\lambda,\alpha-1]}$, by \cref{finitenessofcenteredfunctions} any centered function $f\in\sC_{[\lambda,\alpha]}$ is
either $f\equiv \Minimalfct{\lambda+1}\in \generator{\alpha}$ or $f=\pgl F$ for some nonempty finite set $F\subseteq\generator{\alpha-1}$.
We show that if $g\in \generator{\alpha-1}$ is a wedge generator, then $\pgl g$ is equivalent to some centered generator in $\centered{\alpha}$. Use \cref{BasicsOnGenerators}~\cref{BasicsOnGenerators_boundingwedge} to get finite sets $G, H\subseteq\centered{\alpha-1}$ with $ \set{\omega H}\cup G \leq g$ and $g\leq \omega H \glbin (\gl G)$.
Note that both $G$ and $H$ are included in $\centered{\alpha-1}$ and that $\iw{g}$ is equivalent by pieces to $\iw{F_g}$ with $F_g=\iw{G\cup\set{\omega h}[h\in H]}$. Hence $\pgl g$ is equivalent to some function $h\in\centered{\alpha}$ by \cref{Pgluingasupperbound}. 
Therefore, forming a set $F'$ where each wedge generator $g$ in $F\subseteq\generator{\alpha-1}$ is replaced by the functions in $F_g$, it follows that $\pgl F\equiv \pgl F'$ by \cref{Pgluingasupperbound},
as desired.    
\end{enumerate}
\end{proof}

% !TEX root = main.tex

\section{Finite generation at double successors}\label{sectionDoubleSucc}

In view of \cref{FGconsequences}, it remains to show that $\FG(<\alpha+2)$ implies $\FG(\alpha+2)$ in order to complete the proof by induction of \cref{PreciseStructureThm}.
To do so, writing $\alpha=\lambda+n$ with $\lambda$ limit and $n\in\N$, we can can rely on a decomposition in centered functions, which are either in $\sC_{<\lambda}$ or in $\centered{\alpha+2}$, up to equivalence.
The situation is more complex than for successor of limits, however we can actually assume that they are all in $\centered{\alpha+2}$ thanks to \cref{ConsequencesGeneralStructureThm}~\cref{ConsequencesGeneralStructureThm2} (see \cref{lem:gobblingLessThanLambda}).
For this general case, we show that partitions in centered functions can be made particularly well-behaved. We then analyze two gradually more complex situations before we define solvable functions. We finally show that solvable functions are equivalent to finite gluings of our generators and how to reduce the general case to that of solvable functions.

Note that this strategy departs from the one we used at successors of limit (\cref{FGatsuccessoroflimit}) which relied on two structural properties of continuous reducibility specific to these levels. %at successor of limit levels.
First, by \cref{cor:CenteredSucessor} there are up to equivalence only two centered functions: the minimum and the maximum among simple functions, which makes them comparable.
Second, the only generator at this level that requires the wedge operation,
namely $\fctwedge(\Maximalfct{\lambda}\mid\Minimalfct{\lambda+1})$, is actually of infinite $\CB$-degree.

\subsection{Fine partitions in centered functions}\label{DoubleSucc_domain}

We introduce some terminology and notations for partitions in centered functions.

Given topological spaces $A$ and $B$, a \emph{$c$-partition} of a function $f:A\to B$ in $\sC$ is a clopen partition $\partit\subseteq \mathbf{\Delta}^0_1(A)$ of $A$
such that for all $P\in \partit$ the restriction $f\restr{P}$ is centered with \cocenter{} denoted by $y_P$.

Let $\partit$ be a $c$-partition for $f$. We let $Y_\partit=\set{y_P}[ P\in \partit]\subseteq B$ be the (countable) set of all \cocenter{}s of the functions $f\restr{P}$ for $P\in \partit$.
For a centered function $g$ and $y\in Y_\partit$, let $\partit_{g,y}=\set{P\in \partit}[f\restr{P}\equiv g \text{ and } y_P=y]$ and $f_{(g,y)}=\bigsqcup_{P\in \partit_{g,y}}f\restr{P}$.

\subsubsection{Dissolving lumps}

Let $f\in \sC_{\alpha}$ for $\alpha=\lambda+n$ with $\lambda$ limit and $n\in \N$. We define $\omegaregular{\alpha}=\set{\Maximalfct{\lambda}}\cup \set{\omega h }[h\in\centered{\alpha}]$. We say that $f$ is \emph{$\omegaregular{}$-regular} at $y\in B$ if for all $h\in \omegaregular{\alpha}$ the set $\set{j\in \N}[ h\leq \ray{f}{y,j}]$ is either empty or infinite. 

Given a $c$-partition $\partit$ of $f$, a \emph{$\partit$-lump} is a pair $(g,y)$ with $y\in Y_\partit$ and $g$ centered such that $f_{(g,y)}$ is not $\omegaregular{}$-regular at $y$.
The \emph{rank} of the $\partit$-lump is the rank $\CB(g)$ of $g$, which is equal to $\CB(f_{(g,y)})$.

\medskip

\begin{remark}
A centered function $f$ is $\omegaregular{}$-regular at its \cocenter{} $y$, because for all $j,m$ we have $\ray{f}{y,j}\leq \gl_{m=i}^M\ray{f}{y,i}$ for some $M>m$ by \cref{Rigidityofthecocenter}. Hence if $\omega h\leq \ray{f}{y,j}$ for some $h\in \centered{\alpha}$, then $\omega h\leq \ray{f}{y,m}$ for arbitrarily large $m$ by \cref{Intertwinereductionsforomegacentered}. {Similarly for $\Maximalfct{\lambda}=\gl_n \Maximalfct{\beta_n}$ with $(\beta_n)_n$ cofinal in $\lambda$. } Therefore $f_{(g,y)}$ is not centered when $(g,y)$ is a lump.
In particular, it follows that if $(g,y)$ is a lump, then $\partit_{g,y}$ is infinite. {Because if $\partit_{g,y}$ is finite, then $f_{(g,y)}\equiv g$ is centered, hence $\omegaregular{}$-regular. To see this assume that $g\equiv \pgl_n g_n$ for a monotone sequence $(g_n)_n$ by \cref{CenteredasPgluing}. For all $P\in \partit_{g,y}$, the sequence of rays of $f\restr{P}$ along $y$ is reducible by pieces to $(g_n)_n$ by \cref{Rigidityofthecocenter}. Now each ray of $f_{(g,y)}$ at $y$ is the finite disjoint union of corresponding rays of $f\restr{P}$, for $P\in \partit_{g,y}$. So the rays of $f_{(g,y)}$ at $y$ are reducible by pieces to $(g_n)_n$. Therefore $g\leq f_{(g,y)}\leq \pgl_n \ray{(f_{(g,y)})}{y,n} \leq \pgl_n g_n$ by \cref{Pgluingasupperbound}.}
% \ynote{Proof that it is centered. Do we need it? Or show directly that it is $\omegacentered{}$-regular: To see this, note that the rays of $f_{(g,y)}$ are equal to $\bigsqcup_{P\in \partit_{g,y}}\ray{f\restr{P}}{y, n}$, $n\in \N$. Hence if $\partit_{g,y}$ is finite and $\omega h\leq \ray{f_{(g,y)}}{y,n}$ for some $h\in \centered{\alpha}$, then $\omega h\leq \ray{f\restr{P}}{y,n}$ for some $P\in \partit_{g,y}$ by \cref{Intertwinereductionsforomegacentered}.}
\end{remark}

Recall that a partition $\partit'$ is \emph{finer} than a partition $\partit$ if every $P'\in\partit'$ is included in some $P\in\partit$.
A $c$-partition presenting a lump can be refined to \emph{dissolve} that lump.

\begin{lemma}\label{lemma:RefiningBy1}
Let $\alpha<\omega_1$ and assume $\FG(<\alpha)$. Let $f\in \sC_\alpha$ and $\partit$ a $c$-partition for $f$.

If $(g,y)$ is a $\partit$-lump of rank $\beta\leq\alpha$, then there exists a finer $c$-partition $\partit'$ such that $(g,y)$ is not a $\partit'$-lump,
 $\partit\setminus\partit_{(g,y)}\subseteq\partit'$, and every $\partit'$-lump is either a $\partit$-lump or has rank $<\beta$.
\end{lemma}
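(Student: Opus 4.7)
My plan is to refine each piece $P \in \partit_{g,y}$ at a carefully chosen level $N$ into a deep clopen neighborhood $P_{\text{big}}$ of the $y$-preimages (which will remain centered with cocenter $y$ and equivalent to $g$) together with finitely many shallow $y$-rays of $P$ (which have strictly smaller rank and admit a clopen subdivision into centered pieces by $\FG(<\alpha)$).

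To choose $N$, I would note that since $(g,y)$ is a $\partit$-lump, the set $H_{\text{bad}} := \{h \in \omegaregular{\alpha} : \{j : h\leq \ray{(f_{(g,y)})}{y,j}\}$ is finite and non-empty$\}$ is non-empty. Crucially, $\centered{\alpha}$ is finite by \cref{BasicsOnGenerators}, hence so are $\omegaregular{\alpha}$ and $H_{\text{bad}}$; I would pick $N$ strictly greater than $\max\{j : h \leq \ray{(f_{(g,y)})}{y,j}\}$ for every $h \in H_{\text{bad}}$.

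For the refinement, set $P_{\text{big}} := P \cap f^{-1}(N_{y\restr N})$, which is clopen in $P$ (and thus in the domain of $f$) by continuity; the complement splits as the finite clopen disjoint union $\bigsqcup_{j<N} \dom\ray{(f\restr P)}{y,j}$. Since $f\restr P$ is simple with distinguished point $y$ by \cref{scatteredhavecocenter}, any center of $f\restr P$ maps to $y$ and therefore lies in $P_{\text{big}}$; by centeredness $f\restr{P_{\text{big}}} \equiv f\restr P \equiv g$, so $f\restr{P_{\text{big}}}$ is centered with cocenter $y$. Each ray has $\CB$-rank $<\beta$ by \cref{CBrankofPgluingofregularsequence2simple} and so lies in $\sC_{<\alpha}$; by $\FG(<\alpha)$ combined with \cref{FGconsequences}, every function in $\sC_{<\alpha}$ is locally centered, and \cref{0dimanddisjointunion} delivers a clopen partition of each ray's domain into centered pieces. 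Let $\partit'$ be the union of $\partit \setminus \partit_{g,y}$, the $P_{\text{big}}$'s, and all these ray subdivisions.

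For verification: $\partit'$ is, by construction, a c-partition finer than $\partit$ and contains $\partit \setminus \partit_{g,y}$. Since the ray subdivisions have rank strictly less than $\beta$, they cannot be equivalent to $g$, so $\partit'_{g,y} = \{P_{\text{big}} : P \in \partit_{g,y}\}$; consequently the $y$-rays of $f'_{(g,y)}$ vanish below index $N$ and coincide with those of $f_{(g,y)}$ for $j \geq N$. The choice of $N$ guarantees that for every $h \in \omegaregular{\alpha}$ the new witness set is either empty (if the old one was empty or in $H_{\text{bad}}$) or cofinite in an infinite set, hence infinite; so $(g,y)$ is no longer a $\partit'$-lump. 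For any $(g',y') \neq (g,y)$ with $\CB(g') \geq \beta$, neither a $P_{\text{big}}$ (wrong equivalence class / wrong cocenter) nor a ray-piece (too small rank) can enter $\partit'_{g',y'}$, so $\partit'_{g',y'} = \partit_{g',y'}$ and the lump status is preserved; any genuinely new lump must have rank $<\beta$. The main obstacle is killing all bad $h$'s simultaneously with a single cut-off $N$, which decisively relies on the structural finiteness of $\centered{\alpha}$ from \cref{BasicsOnGenerators}.
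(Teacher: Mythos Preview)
Your proof is correct and follows essentially the same approach as the paper: choose a single cut-off level on the rays of $f_{(g,y)}$ at $y$, keep the deep part of each $P\in\partit_{g,y}$ (which remains centered with \cocenter{} $y$ and equivalent to $g$), and use $\FG(<\alpha)$ to $c$-partition the finitely many shallow rays of rank $<\beta$. One small index slip: the lump condition for $f_{(g,y)}\in\sC_\beta$ is formulated with respect to $\omegaregular{\beta}$, not $\omegaregular{\alpha}$, so your $H_{\text{bad}}$ should be taken in $\omegaregular{\beta}$; this is immaterial when $\beta$ and $\alpha$ lie in the same interval $[\lambda,\lambda+\omega)$ (the only case that actually arises in \cref{ExistenceFinePartitions}), since then $\omegaregular{\beta}\subseteq\omegaregular{\alpha}$ and your larger cut-off still works.
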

\begin{proof}
Suppose that $(g,y)$ is a $\partit$-lump of rank $\beta$, namely that $h=f_{(g,y)}$ is not $\omegaregular{}$-regular.
Note that $\CB(h)=\CB(g)=\beta$ by \cref{CBrankofclopenunion}.
There exists $w\in \omegaregular{\beta}$ such that $J_w=\{j\in\N\mid w\leq \ray{h}{j}\}$ is finite but non-empty.
Since $\omegaregular{\beta}$ is finite, there exists $J\in \N$ such that for all $w\in \omegaregular{\beta}$ either $J_w\subseteq J$ or $J_w$ is infinite. Let $U=N_{y\restr{J+1}}$ and note that $h\corestr{U}$ is $\omegaregular{}$-regular.

Now for all $P\in \partit_{(g,y)}$, let $P'=P\cap f^{-1}(U)$ and $A_P=P\setminus P'$. Note that while $f\restr{P'}\equiv g$ by centeredness and we have $\CB(f\restr{A_P})<\CB(g)=\beta\leq\alpha$ by \cref{CenteredasPgluing}.
So by $\FG(<\alpha)$ we can use \cref{FGconsequences} to get $\partit_P$ a $c$-partition of $f\restr{A_P}$ with functions of $\CB$-rank $<\beta$.
Our new $c$-partition for $f$ is given by 
\[
\partit'=(\partit\setminus \partit_{g,y})\cup \{P' \mid P\in \partit_{g,y}\} \cup \bigcup \{\partit_P \mid P\in \partit_{g,y}\}\,.
\]
Note that $\partit'_{g,y}=\{P'\mid P\in \partit_{g,y}\}$ and $\bigcup \partit'_{g,y}=f^{-1}(U)\cap \bigcup \partit_{g,y}$, so $(g,y)$ is not a $\partit'$-lump. %by construction.
Moreover for every $Q\in \partit'\setminus \partit$ if $Q\notin \partit'_{g,y}$ then $\CB(f\restr{Q})<\CB(g)=\beta$ holds, which proves the statement.
\end{proof}

\subsubsection{Gobbling up small functions}

Writing $\alpha=\lambda+n$ with $\lambda$ limit and $n$ finite, note that a $c$-partition of $f\in\sC_\alpha$ may contain a clopen $P$ such that $\CB(f\restr{P})<\lambda$.
When $\alpha$ is a double successor, that is when $n>1$, the parts of $\CB$-rank bigger than $\lambda+1$ can gobble up those of rank $<\lambda$.

\begin{lemma}\label{lem:gobblingLessThanLambda}
Let $\lambda<\omega_1$ be limit and $f\in\sC$.
Assume that $f=f_0\sqcup f_1$ with $f_0$ centered, $\pgl \Maximalfct{\lambda}\leq f_0$ and $f_1\leq \Maximalfct{\lambda}$. Then $f$ is centered and $f\equiv f_0$.
\end{lemma}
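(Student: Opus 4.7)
My plan is to reduce everything to a clean application of Proposition~\ref{Gluingaslowerbound}. The direction $f_0\leq f$ is trivial (clopen restriction). For the other direction, I would first note that $f=f_0\sqcup f_1\leq f_0\glbin f_1$ by \cref{Gluingasupperbound_cor}, so it suffices to show that $f_0\glbin f_1\leq f_0$. By \cref{Gluingaslowerbound}, this reduces to exhibiting a relatively clopen partition $(V_0,V_1)$ of the codomain $B_0=\im f_0$ such that $f_0\leq f_0\corestr{V_0}$ and $f_1\leq f_0\corestr{V_1}$.

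The key observation — and the main obstacle of the argument — is finding such a partition. Let $y_0$ be the \cocenter{} of $f_0$. Any clopen neighbourhood of $y_0$ in $B_0$ works as $V_0$ thanks to centeredness (via continuity, $f_0\equiv f_0\corestr{V_0}$), so what remains is to separate out a clopen set in $B_0$, disjoint from some open neighbourhood of $y_0$, to which $\Maximalfct{\lambda}$ reduces (since $f_1\leq\Maximalfct{\lambda}$). The natural candidates are rays at $y_0$: if some ray $\ray{f_0}{y_0,k}$ satisfies $\Maximalfct{\lambda}\leq\ray{f_0}{y_0,k}$, I set $V_1=\ray{B_0}{y_0,k}$ and $V_0=B_0\setminus V_1$; the latter is a clopen open neighbourhood of $y_0$, so centeredness of $f_0$ does the rest.

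The hard part is producing such a ray. Using \cref{CenteredasPgluing}, $f_0\equiv\pgl_n\ray{f_0}{y_0,n}$, and by \cref{CBrankofPgluingofregularsequence2simple} the sequence of ranks $(\CB(\ray{f_0}{y_0,n}))_n$ is regular with supremum $\CB(f_0)-1\geq\lambda$ (using $\pgl\Maximalfct{\lambda}\leq f_0$). Suppose for contradiction that every ray has rank strictly less than $\lambda$; then the supremum equals exactly $\lambda$, so $\CB(f_0)=\lambda+1$, and \cref{ConsequencesGeneralStructureThm} gives $f_0\equiv\pgl_n\ray{f_0}{y_0,n}\equiv\Minimalfct{\lambda+1}$. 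But \cref{cor:CenteredSucessor} yields $\Minimalfct{\lambda+1}<\pgl\Maximalfct{\lambda}$, contradicting $\pgl\Maximalfct{\lambda}\leq f_0$. Hence some ray has rank $\geq\lambda$, and the General Structure \cref{JSLgeneralstructure} (combined with the fact that any scattered function of rank $\lambda$ is equivalent to $\Maximalfct{\lambda}$) delivers $\Maximalfct{\lambda}\leq\ray{f_0}{y_0,k}$.

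With this partition in hand, \cref{Gluingaslowerbound} yields $f_0\glbin f_1\leq f_0$, hence $f\leq f_0$ and $f\equiv f_0$. Centeredness of $f$ follows from \cref{Centerinvariance}\cref{Centerinvariance2}: any center $x_0$ of $f_0$ lies in $\dom f_0\subseteq\dom f$, and the trivial inclusion reduction from $f_0$ to $f$ witnesses an equivalence, carrying the center of $f_0$ to a center of $f$.
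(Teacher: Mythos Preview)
Your proof is correct. It differs from the paper's in one substantive way: to locate a clopen set $V_1$ (disjoint from a neighbourhood of the \cocenter{}) with $\Maximalfct{\lambda}\leq f_0\corestr{V_1}$, you argue by contradiction through the classification of centered functions in $\sC_{\lambda+1}$ (\cref{cor:CenteredSucessor}) to find a ray of $f_0$ of rank $\geq\lambda$. The paper instead extracts $V$ \emph{directly} from a given reduction $(\sigma,\tau):\pgl\Maximalfct{\lambda}\leq f_U$: either $\tau$ moves the \cocenter{} (and a small clopen ball around the image works), or $V=\tau^{-1}(N_{(1)})$ captures a copy of $\Maximalfct{\lambda}$ off the \cocenter{}. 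This is lighter --- it uses only the hypothesis $\pgl\Maximalfct{\lambda}\leq f_0$, not the structure theory of $\sC_{\lambda+1}$ --- and it works uniformly for every clopen $U$ containing a center $x$ of $f_0$, so it shows in one stroke that $x$ is a center of $f$. Your route recovers centeredness only \emph{a posteriori} via \cref{Centerinvariance} after establishing $f\equiv f_0$. Both are valid; the paper's is more self-contained and slightly stronger in what it proves along the way. (Minor point: your $B_0$ should be the codomain of $f_0$, not $\im f_0$, since rays are defined in the codomain; this does not affect the argument.)
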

\begin{proof}
Let $x$ be a center for $f_0$ and let $U\ni x$ be clopen in $\dom (f_0)$. Setting $f_U=f\restr{U}$, we have $f_{U}\leq f_0$ and we show that $f\leq f_{U}$.

Since $x$ is a center for $f_0$, we have $f_0\leq f_{U}$ and in particular $\pgl \Maximalfct{\lambda}\leq f_{U}$ as witnessed by some reduction $(\sigma,\tau)$.
Let $\tilde{y}=f\sigma(\iw{0})$ and $y=f(x)$. If $\tilde{y}\neq y$, choose a clopen neighborhood $V\subseteq \im (f)$ of $\tilde{y}$ not containing $y$.
Otherwise, set $V=\tau^{-1}(N_{(1)})$, notice that $V$ is a clopen set with $y\notin V$, and such that $ \Maximalfct{\lambda}\leq f_{U}\corestr{V}$ as witnessed by restricting $(\sigma,\tau)$.
Now let $W\ni y$ be clopen and disjoint from $V$. By centeredness of $f_0$, we have $f_0\leq f_{U}\corestr{W}$. Moreover $f_1\leq\Maximalfct{\lambda}\leq f_{U}\corestr{V}$. Hence by \cref{Gluingasupperbound,Gluingaslowerbound}, we get
\[
f\leq f_0 \glbin f_1 \leq  f_{U}\corestr{W} \glbin  f_{U}\corestr{V} \leq f_{U}.\qedhere
\]
\end{proof}

\subsubsection{Fine $c$-partitions.}\label{DefFine}
We now define the partitions with all the required properties. 
\begin{definition}
Let $f\in \sC_{\lambda+n+1}$ for $\lambda$ limit and $n\in \N$. We say that a $c$-partition $\partit$ of $f$ is \emph{fine} if there are no $\partit$-lumps and $\CB(f\restr{P})>\lambda$ for all $P\in \partit$.
\end{definition}

\medskip

We now show that $\FG(<\alpha)$ yields the existence of fine $c$-partitions when $\alpha$ is a double successor.
To do so, we take any $c$-partition and start by dissolving all lumps of large rank before gobbling up any small pieces that it may contain.

\begin{proposition}\label{ExistenceFinePartitions}
Let $\alpha=\lambda+n+2$ with $\lambda<\omega_1$ limit, $n\in\N$, and assume $\FG(<\alpha)$.
Then every function in $\sC_\alpha$ admits a fine $c$-partition.
\end{proposition}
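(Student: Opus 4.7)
The plan is to produce a fine $c$-partition of $f$ in three stages, each relying essentially on $\FG(<\alpha)$. First, \cref{FGconsequences} gives that continuous reducibility is \bqo{} on $\sC_{<\alpha}$; hence by \cref{LocalCenterednessFromBQO} the function $f$ is locally centered, and \cref{0dimanddisjointunion} yields an initial $c$-partition $\partit_0$.

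Second, I would dissolve all lumps of rank $\geq \lambda+1$ by iterating \cref{lemma:RefiningBy1}. Since $\FG(<\alpha)$ together with \cref{finitenessofcenteredfunctions} bounds the number of centered equivalence classes at each rank and $Y_{\partit}$ is always countable, the lumps at each of the finitely many relevant ranks $\lambda+1, \ldots, \lambda+n+2$ form a countable family. Processing these ranks from highest to lowest, I would enumerate the lumps at each rank and dissolve them one at a time, taking a common limit partition after $\omega$ steps. The key observation enabling this limit is that \cref{lemma:RefiningBy1} refines only parts lying inside $\partit_{(g,y)}$, and the sets $\partit_{(g,y)}$ for distinct lump pairs are disjoint in $\dom f$; the successive refinements therefore happen in independent clopen regions, and their common refinement is a well-defined $c$-partition. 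Because newly created lumps have strictly smaller rank, previously dissolved lumps remain dissolved and no lumps of rank $\geq \lambda+1$ are produced below the currently processed rank. After the $n+2$ rank stages, we arrive at a $c$-partition $\partit$ without lumps of rank $\geq \lambda+1$, still possibly containing many small parts of $\CB \leq \lambda$ (both inherited from $\partit_0$ and produced by the dissolutions themselves).

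Third, I would gobble all small parts into one designated large part. Since $\CB(f) = \alpha \geq \lambda+2$, \cref{CBbasics0} ensures some $P_0 \in \partit$ satisfies $\CB(f\restr{P_0}) = \alpha$, so $\pgl\Maximalfct{\lambda} \leq f\restr{P_0}$ by \cref{ConsequencesGeneralStructureThm}. Setting $f_1 = \bigsqcup_s f\restr{P_s}$ over the (countably many) small parts gives $\CB(f_1) \leq \lambda$, hence $f_1 \leq \Maximalfct{\lambda}$, and \cref{lem:gobblingLessThanLambda} then gives that $f\restr{P_0'}$ with $P_0' = P_0 \cup \bigcup_s P_s$ is centered and equivalent to $f\restr{P_0}$. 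Since any union of parts of a clopen partition is itself clopen, the resulting family $\partit''$ is a $c$-partition in which every part has $\CB > \lambda$.

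The main obstacle I anticipate is verifying that this gobbling does not create new lumps. Every part of $\partit''$ has $\CB > \lambda$, so only lumps of rank $\geq \lambda+1$ need be checked, and candidates $(g,y) \neq (f\restr{P_0}, y_{P_0})$ are unaffected by gobbling, as small parts could only belong to $\partit_{(g,y)}$ for lumps of rank $\leq \lambda$. For the critical pair $(f\restr{P_0}, y_{P_0})$, each ray of $f_{(g,y)}^{(\partit'')}$ at $y_{P_0}$ differs from the old ray only by an added small contribution of rank $\leq \lambda$. For $h = \omega h'$ with $h' \in \centered{\alpha}$ (so $\CB(h') > \lambda$), a clopen-partition argument using centeredness of $h'$ -- namely, $h' \leq X \sqcup Y$ with $\CB(Y) < \CB(h')$ forces each copy of $h'$ in $\omega h'$ to land in the $X$-side, and one deduces $\omega h' \leq X$ by centeredness -- shows that $J_h$ is unchanged. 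For $h = \Maximalfct{\lambda}$ this rank-based argument fails, but the inequality $\pgl\Maximalfct{\lambda} \leq f\restr{P_0}$ combined with $\omegaregular{}$-regularity of the centered function $f\restr{P_0}$ forces $\Maximalfct{\lambda} \leq \ray{(f\restr{P_0})}{y_{P_0}, j}$ for infinitely many $j$, indices inherited by $f_{(g,y)}^{(\partit'')}$, so $J_{\Maximalfct{\lambda}}^{(\partit'')}$ remains infinite. Hence $\partit''$ is the desired fine $c$-partition.
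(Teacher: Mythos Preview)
Your proposal is correct and follows essentially the same three-stage strategy as the paper: obtain an initial $c$-partition via \cref{FGconsequences}, dissolve lumps rank by rank using \cref{lemma:RefiningBy1} and a limiting procedure, then gobble small parts into a single large one via \cref{lem:gobblingLessThanLambda}. Your description of the dissolving stage (independent refinements in disjoint clopen regions, limit partition) is equivalent to the paper's limit inferior construction.

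The notable difference is your final paragraph: the paper simply asserts that the partition obtained after gobbling is fine, whereas you carefully verify that the gobbling step does not reintroduce a lump at the pair $(f\restr{P_0}, y_{P_0})$. Your verification is correct. For $h=\omega h'$ with $h'\in\centered{\alpha}$, the argument via \cref{Intertwinereductionsforomegacentered} and the rank bound $\CB(\ray{(f\restr{R})}{y_{P_0},j})\leq\lambda<\CB(h')$ works as stated. For $h=\Maximalfct{\lambda}$, your choice of $P_0$ with $\CB(f\restr{P_0})=\alpha\geq\lambda+2$ guarantees (by \cref{CenteredasPgluing}) that some ray of $f\restr{P_0}$ at $y_{P_0}$ has rank $\geq\lambda+1$, hence reduces $\Maximalfct{\lambda}$; then $\omegaregular{}$-regularity of the centered $f\restr{P_0}$ propagates this to infinitely many rays, forcing $J_{\Maximalfct{\lambda}}^{(\partit'')}$ to be infinite (not merely ``remain'' infinite---it could have been empty before gobbling, so ``is infinite'' would be more accurate). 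This explicit check fills in a detail the paper leaves to the reader.

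Two minor remarks: the existence of $P_0$ with $\CB(f\restr{P_0})=\alpha$ follows from \cref{CBrankofclopenunion} rather than \cref{CBbasics0} directly; and the paper locates its absorbing part $P$ via \cref{Centerinvariance}~\cref{Centerinvariance3} (only ensuring $\pgl\Maximalfct{\lambda}\leq f\restr{P}$), whereas your stronger choice $\CB(f\restr{P_0})=\alpha$ makes the $\Maximalfct{\lambda}$ verification slightly cleaner.
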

\begin{proof}
Take $f\in \sC_\alpha$, and start by defining a sequence $(\partit_i)_{i\leq n+2}$ of $c$-partitions of $f$ such that all $\partit_i$-lumps have rank $\leq\lambda+n+2-i$.
By $\FG(<\alpha)$, $f$ admits a  $c$-partition $\partit_0$ by \cref{FGconsequences}.

Suppose by induction that $\partit_i$ is defined for some $i\leq n+2$ and fix an enumeration $(g_j,y_j)_{j\in J}$ for $J$ an initial segment of $\N$ of all $\partit_i$-lumps of rank $\lambda+n+2-i$.
We define by induction a sequence $(\partit'_j)_{j\in J}$ of $c$-partitions, starting with $\partit'_{-1}=\partit_i$. For all $j\in J$, let $\partit'_{j+1}$ be a $c$-partition refining $\partit'_{j}$ and dissolving the $\partit_i$-lump $(g_j,y_j)$ by an application of \cref{lemma:RefiningBy1}.
The limit inferior $\partit_{i+1}=\bigcup_{k\in J} \bigcap_{k\leq j \in J} \partit'_j$ is the desired $c$-partition.
Note that all $\partit_{n+2}$-lumps have rank $<\lambda$. Moreover since $\CB(f)\geq \lambda+2$ we have $\pgl \Maximalfct{\lambda}\leq f$ by \cref{ConsequencesGeneralStructureThm}~\cref{ConsequencesGeneralStructureThm2}. Therefore $\pgl \Maximalfct{\lambda}\leq f\restr{P}$ for some $P\in \partit_{n+2}$ by \cref{Centerinvariance}~\ref{Centerinvariance3}.
So letting $\tilde{\partit}=\{Q\in \partit_{n+2} \mid f\restr{Q}\leq \Maximalfct{\lambda}\}$ and $R=\bigcup \tilde{\partit}$,
then $\CB(f\restr{P\cup R})\geq \CB(\pgl \Maximalfct{\lambda})>\lambda$ and by \cref{lem:gobblingLessThanLambda}
$f\restr{P\sqcup R}$ is centered and $f\restr{P\sqcup R}\equiv f\restr{P}$ .
Therefore the $c$-partition $\partit$ obtained from $\partit_{n+2}$ by replacing $P$ and elements of $\tilde{\partit}$ by $P\cup R$ is a fine $c$-partition.
\end{proof}

\subsection{Pseudo-centered functions}\label{subsec:pseudocentered}

Assuming $\FG(<\alpha+2)$ any function in $\sC_{\alpha+2}$ admits a fine $c$-partition $\partit$ by \cref{ExistenceFinePartitions}.
Hence every $P\in \partit$ is associated with both a centered function in $\centered{\alpha+2}$ and a point $y_P$ in the codomain of $f$.
{The difficulty in proving that $f$ is equivalent to a finite gluing of generators in $\generator{\alpha+2}$
resides in the potentially complex combinatorial and topological structure of the combination of the mappings $\partit\to \centered{\alpha+2}$ and $\partit\to \im f$. }
Before tackling the general case, we deal with ubiquitous situations that are easier to handle.
The first one consists of a disjoint union of the same centered function with only one \cocenter{},
in other words (using the notations of \cref{DoubleSucc_domain}) a function $f=f_{(g,y)}$ for some centered $g$ and $y\in\im(f)$.

\begin{definition}
We say that $f\in\sC$ together with a fine $c$-partition is \emph{pseudo-centered} at $y$ if $Y_\partit$ is a singleton $\set{y}$ and for all $P,P'\in \partit$ we have $f\restr{P}\equiv f\restr{P'}$.
\end{definition}

The following result generalizes \cref{simplefunctionslambda+1damuddafuckaz} to functions whose $\CB$-rank is a double successor.
The strategy of proof is an elaboration on the successor of limit case.

\begin{theorem}[Vertical theorem]\label{VerticalTheorem}
Let $\alpha<\omega_1$ and assume $\FG(\leq\alpha+1)$.
Let $f:A\to B$ in $\sC_{\alpha+2}$ be pseudo-centered at $y$.
There exist $g\in\centered{\alpha+2}$ and $H\subseteq \omegaregular{\alpha+1}$ such that
for all clopen neighborhood $U$ of $y$ there is a clopen set $W\subseteq U$ and a clopen partition $A=A^0\sqcup A^1$ such that 
\begin{enumerate}
\item $y\notin W$ and $f\restr{A^0}\leq \gl H \leq f\corestr{W}$,
\item for all clopen set $V\ni y$, $f\restr{A^1}\leq g \leq f\corestr{V}$ (in fact  $g \leq f\restr{A^1}\corestr{V}$),
\item $\gl H\leq g$, so in particular $f\leq g\gl g$.
\end{enumerate}
\end{theorem}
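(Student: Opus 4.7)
The plan is to extract $g\in\centered{\alpha+2}$ and a finite set $H\subseteq\omegaregular{\alpha+1}$ from the pseudo-centered structure of $f$, verify $\gl H\leq g$ via the intertwining lemmas, and construct the clopen partition and the set $W$ using the rays of $f$ at $y$.

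By $\FG(\leq\alpha+1)$ and \cref{FGconsequences}, I pick $g\equiv\pgl F\in\centered{\alpha+2}$ such that $f\restr{P}\equiv g$ for every $P\in\partit$, where $F\subseteq\centered{\alpha+1}\cup\omega\set{\centered{\alpha+1}}$ is finite. I then set $H'=\set{h\in\centered{\alpha+1}}[h\in F\text{ or }\omega h\in F]$ and $H=\set{\omega h}[h\in H']$, a finite subset of $\omegaregular{\alpha+1}$. To establish item~(3), for each centered $h\in H'$ the inclusion $h\leq\gl F\equiv\ray{g}{y_g,j}$ for every $j$ combined with the centeredness of $h$ yields infinitely many witness points in $\im g$ (one per ray of $g$) around which $h$ reduces to arbitrarily small corestrictions of $g$; \cref{Intertwinereductions} then gives $\omega h\leq g$. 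Applying \cref{Intertwinereductionsforomegacentered} to the centered function $g$ and the finite set $H'$, I obtain $\gl H\equiv\omega H'\leq g$.

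For the decomposition, given a clopen neighbourhood $U$ of $y$, I fix $M\in\N$ such that $\ray{B}{y,j}\subseteq U$ for all $j\geq M$. Using \cref{Rigidityofthecocenter} applied to each $f\restr{P}\equiv g$ combined with the pseudo-centered structure (infinitely many pieces $P$ each contributing copies of $h'$ via their rays), I check that for each $h\in H$ the set $\set{j}[h\leq\ray{f}{y,j}]$ is infinite, and hence I can pick pairwise distinct $j_h\geq M$ with $h\leq\ray{f}{y,j_h}$ for each $h\in H$. Setting $W=\bigsqcup_{h\in H}\ray{B}{y,j_h}$ yields $y\notin W\subseteq U$ and $\gl H\leq f\corestr{W}$, settling the inequality in item~(1). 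For the partition, I take $A^1=f^{-1}(U)$ and $A^0=f^{-1}(B\setminus U)$. Then $f\restr{A^0}\leq\gl H$ follows from \cref{ResidualCorestrictionOfCentered} applied to each centered $f\restr{P}$: the corestrictions $(f\restr{P})\corestr{B\setminus U}$ all lie in $\FinGl{F}$, and their disjoint union over $P$ reduces to $\omega\gl F\equiv\gl H$. For the strong condition $g\leq f\restr{A^1}\corestr{V}$ for every clopen $V\ni y$, I fix any $P_0\in\partit$ and invoke the centeredness of $f\restr{P_0}\equiv g$ within $A^1$ together with \cref{Centerinvariance}.

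The main obstacle is the inequality $f\restr{A^1}\leq g$. Indeed, $f\restr{A^1}$ is a disjoint union of copies of $g$ (each $(f\restr{P})\restr{f^{-1}(U)}\equiv g$ by centeredness) all sharing the \cocenter{} $y$, and the naive reduction $\omega g\leq g$ typically fails by \cref{Intertwinereductionsforomegacentered}. The resolution, which constitutes the core new contribution of this theorem, exploits the pseudo-centered hypothesis (all \cocenter{}s collapse to the single point $y$) together with the absorption $g\glbin\gl H\equiv g$ (which follows from $g=\pgl F$ possessing $\omega$ slots and $\gl H\leq g$) to assemble a non-trivial reduction, via a careful application of the Disjointification \cref{DisjointificationLemma} or a refinement of \cref{Pgluingasupperbound} applied to the rays of $f$ at $y$, whose combined behaviour is uniformly controlled by $\gl H$ thanks to the absence of $\partit$-lumps.
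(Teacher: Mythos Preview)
Your proposal has a genuine gap at the step you yourself flag as the main obstacle: $f\restr{A^1}\leq g$. With your choice $A^1=f^{-1}(U)$, the restriction $f\restr{A^1}$ is again a disjoint union over $P\in\partit$ of functions $\equiv g$ all sharing the \cocenter{} $y$ --- structurally the same object you started with. Your final paragraph gestures at the Disjointification Lemma and at absorption $g\glbin\gl H\equiv g$, but neither helps: the Disjointification Lemma produces \emph{lower} bounds, and absorption on the target side does not by itself collapse an infinite disjoint union on the source side. The rays of your $f\restr{A^1}$ at $y$ are exactly the rays $\ray{f}{y,j}$ with $\ray{B}{y,j}\subseteq U$, and these need not lie in $\FinGl{F}$; when they do not, the route $f\restr{A^1}\leq\pgl_j\ray{(f\restr{A^1})}{y,j}\leq\pgl F$ is blocked.

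The paper's proof takes a different partition, obtained by a diagonal split. Enumerate $\partit=(A_i)_{i\in\N}$, set $f_i=f\restr{A_i}$, and for each $j$ consider the \emph{late} part $f^{[j]}=\bigsqcup_{i>j}\ray{f_i}{j}$. The paper defines
\[
H=\set{h\in\omegaregular{\alpha+1}}[h\nleq\FinGl{G}\text{ and }\exists j\ h\leq\ray{f}{y,j}],
\]
and proves a Claim: each $f^{[j]}$ admits a clopen partition $A^0_j\sqcup A^1_j$ with $f^{[j]}\restr{A^0_j}\leq w:=\gl H$ and $f^{[j]}\restr{A^1_j}\leq\FinGl{G}$. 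This uses $\FG(\leq\alpha+1)$ to write $f^{[j]}$ as a finite gluing of generators in $\generator{\alpha+1}$ and then checks, case by case on the three kinds of generators, that each one is $\leq w\glbin h$ for some $h\in\FinGl{G}$. Setting $A^0=\bigcup_j A^0_j$ and $A^1=A\setminus A^0$, the $j$-th ray of $f\restr{A^1}$ is $f\restr{A^1_j}\sqcup\bigsqcup_{i\leq j}\ray{f_i}{j}$: the first piece is in $\FinGl{G}$ by the Claim, and the second is a \emph{finite} union of things in $\FinGl{G}$ by \cref{Rigidityofthecocenter}. Hence every ray of $f\restr{A^1}$ lies in $\FinGl{G}$, and \cref{Pgluingasupperbound} gives $f\restr{A^1}\leq\pgl G=g$.

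A secondary issue: your $H=\{\omega h'\mid h'\in F\text{ or }\omega h'\in F\}$ is defined from $g$ alone and may be too large for $\gl H\leq f\corestr{W}$. You claim each $\omega h'$ reduces to infinitely many rays of $f$, invoking ``infinitely many pieces $P$'', but $\partit$ can be finite (the paper explicitly treats this case with $H=\emptyset$), and even for infinite $\partit$ it is not clear that a fixed ray $\ray{f}{y,j}$ ever receives infinitely many copies of $h'$. The paper's $H$ avoids this by only including elements that \emph{already} occur in some ray of $f$; the no-lump condition then guarantees occurrence in infinitely many rays, so $W\subseteq U$ can be chosen as a finite union of rays.
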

\begin{proof}
Fix a fine $c$-partition $\partit$ such that $Y_\partit=\set{y}$ and some centered $\hat{f}$ with $f\restr{P}\equiv \hat{f}$ for all $P\in \partit$.
In this proof, all rays are taken with respect to $y$, so that $\ray{f}{j}$ denotes the rays of $f$ at $y$. Note that $\CB(\ray{f}{j})\leq\alpha+1$.
Since $\alpha+2=\CB(f)=\CB(\hat{f})\neq\lambda+1$ for any limit $\lambda\leq\alpha$, we have $\hat{f}>\Minimalfct{\lambda+1}$,
so $\hat{f}\equiv \pgl G$ for some non-empty set $G$ of $\centered{\alpha+1}$ by \cref{FGconsequences} and so $\pgl G\in\centered{\alpha+2}$. 
Note that for all clopen set $V\ni y$, $\pgl G \leq f\corestr{V}$, because for any $P\in \partit$ we have $\pgl G\leq f\restr{P} \leq f\restr{P}\corestr{V}$.

If for all $j\in\N$ we have $\ray{f}{j}\leq\FinGl{G}$, then $f\leq\pgl_j\ray{f}{j}\leq \pgl G\equiv g$ by \cref{Pgluingasupperbound}.
Observe that this happens automatically if $\partit$ is finite.
In this case, we can set $w=\emptyset$, $W=\emptyset$, $A^0=\emptyset$ and $A^1=A$. 

\smallskip

Otherwise, $\partit$ must be infinite and we consider the set 
\[
H=\set{h\in \omegaregular{\alpha+1}}[h\nleq\FinGl{G} \text{ and } \exists j\in\N \; h \leq \ray{f}{j}]
\]
%$H$ of functions $\tilde{w}$ in \ynote*{maybe take $H\subseteq \centered{\alpha+1}$ and define $w=\omega H$, rework with the final definition of $\omegacentered{\alpha+1}$, here any way $\Maximalfct{\lambda}\leq G$.}{$\omegacentered{\alpha+1}$ such that $\tilde{w}\not\leq\FinGl{G}$ and there exists $j\in\N$ with $\tilde{w}\leq \ray{f}{j}$.}
%We let $w=\gl H$.
We let $w=\gl H$ and we claim that for all clopen $U\ni y$ we have $w\leq f\corestr{W}$ for some clopen $W$ with $y\notin W\subseteq U$.

To see this, let $h\in H$. Since $(f,y)$ is not a $\partit$-lump, we must have $h\leq\ray{f}{j}$ for infinitely many $j\in\N$.
Suppose that $U\ni y$ is clopen, and therefore contains all but finitely many rays $\ray{B}{j}$.
As $H$ is a subset of the finite set $\omegaregular{\alpha+1}=\set{\Maximalfct{\lambda}}\cup\omega \set{\centered{\alpha+1}}$ by \cref{BasicsOnGenerators}, there is a finite set $J\subseteq\N$ such that for all $h\in H$ there exists $j\in J$ with
$h\leq \ray{f}{j}=f\corestr{\ray{B}{j}}$ and $\bigcup_{j\in J}\ray{B}{j}\subseteq U$.
Setting $W=\bigcup_{j\in J}\ray{B}{j}$ and $w\leq f\corestr{W}$ by \cref{Intertwinereductionsforomegacentered}.%\ynote*{check and adapt}{it follows that $w=\gl H\leq f\corestr{W}$ by \cref{BasicsOnGenerators} again}.

\smallskip

Next we show that $w\leq \pgl G$.  Note that $\ray{f}{j}=\bigsqcup_{P\in \partit} \ray{f\restr{P}}{j}$ and $\ray{f\restr{P}}{j} \leq \FinGl G$ by \cref{Rigidityofthecocenter} and so $\ray{f}{j}\leq \omega G$ for all $j$.
Hence by definition of $H$, {$h\leq \omega G$ for $h\in H$ and thus $w\leq \omega G$}. Since $\omega G\leq \pgl G$ by \cref{GluinglowerthanPgluing}, it follows that  $w\leq \pgl G$.

It remains to find a clopen partition $A=A^0\sqcup A^1$ such that $f\restr{A^0}\leq w$ and $f\restr{A^1}\leq g$.
Fix an enumeration $(A_i)_{i\in\N}$ of $\partit$, let $f_i=f\restr{A_i}$, and for all $j\in\N$ set $f^{[j]}=\bigsqcup_{i>j} \ray{f_i}{j}$. We prove the following:

\begin{figure}
\centering
\input{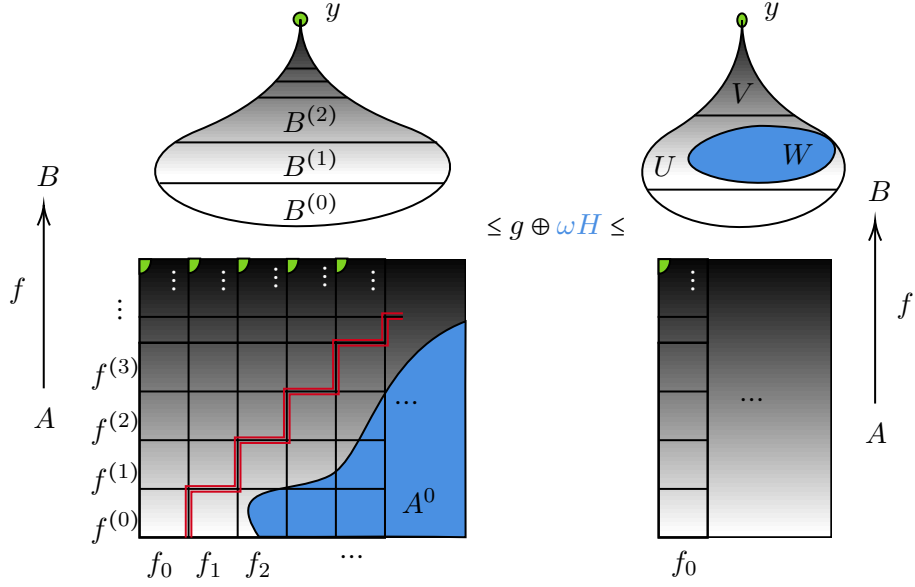}
\caption{Proof of the Vertical Theorem.}
\end{figure}
\begin{claim}
For all $j\in \N$, there exists a clopen partition $(A^0_j, A^1_j)$ of $\dom(f^{[j]})$
such that:  $f^{[j]}\restr{A^0_j}\leq w$ and $f^{[j]}\restr{A^1_j}\leq \FinGl{G}$.
\end{claim}
\begin{subproof}
Let $j\in \N$, we show that $f^{[j]}\leq w \glbin h$ for some $h\in\FinGl{G}$.
By \cref{Gluingasupperbound}, this grants us with the desired clopen partition $(A^0_j, A^1_j)$ of $\dom(f^{[j]})$. 

Since $\CB(f^{[j]})< \alpha+2$, either $\CB(f^{[j]})< \lambda$ or by $\FG(\leq\alpha+1)$ it is equivalent to a finite gluing of functions in $\generator{\alpha+1}$.
In the former case $f^{[j]}\leq G$ by \cref{JSLgeneralstructure} and we are done.
In the latter case, we can write $f^{[j]}=\gl_{i\leq k}g_i$ for some $k\in\N$ and $g_i\in\generator{\alpha+1}$.
So in order to prove the claim, as $\generator{\alpha+1}$ is finite, it suffices to show that for every generator $g\in \generator{\alpha+1}$ if $g\leq f^{[j]}$, then $g \leq w \glbin h$ for some $h\in\FinGl{G}$.
So let $g\in \generator{\alpha+1}$ with $g\leq f^{[j]}$, we distinguish three cases according to the definition of $ \generator{\alpha+1}$:
\begin{enumerate}
\item Suppose that $g\in \centered{\alpha+1}$, so $g$ is centered. Since $f^{[j]}=\bigsqcup_{i>j} \ray{f_i}{j}\leq \gl_{i>j}\ray{f_i}{j}$ by \cref{Gluingasupperbound}
and $\ray{f_i}{j}\leq \FinGl G$ for all $i,j\in\N$ by \cref{Rigidityofthecocenter}, we actually have $g\leq G$ by \cref{Centerinvariance}.

\item Suppose that $g\in\omegaregular{\alpha+1}$. By definition of $H$, we have either $g\leq\FinGl{G}$ or, since $g\leq f^{[j]}\leq \ray{f}{j}$, $g\in H$ and so $g\leq w$.

\item Finally\footnote{Here we see the specificity of double successors!},
it can be that $g$ is a wedge generator, so use \cref{BasicsOnGenerators} to get $F_0,F_1\subseteq \centered{\alpha+1}$ such that $F_0,\omega F_1\leq g$ and $g\leq(\gl F_0)\glbin \omega F_1$

Since $F_0\leq f^{[j]}$, we have $F_0\leq G$ by the first case.
Since $\omega F_1\leq f^{[j]}$, either $\omega F_1 \leq\FinGl{G}$ or $\omega\set{F_1}\subseteq H$ by definition of $H$, and so $\omega F_1\leq \gl H= w$.
This shows that for some $h\in \FinGl{G}$ we have
\[
g\leq\gl \omega F_1 \glbin F_0  \leq w \glbin h, 
\]
as desired.\qedhere
\end{enumerate}
\end{subproof}

We let $A^0=\bigcup_{j\in\N} A^0_j$ and $A^1=A\setminus A^0$;
$A^0$ is open as union of the sets $A^0_j$ which are open, we prove $A^1$ is also open. 
Whenever $j\geq i$, we have $A_i\cap \dom(f^{[j]})=\emptyset$ and so for all $i\in \N$ the set $A_i\cap A^0=A_i \cap \bigcup_{j<i}A^0_j$ is clopen, hence so is $A_i\setminus A^0$.
Therefore $A^1=A\setminus A^0=\bigcup_i A_i\setminus A^0$ is open.

On the one hand, 
\[
f\restr{A^0}\leq \gl_{j} f\restr{A^0_j} \leq \omega w \equiv w.
\] 

On the other hand, for all $j\in\N$ we have by \cref{Gluingasupperbound}
\[
\ray{f\restr{A^1}}{j}=f\restr{A^1_j} \sqcup  \bigsqcup_{i\leq j}\ray{f_i}{j}
\leq f\restr{A^1_j}\glbin \gl_{i\leq j} \ray{f_i}{j} \leq \FinGl G,
\]
where the last reduction follows from the claim and the fact that $\ray{f_i}{j}\leq \FinGl G$ for all $i,j\in\N$. Therefore $(\ray{f\restr{A^1}}{j})_j$ is piecewise reducible to $\iw{\gl G}$, so by \cref{Pgluingasupperbound} $f\restr{A^1}\leq\pgl G$, which concludes the proof. 
\end{proof}

\subsection{Strongly solvable functions}
For a fine $c$-partition of a function $f$, we denote by $\partitn{y}=\set{P\in\partit}[y_P\neq y]$ the members of the partition whose \cocenter{}s are distinct from $y$.
Now that we dealt with the case of a unique \cocenter{} $y$, we next analyze the situation where
cocenters $\{y_P\mid P\in\partitn{y}\}$ converge to $y$ and the $\set{f\restr{P}}[P\in\partitn{y}]$ also enjoys a nice combinatorial property.
\begin{definition}
We say that the function $f:A\to B$ together with a fine $c$-partition $\partit$ is \emph{strongly solvable} at $y\in Y_\partit$ if for all clopen neighborhood $V$ of $y$
\begin{enumerate}
\item the set $\{y_P\mid P\in\partit \text{ and } y_P\notin V\}$ is finite, and
\item for all $P\in\partitn{y}$ there exists $Q\in \partitn{y}$ with $y_{Q}\in V$ and $f\restr{P}\leq f\restr{Q}$.
\end{enumerate}
 \end{definition}

%Note that a centered function in $\sC$ with the trivial partition is strongly solvable at its \cocenter{}. 

\begin{theorem}[Diagonal Theorem]\label{DiagonalTheorem}
Assume $\FG(\leq\alpha+1)$ for $\alpha<\omega_1$.
Let $f:A \to B$ in $\sC_{\alpha+2}$ be strongly solvable {at $y$}.
Then there exists $g\in \FinGl{\generator{\alpha+2}}$ such that $f\leq g\leq f\corestr{U}$ {for all clopen set $U\ni y$}. 
\end{theorem}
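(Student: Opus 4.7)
The plan is to build $g$ around a wedge generator $w\in\generator{\alpha+2}$ that encodes both the vertical structure of $f$ at $y$ and the diagonal accumulation structure supplied by strong solvability. Using $\FG(\leq\alpha+1)$ via \cref{FGconsequences}, every $f\restr{P}$ is equivalent to some element of the finite set $\centered{\alpha+2}$; enumerate as $k_1,\ldots,k_r\in\centered{\alpha+2}$ the distinct equivalence classes represented among $\{f\restr{P}\mid P\in\partit,\ y_P=y\}$, writing each $k_i$ of $\CB$-rank $\alpha+2$ as $k_i\equiv\pgl G_i$ with pairwise distinct $G_i\in\mathcal{P}^+(\generator{\alpha+1})$, and let $F^{diag}\subseteq\centered{\alpha+2}$ collect the maximal equivalence classes (for $\leq$) appearing in $\partitn{y}$. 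Setting $w=\fctwedge(G_1,\ldots,G_r\mid F^{diag})\in\generator{\alpha+2}$ (replacing $w$ by $\gl_{h\in F^{diag}}\omega h$ in the degenerate case $r=0$), and letting each $H_i\subseteq\omegaregular{\alpha+1}$ be the set produced by the Vertical \cref{VerticalTheorem} applied to the pseudo-centered piece $f_y^{(k_i)}:=\bigsqcup\{f\restr{P}\mid f\restr{P}\equiv k_i,\ y_P=y\}$, the candidate is $g=\gl_i(\gl H_i\glbin k_i)\glbin w\in\FinGl{\generator{\alpha+2}}$.

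For the upper bound $f\leq g$, I would decompose $f=f_y\sqcup f_{\neq y}$. The Vertical Theorem directly yields $f_y^{(k_i)}\leq\gl H_i\glbin k_i$ for each $i$, and hence $f_y\leq\gl_i(\gl H_i\glbin k_i)$. For $f_{\neq y}$, enumerate $\partitn{y}=(P_j)_j$ so that $y_{P_j}\to y$ (guaranteed by strong solvability), and split $P_j=P_j^1\sqcup P_j^0$ with $P_j^1=P_j\cap f^{-1}(V_j)$ for shrinking clopen $V_j\ni y_{P_j}$ inside $N_{y\restr{j}}$. Centeredness combined with \cref{CenteredasPgluing} gives $f\restr{P_j^1}\equiv f\restr{P_j}\leq\gl F^{diag}$ with $f(P_j^1)\subseteq V_j\to y$, while $f\restr{P_j^0}\leq f\restr{P_j}\leq\gl F^{diag}$ (as $P_j^0\subseteq P_j$). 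Then \cref{Wedgeasupperbound}, applied with empty verticals and all pieces $(P_j^1\sqcup P_j^0)_j$ routed through the diagonal slots of $w$, gives $f_{\neq y}\leq w$, and combining yields $f\leq g$.

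For the lower bound $g\leq f\corestr{U}$ I would handle the summands separately. Each $\gl H_i\glbin k_i$ reduces to $f\corestr{U}$ because the Vertical Theorem supplies disjoint clopen $W_i\subseteq U\setminus\{y\}$ and $V_i\ni y$ inside $U$ with $\gl H_i\leq f\corestr{W_i}$ and $k_i\leq f\corestr{V_i}$, which \cref{Gluingaslowerbound} assembles into $\gl H_i\glbin k_i\leq f\corestr{U}$. For $w\leq f\corestr{U}$, I apply the Disjointification \cref{DisjointificationLemma}: pick $x_i$ a center of $f_y^{(k_i)}$ and observe that the wedge's vertical function at index $i$ is the \emph{non-pointed} gluing $\gl G_i$ (not $\pgl G_i$); by centeredness of $k_i=\pgl G_i$ combined with \cref{Rigidityofthecocenter}, any clopen neighborhood of $x_i$ admits a reduction of $\gl G_i$ into a ray region of $f_y^{(k_i)}$ with image inside a clopen subset of $B\setminus\{y\}$, satisfying the $y\notin\overline{\im(f\sigma)}$ condition. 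The diagonal condition of the Disjointification Lemma is verified from strong solvability: for every clopen $V\ni y$ and each $h\in F^{diag}$ there is $Q\in\partitn{y}$ with $y_Q\in V$ and $f\restr{Q}\equiv h$ (by maximality of $F^{diag}$), and disjoint small clopen cocenter-neighborhoods inside $V\setminus\{y\}$ assemble into a reduction of $\gl F^{diag}$ with image avoiding $y$.

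The main obstacle will be arranging the $r$ vertical summands $\gl H_i\glbin k_i$ together with the wedge $w$ \emph{simultaneously} in disjoint clopen subsets of $f\corestr{U}$'s codomain, since every $k_i\leq f\corestr{V_i}$ demands $y\in V_i$ and such $V_i$'s cannot be made pairwise disjoint. The resolution I expect to exploit the pairwise disjointness of the pseudo-centered pieces $f_y^{(k_i)}$ in $\dom f$: one iteratively carves $U$ into clopen subregions, each serving as a distinct ``$y$-containing'' witness for a single summand at a time, using that the associated reductions have disjoint domains (coming from distinct $f_y^{(k_i)}$) even when their codomains overlap at $y$; the ray-based vertical witnesses for $w$ are then refined into disjoint clopen regions of $B\setminus\{y\}$, so that the Disjointification construction's shrinking-neighborhood scheme can be threaded through the remaining room.
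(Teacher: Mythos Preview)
Your overall architecture is close to the paper's, but there are two genuine gaps, one of which you flag yourself without resolving.

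\textbf{The self-inflicted obstacle.} The ``main obstacle'' you identify at the end is created by including the summands $k_i$ separately in $g$. These are redundant: since $k_i\equiv\pgl G_i$ and $G_i$ is a vertical of your wedge $w$, you already have $k_i\leq w$. The paper simply does not include them. Instead, it uses the clopen partition $A_g=A^0_g\sqcup A^1_g$ from the Vertical Theorem and routes $f\restr{A^1_g}$ directly into the vertical slot of the wedge via \cref{Wedgeasupperbound}: since $f\restr{A^1_g}\equiv\pgl G_i$, \cref{Rigidityofthecocenter} ensures its rays at $y$ are reducible by pieces to $\iw{G_i}$, which is exactly hypothesis~(i) of \cref{Wedgeasupperbound}. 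Once the $k_i$'s are absorbed into $w$, the lower bound needs only $w$ and the $\gl H_i$'s in disjoint codomain regions, and the latter live in clopen $W\subseteq U$ with $y\notin W$, so a single clopen $V\ni y$ disjoint from $W$ carries the whole wedge via the Disjointification Lemma. Your proposed fix (``the associated reductions have disjoint domains'') cannot work: \cref{Gluingaslowerbound} requires a relative clopen partition in the \emph{codomain}, and all your $V_i\ni y$ necessarily intersect.

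\textbf{Convergence in the diagonal upper bound.} Your argument for $f_{\neq y}\leq w$ routes the pieces $(P_j^1\sqcup P_j^0)_j=(P_j)_j$ through the diagonal slots, but \cref{Wedgeasupperbound} requires $f(P_j)\to y$, which fails: the image of $f\restr{P_j}$ is not confined to any shrinking neighbourhood of $y$. Splitting into $P_j^1$ and $P_j^0$ was the right idea, but you then recombine them. The paper keeps them separate: the $P_j^1$-type pieces (those mapping into a fixed ray $\ray{B}{y,k_j}$) do satisfy $f(A_n^D)\to y$ and fill the diagonal slots for $n\geq 1$; the residuals $P_j^0$ are gathered into a single extra diagonal slot $A_0^D$. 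The non-trivial point is bounding $f\restr{A_0^D}$ by $\FinGl{D}$ rather than $\omega D$: this uses \cref{ResidualCorestrictionOfCentered}, since each $f\restr{P\cap A_0^D}$ is a corestriction of the centered $f\restr{P}\equiv\pgl D_g$ to a set excluding the \cocenter{}, hence $\leq\FinGl{D_g}$, and then $\gl_{g\in D}\omega D_g\leq\gl_{g\in D}\pgl D_g=\gl D$.

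A minor further point: you do not treat the degenerate case where every $k_i$ has $\CB$-rank $\lambda+1$ (so no $G_i$ is available). The paper isolates this as a separate ``first case'' where in fact $f\equiv\omega D$.
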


\begin{proof}
Let $\partit$ be a fine $c$-partition for $f$ and $\bar{y}\in Y_\partit$ witness that $f$ is strongly solvable {at $\bar{y}$}.
Let $\partit_M=\partit\setminus \partitn{\bar{y}}$ and note that if $\partitn{\bar{y}}$ is non-empty, then $Y'=\{y_P\mid P\in \partit\}\setminus\{\bar{y\}}$ is an infinite discrete subset of $B$ by strong solvability of $f$.
% in $U$.
We write $\alpha=\lambda+n$ with $\lambda$ limit and $n\in\N$ and distinguish two cases.
%There are two cases, we first treat the easier one and then the harder one.

\medskip

\textit{First case}: $\CB(f\restr{P})=\lambda+1$ for all $P\in \partit_M$. Since $\CB(f)=\alpha+2$, there exists $P\in\partit_D$ with $\CB(f\restr{P})=\alpha+2$ by \cref{CBrankofclopenunion}.
Choose a set of representatives $D\subseteq \centered{\alpha+2}$ for $\{f\restr{P}\mid P\in \partit_D\}$ by \cref{FGconsequences} and pick $h\in D$ with $\CB(h)=\alpha+2$.
We show that in that case we can set $g:=\omega D$.
By \cref{Maxfunctions,ConsequencesGeneralStructureThm} we have $f\restr{P}\leq\pgl\Maximalfct{\lambda}\leq h$ for all $P\in\partit_M$,
so using \cref{Gluingasupperbound,Gluingasupperbound_cor} we get:
\[
f \leq \gl_{P\in \partit} f\restr{P}\leq \gl_{P\in \partit_D} f\restr{P}\glbin\gl_{P\in \partit_M} f\restr{P}\leq \omega D\glbin \omega \pgl\Maximalfct{\lambda}\leq \omega D.
\]
Now fix any clopen set $U\ni \bar{y}$. Since $f$ is strongly solvable at $\bar{y}$, for all $g\in D$ the set $\set{y_P}[P\in\partit_D \text{ and }y_P\in U \text{ and }g\leq f\restr{P}]$ is infinite. So by \cref{Intertwinereductionsforomegacentered} we have $\omega D\leq f\corestr{U}$, which concludes the proof in this case.
%Note that $D$ is finite and for all $g\in D$ 
%$Y_D\cap V$ is infinite for all clopen neighbourhood $V\ni \bar{y}$, so by \cref{Intertwinereductionsforomegacentered} we have $\omega D\leq f\corestr{U}$.

\medskip

\textit{Second case}: $\CB(f\restr{P})> \lambda+1$ for some $P\in \partit_M$. 
Let $\partit'_M=\{P\in \partit_M\mid \CB(f\restr{P})> \lambda+1\}$ and choose a finite set of representatives
$M\subseteq \centered{\alpha+2}$ for $\{f\restr{P}\mid P\in \partit'_M\}$ by \cref{FGconsequences}.
For each $g\in M$, let $M_g\subseteq  \centered{\alpha+1}\cup \omega\{\alpha+1\}$ such that $g=\pgl M_g$ and let $A_g=\bigcup \{P\in \partit'_M\mid f\restr{P}\equiv g\}$. 

For all $g\in M$ the function $f\restr{A_g}$ is pseudo-centered, so by \cref{VerticalTheorem} there exists $H_g\subseteq \centered{\alpha+1}$,
a clopen $W_g\subseteq U$ and a clopen partition $A_g=A^0_g\sqcup A^1_g$ such that: first, $\bar{y}\notin W_g$ and $f\restr{A^0_g}\leq \omega H_g \leq f\restr{A^0_g}\corestr{W_g}$,
and second for all clopen $V\ni \bar{y}$ we have $f\restr{A^1_g}\leq g \leq f\restr{A^1_g}\corestr{V}$ (so in particular,  $f\restr{A^1_g}\equiv g$).

If $\partit_D$ is empty, set $D=\emptyset$. Otherwise, as in the previous case, let $D\subseteq \centered{\alpha+2}$ be a set of representatives for $\{f\restr{P}\mid P\in \partitn{\bar{y}}\}$
by \cref{FGconsequences}. Set $w=\gl_{g\in M} \omega H_g$. We claim that for all clopen set $U\ni \bar{y}$ we have
%Writing $(M_g)_{g\in M}$ \ynote*{unnecessary by \cref{cor:wedgeSets}}{for any injective enumeration of $\set{M_g}[g\in M]$}, \ynote*{for all clopen $U\ni \bar{y}$ in fact, do we use it?}{we claim that}
\[
f\leq w \glbin \fctwedge((M_g)_{ g\in M}\mid D) \leq f\corestr{U},
\]
which concludes the proof, once we prove the existence of the two reductions.

\smallskip

\textit{Right reduction}: $ w \glbin \fctwedge((M_g)_{ g\in M}\mid D) \leq f\corestr{U}$.
Let $W=\bigcup_{g\in M}W_g$ and note that by \cref{Intertwinereductionsforomegacentered} we have $w\leq f\corestr{W}$.
For each $g\in M$, we choose some $P_g\in\partit_M$ such that $f\restr{P_g}\equiv g$. We define $x_g$ to be any center for $f\restr{P_g}$.
Since $W$ is clopen and $\bar{y}\notin W$, we can pick a clopen $V\ni \bar{y}$ disjoint from $W$ and included in $U$.

We actually show that $\fctwedge((M_g)_{ g\in M}\mid D) \leq f\corestr{V}$ by verifying
the two conditions in the premice of \cref{DisjointificationLemma} for $\bar{y}$ and the points $x_g$.  

For the first condition, we need to show that for all $g\in M$, for every open $U'\ni x_g$ and all $h\in M_g$
there exists $(\sigma,\tau)$ reducing $h$ to $f$ with $\im(\sigma)\subseteq U'$ and $\overline{\im(f\sigma)}\notni \bar{y}$.
%\ynote*{it's not the first time we need to make this argument, maybe it could be stated in the section about centered functions}
{Since $x_g\in P_g\cap U'$ is a center for $f\restr{P_g}\equiv g$, there is a reduction $(\sigma,\tau)$ from $g$ to $f\restr{P_g\cap U'}$ by \cref{Centerinvariance}~\cref{Centerinvariance1}.
But the \cocenter{} of $f\restr{P_g\cap U'}$ is $\bar{y}$ so by \cref{Rigidityofthecocenter} we obtain that the desired reduction by restricting $(\sigma,\tau)$.  %$\overline{\im(f\sigma)}\notni \bar{y}$.
}
 
To verify the second condition, we need to show that
for every open $U'\ni \bar{y}$, there exists $(\sigma,\tau)$ reducing $\gl D$ to $f$ with $\im(f\sigma)\subseteq U'$ and $\overline{\im(f\sigma)}\notni \bar{y}$.
It is enough to find such a continuous reduction from $g$ to $f\corestr{U'}$ for every $g\in D$, so let $g\in D$.
Because $f$ is strongly solvable in $U$, there exists $P\in\partit_D$ such that $f\restr{P}\equiv g$ and $y_P\in U'$.
As $y_P\neq \bar{y}$, we can pick a clopen neighbourhood $V'\subseteq U'$ of $y_P$ that does not include $\bar{y}$.
Since $f\restr{P}$ is centered with \cocenter{} $y_P$ we have $g\leq f\corestr{V'}$, which gives the desired reduction. 

Altogether we obtain by \cref{Gluingasupperbound}:
\[
w \glbin \fctwedge((M_g)_{ g\in M}\mid D) \leq f\corestr{W}\glbin f\corestr{V} \leq f\corestr{U}.
\]

\smallskip
\textit{Left reduction:} $f\leq w \glbin \fctwedge((M_g)_{ g\in M}\mid D)$.
Let $A^0=\bigsqcup_{g\in M} A^0_g$, note that $A^0$ is clopen (since $M$ is finite) and $f\restr{A^0}\leq w$ by \cref{Gluingasupperbound}.
We show that for $A^1=A\setminus A^0$ we have $f\restr{A^1}\leq \fctwedge((M_g)_{ g\in M}\mid D)$ by verifying the hypotheses of \cref{Wedgeasupperbound}.
For all $g\in M$ we have $f\restr{A^1_g}\equiv g=\pgl M_g$ and the \cocenter{} of $f\restr{A^1_g}$ is $\bar{y}$.
By \cref{Rigidityofthecocenter} this implies that the sequence of rays $(\ray{(f\restr{A^1_g})}{\bar{y},j})_j$ is reducible by finite pieces to $\iw{M_g}$.

In case there is $P\in \partit_M$ with $\CB(f\restr{P})= \lambda+1$, let $A^1_0=\bigcup  (\partit_M\setminus \partit'_M)$.
Notice that $f\restr{A^1_0}$ is a simple function of rank $\lambda+1$ and distinguished point $\bar{y}$ by \cref{Simpleiffcoincidenceofcocenters}. So for all $j\in\N$ we have $\CB(\ray{(f\restr{A^1_0})}{\bar{y},j})\leq\lambda$ and by \cref{JSLgeneralstructure} we get $\ray{(f\restr{A^1_0})}{\bar{y},j}\leq \Maximalfct{\lambda}$.
Recall that any $g\in M$ has $\CB$-rank at least $\lambda+2$, so for some $h\in M_g$ we have $\CB(h)\geq\lambda+1$
and in turn $\Maximalfct{\lambda}\leq h$ by \cref{JSLgeneralstructure} again.
This means that we can safely replace $A^1_g$ by $A^1_g \sqcup A^1_0$ for some $g\in M$, since $(\ray{(f\restr{A^1_g})}{\bar{y},j})_j$ still reduces by pieces to $\iw{M_g}$.

If $\partitn{\bar{y}}$ is empty and so $D=\emptyset$, then $(A^1_g)_g\in M$ partitions $A^1$ and so $f\restr{A^1}\leq \fctwedge((M_g)_{ g\in M}\mid \emptyset)$ by \cref{Wedgeasupperbound}.
\smallskip

Otherwise, we have $A^1=(\bigsqcup_{g\in M} A^1_g)\sqcup A^D$ with $A^D=\bigcup \{P\mid P\in \partitn{\bar{y}}\}$, so we need to partition $A^D$ into an infinite sequence of clopen sets $(A^D_n)_{n\in\N}$
that is reducible by pieces to $\iw{D}$ and such that $f(A^D_n) \to \bar{y}$.

Fix any enumeration $(y_n)_{n\geq 1}$ of $Y'$ and set $A_n=\bigcup \{ P\in \partit \mid y_p=y_n\}$ for all $n\geq 1$.
For all $n\geq 1$ let $k_n\in\N$ be such that $y_n\in \ray{B}{\bar{y},k_n}$ and define $A^D_n=A_n\cap f^{-1}( \ray{B}{\bar{y},k_n})$ and $A^0_n=A_n\setminus A^D_n$. Finally let $A^D_0=\bigcup_{n\geq 0} A^0_n$. %Fix any enumeration $(y_n)_{n\geq 1}$ of $Y$ and set $A^D_n=A^D_{y_n}$ for all $n\geq 1$.
We have seen that strong solvability implies $y_n\to \bar{y}$, so $k_{n}\to\infty$, and in turn $f(A^D_n)\to \bar{y}$.

To conclude using \cref{Wedgeasupperbound}, it only remains to prove that $(A^D_n)_{n\in\N}$ is reducible by finite pieces to $\iw{D}$, or equivalently that $f\restr{A^D_n}\leq\FinGl{D}$ for all $n\in\N$.
By our definition, we need to distinguish two cases.

\textit{Case $n>0$.} We actually show the stronger relation $f\restr{A_n}\leq 2(\gl D)$.
We partition $A_n=\bigsqcup_{g\in D} A^g_n$ where $A^g_n=\bigcup\{P\in\partit \mid y_P=y_n \text{ and } f\restr{P}\equiv g\}$.
For all $g\in D$, $f\restr{A^g_n}$ is pseudo-centered, so by \cref{VerticalTheorem} we have $f\restr{A^g_n}\leq 2g$.
Therefore, by \cref{Gluingasupperbound},
\[ f\restr{A_n}\leq 2 (\gl D).\]

\textit{Case $n=0$.} We finally prove that $f\restr{A^D_0} \leq \FinGl{D}$. Note that $A^D_0=\bigsqcup_{P\in \partit_D}P\cap A^D_0$. Let $P\in \partitn{\bar{y}}$, $k$ be such that $y_P\in \ray{B}{\bar{y},k}$ and $g\in D$ with $f\restr{P}\equiv g$. Note that by definition $h:=f\restr{P\cap A^D_0}=f\restr{P}\corestr{B\setminus \ray{B}{\bar{y},k}}$.{If $g\equiv \Minimalfct{\lambda+1}$, then as $y_P$ is the distinguished point of $f\restr{P}$ we have $\CB(h)< \lambda$ }and so in particular $h\leq \Maximalfct{\lambda}$. Otherwise, $g\equiv \pgl D_g$ for some subset $D_g\subseteq  \centered{\alpha+1}\cup \omega \{\centered{\alpha+1}\}$ and so $h\leq \FinGl{D_g}$ by \cref{ResidualCorestrictionOfCentered}. Since $D$ is not empty and for any $g\in D$ we have $\Maximalfct{\lambda}\leq g$, we obtain:
\[
f\restr{A^D_0}\leq \gl_{P\in\partit_D} f\restr{P\cap A^D_y} \leq \gl_{g\in D}\omega D_g \leq  \gl_{g\in D}\pgl D_g \equiv \gl D 
\]
where we used that for each $g\in D$ we have $\omega D_g\leq \pgl D_g=g$ by \cref{GluinglowerthanPgluing}.
\end{proof}

\subsection{Solvable functions}
We finally introduce the weaker notion of \emph{solvable functions} by dropping the first requirement in the definition of strongly solvable functions.
We show that we can reduce the general case to the case of solvable functions and then we show how solvable functions can be expressed in terms of generators.  

\begin{definition}
We say that the function $f:A\to B$ is \emph{solvable} {with} a fine $c$-partition $\partit$ if for some $y\in Y_\partit$, for all $P\in\partitn{y}$ and all clopen $V\ni y$ there exists $Q\in \partitn{y}$ with $y_{Q}\in V$ and $f\restr{P}\leq f\restr{Q}$.
\end{definition}
Note that if $f$ is solvable with $\partit$, then either $Y_\partit$ is a singleton or it is infinite.
If $\partit$ is a fine $c$-partition for a function $f:A\to B$ and $U$ is a clopen subset of $B$,
we define the corestriction $\partit\corestr{U}=\{P\in \partit \mid y_P\in U\}$ and we let $A^U_\partit=\bigcup \partit\corestr{U}$.
Note that $\partit\corestr{U}$ is a fine $c$-partition of $f\restr{A^U_\partit}$, because any $\partit\corestr{U}$-lump is a $\partit$-lump.

\begin{theorem}\label{SolvableDecomposition}
For $\alpha<\omega_1$ assume $\FG(<\alpha+2)$ and let $\partit$ be a fine $c$-partition of $f:A \to B$ in $\sC_{\alpha+2}$.
Then there exists a countable family $\mathcal{U}$ of pairwise disjoint clopen subsets of $B$ such that:
\begin{enumerate}
\item $Y_\partit \subseteq \bigcup \mathcal{U}$, and 
\item for all $U\in \mathcal{U}$, the function $f\restr{A^U_\partit}:A^U_\partit\to B$ is solvable with $\partit\corestr{U}$.
\end{enumerate}
\end{theorem}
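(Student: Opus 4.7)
The plan is to establish that every $y\in Y_\partit$ admits arbitrarily small \emph{good} clopen neighborhoods $U\ni y$ in $B$, meaning both that $\partit\corestr{U}$ is a fine $c$-partition of $f\restr{A^U_\partit}$ and that $f\restr{A^U_\partit}$ is solvable at $y$ with $\partit\corestr{U}$. Granted this, the family $\mathcal{U}$ is produced by a standard disjointification: $Y_\partit$ is countable since $\partit$ is a pairwise disjoint family of non-empty open sets in the second countable space $A\subseteq\cN$, so enumerating $Y_\partit=(y_n)_{n\in\N}$ we inductively set $U_n=\emptyset$ if $y_n\in\bigcup_{k<n}U_k$, and otherwise let $U_n=W_n\setminus\bigcup_{k<n}U_k$ for some good clopen neighborhood $W_n$ of $y_n$. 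Each such $U_n$ is clopen as a finite Boolean combination, lies inside $W_n$, and contains $y_n$; provided goodness is preserved under clopen shrinking of the center, the resulting family $\mathcal{U}=\{U_n\mid U_n\neq\emptyset\}$ is pairwise disjoint, covers $Y_\partit$, and each $U_n\in\mathcal{U}$ is good at $y_n$.

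To build a good neighborhood of a given $y\in Y_\partit$, I exploit finiteness of types. By \cref{FGconsequences}, the hypothesis $\FG(<\alpha+2)$ implies that every centered function in $\sC_{[\lambda,\alpha+2]}$ is equivalent to one in the finite set $\centered{\alpha+2}$, so the collection $H$ of continuous equivalence classes of $\{f\restr{P}\mid P\in\partit\}$ is finite. For each $y\in Y_\partit$ and clopen $U\ni y$, set $T_U(y)=\{h\in H\mid\exists P\in\partit,\ y_P\in U\setminus\{y\},\ f\restr{P}\equiv h\}$ and $D(y)=\bigcap_{U\ni y \text{ clopen}}T_U(y)$. The map $U\mapsto T_U(y)$ is monotone decreasing with values in the finite power set $\mathcal{P}(H)$, so intersecting finitely many witnesses (one for each $h\in H\setminus D(y)$) yields a clopen $U^*\ni y$ with $T_{U^*}(y)=D(y)$, and then $T_U(y)=D(y)$ for every clopen $U$ with $y\in U\subseteq U^*$. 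Any such $U$ is solving at $y$: if $P\in(\partit\corestr{U})^{\neq y}$ with $f\restr{P}\equiv h\in D(y)$, then for every clopen $V\ni y$ the clopen neighborhood $V\cap U$ contains some $y_Q\neq y$ with $f\restr{Q}\equiv h$, yielding $Q\in(\partit\corestr{U})^{\neq y}$ with $f\restr{P}\equiv f\restr{Q}$.

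The main obstacle is showing that $\partit\corestr{U}$ is a fine $c$-partition of $f\restr{A^U_\partit}$ for \emph{every} clopen $U\subseteq B$; this both produces the fine partitions of good neighborhoods and ensures that goodness is preserved under shrinking. The key observation is the identity $(\partit\corestr{U})_{g,y'}=\partit_{g,y'}$ for every $y'\in U$, which holds trivially because the defining condition $y_P=y'$ already forces $y_P\in U$. Consequently $(f\restr{A^U_\partit})_{(g,y')}=f_{(g,y')}$ for $y'\in U$, so the $\omegaregular{\alpha+2}$-regularity of $f_{(g,y')}$ at $y'$ inherited from $\partit$-fineness yields $\omegaregular{\alpha'}$-regularity at $y'$ for $\alpha'=\CB(f\restr{A^U_\partit})$; here $\omegaregular{\alpha'}\subseteq\omegaregular{\alpha+2}$ because $f\restr{A^U_\partit}$ shares its limit part $\lambda$ with $f$ (as $\CB(f\restr{P})>\lambda$ for each $P\in\partit\corestr{U}$) and $\centered{\alpha'}\subseteq\centered{\alpha+2}$ by \cref{BasicsOnGenerators}. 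Combined with the trivial inheritance of $\CB(f\restr{P})>\lambda$, this gives fineness of $\partit\corestr{U}$; the same identity also makes solvability at $y$ stable under further shrinking, closing the argument.
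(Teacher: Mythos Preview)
Your proof is correct and follows essentially the same approach as the paper: both arguments exploit the finiteness of $\centered{\alpha+2}$ (granted by $\FG(<\alpha+2)$) to stabilize the set of centered types appearing near a given $y\in Y_\partit$, and then enumerate $Y_\partit$ to produce the required pairwise disjoint family. Your treatment of why $\partit\corestr{U}$ remains a fine $c$-partition is more detailed than the paper's (which dispatches this in a one-line remark just before the theorem via ``any $\partit\corestr{U}$-lump is a $\partit$-lump''), and your disjointification by subtraction differs cosmetically from the paper's choose-small-enough-and-disjoint construction, but the substance is the same.
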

\begin{proof}
We start by observing that for every $y\in Y_\partit$ and every small enough neighborhood $U\ni y$ the function $f\restr{A^U_\partit}:A^U_\partit\to B$ is solvable in $\partit\corestr{U}$. 
To see this, fix a basis of clopen neighborhoods $(U_n)_{n\in \N}$ at $y$ and define:
\[
D_n=\set{g\in \centered{\alpha+2}}[\exists P\in\partitn{y} \text{ and } y_P\in U_n \text{ and } f\restr{P}\equiv g]
\]
Note that $m<n$ implies $D_m\supseteq D_n$, therefore as $\centered{\alpha+2}$ is finite this sequence must be eventually constant. So let $M\in\N$ be such that $D_M=D_n$ for all $n\geq M$ and let $U=U_M$. 
Clearly $\partit\corestr{U}$ is a fine $c$-partition of $f\restr{A^U_\partit}$. Suppose that $P\in \partit\corestr{U}$ with $y_P\neq y$ and let $V$ be a clopen neighborhood of $y$. As we assume $\FG(<\alpha+2)$, there exists $g\in \centered{\alpha+2}$ with $g\equiv f\restr{P}$ by \cref{FGconsequences}, so $g\in D_M$. Moreover there exists $n>M$ with $U_n\subseteq V$, hence as $D_n=D_M$, it follows that there exists $Q\in\partit\corestr{U}$ with $y_Q\in U_n\subseteq V$ and $f\restr{Q}\equiv f\restr{P}$.

Now fix an enumeration $(y_n)_n$ of $Y_\partit$. We define by induction a sequence $(U_k)_k$ such that the sets $U_k$ are pairwise disjoint clopen sets of $B$,
they cover $Y_\partit$, and for all $k$ the restriction $f\restr{A^{U_k}_\partit}:A^{U_k}_\partit\to B$ is solvable in $\partit\corestr{U_k}$.
Let $n_0=0$ and use the observation to choose a neighborhood $U_0$ of $y_0$ such that $f\restr{A^{U_0}_\partit}:A^{U_0}_\partit\to B$ is solvable in $\partit\corestr{U_0}$. Assume that $(U_l)_{l<k}$ is defined with $U_l$ clopen pairwise disjoint and $f\restr{A^{U_l}_\partit}:A^{U_l}_\partit\to B$ is solvable in $\partit\corestr{U_l}$ for all $l<k$. If $Y_P\subseteq \bigcup_{l<k}U_l$ we are done, otherwise let $n_k=\min \{n\mid y_n\notin  \bigcup_{l<k}U_l\}$
and using the observation again choose a small enough neighbourhood $U_k$ disjoint from $\bigcup_{l<k}U_l$ and such that $f\restr{A^{U_k}_\partit}:A^{U_k}_\partit\to B$ is solvable in $\partit\corestr{U_k}$. 
\end{proof}

\begin{remark}
If $f$ is strongly solvable at $y$ then we saw that $f\leq f\corestr{U}$ for every neighborhood $U$ of $y$.
In contrast, when $f$ is solvable with $\partit$ we will show that $f\leq f\corestr{U}$ whenever $U$ is a clopen set such that $Y_\partit\subseteq U$.
Note that it would be possible to strengthen the notion of solvable functions to a notion of \emph{solvable function at $y$} which further requires that
for any sequence $(P_n)_{n\in \N}$ in $\partitn{y}$ with pairwise distinct $y_{P_n}$ and every clopen set $V\ni y$
there exists a sequence $(Q_n)_{n\in \N}$ in $\partitn{y}$ with pairwise distinct $y_{Q_n}$ in $V$ such that $f\restr{P_n}\leq f\restr{Q_n}$.
This would ensure that $f\leq f\corestr{V}$ for any clopen neighborhood of $y$.
While this may seem desirable and a result analogous to \cref{SolvableDecomposition} can be established,
it seems to introduce a complexity which did not appear necessary for our proof.
\end{remark}

%At this point, our last task consists of showing
We now want to show the following statement for all $\alpha<\omega_1$:
\begin{descThm}
 \item[$\text{S}(\alpha)$] $\FG(\leq \alpha)$ implies that for every $f\in \sC_{\alpha+1}$ if $f$ solvable with $\partit$, then there exists $g\in \generator{\alpha+1}$ with $f\leq g$ and $g\leq f\corestr{U}$ for every clopen $U\supseteq Y_\partit$.
  \end{descThm}
In \cref{FGatdoublesuccessors}, we prove that this is enough to obtain $\FG(\alpha+2)$ from $\FG(<\alpha+2)$, thus completing the inductive proof of \cref{PreciseStructureThm}.

This latter proof notably relies on a combined application of \cref{ExistenceFinePartitions,SolvableDecomposition}
to functions in $\sC_{\alpha+2}$ (for $\alpha=\lambda+n$ with $\lambda$ limit or null) to first get a fine $c$-partition, which is then partitioned to get solvable functions. %and then a solvable decomposition.
This process may yield some solvable functions in $\sC_{\lambda+1}$, and
while we have established $\FG(\lambda+1)$ this does not directly imply $\text{S}(\lambda)$\footnote{Nor does $\text{S}(\lambda)$ directly imply $\FG(\lambda+1)$,
which makes \cref{FGatsuccessoroflimit} still necessary.}.
So we now prove $\text{S}(\lambda)$ for $\lambda$ limit or null, before showing $\text{S}(\alpha)$ in the successor case.

%This proof notably relies on \cref{ExistenceFinePartitions}, which states that $\FG(<\alpha+2)$ ensures the existence of fine $c$-partition for functions $f\in \sC_{\alpha+2}$.
%However, when $\alpha=\lambda+n$ for $\lambda$ limit or null, some functions in $\sC_{\lambda+1}$ are actually solvable and they may arise from the application of \cref{SolvableDecomposition} to functions in $\sC_{\alpha+2}$. 
%While we have established that $\FG(\lambda+1)$ for $\lambda$ limit or null, note that this does not directly imply $\text{S}(\lambda)$. We therefore now prove $\text{S}(\lambda)$ for $\lambda$ limit or null before we show $\text{S}(\alpha)$ for successor ordinals.

\begin{proposition}\label{solvablelambda+1}
Let $\lambda<\omega_1$ be limit or null and assume $\FG(\leq \lambda)$. Suppose that $f:A\to B$ in $\sC_{\lambda+1}$ is solvable with $\partit$.

Then there exists a finite gluing $g$ of functions in $\generator{\lambda+1}$ such that $f\leq g$ and $g\leq f\corestr{U}$ for every clopen $U\supseteq Y_\partit$.
\end{proposition}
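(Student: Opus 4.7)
Since $\partit$ is a fine $c$-partition, each $f\restr{P}$ has $\CB$-rank exactly $\lambda+1$; using $\FG(\leq\lambda)$, \cref{FGconsequences}~(v) shows $f\restr{P}$ is equivalent to a centered function in $\centered{\lambda+1}$, i.e., to $\Minimalfct{\lambda+1}$ or $\pgl\Maximalfct{\lambda}$ (these coincide when $\lambda=0$). Setting $A^y=\bigcup\set{P\in\partit:y_P=y}$, the restriction $f\restr{A^y}$ is a disjoint union of centered functions sharing cocenter $y$, hence simple with distinguished point $y$ by \cref{Simpleiffcoincidenceofcocenters}; a centeredness argument (any $\pgl\Maximalfct{\lambda}$ piece absorbs $\Minimalfct{\lambda+1}$ ones) shows $f\restr{A^y}$ itself is equivalent to $\Minimalfct{\lambda+1}$ or $\pgl\Maximalfct{\lambda}$. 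When $\partitn{y}=\emptyset$, I take $g=f\restr{A^y}\in\generator{\lambda+1}$, with $g\leq f\corestr{U}$ following from \cref{Minfunctions} or from centeredness at $y\in U$.

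Otherwise, if some $f\restr{P}$ for $P\in\partitn{y}$ is equivalent to $\pgl\Maximalfct{\lambda}$, solvability places such pieces with cocenters in every clopen neighborhood of $y$, so infinitely many of them lie in any clopen $U\supseteq Y_\partit$; \cref{Intertwinereductions} yields $\Maximalfct{\lambda+1}\equiv \omega\pgl\Maximalfct{\lambda}\leq f\corestr{U}$, and combined with $f\leq\Maximalfct{\lambda+1}$ from \cref{Maxfunctions} we obtain $f\equiv g=\Maximalfct{\lambda+1}$. If only $\Minimalfct{\lambda+1}$ appears in $\partitn{y}$ (infinite by solvability), combining \cref{Intertwinereductions} and \cref{Gluingasupperbound_cor} gives $f\restr{A^{\neq y}}\equiv\omega\Minimalfct{\lambda+1}$; when $f\restr{A^y}\equiv\Minimalfct{\lambda+1}$ this directly yields $f\equiv g=\omega\Minimalfct{\lambda+1}$, with lower bound by \cref{Intertwinereductions} inside $U$.

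The remaining case is $f\restr{A^y}\equiv\pgl\Maximalfct{\lambda}$ with only $\Minimalfct{\lambda+1}$'s in $\partitn{y}$. I split on the accumulation structure of $Y_\partit$. If some clopen $V\ni y$ satisfies that $Y_\partit\setminus V$ is infinite, then $g=\pgl\Maximalfct{\lambda}\gl\omega\Minimalfct{\lambda+1}$ works: the upper bound $f\leq g$ follows from \cref{Gluingasupperbound_cor}, and the lower bound $g\leq f\corestr{U}$ from \cref{Gluingaslowerbound} applied to the relative clopen partition $V_0=V\cap U$ (where $\pgl\Maximalfct{\lambda}\leq f\corestr{V_0}$ by centeredness of $f\restr{A^y}$) and $V_1=U\setminus V_0$ (where $\omega\Minimalfct{\lambda+1}\leq f\corestr{V_1}$ via the infinitely many $y_P\in Y_\partit\setminus V\subseteq V_1$). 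Otherwise $Y_\partit$ has $y$ as its unique accumulation point, and up to finitely many isolated cocenters the $y_P$'s converge to $y$; then $f$ is equivalent to $\fctwedge(\Maximalfct{\lambda}\mid\Minimalfct{\lambda+1})\gl n\Minimalfct{\lambda+1}$ for some $n\in\N$ counting the non-accumulating cocenters, a finite gluing of generators. The upper bound follows \cref{Diagonalforlambda+1}'s use of \cref{Wedgeasupperbound} (splitting each $\Minimalfct{\lambda+1}$ piece into a part with image in a shrinking neighborhood of its cocenter and an absorbable rank-$<\lambda+1$ remainder, which goes into $f\restr{A^y}$ via \cref{lem:gobblingLessThanLambda}). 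The lower bound relies on the Disjointification \cref{DisjointificationLemma} with a center of $f\restr{A^y}$ as anchor $x_0$ and the accumulation of $\Minimalfct{\lambda+1}$'s at $y$ witnessing the second condition; the finitely many non-accumulating $\Minimalfct{\lambda+1}$ summands are handled separately by isolating their cocenters with pairwise disjoint clopen sets and applying \cref{Gluingaslowerbound}.

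The main obstacle is this last sub-case: invoking $\fctwedge(\Maximalfct{\lambda}\mid\Minimalfct{\lambda+1})$ requires carefully partitioning $\dom f$ so that the image of each diagonal piece converges to $y$ as demanded by \cref{Wedgeasupperbound}, and then handling the finitely many non-accumulating $\Minimalfct{\lambda+1}$ summands without disturbing the wedge structure.
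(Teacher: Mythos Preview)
Your overall case analysis is close to the paper's, but there is a genuine gap in your treatment of $f\restr{A^y}$. You assert that a disjoint union of centered pieces with common \cocenter{} $y$ is again equivalent to $\Minimalfct{\lambda+1}$ or $\pgl\Maximalfct{\lambda}$, justifying this by saying a $\pgl\Maximalfct{\lambda}$-piece absorbs the others. That absorption is correct, but the implicit complementary claim---that if \emph{all} pieces in $A^y$ are $\equiv\Minimalfct{\lambda+1}$ then $f\restr{A^y}\equiv\Minimalfct{\lambda+1}$---is false. An infinite disjoint union of copies of $\Minimalfct{\lambda+1}$ sharing \cocenter{} $y$ can have some ray $\ray{f}{y,n}$ of $\CB$-rank exactly $\lambda$, in which case $f\restr{A^y}\equiv\Minimalfct{\lambda+1}\glbin\Maximalfct{\lambda}$, not $\Minimalfct{\lambda+1}$. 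The paper flags exactly this phenomenon (see the footnote in the proof) and handles it via a third sub-case of the ``$Y_\partit=\{y\}$'' analysis.

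This matters most when $\partitn{y}=\emptyset$: your lower bound ``$g\leq f\corestr{U}$ following from \cref{Minfunctions} or from centeredness'' does not produce $\Maximalfct{\lambda}\leq f\corestr{U}$ for arbitrary clopen $U\ni y$. The paper obtains it precisely from the \emph{fine} hypothesis on $\partit$: since $(\Minimalfct{\lambda+1},y)$ is not a $\partit$-lump, the set $\{j:\Maximalfct{\lambda}\leq\ray{f}{y,j}\}$ is infinite, so one can pick $N$ large with $\ray{B}{y,N}\subseteq U$ and $\Maximalfct{\lambda}\leq\ray{f}{y,N}$. You never invoke the no-lumps property anywhere, and without it the $\Minimalfct{\lambda+1}\glbin\Maximalfct{\lambda}$ case cannot be closed. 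Apart from this, your organization (splitting first by $A^y$ versus $A^{\neq y}$, rather than by $\partit_1$ versus $\partit_0$ as the paper does) is a legitimate alternative; the remaining cases line up with the paper's once the above is repaired, and your ``$n$ non-accumulating \cocenter{}s'' in the final wedge case are in fact absent (if every $V\ni y$ meets all but finitely many $y_P$, then $n=0$ and \cref{Diagonalforlambda+1} applies directly).
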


\begin{proof}
Let $\partit$ be a fine $c$-partition of $f$ and $y\in Y_\partit$ witnessing that $f$ is solvable and let $U\supseteq Y_\partit$ be clopen in $B$.
We first deal with the case $\lambda=0$.
As $f$ is in this case locally constant, we know that $f\equiv |\im f| \Minimalfct{1}$ from \cref{LocallyConstantFunctions}.
Since $f$ is solvable with $\partit$, by centeredness $f\restr{P}$ is constant for all $P\in \partit$, and so $\im f= Y_\partit\subseteq U$. So $f=f\corestr{U}$
and the result follows\footnote{Note that since $f$ is solvable, it actually follows that $\im f$ is either infinite or equal to $\set{y}$,
hence $f\equiv \omega \Minimalfct{1}$ or $f\equiv \Minimalfct{1}$.}.

Now assume that $\lambda$ is limit, recall that $\FG(\leq \lambda)$ ensures that $\FG(\lambda+1)$ holds by \cref{FGconsequences}, and using 
that $\partit$ is fine with \cref{cor:CenteredSucessor},
for every $P\in\partit$ we have $f\restr{P}\equiv \Minimalfct{\lambda+1}$ or $f\restr{P}\equiv  \pgl\Maximalfct{\lambda}$. 

\smallskip

First case:  $y_P=y$ for all $P\in \partit$, so $f$ is simple by \cref{Simpleiffcoincidenceofcocenters} and so $f\leq \pgl \Maximalfct{\lambda}$ by \cref{Maxfunctions}.
If $f\restr{P}\equiv  \pgl\Maximalfct{\lambda}$ for some $P\in \partit$, then $\pgl \Maximalfct{\lambda}\leq (f\restr{P})\corestr{V}\leq f\corestr{V}$ for every clopen neighbourhood $V$ of $y$ by centeredness of $f\restr{P}$. Otherwise $f\restr{P}\equiv\Minimalfct{\lambda+1}$ for all $P\in \partit$. If $\CB(\ray{f}{y,n})<\lambda$ for all $n\in \N$, then $f\leq \pgl_n \ray{f}{y,n}\leq \Minimalfct{\lambda+1}$ by \cref{Pgluingofraysasupperbound,ConsequencesGeneralStructureThm} and so we are done in this case too. Finally, assume that for some $n\in \N$ we have $\CB(\ray{f}{y,n})=\lambda$ and so $\Maximalfct{\lambda}\leq \ray{f}{y,n}$ by the General Structure \cref{JSLgeneralstructure}. As $\partit$ is a fine partition, $f_{y,\Minimalfct{\lambda+1}}=f$ is not a lump and so we can find a large enough $N\in \N$ such that $\Maximalfct{\lambda}\leq  \ray{f}{y,N}$ and $\ray{B}{y, N}\subseteq U$. Therefore for $V=U \setminus \ray{B}{y,N}$ we have  $\Minimalfct{\lambda+1}\glbin\Maximalfct{\lambda}\leq f\corestr{V} \glbin f\corestr{\ray{B}{y,N}}\leq f\corestr {U}$, as desired\footnote{Such a function $f\equiv \Minimalfct{\lambda+1}\glbin\Maximalfct{\lambda}$ can for example be constructed using the infinitary wedge (cf. \cref{rem:infinitewedge}) where each column consists of a sequence of functions $(f_{i,j})_{j\in\N}$ whose $\CB$-rank is cofinal in $\lambda$. While $\Minimalfct{\lambda+1}\glbin\Maximalfct{\lambda}$ does not admit a fine $c$-partition, the function $f$ clearly does.}.

\smallskip

Second case: there exists $P\in \partit$ with $y_P\neq y$. This means that $Y_\partit$ is actually infinite since $f$ is solvable.
We partition $\partit=\partit_0\sqcup\partit_1$ with $\partit_1=\set{P\in\partit}[f\restr{P}\equiv\pgl\Maximalfct{\lambda}]$ and set $Y_i=\set{y_P}[P\in\partit_i]$, $i=0,1$.
If $Y_1$ is empty then $Y=Y_0$ is infinite and $f\leq \omega\Minimalfct{\lambda+1}$ by \cref{Gluingasupperbound_cor}. Since $Y_0\subseteq U$, we get $\omega\Minimalfct{\lambda+1}\leq f\corestr{U}$ by \cref{Intertwinereductionsforomegacentered}, so we are done. If $Y_1$ is infinite, then $\Maximalfct{\lambda+1}=\omega \pgl \Maximalfct{\lambda} \leq f$ by \cref{Gluingasupperbound_cor} and $\Maximalfct{\lambda+1} \leq f\corestr{U}$ by \cref{Intertwinereductionsforomegacentered}, as desired. 

Suppose now that $Y_1$ is finite and nonempty. Then we claim that $Y_1=\set{y}$.  

Towards a contradiction, assume that $y_P\neq y$ for some $P\in \partit_1$. Then $f\restr{P}\equiv \pgl\Maximalfct{\lambda}$, but since $Y_1$ is finite we can find a small enough clopen neighbourhood $V$ of $y$ such that $V\cap Y_1=\emptyset$. But as $f$ is solvable, there exists $Q\in\partitn{y}$ with $y_Q\in V$ and $f\restr{P}\leq f\restr{Q}$. Since necessarily $f\restr{Q}\equiv \Minimalfct{\lambda+1}$, we get $\pgl\Maximalfct{\lambda}\leq \Minimalfct{\lambda+1}$, in contradiction with \cref{cor:CenteredSucessor}.
So we have $Y_1=\set{y}$ and $Y_0$ is infinite. We distinguish two subcases.
\begin{enumerate}
\item Suppose that for every clopen $V\ni y$ the set $Y_0\setminus V$ is finite. So $Y_1$ converges to $y$ and therefore \cref{Diagonalforlambda+1} allows to conclude.
\item Otherwise, $Y_0\cap W$ is infinite for some clopen $W\notni y$ and set $V=U\setminus W$. Let $f_i=\bigsqcup_{P\in \partit_i}f\restr{P}$ for $i=0,1$. By the first case, we have $f_1\leq \pgl \Maximalfct{\lambda}\leq f_1\corestr{V}$ and note that $f_1\corestr{V}\leq f\corestr{V}$. By \cref{Gluingasupperbound_cor,Intertwinereductionsforomegacentered}, we get $f_0\leq \omega\Minimalfct{\lambda+1}\leq f\corestr{W}$. Therefore
\[
f=f_1\sqcup f_0 \leq \pgl \Maximalfct{\lambda} \glbin \omega\Minimalfct{\lambda+1} \leq f\corestr{V}\glbin f\corestr{W}\leq f\corestr{U},
\]
as desired.\qedhere
\end{enumerate}
\end{proof}

\begin{theorem}\label{FiniteGenerationForSolvable}
Assume $\FG(\leq\alpha+1)$ for $\alpha<\omega_1$. Let $f:A \to B$ in $\sC_{\alpha+2}$ be solvable with $\partit$.
Then there exists $g\in \FinGl{\generator{\alpha+2}}$ such that $f\leq g$ and $g\leq f\corestr{U}$, so in particular $f\equiv g \equiv f\corestr{U}$, for every clopen $U\supseteq Y_\partit$.
\end{theorem}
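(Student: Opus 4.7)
Fix a fine $c$-partition $\partit$ of $f$, let $y\in Y_\partit$ witness the solvability of $f$, and let $U\supseteq Y_\partit$ be clopen. By the assumption $\FG(\leq\alpha+1)$ and \cref{FGconsequences}, every restriction $f\restr{P}$ is continuously equivalent to some function in the finite set $\centered{\alpha+2}$, so the set $F\subseteq\centered{\alpha+2}$ of equivalence classes appearing among $\set{f\restr{P}}[P\in\partit]$ is finite. Writing $\partit^{*}=\partitn{y}$, I would define $\tilde F\subseteq F$ as the set of classes $g$ such that every clopen neighbourhood $V\ni y$ contains infinitely many $P\in\partit^{*}$ with $f\restr{P}\equiv g$. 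A pigeonhole argument along a countable neighbourhood basis at $y$, combined with solvability, shows that each class appearing in $\partit^{*}$ is $\leq$ some class in $\tilde F$, and that each class in $F\setminus\tilde F$ is absent from some clopen neighbourhood of $y$.

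Next, isolate a strongly solvable subfunction at $y$. Shrink $U$ to a clopen $W\subseteq U$ with $y\in W$ small enough that every $P$ with $y_P\in W\setminus\set{y}$ satisfies $f\restr{P}\equiv g$ for some $g\in\tilde F$; this is possible thanks to the finiteness of $F\setminus\tilde F$. For each $g\in\tilde F$, extract an infinite sequence $(P_{g,n})_{n\in\N}$ inside $\partit^{*}\cap\partit\corestr{W}$ with $f\restr{P_{g,n}}\equiv g$ and pairwise distinct cocenters $y_{P_{g,n}}\to y$. Setting $\partit_{\mathrm{core}}=\partit_{y}\cup\bigcup_{g\in\tilde F}\set{P_{g,n}}[n\in\N]$ (where $\partit_{y}=\set{P\in\partit}[y_P=y]$) and $A_{\mathrm{core}}=\bigcup\partit_{\mathrm{core}}$, the cocenters of $\partit_{\mathrm{core}}$ accumulate only at $y$, so $f\restr{A_{\mathrm{core}}}$ together with $\partit_{\mathrm{core}}$ is strongly solvable at $y$. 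The Diagonal Theorem~\cref{DiagonalTheorem} then produces $g_0\in\FinGl{\generator{\alpha+2}}$ such that $f\restr{A_{\mathrm{core}}}\leq g_0\leq f\restr{A_{\mathrm{core}}}\corestr{V}$ for every clopen $V\ni y$.

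Let $A_{\mathrm{rest}}=\dom f\setminus A_{\mathrm{core}}$. Every $P\in\partit\setminus\partit_{\mathrm{core}}$ satisfies $f\restr{P}\leq g$ for some $g\in\tilde F$ (directly when $y_P\in W$, and by solvability otherwise), so \cref{Gluingasupperbound_cor} together with \cref{Gluingcohomomorphism} yield $f\restr{A_{\mathrm{rest}}}\leq\omega(\gl\tilde F)$, which I abbreviate $\omega\tilde F$. Symmetrically, for each $g\in\tilde F$ the pairwise distinct cocenters $y_{P_{g,n}}$ inside any prescribed clopen $V\ni y$ give, by \cref{Gluingaslowerbound}, a reduction $\omega g\leq f\restr{A_{\mathrm{core}}}\corestr{V}$, and \cref{Intertwinereductionsforomegacentered} upgrades this to $\omega\tilde F\leq f\restr{A_{\mathrm{core}}}\corestr{V}$ for every clopen $V\ni y$. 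Taking $g=g_0\gl\omega\tilde F$, an element of $\FinGl{\generator{\alpha+2}}$ by \cref{BasicsOnGenerators}, we obtain $f=f\restr{A_{\mathrm{core}}}\sqcup f\restr{A_{\mathrm{rest}}}\leq g$ by \cref{Gluingasupperbound}.

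The hard part is the converse $g\leq f\corestr{U}$: both $g_0$ and $\omega\tilde F$ are witnessed by reductions living near $y$, and one must combine them into a single reduction into $f\corestr{U}$ without double-using the same region. The plan is a case split. If for some clopen $V\ni y$ inside $U$ the cocenters in $U\setminus V$ realise each type of $\tilde F$ infinitely often, split $U=V\sqcup(U\setminus V)$ and combine $g_0\leq f\corestr{V}$ (Diagonal Theorem) with $\omega\tilde F\leq f\corestr{U\setminus V}$ (\cref{Intertwinereductions}) via \cref{Gluingaslowerbound2} to obtain $g\leq f\corestr{U}$. In the complementary case, $Y_\partit$ is concentrated near $y$ and $f$ is, up to a finite perturbation controlled by the Vertical Theorem~\cref{VerticalTheorem}, already strongly solvable at $y$; one then exploits the $\omega$-idempotence of the centered-$\omega$ generators already packaged inside $g_0$ (cf.\ \cref{BasicsOnGenerators}) to absorb $\omega\tilde F$ into $g_0$, yielding $g_0\gl\omega\tilde F\equiv g_0\leq f\corestr{U}$. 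Managing this interaction between the Diagonal output and $\omega\tilde F$, and verifying that $\omega$-idempotence holds at the right places, is the main technical obstacle of the proof.
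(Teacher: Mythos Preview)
Your overall shape is close to the paper's, but the key design choice differs and that is exactly where your gap sits. The paper does \emph{not} split off the classes that accumulate \emph{at} $y$ (your $\tilde F$); it splits off the set
\[
H=\set{g\in G}[\text{$g$ is reducible infinitely often \emph{off} $y$}],
\]
namely those $g$ for which some clopen $V\ni y$ leaves infinitely many $y_P$ with $g\leq f\restr{P}$ outside $V$. This choice makes the two halves of the argument mesh automatically: $\omega H\leq f\corestr{U\setminus V}$ is immediate from \cref{Intertwinereductionsforomegacentered}, and since $H$ is $\leq$-downward closed in $G$, the remaining parts $\partit_M=\partit\setminus\partit_H$ form a strongly solvable function at $y$ (each class in $D=G\setminus H$ has only finitely many cocenters outside any $V$, giving condition~(1), and solvability plus downward-closedness of $H$ gives condition~(2)). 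One then simply glues $g_M\leq f\corestr{V}$ with $\omega H\leq f\corestr{U\setminus V}$. There is no case split and no absorption argument.

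Your Case~2 is the residue of choosing $\tilde F$ instead of $H$, and as written it does not go through. First, your assertion that in Case~2 ``$Y_\partit$ is concentrated near $y$'' is false: take $\tilde F=\{g_1,g_2\}$ where cocenters of type $g_1$ converge to $y$ while cocenters of type $g_2$ also accumulate at some $z\neq y$; then for every $V\ni y$ the class $g_1$ witnesses the failure of your Case~1, yet $Y_\partit$ is not concentrated at $y$. Second, your absorption claim $g_0\glbin\omega\tilde F\equiv g_0$ is correct but non-trivial, and \cref{BasicsOnGenerators} does not provide it. What is actually needed is that a wedge absorbs $\omega$ of its own diagonal: for $h=\fctwedge(f_0,\ldots,f_k\mid D)$ one has $h\glbin\omega D\leq h$, which one can prove by splitting the rays of $\im h$ into evens and odds and applying \cref{Gluingaslowerbound}. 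Once you know this, your case split becomes unnecessary: since in your core the Diagonal Theorem's diagonal set is exactly $\tilde F$, absorption gives $g_0\glbin\omega\tilde F\equiv g_0$, and $g_0\leq f_{\mathrm{core}}\corestr{U}\leq f\corestr{U}$ directly from \cref{DiagonalTheorem}. So your approach can be salvaged, but only after proving this absorption lemma and dropping the incorrect Case~2 narrative; the paper's definition of $H$ sidesteps all of this.
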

\begin{proof}
Let $y\in Y_\partit$ witness that $f$ is solvable with $\partit$ and suppose that $U\supseteq Y_\partit$ is clopen in $B$.

By \cref{FGconsequences}, there exists a finite set $G\subseteq \centered{\alpha+2}$ of representatives for $\{f\restr{P}\mid P\in \partitn{y}\}$.
Let us say that $g\in G$ is \emph{reducible infinitely often off} $y$ if there exists a clopen neighborhood $V$ of $y$ in $B$
such that the set $V_g=\set{y_P}[P\in\partit \text{ and }y_P\notin V \text{ and }g\leq f\restr{P}]$ is infinite. We define
\[
H=\set{g\in G}[\text{$g$ is reducible infinitely often off $y$}]
\]
Note that $H$ is a $\leq$-downward closed subset of $G$ and set $D=G\setminus H$.
We further define $\partit_H=\{P\in\partitn{y}\mid \exists h\in H\,  f\restr{P}\equiv h\}$ and $\partit_M=\partit\setminus \partit_H$. 

We claim that the function $f_M=\bigsqcup_{P\in \partit_M}f\restr{P}$ together with $\partit_M$ is strongly solvable at $y$.
To this end, suppose that $W$ is a clopen neighborhood of $y$.
To check the first condition, suppose towards a contradiction that the set $\set{y_P}[P\in\partit_M \text{ and } y_P\notin W]$ is infinite.
Since $D$ is finite, the pigeonhole principle ensures that some $g\in D$ reduces infinitely often off $y$, a contradiction with the definition of $D$.
For the second condition, let $P\in\partit_M$ with $y_P\neq y$.
As $f$ is solvable with $\partit$ there exists $Q\in \partitn{y}$ such that $f\restr{P}\leq f\restr{Q}$.
Note that if $Q\in \partit_H$, then so is $P$ since $H$ is $\leq$-downward closed. Since $P\in \partit_M$, it follows that $Q\in \partit_M$ too, as desired.

Therefore $f_M$ is strongly solvable at $x$ and by \cref{DiagonalTheorem} there exists $g_M\in\FinGl{\generator{\alpha+2}}$ such that
\[
f_M \leq g_M \leq f_M\corestr{V}\leq f\corestr{V}.
\]
for every clopen neighborhood $V$ of $y$ in $B$. 

Finally, note that since $H$ is finite we can choose a single clopen neighborhood $V$ of $y$ in $B$ such that the sets $V_g$ are infinite for all $g\in H$.
It follows from \cref{Intertwinereductionsforomegacentered} that $\omega H\leq f\corestr{U\setminus V}$ for all clopen set $U\supseteq Y_\partit$.
Hence for $f_H=\bigsqcup_{P\in \partit_H}f\restr{P}$, the following holds by \cref{Gluingasupperbound}:
\[
f_H \leq \omega H \leq f\corestr{U\setminus V}.
\]

Altogether, for every clopen set $B\supseteq Y_\partit$ we have as desired: 
\[
f \leq f_H \glbin f_M \leq \omega H \glbin g_M \leq f\corestr{U\setminus V} \glbin f\corestr{V\cap U} \leq f\corestr{U}.\qedhere
\]
\end{proof}

\begin{theorem}\label{FGatdoublesuccessors}
For all $\alpha<\omega_1$, if $\FG(<\alpha+2)$ holds then so does $\FG(\leq\alpha+2)$.
\end{theorem}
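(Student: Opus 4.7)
The plan is to assemble the heavy machinery built in the previous subsections into a short synthesis proof. Fix $f:A\to B$ in $\sC_{\alpha+2}$. The hypothesis $\FG(<\alpha+2)$ gives all the ingredients we need, so the argument is really about chaining existing results and checking that the pieces fit together.

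First I would apply \cref{ExistenceFinePartitions} to obtain a fine $c$-partition $\partit$ of $f$. Then \cref{SolvableDecomposition} produces a countable family $(U_k)_{k\in K}$ of pairwise disjoint clopen subsets of $B$ covering $Y_\partit$ and such that for each $k$ the restriction $f_k:=f\restr{A^{U_k}_\partit}$ is solvable with $\partit\corestr{U_k}$. Since $Y_\partit\subseteq\bigcup_k U_k$ and every $P\in\partit$ belongs to exactly one $\partit\corestr{U_k}$ (the one indexed by the unique $k$ with $y_P\in U_k$), the family $(A^{U_k}_\partit)_{k\in K}$ is a clopen partition of $A$, so $f=\bigsqcup_{k\in K} f_k$.

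Next I would invoke \cref{FiniteGenerationForSolvable}, which applies because $\FG(\leq\alpha+1)$ is part of $\FG(<\alpha+2)$: for each $k\in K$ there exists $g_k\in\FinGl{\generator{\alpha+2}}$ satisfying $f_k\leq g_k$ and $g_k\leq f_k\corestr{U_k}\leq f\corestr{U_k}$, since $Y_{\partit\corestr{U_k}}\subseteq U_k$. Combining via \cref{Gluingasupperbound_cor} on the upper side and \cref{Gluingaslowerbound2} (using that the $U_k$ are pairwise disjoint open sets in $B$) on the lower side yields
\[
\gl_{k\in K} g_k\;\leq\;\gl_{k\in K} f\corestr{U_k}\;\leq\; f\;\leq\;\gl_{k\in K} f_k\;\leq\;\gl_{k\in K} g_k,
\]
hence $f\equiv\gl_{k\in K} g_k$. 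Finally, since each $g_k$ lies in $\FinGl{\generator{\alpha+2}}$, \cref{BasicsOnGenerators}~\cref{trickforfinalproof} (which handles both the finite and the countably infinite case of $K$, collapsing repeated generators into $\omega$-gluings already in $\generator{\alpha+2}$) guarantees that $\gl_{k\in K}g_k$ is itself equivalent to a function in $\FinGl{\generator{\alpha+2}}$. Thus $f$ is equivalent to a finite gluing of generators of $\generator{\alpha+2}$, proving $\FG(\alpha+2)$.

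There is no real obstacle at this step: all the hard work -- dissolving lumps, gobbling small pieces, the Vertical and Diagonal Theorems, and the reduction of a general fine $c$-partition to solvable pieces -- has already been done. The only point that requires mild care is ensuring that the decomposition of $A$ into the sets $A^{U_k}_\partit$ really is a clopen partition (openness is immediate, and closedness follows from the complement being a union of the other open pieces) and that \cref{BasicsOnGenerators}~\cref{trickforfinalproof} indeed compresses a potentially infinite gluing of generators back into a finite one, which is exactly what it was designed for.
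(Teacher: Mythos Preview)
Your approach is essentially the paper's, and the chain of inequalities at the end is exactly right. There is, however, a genuine gap in the step where you invoke \cref{FiniteGenerationForSolvable}.

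That theorem is stated for a function in $\sC_{\alpha+2}$, but the solvable pieces $f_k$ produced by \cref{SolvableDecomposition} need not all lie in $\sC_{\alpha+2}$. Writing $\alpha=\lambda+n$ with $\lambda$ limit or null, the fine $c$-partition $\partit$ only guarantees $\CB(f\restr{P})>\lambda$ for every $P\in\partit$, so each $f_k$ has $\CB$-rank somewhere in $[\lambda+1,\alpha+2]$. When $\CB(f_k)=\lambda+m+2$ for some $0\leq m\leq n$ you can still apply \cref{FiniteGenerationForSolvable} with the smaller parameter $\lambda+m$ in place of $\alpha$ (the hypothesis $\FG(\leq\lambda+m+1)$ follows from $\FG(<\alpha+2)$, and $\generator{\lambda+m+2}\subseteq\generator{\alpha+2}$). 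But when $\CB(f_k)=\lambda+1$, this rank is \emph{not} of the form $\beta+2$ for any $\beta\geq\lambda$, and \cref{FiniteGenerationForSolvable} simply does not apply. The paper handles exactly this case separately via \cref{solvablelambda+1}, and explicitly flags the issue just before that proposition: the decomposition into solvable pieces ``may yield some solvable functions in $\sC_{\lambda+1}$'', and $\FG(\lambda+1)$ alone does not give the required two-sided bound $f_k\leq g_k\leq f_k\corestr{U_k}$.

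The fix is minimal: at your step~4, cite both \cref{solvablelambda+1} and \cref{FiniteGenerationForSolvable} (according to whether $\CB(f_k)=\lambda+1$ or $\CB(f_k)\geq\lambda+2$), exactly as the paper does. Everything else in your outline is correct.
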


\begin{proof}
Assume $\FG(<\alpha+2)$ and let $f:A\to B$ in $\sC_{\alpha+2}$, we prove that $f$ is equivalent to some function in $\FinGl{\generator{\alpha+2}}$.
By \cref{ExistenceFinePartitions} we can take a fine $c$-partition of $f$, and by \cref{SolvableDecomposition} there is a countable family $\mathcal{U}$
of pairwise disjoint clopen subsets of $B$ whose union covers $Y_\partit$ and such that $f_U=f\restr{A^U_\partit}$ is solvable for all $U\in \mathcal{U}$.
Now by \cref{solvablelambda+1,FiniteGenerationForSolvable}, for all $U\in \mathcal{U}$ there exists $g_U\in \FinGl{\generator{\alpha+2}}$
such that $f_U\leq g_U$ and $g_U\leq f_U\corestr{U}$, so in particular $f_U\equiv g_U \equiv f\corestr{U}$.
Putting everything together, by \cref{Gluingasupperbound_cor,Gluingaslowerbound2} we have
\[
f=\bigsqcup_{U\in\mathcal{U}}f_U\leq\gl_{U\in\mathcal{U}}f_U\leq\gl_{U\in\mathcal{U}}g_U\leq\gl_{U\in\mathcal{U}}{f_U}\corestr{U}\leq\gl_{U\in\mathcal{U}}f\corestr{U}\leq f.
\]
So $f\equiv\gl_{U\in\mathcal{U}}g_U$, which is as desired if $\mathcal{U}$ is finite. If $\mathcal{U}$ is infinite, we conclude using \cref{BasicsOnGenerators}~\cref{trickforfinalproof}.
\end{proof}

We can finally conclude the proofs of all our main results:

\begin{proof}[Proof of 
\cref{MainTheorem1,MainTheorem2,MainTheorem3,PreciseStructureThm,Introthm:LevelsAreFinitelyGenerated,Introthm:BQO,Introthm:BQOonScat,Introthm:BQO}]
By \cref{FGconsequences}, we have already established $\FG(0)$, $\FG(\lambda)$ and $\FG(<\lambda+1)$ implies $\FG(\lambda+1)$, for every $\lambda$ limit.
\Cref{FGatdoublesuccessors} therefore completes the proof by induction of \cref{PreciseStructureThm}.
This in particular yields \cref{Introthm:LevelsAreFinitelyGenerated}, which in turn implies \cref{Introthm:BQO,Introthm:BQOonScat} by \cref{FGgivesBQO_2}.
This therefore concludes the proof of \cref{MainTheorem1,MainTheorem2,MainTheorem3}, since any \bqo{} is \wqo{}.
\end{proof}

% !TEX root = main.tex

\section{Concluding remarks and Open questions}\label{sectionConclusion}

%\subsection{Wrapping up: loose ends and optimality}
\subsection{Comments, questions and optimality}
%
%For the sake of completeness, let us now tie up some loose ends and finally give the proofs for all the main results announced in the course of this paper.
%
%\begin{proof}[Proof of the Precise Structure \cref{PreciseStructureThm}.]
%By \cref{FGconsequences,FGatdoublesuccessors}.
%\end{proof}
%
%This also proves \cref{Introthm:LevelsAreFinitelyGenerated}, because the statement of the Precise Structure \cref{PreciseStructureThm} is actually stronger.
%
%\begin{proof}[Proof of \cref{Introthm:BQO,Introthm:BQOonScat}.]
%Combining \cref{Introthm:LevelsAreFinitelyGenerated,FGgivesBQO_2}.
%\end{proof}
%
%Since any \bqo{} is \wqo{}, this concludes the proof of the Main Theorems of this paper.
Here are a few interesting corollaries %consequences
 of the whole analysis.

\begin{theorem}\label{PreciseStructureCor1}
Every function in $\sC$ is locally centered.
\end{theorem}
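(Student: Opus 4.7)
The plan is to derive this as a direct corollary of the Precise Structure \cref{PreciseStructureThm} via the machinery already developed in \cref{sectionCentered}. Concretely, \cref{LocalCenterednessFromBQO} states that if $\sC_{<\alpha}$ is \bqo{} for some $\alpha<\omega_1$, then every function in $\sC_{\alpha}$ is locally centered; this is packaged convenient\-ly in \cref{FGconsequences}, which says that $\FG(<\alpha)$ alone implies local centeredness at level $\alpha$.

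First I would invoke \cref{PreciseStructureThm}, which has just been established by the inductive scheme culminating in \cref{FGatdoublesuccessors}: for every countable ordinal $\alpha$, $\FG(\alpha)$ holds. In particular, $\FG(<\alpha)$ holds for every $\alpha<\omega_1$. Next, I would let $f\in\sC$ be arbitrary. Since $f$ is scattered and its domain is second countable (as a subset of $\cN$), its Cantor--Bendixson rank $\alpha=\CB(f)$ is a countable ordinal, as remarked just before the definition of $\sC$. Hence $f\in\sC_{\alpha}$ for some $\alpha<\omega_1$.

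Finally, applying \cref{FGconsequences} (the local centeredness clause) to this $\alpha$, and using $\FG(<\alpha)$ from the first step, I conclude that $f$ is locally centered, which is the desired statement.

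There is no obstacle here: all the work has been done upstream. The only thing to check is the appeal to countable $\CB$-rank, but this is immediate from second countability of the domain together with the fact that the Cantor--Bendixson derivative for functions is a closed derivative (as observed in \cref{sectionCB}).
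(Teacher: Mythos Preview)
Your proof is correct and follows essentially the same route as the paper. The paper cites \cref{Introthm:LevelsAreFinitelyGenerated,SecondstepforBQOthm,LocalCenterednessFromBQO} directly, whereas you go through the packaged statement \cref{FGconsequences}, whose local-centeredness clause is itself proved using exactly those ingredients; the small additional remark on countability of the $\CB$-rank is correct and harmless.
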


\begin{proof}
By \cref{Introthm:LevelsAreFinitelyGenerated,SecondstepforBQOthm,LocalCenterednessFromBQO}.
\end{proof}

As a byproduct, we obtained a ``normal form'' theorem for functions in $\sC$.

\begin{theorem}\label{PreciseStructureCor2}
Every function in $\sC$ is continuously equivalent to a finite-to-one function whose domain and range are countable Polish spaces.
\end{theorem}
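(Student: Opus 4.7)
The plan is to reduce this statement to a structural property of the generators provided by the Precise Structure~\cref{PreciseStructureThm}, which asserts that every $f\in\sC$ is continuously equivalent to some finite gluing $g\in\FinGl{\generator{\alpha}}$ for some countable ordinal $\alpha$. Thus it suffices to prove two claims: first, every generator in $\bigcup_{\alpha<\omega_1}\generator{\alpha}$ is a finite-to-one function whose domain and range are countable Polish subsets of $\cN$; second, the class of such finite-to-one functions with countable Polish domain and range is closed under finite gluings.

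The second claim is routine. The domain of a finite gluing $\gl_{i<n}f_i$ is a finite disjoint union of copies of the operands' domains, and likewise for the codomain, so countability and Polishness transfer to the result; preservation of Polishness is also explicitly recorded in \cref{BasicsOnGluing}. Furthermore, the preimage of each point under $\gl_{i<n}f_i$ coincides with the preimage of a single point under exactly one $f_i$, so finite-to-oneness transfers as well.

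The first claim I would prove by induction on $\alpha$, following the inductive construction of $\centered{\alpha}$ and $\generator{\alpha}$ from \cref{subsectionGenerators}. The base case is immediate: both $\emptyset$ and $\Minimalfct{1}\equiv\id_{\{\iw{0}\}}$ trivially satisfy the required properties. For the inductive step, every new generator is built from previously constructed generators (or centered functions, themselves already handled inductively) via one of three operations: gluing, pointed gluing, or the wedge. Each preserves countability and Polishness of domain and range, as recorded respectively in \cref{BasicsOnGluing,BasicsOnPointedGluing,BasicfactsWedge}. Each also preserves finite-to-oneness: this is explicit for wedge in \cref{BasicfactsWedge}, and for gluing and pointed gluing it is immediate, since the preimage of any point in the codomain of the resulting function is either the singleton $\{\iw{0}\}$ (the only point mapping to $\iw{0}$ in a pointed gluing) or naturally in bijection with the preimage of a single point under exactly one operand.

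I do not anticipate any serious obstacle, since all the preservation properties of the three operations required by the argument have already been recorded in the preliminary sections. The entire proof amounts to verifying that the inductive construction of $\bigcup_{\alpha<\omega_1}\generator{\alpha}$ never leaves the class of finite-to-one maps between countable Polish subsets of $\cN$, and that this class is stable under finite gluing.
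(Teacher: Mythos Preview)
Your proposal is correct and follows essentially the same approach as the paper's proof: reduce to generators via the Precise Structure \cref{PreciseStructureThm}, then verify by induction on the construction of $\generator{\alpha}$ that gluing, pointed gluing, and wedge all preserve countable Polish domain and range as well as finite-to-oneness. The only minor difference is that the paper sharpens the argument slightly by noting that gluing and pointed gluing preserve \emph{injectivity}, so all functions in $\centered{\alpha}$ (and $\Maximalfct{\alpha}$, $\Minimalfct{\alpha+1}$, etc.) are in fact injective, with the wedge being the sole source of non-injective (but still finite-to-one) generators; this extra observation is not needed for the statement as given.
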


\begin{proof}
Using \cref{BasicsOnGluing}, observe first that having countable Polish domain and range and being finite-to-one are all properties preserved by finite gluing,
so by the Precise Structure \cref{PreciseStructureThm}, it is enough to prove the result for generators.
By \cref{BasicsOnGluing,BasicsOnPointedGluing,BasicfactsWedge}, all three operations -- gluing, pointed gluing and wedge -- preserve countability and Polishness of both domain and range,
so by induction all generators -- built using only these operations -- have countable Polish domain and range.% It remains to prove finite-to-oneness.

For finite-to-oneness, note that by \cref{BasicsOnGluing,BasicsOnPointedGluing} again the gluing and pointed gluing operations preserve injectivity, so by induction we have:
for all $\alpha<\omega_1$ the functions $\Maximalfct{\alpha},\pgl\Maximalfct{\alpha},\Minimalfct{\alpha+1}$, and
all functions in $\centered{\alpha}$ are injective. % and $\omegacentered{\alpha}$
The only operation which produces non-injective functions is the wedge, but by \cref{BasicfactsWedge} it produces finite-to-one generators, thus concluding the proof.
\end{proof}

It is now the right moment to open a line of questions regarding the optimality of our general strategy:
was proving the Precise Structure \cref{PreciseStructureThm} really necessary to get all its consequences?
For the first instance of this general question, observe that
\cref{PreciseStructureCor1,PreciseStructureCor2} are statements that follow from the Precise Structure \cref{PreciseStructureThm}, but they do not concern \emph{a priori} the generators.
This suggests the following angle:

\begin{question}
Is there a proof of \cref{PreciseStructureCor1,PreciseStructureCor2} that does not use the Precise Structure \cref{PreciseStructureThm},
or \cref{Introthm:LevelsAreFinitelyGenerated}, or even \cref{Introthm:BQOonScat}?
\end{question}

The second instance appears already in the introduction: \cref{Introthm:LevelsAreFinitelyGenerated} implies \cref{Introthm:BQOonScat} by \cref{SecondstepforBQOthm},
but they could still be somehow of the same strength.

\begin{question}
Can we deduce \cref{Introthm:LevelsAreFinitelyGenerated} from \cref{Introthm:BQOonScat}?
\end{question}

Even within our global strategy, one can wonder if all these operations and generators are really needed.
In that direction, the strongest sign is that -- up to continuous equivalence -- a generating set for $\sC$ cannot omit the generators we identified.

Write $\generator{}=\bigcup_{\alpha<\omega_1}\generator{\alpha}$ and $\centered{}=\bigcup_{\alpha<\omega_1}\centered{\alpha}$.
We already have all we need to show that elements of $\centered{}$ and $\omega\set{\centered{}}$ cannot be omitted,
but the situation for wedge generators is a bit trickier and needs more preparation.

\begin{lemma}\label{WedgeReducedForm}
Any wedge generator $f$ admits a \emph{reduced form} defined as follows:
$f\equiv\fctwedge(f_0,\ldots, f_k \mid F_{k+1})$ for $k\in\N$ with $f_i\in\sC$ for $i\leq k$ and $F_{k+1}\subseteq\centered{}$ satisfying:
\begin{enumerate}
\item $(f_i)_{i\leq k}$ is an antichain with respect to continuous reduction.
\item For all $i\leq k$ and for all $h\in F_{k+1}$ we have $h\not\leq f_i$, and $F_{k+1}$ is an antichain. %If $F_{k+1}\neq\emptyset$ then f
\item Either $f\equiv \omega F_{k+1}$, or for all $i\leq k$ we have $\pgl f_i\not\leq F_{k+1}$.
\end{enumerate}
\end{lemma}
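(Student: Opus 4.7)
The plan is to take an arbitrary wedge generator $f = \fctwedge(F_0, \ldots, F_k \mid F_{k+1})$, write $f_i := \gl F_i$ for $i \leq k$ so that $f = \fctwedge(f_0,\ldots,f_k \mid \gl F_{k+1})$, and normalize $f$ into reduced form through three kinds of equivalence-preserving moves.

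First, by \cref{cor:wedgeSets}, the wedge depends on the family $\{f_i\}_{i \leq k}$ only through its domination-equivalence class; an analogous statement for the diagonal ($g \equiv g'$ implies $\fctwedge(f_0,\ldots,f_k \mid g) \equiv \fctwedge(f_0,\ldots,f_k \mid g')$) follows from a two-way application of \cref{Wedgeasupperbound} that reduces each diagonal piece via the given equivalence. So I replace $\{f_i\}$ and $F_{k+1}$ each by a maximal antichain of maxima to secure condition~(1) and the antichain part of~(2). The remaining half of~(2)---that no $h \in F_{k+1}$ is below any vertical---I handle by establishing the equivalence $\fctwedge(f_0,\ldots,f_k \mid \gl F_{k+1}) \equiv \fctwedge(f_0,\ldots,f_k \mid \gl(F_{k+1} \setminus \{h\}))$ whenever $h \leq f_i$. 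Monotonicity of the wedge in its diagonal gives one direction, and for the other I apply \cref{Wedgeasupperbound} to a modified clopen partition of $\dom f$ in which the $\omega$ copies of $\dom h$ along the diagonal are redistributed into the $i$-th vertical. The $n$-th ray of the enlarged vertical then becomes $f_i \sqcup h \leq 2 f_i$, reducible by pieces to $\iw{f_i}$ via any injective bookkeeping $\N \to \N$, while the other pieces and the image-convergence $f(A_i) \to \iw{0}$ are preserved. Iterating this move discharges all offending $h$'s.

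For condition~(3), after the above, I check whether some $\pgl f_{i^*} \leq h^* \in F_{k+1}$. If not, the second alternative holds and we are done. Otherwise, I aim to show $f \equiv \omega F_{k+1}$. The direction $\omega F_{k+1} \leq f$ is immediate from \cref{Gluingaslowerbound} applied to the $\omega$-many diagonal copies of $\gl F_{k+1}$, which occupy pairwise disjoint rays of $\im f$. For the converse, I use \cref{Gluingasupperbound_cor} to write $f \leq \bigl(\gl_{i \leq k} \pgl f_i\bigr) \glbin \omega \gl F_{k+1}$, so it suffices to reduce each vertical $\pgl f_j$ into $\gl F_{k+1}$; for $j = i^*$ this is the hypothesis, and for the remaining verticals I plan to iterate a vertical-dropping step, using the Disjointification Lemma~\cref{DisjointificationLemma} with the $\omega$-many diagonal copies of $h^*$ to absorb simultaneously the domain-side pointed-gluing convergence and the image-side convergence at $\iw{0}$ of $\pgl f_{i^*}$, and repeating until no verticals remain. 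The main obstacle is exactly this last absorption step: the pointed-gluing domain of $\pgl f_{i^*}$ has all of its branches converging to a single special point, while the diagonal copies of $h^*$ sit in pairwise disjoint clopen pieces of the target wedge's domain; reconciling this topological mismatch---and coordinating with the other verticals, which by the antichain hypothesis are incomparable with $f_{i^*}$---is where the subtlety lies and is precisely what the Disjointification Lemma is tailored to handle.
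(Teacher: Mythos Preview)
Your treatment of conditions~(1) and~(2) matches the paper's: normalize the verticals via \cref{cor:wedgeSets} and absorb redundant diagonal pieces into a vertical using \cref{Wedgeasupperbound}. The gap is in condition~(3). You claim that if \emph{some} $\pgl f_{i^*}\leq F_{k+1}$ then $f\equiv\omega F_{k+1}$, and plan to iterate vertical-dropping ``until no verticals remain''. But this is false: if another vertical $f_j$ satisfies $\pgl f_j\nleq F_{k+1}$, then $\pgl f_j\leq f$ while $\pgl f_j\nleq\omega F_{k+1}$ (as $\pgl f_j$ is centered, \cref{Centerinvariance} would otherwise force $\pgl f_j\leq h$ for some $h\in F_{k+1}$), so $f\not\leq\omega F_{k+1}$. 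Your iteration can stall with verticals left over, and when it does you have landed in the \emph{second} alternative with a new, smaller presentation---you need to say so. The paper does this in one stroke: let $J=\set{i\leq k}[\pgl f_i\leq F_{k+1}]$; if $J=\{0,\ldots,k\}$ then $f\leq\omega F_{k+1}$ by \cref{Gluingasupperbound}, otherwise $f\equiv\fctwedge((f_j)_{j\notin J}\mid F_{k+1})$, which satisfies the second alternative.

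Separately, the tool for dropping a vertical---that is, for showing $f\leq\fctwedge((f_j)_{j\neq i^*}\mid F_{k+1})$---is \cref{Wedgeasupperbound}, not the Disjointification \cref{DisjointificationLemma}: the latter is a \emph{lower}-bound criterion and cannot produce the reduction you need. Concretely, declare $N_{(i^*)}$ to be a new first diagonal piece; since $f\restr{N_{(i^*)}}\equiv\pgl f_{i^*}\leq\gl F_{k+1}$, the augmented diagonal sequence is still reducible by pieces to $\iw{\gl F_{k+1}}$, and convergence of the images to $\iw{0}$ is unaffected by one additional initial term. There is no ``topological mismatch'' to overcome.
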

\begin{proof}
We start with any $f=\fctwedge(f_0,\ldots, f_k \mid F_{k+1})$ for $k\in\N$ with $f_i\in\sC$ for $i\leq k$ and $F_{k+1}\subseteq\centered{}$,
and we give three finite procedures to get a reduced form for $f$.

First, we can pick an $\leq$-antichain of $\leq$-maximal elements in $\set{f_0,\ldots, f_k}$ 
and still obtain by \cref{cor:wedgeSets} a function equivalent to the original one, getting the first item.
%and assume that these functions are incomparable.
%For the first procedure, note that if for some $i,j\leq k$ with $i\neq j$ we have $f_i\leq f_j$,
%then by \cref{cor:wedgeSets} we have $f\equiv\fctwedge(f_0,\ldots, f_{i-1},f_{i+1},\ldots f_k\mid F_{k+1})$.
%By iterating this procedure up to continuous equivalence we can suppose that $f_i$ and $f_j$ are incomparable whenever $i\neq j\leq k$, getting the first item.

For the second procedure:
suppose that some $h\in F_{k+1}$ is reduced by $f_i$ for some $i\leq k$ (resp. by some $h'\in F_{k+1}$).
By definition of the wedge operation there is $n<|F_{k+1}|$ such that
for all $m\in \N$ we have $f\restr{N_{(k+1+m,n)}}\equiv h$. We want to apply \cref{Wedgeasupperbound} on a `subfunction' of $f$, so we need to produce
relevant sets $A_m$ for all $m\in \N$. Define $A_m=N_{(m)}$ if $i\neq m\leq k$ (resp. for all $i\leq k$), $A_i=N_{(i)}\cup\bigcup_{m\in\N}N_{(k+1+m,n)}$,
and $A_{k+1+m}=N_{(k+1+m)}\setminus N_{(k+1+m,n)}$ for all $m\in\N$. Since $h\leq f_i$ the partition $(A_m)_{m\in\N}$ of $\dom(f)$ along with the point $\iw{0}$
satisfy the hypotheses of \cref{Wedgeasupperbound}, which gives $f\leq\fctwedge(f_0,\ldots,f_k\mid F_{k+1}\setminus\set{h})\leq f$.
Iterating this second procedure we get the second item.

For the third item, observe first that $\omega F_{k+1}\leq f$ by \cref{Gluingaslowerbound}, and consider the set $J=\set{i\leq k}[\pgl f_i\leq F_{k+1}]$.
If $J=k+1$ then $f\leq\omega F_{k+1}$ by \cref{Gluingasupperbound}, and otherwise $f\leq\fctwedge((f_j)_{j\in k+1\setminus J}\mid F_{k+1})\leq f$, concluding the proof.
\end{proof}

%\begin{lemma}\label{centeredareprime}
%Let $f,g\in \sC$ and $(A_i)_{i\in I}$ be a finite clopen covering of $\dom g$.
%
%If $f$ is centered and $\omega f\leq g$, then $\omega f\leq g\restr{A_i}$ for some $i\in I$.
%\end{lemma}
%\begin{proof}
%Let $x$ be a center for $f$ and $(\sigma,\tau)$ is a continuous reduction from $\omega f$ to $g$, then as $\omega f$ is injective on $\set{(i)\conc x}[i\in\omega]$, so must be $g\sigma$.
%So $\set{g\sigma((i)\conc x)}[i\in\omega]$ is infinite and note that for all $i\in\omega$ and all open neighbourhood $U\ni \sigma ((i)\conc x)$ we have $f\leq g\restr{U}$ by \cref{Centerinvariance}.
%Therefore there exists one piece $A_j$ of the finite partition such that $\set{\sigma((i)\conc x)\in A_j}[i\in\omega]$ is infinite.
%Now $g(\set{\sigma((i)\conc x)\in A_j}[i\in\omega])$ must also be discrete since $\tau$ is continuous and $\set{\omega f((i)\conc x)}[i\in\omega]$ is discrete,
%so finally $\omega f\leq g\restr{A_j}$ by \cref{Gluingaslowerbound}.
%\end{proof}

\begin{lemma}\label{ReducedFormIsUseful}
Let $f=\fctwedge(f_0,\ldots, f_k \mid F_{k+1})$ be a wedge generator in reduced form and call $F=\set{f_0,\ldots,f_k}\cup F_{k+1}$.
Suppose that $(A,A')$ is a clopen partition of $\dom(f)$ and $f\restr{A'}\leq \FinGl{F}$.
Then either $f\equiv \omega F_{k+1}$, or $f\leq f\restr{A}$.
\end{lemma}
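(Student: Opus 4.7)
The plan is to assume $f\not\equiv\omega F_{k+1}$ and to show $f\leq f\restr{A}$ by embedding a copy of the wedge representing $f$ into $f\restr{A}$ via the Disjointification \cref{DisjointificationLemma}. Writing $f_{k+1}:=\gl F_{k+1}$, we have $f\equiv\fctwedge(f_0,\ldots,f_k\mid f_{k+1})$. Denote $V_i:=N_{(i)}\cap\dom f$ and $D_j:=N_{(k+1+j)}\cap\dom f$, so that $f\restr{V_i}\equiv\pgl f_i$ is centered at $x_i:=(i)\conc\iw{0}$ with $f(x_i)=\iw{0}$, and $f\restr{D_j}\equiv f_{k+1}$ with $f(D_j)\to\iw{0}$ as $j\to\infty$. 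Moreover each $D_j$ decomposes as a disjoint clopen union $\bigsqcup_{h\in F_{k+1}}D_j^h$ with $f\restr{D_j^h}\equiv h$, and each such piece admits a center $c_j^h$.

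The first step is to check that $x_i\in A$ for every $i\leq k$. If not, $A'\cap V_i$ is a clopen neighborhood of the center of $f\restr{V_i}$, and centeredness yields $\pgl f_i\leq f\restr{A'\cap V_i}\leq\FinGl F$; applying \cref{Centerinvariance}(iii) to the clopen cover of the finite gluing by its summands produces some $g\in F$ with $\pgl f_i\leq g$. If $g=f_j$ with $j\leq k$, item~1 of \cref{WedgeReducedForm} forces $i=j$, whence $\pgl f_i\leq f_i$ contradicts $\CB(\pgl f_i)>\CB(f_i)$; if $g\in F_{k+1}$, this contradicts item~3 of \cref{WedgeReducedForm} under our standing assumption $f\not\equiv\omega F_{k+1}$. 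A parallel argument handles the diagonal: for each $h\in F_{k+1}$ the set $J_h:=\{j\in\N:c_j^h\in A'\}$ is finite. Otherwise centeredness of $h$ yields $h\leq f\restr{A'\cap D_j^h}$ for infinitely many pairwise disjoint clopen pieces, so $\omega h\leq f\restr{A'}\leq\FinGl F$; \cref{Intertwinereductionsforomegacentered}(1) applied to the finite disjoint-union decomposition of $\FinGl F$ then yields some $g\in F$ with $\omega h\leq g$, whence $h\leq g$. Item~2 of \cref{WedgeReducedForm} rules out $g\in\{f_j\}_{j\leq k}$, so $g\in F_{k+1}$, and the antichain property of $F_{k+1}$ forces $g=h$, giving $\omega h\leq h$. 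But $\omega h$ has infinite $\CB$-degree at the successor rank $\CB(h)$ while $h$ is simple of degree $1$, contradicting \cref{CBbasicsfromJSL-tp}. Since $F_{k+1}$ is finite, the set $J:=\bigcap_h(\N\setminus J_h)$ is cofinite, and for every $j\in J$ centeredness of each $h\in F_{k+1}$ yields $h\leq f\restr{A\cap D_j^h}$, whence $f_{k+1}=\gl F_{k+1}\leq f\restr{A\cap D_j}$.

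Finally, I would apply the Disjointification \cref{DisjointificationLemma} with $y:=\iw{0}$ and the points $x_i\in A$. For the first hypothesis, given an open $U\ni x_i$ in $A$, centeredness of $f\restr{V_i}$ at $x_i$ produces a clopen $W\subseteq U\cap V_i$ with $f\restr{W}\equiv f_i$ and $f(W)\subseteq N_{(0)^n\conc(1)\conc(i)}$ for some $n$, yielding a reduction of $f_i$ into $f\restr{A}$ whose image is bounded away from $\iw{0}$. For the second hypothesis, given an open $V\ni\iw{0}$, choose $j\in J$ large enough that $f(D_j)\subseteq N_{(0)^j\conc(1)\conc(k+1)}\subseteq V$; then $f_{k+1}\leq f\restr{A\cap D_j}$ with image inside $V$ and closure avoiding $\iw{0}$. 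The Disjointification Lemma then delivers $\fctwedge(f_0,\ldots,f_k\mid f_{k+1})\leq f\restr{A}$, and since $f$ is equivalent to this wedge we conclude $f\leq f\restr{A}$. The main obstacle is the diagonal analysis, where controlling $J_h$ crucially combines item~2 of \cref{WedgeReducedForm} with the fact that a centered $h$ cannot satisfy $\omega h\leq h$ (same rank, different degree); this is precisely what forces the cofinite availability of complete $f_{k+1}$-copies inside $A$.
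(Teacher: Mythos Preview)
Your argument is correct and follows essentially the same route as the paper: first show all the vertical basepoints $x_i$ lie in $A$ (using items 1 and 3 of the reduced form), then show each $h\in F_{k+1}$ retains infinitely many centers in $A$ (using item 2 and $\omega h\not\leq h$), and finally rebuild the wedge inside $A$. The only notable differences are cosmetic: you invoke the Disjointification \cref{DisjointificationLemma} explicitly (the paper's citation of \cref{Wedgeasupperbound} at that step appears to be a slip for the same lemma), and you rule out $\omega h\leq h$ via $\CB$-type rather than \cref{Rigidityofthecocenter}; both are valid.
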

\begin{proof}
We show that either $f\equiv  \omega F_{k+1}$, or $A$ contains $(i)\conc\iw{0}$ for all $i\leq k$ and infinitely many centers for every function in $F_{k+1}$.
Since $A$ is a clopen set of $\dom f$ the latter allows us to use \cref{Wedgeasupperbound} inside $A$ and conclude.

Suppose now that $x=(i)\conc\iw{0}\in A'$ for some $i\leq k$. Since $x$ is a center for $\pgl f_i$ by \cref{Pgluingofregulariscentered,Centerinvariance} we have $\pgl f_i\leq f\restr{A'}\leq \FinGl{F}$.
Since $\pgl f_i$ is centered, by \cref{Centerinvariance} we have $\pgl f_i\leq F$. But $\CB(\pgl f_i)=\CB(f_i)+1$ by \cref{BasicsOnPointedGluing} since $f_i\in \sC$, so $\pgl f_i\not\leq f_i$.
Therefore $\pgl f_i\leq F\setminus\set{f_i}$, and since $f$ is in reduced form we cannot have $f_i\leq f_j$, so even less $\pgl f_i\leq f_j$, for some $j\neq i\leq k$.
We must have then $\pgl f_i\leq F_{k+1}$, so \cref{WedgeReducedForm} again tells us that $f\equiv\omega F_{k+1}$.

We can thus suppose that $(i)\conc\iw{0}\in A$ for all $i\leq k$. If $F_{k+1}=\emptyset$ we are done, so suppose it is not, and
some function $h\in F_{k+1}$ has only finitely many centers in $A$. Then $h$ has infinitely many centers in $A'$ by the pigeonhole
principle, so by centeredness and \cref{Gluingaslowerbound} we get $\omega h\leq f\restr{A'}\leq\FinGl{F}$, and by \cref{Intertwinereductionsforomegacentered} we obtain $\omega h\leq F$.
But since $h$ is centered, we cannot have $\omega h\leq h$ by \cref{Rigidityofthecocenter} and in turn $\omega h\leq F\setminus\set{h}$, which contradicts our assumption of normal form.
\end{proof}

We are ready to show the optimality of our generating set.

\begin{theorem}\label{OptimalityOfGenerators}
Assume that $G\subseteq\sC$ is generating $\sC$, that is, any function in $\sC$ is continuously equivalent to one in $\FinGl{G}$.
Then $G$ contains $\generator{}$ up to continuous equivalence, that is for all $f\in \generator{}$ there is $g\in G$ such that $f\equiv g$.
\end{theorem}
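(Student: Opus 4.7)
The plan is to prove, by strong induction on the rank $\alpha < \omega_1$, the statement that every $f \in \generator{\alpha}$ is continuously equivalent to some element of $G$. Given $f$, the generating hypothesis on $G$ yields a finite sequence $(g_j)_{j \leq n}$ in $G$ with $f \equiv \gl_{j \leq n} g_j$, and the task reduces to producing some index $j$ with $g_j \equiv f$. The argument then splits along the three syntactic types of generators enumerated in the definition preceding \cref{AlreadyKnownGenerators}: centered, of the form $\omega h$, and wedge.

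The two easier cases exploit centeredness invariance directly and do not require the induction hypothesis. If $f \in \centered{\alpha}$ is centered, I will apply \cref{Centerinvariance}~\cref{Centerinvariance3} to the reduction witnessing $f \leq \gl_{j \leq n} g_j$ and to the open cover $\{N_{(j)}\}_{j \leq n}$ of the codomain of $\gl_j g_j$; this furnishes an index $i$ with $f \leq (\gl_j g_j)\restr{N_{(i)}} \equiv g_i$, and combining with $g_i \leq \gl_j g_j \equiv f$ yields $g_i \equiv f$. If $f = \omega h$ with $h \in \centered{}$, I will instead view $\gl_{j \leq n} g_j$ as the disjoint union $\bigsqcup_{j \leq n} g_j$ and invoke the first item in the second paragraph of \cref{Intertwinereductionsforomegacentered}: since $h$ is centered and $\omega h \leq \bigsqcup_j g_j$, that lemma provides an index $i$ with $\omega h \leq g_i$, and again $g_i \equiv f$.

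The wedge case is the crux. I will first normalize $f$ via \cref{WedgeReducedForm} to a reduced form $f \equiv \fctwedge(f_0, \ldots, f_k \mid F_{k+1})$, set $F = \{f_0, \ldots, f_k\} \cup F_{k+1}$, and argue by contradiction, assuming $g_j < f$ strictly for every $j \leq n$. Letting $(A_j)_{j \leq n}$ be the clopen partition $A_j = \sigma^{-1}(N_{(j)})$ of $\dom f$ coming from the continuous reduction $\sigma$ witnessing $f \leq \gl_j g_j$, so that $f\restr{A_j} \leq g_j$, the plan is to verify $f\restr{A_j} \leq \FinGl{F}$ for every $j$. Once that is in hand, applying \cref{ReducedFormIsUseful} with the degenerate partition $(A, A') = (\emptyset, \dom f)$ will deliver the contradiction: its two alternatives give either $f \equiv \omega F_{k+1}$, which reduces via the $\omega h$ case handled above to the existence of some $g_i \equiv f$ (contradicting the standing assumption $g_j < f$ for all $j$), or the plainly absurd $f \leq f\restr{\emptyset}$.

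The main obstacle is therefore the verification of $f\restr{A_j} \leq \FinGl{F}$. The approach will be to decompose $g_j$ as a finite gluing of generators using \cref{PreciseStructureThm} (available at this point of the paper), apply the induction hypothesis to replace each sub-generator of rank $<\alpha$ by a representative in $G$, and then run a pigeonhole analysis on the distinguished points of $f$: the $k+1$ vertical centers $(i) \conc \iw{0}$ of $f$ (each a center for the corresponding $\pgl f_i$) and, for each $h \in F_{k+1}$, the infinitely many centers carried by the diagonal copies of $h$. Exploiting the reduced-form conditions (the antichain requirements on $\{f_0, \ldots, f_k\}$ and on $F_{k+1}$, and the clause $\pgl f_i \not\leq F_{k+1}$), this pigeonhole analysis will force each centered sub-generator of $g_j$ either to factor through an element of $F$---contributing to the reduction $f\restr{A_j} \leq \FinGl{F}$---or to absorb enough of the wedge structure of $f$ so that $g_j$ actually dominates $f$, contradicting $g_j < f$. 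The bookkeeping of the pigeonhole, and in particular the careful separation of contributions from verticals and diagonals, constitutes the technical heart of the argument.
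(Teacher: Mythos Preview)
Your treatment of the centered generators and of the generators of the form $\omega h$ with $h\in\centered{}$ is correct and matches the paper. You do omit the case $f=\Maximalfct{\lambda}$ for $\lambda$ limit (it lies in $\generator{\lambda}$ but is neither centered, nor of the form $\omega h$, nor a wedge); this is easy, since some $g_i$ must have $\CB(g_i)=\lambda$ and hence $g_i\equiv\Maximalfct{\lambda}$ by the General Structure \cref{JSLgeneralstructure}.

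In the wedge case your plan has a genuine gap. You aim to show $f\restr{A_j}\leq\FinGl{F}$ for \emph{every} $j$, but this is impossible whenever $f\not\equiv\omega F_{k+1}$. Indeed some $A_{j_0}$ must contain one of the vertical centers $(i)\conc\iw{0}$, and by centeredness this gives $\pgl f_i\leq f\restr{A_{j_0}}$. Were $f\restr{A_{j_0}}\leq\FinGl{F}$ to hold, \cref{Centerinvariance} would force $\pgl f_i\leq F$; but $\pgl f_i\not\leq f_j$ for any $j\leq k$ (else $f_i\leq\pgl f_i\leq f_j$, contradicting the antichain condition when $j\neq i$, and $\CB$-rank when $j=i$) and $\pgl f_i\not\leq F_{k+1}$ by the third reduced-form condition. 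The pigeonhole you sketch is essentially the content of the proof of \cref{ReducedFormIsUseful} and cannot be used to manufacture its own hypothesis. Nor does the rank induction help: the bare assumption $g_j<f$ gives no control on how $g_j$, or its sub-generators, relate to the fixed finite set $F$.

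The missing idea is to exploit the \emph{other} direction of the equivalence, namely $\gl_j g_j\leq f$. By \cref{Gluingaslowerbound} this yields a relative clopen family $(B_j)_j$ in $\im f$ with $g_j\leq f\corestr{B_j}$. The image of the wedge is a pointed gluing with single distinguished point $\iw{0}$, and any clopen subset of $\im f$ avoiding $\iw{0}$ has $f$-corestriction in $\FinGl{F}$; hence at most one index $n$ satisfies $\iw{0}\in B_n$, and for every other $j$ one gets $g_j\leq\FinGl{F}$. Now return to your domain partition $(A_j)_j$ coming from $f\leq\gl_j g_j$ and set $A'=\bigcup_{j\neq n}A_j$: then $f\restr{A'}\leq\gl_{j\neq n}g_j\leq\FinGl{F}$, and \cref{ReducedFormIsUseful} yields either $f\equiv\omega F_{k+1}$ (handled by the $\omega h$ case) or $f\leq f\restr{A_n}\leq g_n$.
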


\begin{proof}
Take $f \in\generator{}$ and $g\in\FinGl{G}$ such that $f\equiv g$. Write $g=\gl_{i\in I}g_i$ for some finite $I\subseteq\N$ and $g_i\in G$ for all $i\in I$.
Since $g\leq f$ we have $g_i\leq f$ for all $i\leq n$, it is enough to find $i\in I$ such that $f\leq g_i$.
We prove this by induction on the definition of generators. 

If $f=\Minimalfct{1}$, $f=\Minimalfct{\lambda+1}$ or more generally if $f\in \centered{}$, then $f\leq \gl_i{g_i}$ implies $f\leq g_i$ for some $i\in I$ by \cref{Centerinvariance}.
If $f=\Maximalfct{\lambda}$ for $\lambda$ limit, then necessarily $\CB(g_i)=\lambda$ for some $i\in I$ by \cref{CBrankofclopenunion} and so $g_i\equiv f$ by the General Structure \cref{JSLgeneralstructure}. 
If $f\in \omega\set{\centered{}}$, then similarly $f\leq g_i$ for some $i\in I$ by \cref{Intertwinereductionsforomegacentered}.

Suppose now that $f=\fctwedge(f_0,\ldots, f_k \mid F_{k+1})$ is a wedge generator in reduced form and call $F=\set{f_0,\ldots,f_k}\cup F_{k+1}$.
Note that if $B$ is any clopen set of $\im f$ that does not contain $\iw{0}$ then by definition of the wedge operation we have $f\corestr{B}\leq\FinGl{F}$.

Since $g\leq f$ by \cref{Gluingaslowerbound} there is a relative clopen partition $(B_i)_{i\in I}$ in $\im f$ such that $g_i\leq f\corestr{B_i}$.
If $\iw{0}\notin B=\bigcup_{i\in I}B_i$ then $f\leq g \leq f\corestr{B}\leq\FinGl{F}$, so by \cref{ReducedFormIsUseful} applied to $(\emptyset,\dom(f))$
we obtain $f\equiv\omega F_{k+1}$ and we are done. 

Suppose now that $\iw{0}\in B$. There is a unique $n\in I$ such that $\iw{0}\in B_n$, and for some clopen partition $(C,C')$ of $\im f$ we have $B_n\subseteq C$ and $\bigcup_{i\neq n} B_i \subseteq C'$. Note that $\gl_{i\neq n}g_i\leq f\corestr{C'}\leq\FinGl{F}$. 
Now apply \cref{ReducedFormIsUseful} to the clopen partition $(A,A')=(f^{-1}(C), f^{-1}(C'))$ of $\dom f$. 
If $f\equiv\omega F_{k+1}$ once again we are done, otherwise $f\leq f\restr{A}\leq g_n$ so we are also done.
\end{proof}

Observe that if \cref{OptimalityOfGenerators} shows that one cannot omit our generators,
the sets $\generator{\alpha}$ that we have identified are far from being minimal, for at least two reasons.
First because some pairs of distinct generators can be continuously equivalent, as evidenced (for instance) by the Reduced Form \cref{WedgeReducedForm}.
Second because some generators could be dispensable, besides for each $\alpha=\lambda+n$ with $\lambda$ limit or null and $n\in\N$,
our set $\generator{\alpha}$ generates not only $\sC_{\alpha}$ but actually the whole (bigger) class $\sC_{[\lambda,\alpha]}$.
%We chose for comodity to have $\generator{\lambda+n}\subseteq\generator{\lambda+n+1}$, but this is not necessary.
%\ynote*{I do not see that, do you believe this?}{We can actually conjecture that for all limit $\lambda\in\omega_1$ and $n\in\N$
%the set $\generator{\lambda+2n+2}\setminus\generator{\lambda+n}$ still generates $\sC_{\lambda+2n+2}$.}
This suggest the following question.

\begin{question}\label{qu:numberofgenerators}
Given $\alpha=\lambda+n$ with $\lambda<\omega_1$ limit and $n\in\N$, what is the smallest number of functions needed to finitely generate $\sC_{[\lambda,\alpha]}$?
\end{question}

In case $\alpha=\lambda$ is limit, the answer to \cref{qu:numberofgenerators} is trivially 1, since in that case $\generator{\lambda}=\set{\Maximalfct{\lambda}}$.
%Some sets of generators have been correctly identified however: for all limit $\lambda$ the sets $\generator{\lambda}$. We now see that we have already (almost) identified a minimal set of generators for the levels $\sC_{\lambda +1}$ with $\lambda$ limit or $1$%, and -- as we now see -- $\generator{\lambda+1}$, for $\lambda$ limit or $1$
%, thus proving the optimality of \cref{FGatsuccessoroflimit}.
We now study \cref{qu:numberofgenerators} at $\lambda+1$ for $\lambda$ limit or 1, proving first the optimality of \cref{FGatsuccessoroflimit}.

%\ynote*{Il manquait quelque chose dans la preuve. We cannot remove any of these generators to generate $\sC$, but we need to show we cannot remove any of these to generate $\sC_{\lambda+1}$. This is clear for those of rank $\lambda+1$. We need to show that $\Maximalfct{\lambda}$ is necessary. One solution is to replace $\Maximalfct{\lambda}$ by $\Minimalfct{\lambda+1}\glbin\Maximalfct{\lambda}$. J'ai adapté, mais il faut relire et peaufiner.}{
\begin{theorem}\label{OptimalityatSuccessorofLimit}
Let $\lambda<\omega_1$ be limit or 1.
%The smallest set, up to continuous equivalence, generating $\sC_{\lambda+1}$ is given by
Up to continuous equivalence, the smallest subset that generates  $\sC_{[\lambda,\lambda+1]}$ is given by

\[\set{\Maximalfct{\lambda},\Minimalfct{\lambda+1},\pgl\Maximalfct{\lambda},\omega\Minimalfct{\lambda+1},
\fctwedge(\Maximalfct{\lambda}\mid\Minimalfct{\lambda+1}),\Maximalfct{\lambda+1}}.
\]
%\[\set{\Minimalfct{\lambda+1},\Minimalfct{\lambda+1}\glbin\Maximalfct{\lambda},\omega\Minimalfct{\lambda+1},\pgl\Maximalfct{\lambda},
%\fctwedge(\Maximalfct{\lambda}\mid\Minimalfct{\lambda+1}),\Maximalfct{\lambda+1}}.
%\]
\end{theorem}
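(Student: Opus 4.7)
My plan is to establish both halves of the claim: first that the displayed six functions generate $\sC_{[\lambda,\lambda+1]}$, and then that no subset (up to continuous equivalence) of it still generates. The generating direction is essentially already in hand: for $\sC_{\lambda+1}$ this is precisely \cref{FGatsuccessoroflimit}, which applies because continuous reducibility is \bqo{} on $\sC_{<\lambda}$ by the General Structure \cref{JSLgeneralstructure}; and for $\sC_\lambda$ the General Structure again reduces matters to $\Maximalfct{\lambda}$, which is one of the six. Finite gluings of the six therefore cover the whole $\sC_{[\lambda,\lambda+1]}$.

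For the minimality direction, I will adapt the inductive argument of \cref{OptimalityOfGenerators} to this local setting. The key observation there, which only uses that the pieces of a gluing lie in $\sC$, is that whenever a generator $f$ satisfies $f\equiv\gl_{i<n}g_i$ with $g_i\in\sC$, some $g_{i_0}$ must itself be equivalent to $f$. Instantiating this for each of our six functions: (i) for $\Maximalfct{\lambda}$ use \cref{CBrankofclopenunion} together with the General Structure, since it is the only function of \CB-rank $\lambda$ in the list; (ii) for the centered generators $\Minimalfct{\lambda+1}$ and $\pgl\Maximalfct{\lambda}$, pick a center and apply \cref{Centerinvariance} to locate it in a single piece of the gluing; (iii) for $\omega\Minimalfct{\lambda+1}$ and $\Maximalfct{\lambda+1}\equiv\omega\pgl\Maximalfct{\lambda}$, invoke \cref{Intertwinereductionsforomegacentered} to funnel the infinitely many centers into one piece; (iv) for the wedge generator $\fctwedge(\Maximalfct{\lambda}\mid\Minimalfct{\lambda+1})$, first observe that it is already in reduced form in the sense of \cref{WedgeReducedForm} and then apply \cref{ReducedFormIsUseful} to the clopen partition of the domain induced by the gluing.

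The main obstacle will be verifying that the six functions are pairwise strictly inequivalent, since the argument above shows minimality only once we know that no generator is already equivalent to a \emph{different} generator on the list. Most comparisons are immediate: $\Maximalfct{\lambda}$ is the unique one of rank $\lambda$; $\Minimalfct{\lambda+1}<\pgl\Maximalfct{\lambda}$ by \cref{cor:CenteredSucessor}; $\Minimalfct{\lambda+1}$ and $\pgl\Maximalfct{\lambda}$ are centered while the three remaining functions have infinite \CB-degree; and $\Maximalfct{\lambda+1}$ is strictly above the others by maximality in $\sC_{\lambda+1}$. The delicate point is that $\fctwedge(\Maximalfct{\lambda}\mid\Minimalfct{\lambda+1})$ is not equivalent to $\omega\Minimalfct{\lambda+1}$: although their \CB-types coincide and the latter trivially reduces to the former, the reverse reduction fails because the image of the wedge has an accumulation point $\iw{0}$ reached by the cocenters of the diagonal copies, whereas the image of $\omega\Minimalfct{\lambda+1}$ is a disjoint gluing of simple codomains with cocenters in distinct clopen columns. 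Using continuity of $\tau$, a candidate reduction would force infinitely many $\CB_\lambda$-level points of the wedge to be sent by $\sigma$ into a single $\Minimalfct{\lambda+1}$-column of $\omega\Minimalfct{\lambda+1}$, whose $\CB_\lambda$-level is a single point; combining \cref{CBbasicsfromJSL} with the fact that these points of the wedge carry pairwise distinct $f$-values yields the desired contradiction, completing the proof of strict inequivalence and thus of minimality.
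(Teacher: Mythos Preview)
Your overall architecture is correct and matches the paper's: first generation via \cref{FGatsuccessoroflimit} and \cref{JSLgeneralstructure}, then for minimality show that each of the six functions is ``gluing-irreducible'' (using \cref{Centerinvariance}, \cref{Intertwinereductionsforomegacentered}, and \cref{ReducedFormIsUseful} as in \cref{OptimalityOfGenerators}), and finally show the six are pairwise inequivalent. The first two parts are fine.

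The problem is in your pairwise-inequivalence step. First, your argument that $\fctwedge(\Maximalfct{\lambda}\mid\Minimalfct{\lambda+1})\nleq\omega\Minimalfct{\lambda+1}$ does not go through as written. You claim that continuity of $\tau$ forces infinitely many $\CB_\lambda$-points of the wedge into a single column of $\omega\Minimalfct{\lambda+1}$, but continuity of $\tau$ imposes no such constraint: a continuous $\tau$ can perfectly well send a discrete sequence $(n_i)\conc\iw{0}$ (with the $n_i$ pairwise distinct) to a sequence converging to $\iw{0}$. If the $\sigma$-images of the diagonal $\CB_\lambda$-points land in pairwise distinct columns, nothing in your argument yields a contradiction. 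Second, you never justify $\Maximalfct{\lambda+1}\nleq\fctwedge(\Maximalfct{\lambda}\mid\Minimalfct{\lambda+1})$: ``maximality in $\sC_{\lambda+1}$'' only gives $\geq$, not strict inequality, and this comparison is genuinely nontrivial.

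Both gaps are filled by the same much simpler observation that the paper uses: the wedge restricted to $N_{(0)}$ is (equivalent to) the centered function $\pgl\Maximalfct{\lambda}$. Hence a reduction of the wedge to $\omega\Minimalfct{\lambda+1}$ would give $\pgl\Maximalfct{\lambda}\leq\omega\Minimalfct{\lambda+1}$, and then \cref{Centerinvariance} forces $\pgl\Maximalfct{\lambda}\leq\Minimalfct{\lambda+1}$, contradicting \cref{cor:CenteredSucessor}. For the other gap, a reduction $(\sigma,\tau)$ of $\Maximalfct{\lambda+1}=\omega\pgl\Maximalfct{\lambda}$ to the wedge sends each center $(i)\conc\iw{0}$ into $\CB_\lambda$ of the wedge by \cref{CBbasicsfromJSL}; since $\Maximalfct{\lambda+1}$ takes pairwise distinct values on these centers, the $\sigma$-images must be distinct, hence some $\sigma((i)\conc\iw{0})$ lands in a diagonal column $N_{(j)}$ with $j>0$. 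By centeredness this gives $\pgl\Maximalfct{\lambda}\leq\Minimalfct{\lambda+1}$, the same contradiction.
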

%}
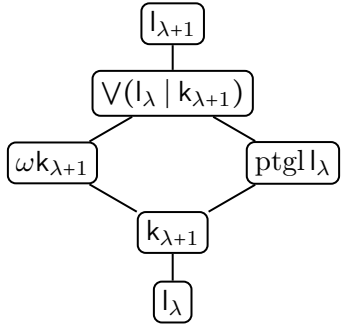
\begin{figure}
\centering
\tikzset{every picture/.style={line width=0.75pt}} %set default line width to 0.75pt        

\begin{tikzpicture}[x=0.75pt,y=0.75pt,yscale=-1,xscale=1,
 every node/.append style={anchor=center, draw, inner sep=3pt, rounded corners=3pt}]%minimum size=12pt,

\node (1)   at (120,140)   {$ \Maximalfct{\lambda }$};
\node (2)  at (120,105)   {$\Minimalfct{\lambda +1}$};
%antichain
\node (3a) at  (60,70)  {$\omega \Minimalfct{\lambda+1}$};
\node (3b)  at (180,70)  {$\pgl \Maximalfct{\lambda}$};

\node   (4) at (120,35)   {$\fctwedge(\Maximalfct{\lambda } \mid \Minimalfct{\lambda +1})$};
\node (5) at (120,0)   {$\Maximalfct{\lambda  +1}$};

\draw  (1) -- (2) ;
\draw  (2) -- (3a) ;
\draw  (2) -- (3b) ;
\draw  (3a) -- (4) ;
\draw  (3b) -- (4) ;
\draw  (4) -- (5) ;
\end{tikzpicture}

\caption{Hasse diagram of our minimal set of generators of $\sC_{[\lambda,\lambda+1]}$ for $\lambda$ limit or $1$.}
\end{figure}

\begin{proof}
For $\lambda$ limit or $1$, let $G$ be the set from the statement.
By \cref{JSLgeneralstructure,FGatsuccessoroflimit} $G$ generates $\sC_{[\lambda,\lambda+1]}$.
As all functions of $G$ are in $\generator{}$ by \cref{OptimalityOfGenerators} it is enough to
show that no two of these functions are continuously equivalent.
%By \cref{AlreadyKnownGenerators}, this set generates $\sC_{\lambda+1}$ and we show that
%for all $g,h\in G$ we have $g\not\equiv h$. 
By \cref{cor:CenteredSucessor} we know that $\Minimalfct{\lambda+1}<\pgl\Maximalfct{\lambda}$,
%Remark that $\pgl \Maximalfct{\lambda}\equiv \Maximalfct{\lambda}\glbin \pgl \Maximalfct{\lambda}$ by \cref{SplittingaPgluingonatail},
%so $\Minimalfct{\lambda+1}\glbin \Maximalfct{\lambda}\leq \pgl\Maximalfct{\lambda}$. In fact, the latter reduction is strict for otherwise we would have $\pgl\Maximalfct{\lambda}\leq \Minimalfct{\lambda+1}$ by \cref{Centerinvariance}. Moreover 
and by \cref{Centerinvariance} we have $\pgl\Maximalfct{\lambda}\nleq \omega \Minimalfct{\lambda+1}$.
By considering their $\CB$-type, it follows from \cref{CBbasicsfromJSL-tp} that $\Minimalfct{\lambda+1}\nleq\Maximalfct{\lambda}$
and $\pgl\Maximalfct{\lambda}\ngeq \omega \Minimalfct{\lambda+1}$ so these two generators are incomparable. Now note that the following reduction holds
\[
\Maximalfct{\lambda}< \Minimalfct{\lambda+1} < 
\Bigl\{\begin{array}{c} \pgl\Maximalfct{\lambda} \\ \omega \Minimalfct{\lambda+1}\end{array}\Bigr\}
\leq \fctwedge(\Maximalfct{\lambda}\mid\Minimalfct{\lambda+1}) \leq \Maximalfct{\lambda+1}
\]
%\[
%\Minimalfct{\lambda+1}\leq \Minimalfct{\lambda+1}\glbin\Maximalfct{\lambda} < 
%\Bigl\{\begin{array}{c} \pgl\Maximalfct{\lambda} \\ \omega \Minimalfct{\lambda+1}\end{array}\Bigr\}
%\leq \fctwedge(\Maximalfct{\lambda}\mid\Minimalfct{\lambda+1}) \leq \Maximalfct{\lambda+1}
%\]
%\begin{align*}
% \Minimalfct{\lambda+1} \leq&  \Minimalfct{\lambda+1}\glbin \Maximalfct{\lambda} < \omega \Minimalfct{\lambda+1} \leq \fctwedge(\Maximalfct{\lambda}\mid\Minimalfct{\lambda+1}) \leq \Maximalfct{\lambda+1}\\
% \Minimalfct{\lambda+1} <& \pgl\Maximalfct{\lambda} \leq \fctwedge(\Maximalfct{\lambda}\mid\Minimalfct{\lambda+1})
%\end{align*}
and we show that %$\Minimalfct{\lambda+1}\ngeq \Minimalfct{\lambda+1}\glbin\Maximalfct{\lambda}$,
$\omega\Minimalfct{\lambda+1} \ngeq \fctwedge(\Maximalfct{\lambda}\mid\Minimalfct{\lambda+1})$ and
$\fctwedge(\Maximalfct{\lambda}\mid\Minimalfct{\lambda+1})\ngeq \Maximalfct{\lambda+1}$.

Suppose that $(\sigma, \tau)$ reduces $\fctwedge(\Maximalfct{\lambda}\mid\Minimalfct{\lambda+1})$ to $\omega\Minimalfct{\lambda+1}$. In particular we would get
\[\pgl\Maximalfct{\lambda}\equiv\fctwedge(\Maximalfct{\lambda}\mid\Minimalfct{\lambda+1})\restr{N_{(0)}}\leq\omega\Minimalfct{\lambda+1},\]
and by \cref{Centerinvariance} since $\pgl\Maximalfct{\lambda}$ is centered we would have $\pgl\Maximalfct{\lambda}\leq\Minimalfct{\lambda+1}$, a contradiction.
%Since this relation does not hold, we get $\omega\Minimalfct{\lambda+1}<\fctwedge(\Maximalfct{\lambda}\mid\Minimalfct{\lambda+1})$.

Suppose that $(\sigma, \tau)$ reduces $\Maximalfct{\lambda+1}$ to $\fctwedge(\Maximalfct{\lambda}\mid\Minimalfct{\lambda+1})$, then by \cref{CBbasicsfromJSL} the function $\sigma$ maps injectively the points of $\CB$-rank $\lambda$ for the former function into those of the latter function. Therefore there exists $i\in\N$ and $j\neq 0$ such that $\sigma((i)\conc \iw{0})=(j)\conc \iw{0}$, so by centeredness of $\pgl\Maximalfct{\lambda}$ we would get $\pgl\Maximalfct{\lambda}\leq\Minimalfct{\lambda+1}$, a contradiction again.
\end{proof}

We obtain as a corollary the answer to \cref{qu:numberofgenerators} in case $\alpha=\lambda+1$.

\begin{corollary}
The smallest sets generating $\sC_{[\lambda,\lambda+1]}$ are all order-isomorphic, for $\lambda<\omega_1$ limit and for $\lambda=1$.

In particular they all have the same size of 6 generators.
\end{corollary}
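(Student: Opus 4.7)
The plan is to extract this corollary almost immediately from Theorem \ref{OptimalityatSuccessorofLimit}. That theorem already does the heavy lifting: it exhibits a specific 6-element set $G_0 = \{\Maximalfct{\lambda},\Minimalfct{\lambda+1},\pgl\Maximalfct{\lambda},\omega\Minimalfct{\lambda+1},\fctwedge(\Maximalfct{\lambda}\mid\Minimalfct{\lambda+1}),\Maximalfct{\lambda+1}\}$ which generates $\sC_{[\lambda,\lambda+1]}$ and whose six elements are pairwise inequivalent under continuous reducibility. Moreover, its proof shows (via an adaptation of \cref{OptimalityOfGenerators} to the restricted class $\sC_{[\lambda,\lambda+1]}$) that every generating set $G$ of $\sC_{[\lambda,\lambda+1]}$ must contain, up to continuous equivalence, a representative of each of the six $\equiv$-classes in $G_0$.

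From this I would conclude as follows. Any minimal generating set $G$ of $\sC_{[\lambda,\lambda+1]}$ has exactly six elements: at most six because $G_0$ itself generates with six elements, and at least six because $G$ must meet each of the six pairwise distinct $\equiv$-classes in $G_0$ (and no single element of $G$ can witness two distinct classes, by transitivity of $\equiv$). Therefore for each minimal generating set $G$ there is a canonical bijection $\phi_G \colon G \to G_0$ sending $g \in G$ to the unique element of $G_0$ continuously equivalent to $g$. Given two minimal generating sets $G, G'$, the composition $\psi = \phi_{G'}^{-1} \circ \phi_G \colon G \to G'$ is then a bijection satisfying $\psi(g) \equiv g$ for all $g \in G$.

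Finally, to see that $\psi$ is an order-isomorphism for continuous reducibility, I would use that $\leq$ on functions is invariant under $\equiv$: for $g_1, g_2 \in G$ we have $g_1 \leq g_2$ if and only if $\psi(g_1) \leq \psi(g_2)$, since $g_i \equiv \psi(g_i)$. Hence all smallest generating sets of $\sC_{[\lambda,\lambda+1]}$ are order-isomorphic, and in particular all have cardinality 6. The only real obstacle is the adaptation of \cref{OptimalityOfGenerators} from \enquote{generating all of $\sC$} to \enquote{generating $\sC_{[\lambda,\lambda+1]}$}, but this is already handled inside the proof of Theorem \ref{OptimalityatSuccessorofLimit} and requires no further work.
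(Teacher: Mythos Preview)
Your proposal is correct and follows exactly the intended derivation: the paper states this corollary without proof immediately after Theorem~\ref{OptimalityatSuccessorofLimit}, and your argument—that any minimal generating set must contain one representative from each of the six pairwise inequivalent $\equiv$-classes in $G_0$, and that the resulting bijection preserves $\leq$ since it is given by $\equiv$—is precisely how the corollary is meant to follow. Your remark about adapting \cref{OptimalityOfGenerators} to the subclass $\sC_{[\lambda,\lambda+1]}$ is also apt; the paper's proof of Theorem~\ref{OptimalityatSuccessorofLimit} invokes \cref{OptimalityOfGenerators} somewhat elliptically, and you have correctly identified that the same argument goes through because each of the six generators lies in $\sC_{[\lambda,\lambda+1]}$.
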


More generally the partial orders induced by the quasi-ordered classes $(\sC_{\lambda+1},\leq)$ are all isomorphic, for $\lambda$ limit or 1.

\begin{question}\label{qu:isomorphismofthelevels}
Given $\lambda<\omega_1$ limit or $1$, are the partial orders induced by the classes $(\sC_{\lambda+n+1},\leq)$ isomorphic for all $n\in\N$?
\end{question}

We can now give long-awaited examples.

\begin{proposition}\label{SomeCounterExamples}
In $\sC$ some functions are not equivalent to any injective function, there are injective functions not equivalent to any embedding, and there are $\leq$-antichains of any finite size.
\Needspace{2\baselineskip}
More specifically, for $\lambda<\omega_1$ limit or 1:
\begin{enumerate}
\item The injection $\fctwedge(\Maximalfct{\lambda}\mid\Minimalfct{\lambda+1})$ is not continuously equivalent to an embedding.

\item For all $n\in\N$ non null $(k\pgl \pgl\Maximalfct{\lambda}\gl(n-k)\pgl \omega\Minimalfct{\lambda+1})_{ k\leq n}$ is a $\leq$-antichain.

\item The function $\fctwedge(\pgl\Maximalfct{\lambda},\omega\Minimalfct{\lambda+1}\mid\emptyset)$ is a simple and not continuously equivalent to the pointed gluing of its rays, nor to an injective function.
\end{enumerate}
\end{proposition}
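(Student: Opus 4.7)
\emph{Plan for \textup{(1)}.} I will prove the stronger statement that $f = \fctwedge(\Maximalfct{\lambda}\mid\Minimalfct{\lambda+1})$ is not equivalent to $\id_Y$ for any space $Y$; this suffices since every embedding $g$ is equivalent to $\id_{\im g}$. The key observation is that $\CB_\lambda(f) = \{(0)\conc\iw{0}\}\cup\{(1+i)\conc\iw{0}:i\in\N\}$ is discrete as a subspace of $\dom f$, whereas its $f$-image $\{\iw{0}\}\cup\{(0)^i\conc(1)\conc(1)\conc\iw{0}:i\in\N\}$ is homeomorphic to $\omega+1$ in $\im f$. From a reduction $\id_Y\leq f$ via an embedding $\sigma'$ with $f\sigma'$ also an embedding, $\CB_\lambda(Y)$ must embed homeomorphically into both spectra. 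If the image in $\im f$ contains $\iw{0}$, then it is an infinite subspace of $\omega+1$ carrying the limit point and is therefore non-discrete, contradicting its homeomorphism with a subspace of discrete $\omega$. Otherwise $\CB_\lambda(Y)$ is discrete, and the Decomposition \cref{JSLdecompositionlemma} together with \cref{simplefunctionslambda+1damuddafuckaz} normalize $\id_Y$ up to equivalence to one of $\omega\Minimalfct{\lambda+1}$, $k\pgl\Maximalfct{\lambda}\gl\omega\Minimalfct{\lambda+1}$ for $k\geq 1$, or $\Maximalfct{\lambda+1}$. The first and third are excluded by \cref{OptimalityatSuccessorofLimit}. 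I expect the middle family to be the main obstacle: I will argue that any reduction $k\pgl\Maximalfct{\lambda}\gl\omega\Minimalfct{\lambda+1}\leq f$ requires $k$ disjoint clopen pieces of $\im f$ each supporting $\pgl\Maximalfct{\lambda}$, but \cref{Centerinvariance}~\cref{Centerinvariance3} together with $\Minimalfct{\lambda+1}<\pgl\Maximalfct{\lambda}$ from \cref{cor:CenteredSucessor} forces each such piece to contain $\iw{0}$; this is impossible for $k\geq 2$, and for $k=1$ such a neighborhood of $\iw{0}$ absorbs all but finitely many diagonal rays, leaving no room for $\omega\Minimalfct{\lambda+1}$.

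\emph{Plan for \textup{(2)}.} Set $h_k = k\pgl\pgl\Maximalfct{\lambda}\gl(n-k)\pgl\omega\Minimalfct{\lambda+1}$. I first confirm that $\pgl\pgl\Maximalfct{\lambda}$ and $\pgl\omega\Minimalfct{\lambda+1}$ are $\leq$-incomparable: a reduction $\pgl\pgl\Maximalfct{\lambda}\leq\pgl\omega\Minimalfct{\lambda+1}$ would, by \cref{Rigidityofthecocenter}, force $\pgl\Maximalfct{\lambda}$ to reduce by finite pieces to $\omega\Minimalfct{\lambda+1}$; centeredness of $\pgl\Maximalfct{\lambda}$ and \cref{Centerinvariance}~\cref{Centerinvariance3} collapse this to $\pgl\Maximalfct{\lambda}\leq\Minimalfct{\lambda+1}$, contradicting \cref{cor:CenteredSucessor}, while the reverse direction fails by a $\CB$-degree count. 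The main step is to show that a reduction $h_k\leq h_{k'}$ via $(\sigma,\tau)$ induces a bijection between the $n$ distinguished points of $h_k$'s summands and those of $h_{k'}$ that respects summand type. Injectivity is forced because $\tau$ is a function and the distinguished points of $h_k$'s summands have pairwise distinct images lying in distinct clopen pieces of $\im h_k$; type preservation follows from \cref{Centerinvariance}~\cref{Centerinvariance3} applied to each summand, together with the incomparability. Hence $k = k'$.

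\emph{Plan for \textup{(3)}.} That $g = \fctwedge(\pgl\Maximalfct{\lambda},\omega\Minimalfct{\lambda+1}\mid\emptyset)$ is a non-injective simple function in $\sC_{\lambda+2}$ with $\CB_{\lambda+1}(g) = \{(0)\conc\iw{0},(1)\conc\iw{0}\}$ both mapping to $\iw{0}$ is immediate from the wedge definition and \cref{BasicfactsWedge}. To show $g$ is not centered (whence, by \cref{Pgluingofregulariscentered}, it is not equivalent to $\pgl_n\ray{g}{\iw{0},n}$, which is centered as a pointed gluing of a constant sequence), I invoke \cref{scatteredhavecocenter}: any center of $g$ must map to the distinguished point $\iw{0}$, so the only candidates are $(0)\conc\iw{0}$ and $(1)\conc\iw{0}$. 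Either choice would force the incomparable pair $g\restr{N_{(0)}}\equiv\pgl\pgl\Maximalfct{\lambda}$ and $g\restr{N_{(1)}}\equiv\pgl\omega\Minimalfct{\lambda+1}$ to become $\leq$-comparable, contradicting the incomparability from \textup{(2)}. For the final claim, suppose $g\equiv h$ with $h$ injective. Simplicity and injectivity force $\CB_{\lambda+1}(h) = \{c\}$ a singleton. Fix an injective reduction $\sigma'$ witnessing $h\leq g$ (\cref{ContRedonEmbed}), and say WLOG $\sigma'(c) = (0)\conc\iw{0}$; by continuity, some neighborhood $V\ni c$ satisfies $\sigma'(V)\subseteq N_{(0)}$, hence $h\restr V\leq g\restr{N_{(0)}}\equiv\pgl\pgl\Maximalfct{\lambda}$. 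On the other hand, any reduction $g\leq h$ via some $(\sigma,\tau)$ must map both points of $\CB_{\lambda+1}(g)$ to the singleton $\CB_{\lambda+1}(h) = \{c\}$; since $(1)\conc\iw{0}$ is a center of the centered function $g\restr{N_{(1)}}\equiv\pgl\omega\Minimalfct{\lambda+1}$, \cref{Centerinvariance}~\cref{Centerinvariance1} yields $\pgl\omega\Minimalfct{\lambda+1}\leq h\restr V$. Combining gives $\pgl\omega\Minimalfct{\lambda+1}\leq\pgl\pgl\Maximalfct{\lambda}$, contradicting the incomparability established in \textup{(2)}.
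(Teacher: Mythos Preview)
Your overall strategy is sound and the arguments for (2) and (3) are essentially correct, but there is one genuine gap and a point where your route diverges from the paper's in an interesting way.

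\textbf{The gap in (2).} You invoke \cref{Rigidityofthecocenter} to deduce that a reduction $\pgl\pgl\Maximalfct{\lambda}\leq\pgl\omega\Minimalfct{\lambda+1}$ forces the rays to reduce by pieces, but \cref{Rigidityofthecocenter} requires \emph{equivalence}, not merely $\leq$. The fix is easy and is what the paper does: since $\CB_{\lambda+1}$ of both functions is the singleton $\{\iw{0}\}$, \cref{CBbasicsfromJSL} already gives $\sigma(\iw{0})=\iw{0}$, hence $\tau(\iw{0})=\iw{0}$; then the continuity-of-$\tau$ argument from the proof of \cref{Rigidityofthecocenter} item~(ii) (no equivalence needed there) shows $\iw{0}\notin\overline{(\pgl\omega\Minimalfct{\lambda+1})\sigma(\dom\ray{(\pgl\pgl\Maximalfct{\lambda})}{\iw{0},0})}$, whence $\pgl\Maximalfct{\lambda}$ reduces to finitely many rays $\omega\Minimalfct{\lambda+1}$ and \cref{Centerinvariance}~\cref{Centerinvariance3} finishes. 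The same patch handles the reverse direction, where your ``$\CB$-degree count'' then correctly yields $\omega\Minimalfct{\lambda+1}\not\leq n\pgl\Maximalfct{\lambda}$.

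\textbf{Comparison for (1).} Your approach and the paper's diverge substantially. The paper works with the hypothetical embedding $g\equiv f$ directly: since $g$ is an embedding, the images $g\sigma((i)\conc\iw{0})$ are discrete in $\im g$, so by centeredness and \cref{Gluingaslowerbound} one gets $\pgl\Maximalfct{\lambda}\glbin\omega\Minimalfct{\lambda+1}\leq g\equiv f$, while $f\leq\pgl\Maximalfct{\lambda}\glbin\omega\Minimalfct{\lambda+1}$ by \cref{Gluingasupperbound_cor}; the contradiction then comes from the proof of \cref{OptimalityOfGenerators} applied to the wedge generator $f$. Your route avoids \cref{OptimalityOfGenerators} by normalizing $\id_Y$ and then directly showing $k\pgl\Maximalfct{\lambda}\glbin\omega\Minimalfct{\lambda+1}\not\leq f$ for each $k\geq 1$. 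This does work---your ``no room for $\omega\Minimalfct{\lambda+1}$'' argument in the $k=1$ case is correct once you observe that the cocenters of the $\Minimalfct{\lambda+1}$-pieces in any witnessing relative clopen partition must lie in $f(\CB_\lambda(f))\setminus\{\iw{0}\}$, which converges to $\iw{0}\in U_0$---but it is noticeably more laborious than the paper's shortcut, and your normalization step implicitly reuses the case analysis of \cref{FGatsuccessoroflimit}.

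\textbf{For (3).} Your argument that $g$ is not centered is the same as the paper's. For non-equivalence to an injective function, your route via $h\leq g$ (forcing $h\restr{V}\leq\pgl\pgl\Maximalfct{\lambda}$) combined with $g\leq h$ (forcing $\pgl\omega\Minimalfct{\lambda+1}\leq h\restr{V}$) is a clean alternative to the paper's, which instead shows that the ``same image'' case forces $f\equiv\pgl\{\pgl\Maximalfct{\lambda},\omega\Minimalfct{\lambda+1}\}$, contradicting non-centeredness. Both work; yours is arguably more direct.
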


\begin{proof}[Proof sketch.]
First item: suppose that $(\sigma,\tau)$ is a reduction from $f:=\fctwedge(\Maximalfct{\lambda}\mid\Minimalfct{\lambda+1})$ to an embedding $g\equiv f$.
Observe that $\CB_{\lambda+1}(f)=\set{(i)\conc\iw{0}}[i\in\N]$ so by \cref{CBbasicsfromJSL} we have $\set{\sigma((i)\conc\iw{0})}[i\in\N]\subseteq\CB_{\lambda+1}(g)$.
If $\set{\sigma((i)\conc\iw{0})}[i\in\N]$ is not discrete, any limit point of it should be of CB-rank $\lambda+2$ for the embedding $g$,
which is impossible as $\CB(g)=\lambda+1$.
So $\set{\sigma((i)\conc\iw{0})}[i\in\N]$ is discrete, and since $g$ is an embedding the set $\set{g\sigma((i)\conc\iw{0})}[i\in\N]$ is also discrete in $\im g$.
Pick pairwise disjoint open sets $U_i\ni g\sigma((i)\conc\iw{0})$ for all $i\in\N$ and by centeredness and \cref{Gluingaslowerbound} we obtain
$\pgl\Maximalfct{\lambda}\gl\omega\Minimalfct{\lambda+1}\leq g\leq f\leq\pgl\Maximalfct{\lambda}\gl\omega\Minimalfct{\lambda+1}$.

Therefore if $f$ is equivalent to an embedding, then it must be equivalent to $\pgl\Maximalfct{\lambda}\gl\omega\Minimalfct{\lambda+1}$.
Now since $f\in\generator{}$, by (the proof of) \cref{OptimalityOfGenerators} this implies that either $\pgl\Maximalfct{\lambda}\equiv f$ or $\omega\Minimalfct{\lambda+1}\equiv f$,
and this is impossible by \cref{OptimalityatSuccessorofLimit}.

\medskip

Second item: first show that $f:=\pgl \pgl\Maximalfct{\lambda}$ and $g:=\pgl \omega\Minimalfct{\lambda+1}$ are incomparable.
Note that there are both of $\CB$-rank $\lambda+2$ and $\CB_{\lambda+1}(f)=\CB_{\lambda+1}(g)=\set{\iw{0}}$.  
%So by \cref{CBbasicsfromJSL} any reduction $(\sigma,\tau)$ must satisfy $\sigma(\iw{0})=\iw{0}$, and since $f$ and $g$ are injective $\tau(\iw{0})=\iw{0}$.
Suppose that $(\sigma,\tau)$ reduces $g$ to $f$ then by \cref{CBbasicsfromJSL} we have $\sigma(\iw{0})=\iw{0}$,
and as $f$ is injective we furthermore have $\sigma^{-1}(\set{\iw{0}})=\set{\iw{0}}$ and $\tau^{-1}(\set{\iw{0}})=\set{\iw{0}}$ by \cref{ContRedonEmbed}.

Therefore by continuity $\ray{g}{\iw{0},0}\leq \bigsqcup_{i\leq k}\ray{f}{\iw{0},i}$ for some $k\in\N$. %$(\tau g)^{-1}(N_{(1)})\subseteq\bigcup_{i\leq k}N_{(0)^i\conc(1)}$ for some $k\in\N$, but $f\corestr{N_{(0)}}\equiv \omega\Minimalfct{\lambda+1}$ so
So we get $\omega\Minimalfct{\lambda+1}\leq k\pgl\Maximalfct{\lambda}$, in contradiction with \cref{CBbasicsfromJSL-tp}.
Similarly, $f\leq g$ implies $\pgl\Maximalfct{\lambda}\leq \omega\Minimalfct{\lambda+1}$, contradicting \cref{OptimalityatSuccessorofLimit}.

Now suppose  that $mf\gl ng\leq pf\gl qg$ holds for $n,m,p,q\in \N$. By \cref{Centerinvariance,CBbasicsfromJSL-tp}, as $f$ is centered and incomparable with $g$, each of the $m$ copies of $f$ must be reduced by distinct copies of $f$ of the right hand side, so $m\leq p$. Similarly, we must have $n\leq q$.
% by \cref{CBbasicsfromJSL,ContRedonEmbed} again for all $\varepsilon<2$ and $i<m,n$ we have
%$\sigma((\varepsilon)\conc(i)\conc\iw{0})=(\varepsilon')\conc(j)\conc\iw{0}$ for some $\varepsilon'<2$ and $j<p,q$ but since $f$ and $g$ are centered and incomparable
%we must have $\varepsilon=\varepsilon'$ and in turn $m\leq p$ and $n\leq q$, finishing the second point.

\medskip

Third point: observe that $f:=\fctwedge(\pgl\Maximalfct{\lambda},\omega\Minimalfct{\lambda+1}\mid\emptyset)$
to is simple of $\CB$-type $(\lambda+2,1)$.
If $f$ was centered, any center would be of maximal $\CB$-rank (see proof of \cref{scatteredhavecocenter}) and so one of $\iw{0}$ or $(1)\conc\iw{0}$ would be a center for $f$. But none of these two points are center for $f$, for this would imply either $f\leq \pgl \pgl\Maximalfct{\lambda}$ or $f\leq \pgl \omega\Minimalfct{\lambda+1}$, and since $f$ reduces both of these functions, it would contradict the second point.

Now suppose that $(\sigma,\tau)$ is a continuous reduction from $f$
to an injective function $g\equiv f$.
There are now two possibilities: either $\sigma(\iw{0})\neq\sigma((1)\conc\iw{0})$ or $\sigma(\iw{0})=\sigma((1)\conc\iw{0})$.
In the former case we have $tp(g)=(\lambda+2,2)$ because $g$ is injective, so $g\nleq f$, a contradiction. In the latter case,
using \cref{Centerinvariance,Pgluingaslowerbound2} we would have
$\pgl\set{\pgl\Maximalfct{\lambda},\omega\Minimalfct{\lambda+1}}\leq g\equiv f\leq \pgl \ray{f}{\iw{0},n}\equiv\pgl\set{\pgl\Maximalfct{\lambda},\omega\Minimalfct{\lambda+1}}$,
thus we would have $f\equiv \pgl\set{\pgl\Maximalfct{\lambda},\omega\Minimalfct{\lambda+1}}$ which is impossible by \cref{Centerinvariance}
since $\pgl\set{\pgl\Maximalfct{\lambda},\omega\Minimalfct{\lambda+1}}$ is centered whereas $f$ is not .
\end{proof}

Observe that in the previous proposition we build antichains of unbounded finite size using only $2$ incomparable generators.
In the same line as \cref{qu:numberofgenerators}, one can wonder if $\sC_{n}$ contains more and more generators as $n$ increases. % can be a bound on the number of generators.
Recently in \cite[Theorem 3.3.5]{phdScamperti} Scamperti has built -- using the gluing and pointed gluing operations on spaces --
sequences of cardinality $n$ of closed countable Polish spaces of CB-rank $n$ for all $n\geq 2$ that form an antichain for topological embeddability.
The identity functions on these spaces are actually all among our generators, which implies that the number of generators in $\sC_{n}$ goes to infinity as $n\to\infty$.

\subsection{Sharpness}

In our \cref{MainTheorem1,MainTheorem2,MainTheorem3} there are various hypotheses on the complexity of: the domain, the co-domain, and the function itself.
Trying to see when one of these hypotheses in one these results can be relaxed leads to a large number of open questions.

\cref{scatterediffemptykernel_general} guarantees a decomposition of the domain of any scattered function,
in this respect it does make sense to conjecture that scattered functions (even discontinuous ones) may be constructed from simpler ones,
which might in turn allow for a (very general) \bqo{} result, reminiscent of Fraïssé's Conjecture for scattered linear orders.

\begin{conjecture}[Functional Fraïssé's Conjecture - FFC]
Continuous reducibility is \bqo{} on the class of scattered functions.
\end{conjecture}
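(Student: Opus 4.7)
The plan is to extend the analysis developed in the paper for $\sC$ to arbitrary scattered functions, by a two-stage reduction: first replace an abstract scattered function by a combinatorially equivalent one whose domain and codomain are "nice", and then transfer the inductive machinery of the Precise Structure \cref{PreciseStructureThm} to this setting. The starting observation is that a scattered $f\colon X\to Y$ (even discontinuous, between arbitrary spaces) comes equipped with the filtration $\bigl(\CB_\alpha(f)\bigr)_{\alpha<\CB(f)}$ of \cref{scatterediffemptykernel_general}, and on each successive difference $I(f,\CB_\alpha(f))$ the function is locally constant. This gives an intrinsically countable combinatorial skeleton controlling $f$ up to continuous equivalence: roughly, the data of a tree with countably branching CB-strata labelled by elements of $Y$, from which one should be able to reconstruct a companion $\hat f\colon A\to B$ with $A,B\subseteq\cN$ that continuously reduces to $f$ and vice versa.

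First I would attempt a Representation-type proposition analogous to \cref{RepresentationforFunctions}: every scattered function $f$ is continuously equivalent to a scattered function $\hat f\in\functionsonbaire$. The idea is to use the local-constancy partition along the CB-derivative to extract countably many fibres and refine them into a zero-dimensional model of $f$, then show that the two canonical maps (from $f$ to $\hat f$ and back) are continuous. The main subtlety here is that a priori one only has \emph{partial} continuity structure (on each CB-layer), so the gluing must be done by leveraging the \emph{successor structure} of the CB-filtration to respect the topology, rather than by appealing to continuity of $f$ itself. If this representation succeeds, the paper's \cref{Introthm:BQOonScat} applies directly and the FFC follows.

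If the Representation proposition does not go through in full generality, the natural fallback is to redo the induction \emph{in situ} without the 0-dimensional and continuity assumptions. This would require generalising each of the main tools: the Decomposition \cref{JSLdecompositionlemma} (which already needs 0-dimensionality); the disjoint-union continuity criterion \cref{lem:ContUnion} (which uses metrizability via sequences); the Disjointification \cref{DisjointificationLemma}; and the fine $c$-partition analysis of \cref{sectionDoubleSucc}. A promising intermediate target is to first drop continuity of $f$ alone, keeping the hypotheses on $X$ and $Y$: for a \emph{scattered} $f$ the CB-structure should provide the required \enquote{piecewise control} to substitute for continuity at the interfaces of partitions, since scatteredness forces each point to be separated from the rest by a locally-constant region.

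The hard part will unquestionably be the absence of metrizability (or even sequentiality) in $Y$. The rigidity results for centered functions (\cref{Rigidityofthecocenter}) and the criterion \cref{Pgluingaslowerbound} both depend on converging sequences to control how the map $\tau$ behaves at the cocenter; without metrizability these arguments collapse and one must replace sequential continuity by nets or by directly building $\tau$ from a filterwise argument using the CB-layers of the image $\im f$. A reasonable staged conjecture is therefore:
\begin{conjecture}
Continuous reducibility is a \bqo{} on scattered functions with metrizable codomain.
\end{conjecture}
Establishing this would isolate the non-metrizable case as a separate obstacle, for which one could then attempt to show that the image of any scattered function is, up to continuous equivalence, absorbed into a metrizable (indeed second countable) subspace — so that the metrizable case would imply the full FFC.
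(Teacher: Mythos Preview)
This statement is presented in the paper as an open \emph{conjecture}, not a theorem; there is no proof in the paper to compare against. Your proposal is a research plan rather than a proof, and it has two concrete obstructions that you should be aware of.

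First, the claim that the CB-filtration yields an ``intrinsically countable combinatorial skeleton'' is not justified. As the paper notes just after \cref{scatterediffemptykernel_general}, countability of the CB-rank depends on second countability of the domain; a scattered function on a non-second-countable space can have uncountable CB-rank and uncountable image, so no $\hat f\in\functionsonbaire$ can represent it and your reduction to the paper's setting cannot even get started in that generality. Second, and more decisively for the discontinuous case, your Representation proposition is blocked by \cref{LinkswithWadgeEmbedMeas}(iii): continuous reducibility respects measurability, so if $f\leq \hat f$ with $\hat f$ continuous then $f$ is continuous. Hence a discontinuous scattered $f$ cannot reduce to any element of $\sC$, and \cref{Introthm:BQOonScat}---which concerns $\sC$, i.e.\ \emph{continuous} scattered functions---simply does not apply to the $\hat f$ you would need. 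The paper flags exactly these difficulties: continuity is used essentially in \cref{Maxfunctions} and \cref{CenteredasPgluing}, and zero-dimensionality in \cref{0dimanddisjointunion}. Your fallback of redoing the induction \emph{in situ} names these obstacles but supplies no new mechanism for overcoming them; the staged sub-conjecture you close with is itself open and of the same character as the FFC.
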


Our \cref{Introthm:BQOonScat} proves the fragment of the FFC for continuous functions with zero-dimensional separable metrizable domain and metrizable range.
If the full FFC seems quite ambitious, getting it for functions on separable metrizable domains might be more accessible.
%Our strategy however is far from working without continuity and zero-dimensionality. 
Our proof however makes essential use of continuity (for example in \cref{Maxfunctions,CenteredasPgluing}) and zero-dimensionality (for example in \cref{0dimanddisjointunion}).%(at least) two fundamental tools: the existence of maximum functions (\cref{Maxfunctions}) that supposes (and implies) continuity, and
%the existence of centered decomposition from local centeredness, that uses zero-dimensionality (namely through \cref{0dimanddisjointunion}).\ynote{plus the characterization of centered functions as pointed gluing of monotone sequences requires continuity. It is false in general for scattered discontinuous functions.}

We now develop two directions for future research, sources for open questions motivated by some further results.
First, what happens for classes of definable functions on Polish spaces, and second what happens to the Perfect Function Property on larger classes of functions.

\subsubsection{On functions with Polish domain}\label{DirectionPolish}

Here is the very first conjecture one can think of after \cref{MainTheorem1}.

\begin{conjecture}
Continuous reducibility is a \bqo{} on Borel functions between Polish zero-dimensional spaces.
\end{conjecture}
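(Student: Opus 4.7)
The plan is to leverage MainTheorem 1 as a base case and attempt a two-level analysis: stratify Borel functions by their Baire class and try to extend the bqo machinery developed here class by class. Since every Polish zero-dimensional space is homeomorphic to a $\Pee{2}$ subset of $\cN$, one may assume throughout that both the domain and codomain of every Borel function under consideration sits inside the Baire space; the problem then lives within the hierarchy of $\Dee{\alpha}$-measurable functions, and the natural strategy is an induction on Baire class whose base case is \cref{MainTheorem1} itself.

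The first key step would be to generalize the scattered/non-scattered dichotomy to the Borel setting. The example of the characteristic function of a dense co-dense subset of $\Q$ is a warning that this is subtle: a discontinuous function can be non-scattered and yet fail to reduce $\id_{\Q}$. A natural substitute would be to declare a Borel function $\mathbf{\Gamma}$-scattered if a suitable refinement of the $\CB$-derivative, where \enquote{locally constant} is replaced by \enquote{locally in $\mathbf{\Gamma}$}, stabilizes at the empty set. Using classical definable tools (Louveau's separation theorem, the Jankov--von Neumann uniformization theorem, and a relativized version of the proof of the Perfect Function Property from \cref{uncountablerange}), one would aim to prove an analogue of \cref{FirststepforBQOthm} identifying, at each Baire class, the finitely many non-$\mathbf{\Gamma}$-scattered equivalence classes and reducing the bqo question to the $\mathbf{\Gamma}$-scattered ones.

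The technical heart would then be to establish a Precise Structure Theorem in this enriched setting: each level of the refined hierarchy should be finitely generated under appropriate Borel analogues of gluing, pointed gluing and wedge. Here Borel determinacy (Martin's theorem, provable in ZFC) is available, and one would hope to use it in the spirit of the Wadge--Martin--Monk theorem: two Borel functions would be compared via a game whose winner provides either a continuous reduction or a witness of incomparability. A bqo-strengthening of such a game-theoretic comparison, along the lines of the van~Engelen--Miller--Steel analysis, would then yield the conjecture.

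The central obstacle, in my view, is the loss of rigidity when passing from continuous to merely Borel source functions. Many key lemmas (most visibly \cref{Rigidityofthecocenter} and the Decomposition \cref{JSLdecompositionlemma}) rely crucially on continuity of $f$ to control where centers and $\CB$-strata can be mapped, and the $\CB$-derivative itself records local-constancy information which is invisible as soon as $f$ is no longer continuous. A much finer, pointclass-sensitive derivative is therefore needed, and even with one in hand the combinatorics of centered functions and the Vertical and Diagonal theorems would have to be rebuilt uniformly in $\mathbf{\Gamma}$. For these reasons I expect a direct proof to require genuinely new ideas at the level of either the derivative or a game-theoretic reformulation of continuous reducibility itself; this is presumably why the authors state the result as a conjecture rather than a theorem.
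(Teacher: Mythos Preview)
This statement is a \emph{conjecture} in the paper, not a theorem: the authors explicitly upgrade a question from \cite{carroy2013quasi} to the status of conjecture and offer no proof. There is therefore no proof in the paper for your proposal to be compared against.

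Your submission is not a proof either, and you acknowledge as much in your final paragraph. What you have written is a research programme: stratify by Baire class, replace the Cantor--Bendixson derivative by a pointclass-sensitive one, and hope that a game-theoretic comparison in the style of van~Engelen--Miller--Steel carries the induction. That is a reasonable sketch of where one might look, and you correctly identify the main obstruction: the rigidity results (\cref{Rigidityofthecocenter}, the Decomposition \cref{JSLdecompositionlemma}, and indeed the entire analysis of centered functions) depend essentially on continuity of $f$, not merely on continuity of the reducing maps $\sigma,\tau$. But none of the steps you outline is actually carried out, and several are genuinely hard open sub-problems in their own right (for instance, even formulating the ``right'' $\mathbf{\Gamma}$-scattered notion so that a dichotomy analogous to \cref{scatterediffemptykernel} holds is not obvious, as \cref{counterexamplenonscat} already warns). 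So the honest assessment is that you and the paper are in agreement: this is open, and your outline does not close it.
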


This was \cite[Question 5.6]{carroy2013quasi}, we upgrade it here to the status of conjecture, and we do the same for the next question.
\cite[Question 5.7]{carroy2013quasi} gives indeed a possibility to attack the above \bqo{} conjecture by splitting the problem in the study of injective functions on one side,
and those with countable image on the other, provided we first prove the following statement, interesting on its own.

\begin{conjecture}
Given any Borel function $f$ between Polish (zero-dimen\-sional) spaces, there is a Borel partition $(A,B)$ of $\dom f$ such that
$f\restr{A}$ is continuously equivalent to an injection, and $f\restr{B}$ has countable image.
\end{conjecture}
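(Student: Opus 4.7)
The strategy is to analyze the fiber structure of $f : X \to Y$ and extract a Borel "injective core", dealing with the residual uncountable-fiber part by absorption. First I would partition the image by fiber cardinality: the set $Y_\omega = \{y \in Y \mid f^{-1}(y) \text{ is countable}\}$ is coanalytic by the Lusin boundedness theorem, while its complement $Y_u$ is analytic. Correspondingly, $X = X_\omega \sqcup X_u$ where $X_i = f^{-1}(Y_i)$. On $X_\omega$, $f$ is countable-to-one Borel, so Lusin-Novikov provides a countable Borel partition $X_\omega = \bigsqcup_n X^n_\omega$ on each piece of which $f$ is injective; on $X_u$, each fiber is uncountable Borel, so by Kondô-type uniformization (or the Jankov--von Neumann selection combined with \cite[(8.38)]{kechris} to get a Baire-measurable, and hence on a dense $\Pee{2}$ continuous, section) there is a Borel selector $s : Y_u \to X_u$ with $f\circ s = \id_{Y_u}$.

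These two selections combine into a Borel set $A_0 \subseteq X$ with $f\restr{A_0}$ injective and $f(A_0) = \im f$. Unfortunately, setting $B := X \setminus A_0$ does not meet the conclusion: for each $y \in Y_u$ the fiber $f^{-1}(y) \setminus \{s(y)\}$ remains uncountable, so $f(B) \supseteq Y_u$, which can itself be uncountable. The key flexibility is that $f\restr{A}$ need only be \emph{continuously equivalent} to an injection, not actually injective. This suggests enlarging $A_0$ by adding the parts of the uncountable fibers that can be \enquote{continuously contracted} to the selected point $s(y)$. Concretely, by the effective version of the Perfect Set Theorem applied fiberwise (compare the proof of \cref{uncountablerange}), one obtains continuous embeddings of $\cantor$ into each such fiber in a Borel-in-$y$ fashion; pulling these together yields a continuous section-like map $\sigma$ that collapses large pieces of $X$ onto the transversal $A_0$, which is precisely what a continuous reduction to the injection $f\restr{A_0}$ requires.

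The main obstacle is that this contraction must be performed globally on a Borel set $A$ while ensuring that the leftover $B = X \setminus A$ has \emph{countable} image. The proposed resolution is an inductive Cantor--Bendixson style derivation, in the spirit of \cref{sectionCB}: peel off from $Y$ the isolated points of the quotient space $Y/\!\sim$ (where $y \sim y'$ identifies points whose fibers cannot be simultaneously contracted continuously), form the Borel derivative, and iterate transfinitely. At each stage one either contracts more of the fibers into the Borel transversal (enlarging $A$), or encounters a countable residual image that gets absorbed into $B$. By effective descriptive set theory, relativized to parameters defining $f$, this derivation stabilizes at some countable ordinal, producing a Borel partition.

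The hard part will be twofold: first, verifying that this inductive construction produces a Borel -- not merely $\Pee{1}_1$ -- set $A$ and stabilizes in countably many steps, which requires fine boundedness arguments; second, establishing that $f\restr{A}$ is truly \emph{continuously} equivalent (not just Borel-equivalent) to an injection, since the passage from Baire-measurable to continuous sections only works on comeager pieces. It is plausible that the conjecture only holds modulo a Baire-category or measure-theoretic small set, and that the optimal statement allows $B$ to have countable image on a comeager Borel subset. Resolving this likely requires new structural insights about continuous reducibility for Borel functions, potentially a characterization analogous to \cref{scatterediffemptykernel} that identifies the Borel functions equivalent to injections.
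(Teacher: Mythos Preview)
The statement you are attempting to prove is labelled as a \emph{Conjecture} in the paper and is explicitly presented as an open problem (it originates as Question~5.7 in \cite{carroy2013quasi}). There is no proof in the paper to compare your proposal against.

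Your proposal is, as you yourself acknowledge in the final paragraph, a strategy sketch rather than a proof, and the gaps you identify are real. Let me add two concrete issues beyond those you flag.

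First, a technical point at the very start: the set $Y_\omega$ of points with countable fiber is indeed $\Pee{1}_1$, but in general not Borel, so $X_\omega=f^{-1}(Y_\omega)$ is only $\Pee{1}_1$. Invoking Lusin--Novikov on $f\restr{X_\omega}$ therefore requires justification, and the resulting partition $(X_\omega,X_u)$ is not obviously Borel.

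Second, and more fundamentally, your whole approach attempts to make $f\restr{A}$ close to literally injective (via transversals and fiberwise contractions), whereas the conjecture asks only for \emph{continuous equivalence} to an injection. These are genuinely different: \cref{SomeCounterExamples} exhibits functions in $\sC$ that are not continuously equivalent to any injection, yet the paper's structure theory shows they are equivalent to finite-to-one functions (\cref{PreciseStructureCor2}). The relevant notion is not the cardinality of fibers but something closer to the centered/scattered analysis of the paper; your fiber-cardinality dichotomy does not obviously interact with continuous reducibility in the required way. The Cantor--Bendixson style derivation you propose is suggestive, but the derivative you describe (quotienting by ``fibers that can be simultaneously contracted continuously'') is not made precise, and it is unclear why it would terminate with the desired Borel partition.

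In short: the conjecture remains open, and your proposal does not close it.
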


If we drop zero-dimensionality, the picture becomes quickly much more complex.
Schlicht has indeed proven \cite[Theorem 1.7]{schlicht_nozerodim} that the Wadge quasi-order on Borel subsets of a Polish space $X$ is a \wqo{} if and only if $X$ is not zero-dimensional,
which by \cref{LinkswithWadgeEmbedMeas} translates to continuous reducibility being non \wqo{} in Borel functions from any non zero-dimensional Polish space.
This does not however ruin all hopes for continuous functions.

\medskip

We can start by giving an upper bound for the descriptive complexity of the quasi-order of continuous reducibility on continuous functions with a compact Polish domain.
Given Polish spaces $X$ and $Y$, we endow the space $\C(X,Y)$ of continuous functions from $X$ to $Y$ with the topology of compact convergence.
This topology coincides in this case with the compact-open topology (see \cite[Theorem 46.8]{munkres2000topology}) which is defined by taking as a subbase sets of the form
\[
S_{X,Y}(C,U)=\{f:X\to Y\mid f(C)\sub U\}
\]
for some compact subset $C$ of $X$ and some open subset $U$ of $Y$.

When $X$ is locally compact and $Y$ is Polish, then $\C(X,Y)$ is Polish (see \cite[Theorem 1, p.93 and Theorem 3, p. 94]{kuratowski1968topology}),
while in the non-locally compact case this topology is not even metrizable.

When moreover $X$ is compact, then the topology on $\C(X,Y)$ coincides with the uniform convergence topology induced by the uniform metric
\[
d_u(f,g)=\sup_{x\in X} d_Y(f(x),g(x)),
\]
where $d_Y$ is some compatible metric for $Y$. In this case, $\C(X,Y)$ is Polish \cite[(4.19)]{kechris}
and a straightforward adaptation of \cite[Theorem 3.2]{embeddabilityCPZ} shows that the quasi-order of continuous reducibility is analytic:
		
\begin{theorem}\label{RedContIsAnalytic}
Let $X$ and $Y$ be Polish with $X$ compact. Then continuous reducibility is an analytic quasi-order on $\C(X,Y)$.
\end{theorem}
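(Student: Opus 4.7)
The plan is to exhibit $\leq$ as the projection of a Borel relation over the Polish space $\C(X,X)$, since reflexivity and transitivity of continuous reducibility are immediate from Definition \ref{MainDef}.

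The key reformulation, relying on compactness of $X$ in an essential way, is the following. Given $\sigma \in \C(X,X)$, the map $g\sigma \colon X \to \im(g\sigma)$ is a continuous surjection from the compact space $X$ onto the Hausdorff subspace $\im(g\sigma) \subseteq Y$, and therefore is a quotient map. Consequently, a continuous $\tau \colon \im(g\sigma) \to \im(f)$ with $f = \tau g \sigma$ exists if and only if the relation $\{(g\sigma(x), f(x)) : x \in X\}$ is single-valued: continuity of such a $\tau$ then follows automatically from continuity of $\tau g \sigma = f$. Using compactness of $\im(g\sigma)$ once more to replace continuity of $\tau$ by uniform continuity with respect to any compatible metric $d_Y$ on $Y$, we obtain
\[
f \leq g \iff \exists \sigma \in \C(X,X),\; \forall n \in \N,\; \exists m \in \N,\; \forall x, x' \in X,\quad d_Y(g\sigma(x), g\sigma(x')) < \tfrac{1}{m} \Rightarrow d_Y(f(x), f(x')) \leq \tfrac{1}{n}.
\]

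For each fixed quadruple $(n, m, x, x')$, the above implication defines a closed subset of $\C(X,Y)^2 \times \C(X,X)$, namely the union of the two closed sets $\{d_Y(g\sigma(x), g\sigma(x')) \geq 1/m\}$ and $\{d_Y(f(x), f(x')) \leq 1/n\}$ --- both closed because composition and evaluation are jointly continuous in the compact-open topology, thanks to compactness of $X$. Intersecting over $(x, x') \in X^2$ preserves closedness, so the countable alternation $\forall n\, \exists m$ yields an $F_{\sigma\delta}$ subset $A$ of $\C(X,Y)^2 \times \C(X,X)$, in particular Borel. Since $\C(X,X)$ is Polish (as $X$ is compact Polish), the projection of $A$ onto $\C(X,Y)^2$ is analytic, completing the proof.

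The delicate point is really the quotient-map reformulation that collapses the existence of a continuous $\tau$ to a single-valuedness condition on $(f,g,\sigma)$, together with its uniform-continuity restatement which turns the resulting uncountable inner quantification into a Borel condition. Both steps crucially use compactness of $X$: without it, it is unclear how to express the existence of a suitable $\tau$ as a Borel condition in $(f,g,\sigma)$ through a classical reformulation of this kind.
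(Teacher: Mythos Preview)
Your proof is correct and follows essentially the same strategy as the paper's: reformulate the existence of a reduction $(\sigma,\tau)$ as a uniform-continuity condition on $(\sigma,f,g)$ and then read off Borelness. The only notable implementation difference is that the paper quantifies over a fixed countable dense set $D\subseteq X$ and appeals to a density/extension argument to recover $\tau$ on all of $\im g\sigma$, whereas you quantify over all pairs $(x,x')\in X^2$ and rely on arbitrary intersections of closed sets remaining closed. Your quotient-map observation (that single-valuedness alone forces continuity of $\tau$) is a slightly cleaner way to justify the reformulation than the paper's direct argument; in exchange, the paper's use of a countable dense set makes it transparent that the resulting formula has only countable quantifiers, which some readers may find more explicitly Borel.
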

		
\begin{proof}
We show that the relation $E$ on $\C(X,X)\times \C(X,Y)^2$ given by
\begin{align*}
(\sigma,f,g)\in E \Longleftrightarrow \text{$\sigma$ induces a reduction of $f$ to $g$}
\end{align*}
is Borel. Since $f\leq g$ if and only if $\exists \sigma \in \C(X,X) \ (\sigma,f,g)\in E$, this implies that the relation $f\leq g$ is analytic, as desired.

Fix a countable dense subset $D=\{d_n:n  \in \omega\}$ of $X$ and a compatible complete metric $d_Y$ on $Y$.
Recall from \cite[Theorem 3.2]{embeddabilityCPZ} that for every $x,x' \in X$ and $k>0$ the set $D_{x,x'}^k=\{f \in \C(X,Y):d_Y(f(x),f(x'))\leq \frac{1}{k}\}$ is closed in $\C(X,Y)$. 
We claim that $(\sigma,f,g)\in E$ if and only if
\[
\forall k>0 \ \exists l>0 \ \forall n,m \ \bigl(d_Y(g\sigma(d_n),g\sigma(d_m))\leq \frac{1}{l} \to d_Y(f(d_n),f(d_m))\leq \frac{1}{k}\bigr).
\]
This shows that $E$ is Borel. Suppose first that $(\sigma,f,g)\in E$. Then we have a unique continuous $\tau: \im g \sigma\to\im f $ such that $(\sigma,\tau)$ reduces $f$ to $g$. 
Since $\im g \sigma$ is compact, $\tau$ is uniformly continuous, and as $f(x)=\tau g \sigma(x)$ for all $x\in X$, the above formula is satisfied. 

Conversely suppose that $\sigma:X\to X$ is continuous such that the above formula holds with respect to some $f$ and $g$ in $\C(X,Y)$.
Note that, in particular, if $g \sigma(d_n)=g \sigma(d_m)$ then $f(d_n)=f(d_m)$ for every $m,n\in \omega$.
We therefore have a well defined map $\tilde{\tau}: g\sigma(D)\to f(D)$, given by $\tilde{\tau}(g\sigma(d))=f(d)$. 
Since $g\sigma(D)$ is dense in $\im g\sigma$ and $\tilde{\tau}$ is uniformly continuous, it extends uniquely to a continuous map $\tau: \im g\sigma\to \im f $.
Finally, since $ f$ and $\tau g\sigma$ are both continuous and coincide on $D$, they are equal on $X$.
Therefore $\tau$ is continuous and such that $(\sigma,\tau)$ continuously reduces $f$ to $g$, as desired.
\end{proof}

This proof does not extend in a straightforward way to the case where the domain is not compact. Since $\C(X,Y)$ has a standard Borel structure making the evaluation map Borel if and only if $X$ is $\sigma$-compact~\cite[Theorem 3.9]{embeddabilityCPZ}, we have the following question:

\begin{question}
Suppose that $X$ and $Y$ are Polish and $X$ is $\sigma$-compact, what is the descriptive complexity of continuous reducibility on $\C(X,Y)$?
\end{question}

In some cases we know that continuous reducibility is not \wqo{}:
by \cite[Theorem 4.5]{Louveau2005complete} embeddability between closed subsets of $[0,1]^2$ is analytic complete;
meaning that it is analytic and it (continuously) reduces every analytic quasi-order, in particular it reduces closed ones like the identity quasi-order
or the lexicographic (linear) order on $\cantor$, and embeddability thus contains antichains and strictly descending chains of size continuum.
So it is as far from being a \bqo{} as an analytic quasi-order possibly can.

\cref{LinkswithWadgeEmbedMeas} tells us that the map $X\mapsto\id_{X}$ is a reduction\footnote{But it does not tell us if it is continuous!}
from embeddability on spaces to continuous reduction on functions, so we directly get

\begin{proposition}\label{analyticcomplete}
Let $\mathcal{C}$ be a class of functions containing all functions $\id_{F}$ for $F$ closed in $[0,1]^2$.
Then continuous reducibility is analytic hard on $\mathcal{C}$. In particular, it is not \wqo{}.
\end{proposition}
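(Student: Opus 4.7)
The plan is to combine two facts already in hand: \cref{LinkswithWadgeEmbedMeas}\,(1) which states that $X$ embeds in $Y$ if and only if $\id_X \leq \id_Y$, and the Louveau--Rosendal theorem \cite[Theorem 4.5]{Louveau2005complete} which asserts that embeddability between closed subsets of $[0,1]^2$ is an analytic-complete quasi-order, meaning every analytic quasi-order Borel-reduces to it. Stringing these two results together immediately yields that continuous reducibility on $\mathcal{C}$ is analytic-hard, and the non-\wqo{} conclusion follows from the simple fact that any analytic-complete quasi-order admits infinite antichains (in fact of size continuum).

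More concretely, I would first observe that the map $\Phi : F \longmapsto \id_F$ sending a closed subset $F \subseteq [0,1]^2$ to the identity function on $F$ is a reduction from embeddability on closed subsets of $[0,1]^2$ to continuous reducibility on $\mathcal{C}$: by \cref{LinkswithWadgeEmbedMeas}\,(1) we have $F \segm G$ if and only if $\Phi(F) \leq \Phi(G)$, and by hypothesis $\Phi(F)$ lies in $\mathcal{C}$ for every closed $F \subseteq [0,1]^2$. One should then check that $\Phi$ is Borel once the relevant spaces are equipped with their standard Borel structures (the hyperspace $K([0,1]^2)$ on the source side, and any reasonable Borel coding of functions containing $\mathcal{C}$ on the target side); since $\id_F$ is canonically determined by $F$, this is immediate. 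Composing $\Phi$ with a Borel reduction furnished by the Louveau--Rosendal theorem yields, for every analytic quasi-order $R$, a Borel reduction from $R$ to continuous reducibility on $\mathcal{C}$, which is precisely analytic-hardness.

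For the ``in particular'' statement, it suffices to transfer any known infinite antichain from embeddability on closed subsets of $[0,1]^2$ to continuous reducibility on $\mathcal{C}$ through $\Phi$. Such antichains exist abundantly: for instance, the identity relation on $\mathbb{R}$ is a closed (hence analytic) quasi-order whose antichains have size continuum, so by analytic-hardness these embed into continuous reducibility on $\mathcal{C}$, which therefore cannot be a \wqo{}.

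There is no real obstacle in this argument; essentially all the work is done by the two invoked results. The only thing that truly requires attention is the verification that $\Phi$ behaves well as a Borel reduction between the appropriate standard Borel spaces, but this is routine because $\Phi$ is a completely explicit assignment. The content of the statement is therefore really a direct observation combining \cref{LinkswithWadgeEmbedMeas} with the Louveau--Rosendal theorem.
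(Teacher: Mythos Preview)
Your proposal is correct and follows essentially the same route as the paper: the proposition is stated there as a direct consequence of \cref{LinkswithWadgeEmbedMeas} together with the Louveau--Rosendal theorem, via the map $F\mapsto\id_F$. The paper even flags the same subtlety you mention about Borelness of this map (in a footnote: ``But it does not tell us if it is continuous!''), so your explicit acknowledgment that this needs checking is in line with the paper's own level of rigor here.
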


So continuous reducibility is analytic hard for classes of functions with domain included $[0,1]^2$, while it is \wqo{} on continuous functions with domain included $\cantor$.
What about functions with domains that do not fall in any of these two cases? In particular

\begin{question}
Is continuous reducibility \wqo{} on continuous real functions?
\end{question}

The same question is open for continuous functions on any uncountable Polish space that does not contain $[0,1]^2$. In fact, the following question is already open:

\begin{question}
Let $X$ be a Polish space containing $\cantor$ but not $[0,1]^2$.
What is the complexity of embeddability on closed subsets of $X$?
\end{question}

\subsubsection{The Perfect Function Property}\label{DirectionPerfect}

Let us call PFP the Perfect Function Property for short.
The general question is

\begin{question}\label{PFPquestion}
What are the classes of functions satisfying the PFP?
\end{question}

We know the answer for a few classes.
If we drop the separability assumption on the co-domain, we have the following counter-example.

\begin{theorem}\label{counterPFP1}
There is a function $f:\cantor\to\omega_1$ without the Perfect Function Property.
\end{theorem}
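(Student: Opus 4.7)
The plan is to exploit the fact that $\omega_1$ equipped with its order topology contains no homeomorphic copy of the Cantor space. Concretely, I would let $f:\cantor\to\omega_1$ be any surjection, which exists since $|\cantor|\geq\aleph_1$. Then $\im f=\omega_1$ is uncountable, so the only way the class $\{f\}$ could have the Perfect Function Property would be if $\id_{\cantor}\segm f$; the goal is therefore to rule out this embedding.

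Towards a contradiction, suppose $\id_{\cantor}\segm f$. By \cref{EquivEmbedCantor} there exists a topological embedding $\sigma:\cantor\to\cantor$ such that $f\sigma:\cantor\to\omega_1$ is also a topological embedding. In particular $f\sigma$ is continuous, and since $\cantor$ is compact, $\im(f\sigma)$ is a compact subset of $\omega_1$.

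The key step is the standard observation that every compact subset of $\omega_1$ (with the order topology) is countable. Indeed, $\omega_1$ itself is not compact: the open cover $\{[0,\alpha)\mid \alpha<\omega_1\}$ admits no finite subcover, and the same argument shows any unbounded subset fails to be compact. Hence compact subsets of $\omega_1$ are bounded, contained in some initial segment $[0,\alpha]$ with $\alpha<\omega_1$, which is countable. So $\im(f\sigma)$ is countable, but $f\sigma$ being an embedding means $\im(f\sigma)$ is homeomorphic to $\cantor$, which is uncountable, a contradiction.

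The argument is essentially one line once \cref{EquivEmbedCantor} is in hand; the only potential obstacle is the folklore fact about compact subsets of $\omega_1$, which is entirely standard. The construction makes clear that the failure of the Perfect Function Property here comes precisely from the non-separability of $\omega_1$, consistent with the motivating remark preceding the theorem.
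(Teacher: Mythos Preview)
Your proof is correct and takes a genuinely different, more elementary route than the paper.

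You argue purely topologically: any surjection $f:\cantor\to\omega_1$ has uncountable image, and $\id_{\cantor}\segm f$ would force a continuous injection from $\cantor$ into $\omega_1$, which is impossible since compact subsets of $\omega_1$ (order topology) are bounded, hence countable. This works for \emph{any} function $\cantor\to\omega_1$ with uncountable image and uses nothing beyond point-set topology.

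The paper instead constructs a specific function $f(x)=\omega_1^{\text{CK},x}$ and argues that $\id_{\cantor}\leq f$ would yield, via the continuous $\sigma$, an analytic well-order of $\cantor$ (namely the pullback of the ordinal order along $f\sigma$), contradicting the fact that analytic subsets of $\cantor\times\cantor$ have the Baire Property. This argument uses only the injectivity of $f\sigma$, not its continuity, so it is insensitive to the topology on $\omega_1$.

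What each approach buys: yours is shorter and makes transparent that the obstruction is the topology of $\omega_1$ (no embedded Cantor set). The paper's choice of $f$ is deliberate: because the relation $\omega_1^{\text{CK},x}\leq\omega_1^{\text{CK},y}$ is analytic, the function has definability features that an arbitrary surjection lacks. This is exactly what is exploited in the very next result (\cref{counterPFP2}), where the same $f$ is composed with an injection $\omega_1\hookrightarrow\cantor$ to produce, under an additivity hypothesis on the meager ideal, a Baire-measurable function $\cantor\to\cantor$ without the PFP. Your surjection would not serve that purpose.
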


\begin{proof}
Consider the function $f:\cantor\to\omega_1$, $x\mapsto \omega_1^{\text{CK}-x}$ which maps each $x$ to the set of all countable ordinal recursive in $x$\footnote{See \cite{ChongYuRecursion} for classical results and \cite{BorelGraphableAKL} for more recent results about this notion.}.
It has uncountable image, so suppose towards a contradiction that $(\sigma,\tau)$ reduces $\id_{\cantor}$ to $f$.
In particular, there is $\sigma\in\C(\cantor, \cantor)$ such that for all $x,y\in\cantor$, if  $x\neq y$ then we must have $\omega_1^{\text{CK}-x}\neq\omega_1^{\text{CK}-y}$.
Recall that the relation $x\leq_{\text{CK}}y$ if and only if $\omega_1^{\text{CK}-x}\leq\omega_1^{\text{CK}-y}$ is an analytic pre-well-order on $\cantor$.
Therefore $\sigma^{-1}(\leq_{\text{CK}})$ is an analytic well-order of $\cantor$ which must hence have the Baire Property, thus contradicting \cite[(8.48)]{kechris}.
\end{proof}

There is a more metamathematical proof of \cref{counterPFP1}:
observe first that the statement $\id_{\cantor}\leq f$ is $\mathbf{\Sigma}^1_2$ and it gives an injection from $\cantor$ to $\omega_1$,
so it is generically absolute by Schoenfield's Absoluteness result (see \cite[Theorem 13.15]{kanamori2008higher}), and this is a contradiction as the Continuum Hypothesis is not.

Assuming the Axiom of Choice (AC), one can embed $\omega_1$ in $\cantor$ and get the previous example with a separable co-domain.
Consistently, we can also have a bit more.

\begin{theorem}\label{counterPFP2}
Working in $ZFC$, suppose that the additivity of the meager ideal is $\aleph_2$.

Then there is a Baire-measurable $f:\cantor\to\cantor$ without the Perfect Function Property.
\end{theorem}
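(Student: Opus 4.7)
The plan is to construct $f$ very explicitly: pick a set $A\subseteq\cantor$ of cardinality $\aleph_1$ avoiding the point $\mathbf{0}:=0^{\infty}$, and let $f$ be the identity on $A$ and the constant function $\mathbf{0}$ on the complement. The hypothesis $\operatorname{add}(\mathcal{M})=\aleph_2$ will enter only through two inequalities from Cichoń's diagram. First, $\operatorname{add}(\mathcal{M})\leq\operatorname{non}(\mathcal{M})$ (since every set of cardinality $\kappa$ is a union of $\kappa$ singletons, which are meager), giving $\operatorname{non}(\mathcal{M})\geq\aleph_2$; hence every subset of $\cantor$ of cardinality $\aleph_1$ is meager. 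Second, $\operatorname{non}(\mathcal{M})\leq 2^{\aleph_0}$, giving $2^{\aleph_0}\geq\aleph_2$; hence every Cantor subset of $\cantor$ has cardinality at least $\aleph_2$.

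I would first verify that $f$ is Baire-measurable, which is routine: for open $U\subseteq\cantor$, if $\mathbf{0}\notin U$ then $f^{-1}(U)=A\cap U$ is a subset of the meager set $A$; if $\mathbf{0}\in U$ then $f^{-1}(U)=\cantor\setminus(A\setminus U)$ differs from $\cantor$ by the meager set $A\setminus U$. In both cases $f^{-1}(U)$ has the Baire property.

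Next I would observe that $\im(f)=A\cup\{\mathbf{0}\}$ is uncountable, so by \cref{EquivEmbedCantor} it suffices to show that no Cantor subset $K\subseteq\cantor$ has $f\restr{K}$ injective (note that $f\restr{K}$ is automatically Baire-measurable as $K$ is Borel). The plan here is to use the purely cardinal-arithmetic inequality $|K\cap A|\leq|A|=\aleph_1<2^{\aleph_0}=|K|$, which forces $|K\setminus A|=|K|$; picking two distinct points $x,y\in K\setminus A$ then yields $f(x)=f(y)=\mathbf{0}$, so $f\restr{K}$ is not injective.

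There is no real obstacle: the whole argument reduces to extracting $\operatorname{non}(\mathcal{M})\geq\aleph_2$ and $2^{\aleph_0}\geq\aleph_2$ from the hypothesis, and combining them with the elementary fact that a Cantor set, having size $2^{\aleph_0}$, cannot be absorbed (up to one point) into a set of cardinality $\aleph_1$. The only mildly delicate point to mention in the writeup is that $f\restr{K}$ inherits Baire-measurability from $f$, which is what allows us to invoke item (3) of \cref{EquivEmbedCantor} directly.
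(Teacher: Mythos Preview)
Your construction is correct and considerably more elementary than the paper's. The paper composes the map $x\mapsto\omega_1^{\mathrm{CK},x}$ with an injection $\iota:\omega_1\to\cantor$, then argues Baire-measurability via the fact that each fiber is analytic and that Baire-measurable sets are closed under $\aleph_1$-unions when $\operatorname{add}(\mathcal{M})\geq\aleph_2$; the failure of the PFP is inherited from \cref{counterPFP1}. Your route bypasses both the admissibility-theoretic input and the appeal to \cref{counterPFP1}: you only need $\operatorname{non}(\mathcal{M})\geq\aleph_2$ (so any $\aleph_1$-sized set is meager) and $2^{\aleph_0}\geq\aleph_2$ (so no Cantor set fits, up to one point, inside $A$). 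This is a genuine simplification; the paper's construction has the mild advantage that its fibers are definable, but that plays no role in the statement.

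One small correction to your writeup: the parenthetical ``$f\restr{K}$ is automatically Baire-measurable as $K$ is Borel'' is not a valid general principle --- Baire-measurability does not pass to restrictions to closed subspaces, since a set meager in $\cantor$ need not intersect $K$ in a set meager in $K$. Fortunately you do not need this at all: you prove the strictly stronger statement that no Cantor set $K$ has $f\restr{K}$ injective, which already negates item~(3) of \cref{EquivEmbedCantor} regardless of measurability. Just drop the parenthetical and the closing remark about the ``mildly delicate point''.
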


\begin{proof}
Use AC to get an injection $\iota:\omega_1\to\cantor$, and consider $f:\cantor\to\cantor$, $x\mapsto \iota(\omega_1^{\text{CK}-x})$.
The image of $f$ has size $\aleph_1$, so the preimage of any set by $f$ is the union of at most $\aleph_1$-many preimages of singletons, that are all analytic, so Baire-measurable.
The additivity of the meager ideal being $\aleph_2$ implies in particular that Baire-measurable sets are closed under union of size less or equal than $\aleph_1$.
Therefore $f$ is Baire-measurable, and $f$ does not have the Perfect Function Property by \cref{counterPFP1}.
\end{proof}

\cref{counterPFP1,counterPFP2} suggest to extend the parallel between embeddability on linear orders and continuous reducibility on functions.
We can thus ask about the functions counterparts of Moore Five Elements Basis result and anti-basis result for embeddability on uncountable linear orders \cite{MR2199228,MR2369944}.

\begin{question}
Is there (even consistently) a finite basis with respect to continuous reducibility for functions with uncountable image?

Is there (even consistently) an antibasis result for continuous reducibility on functions with uncountable image?
\end{question}

There are some positive answers to \cref{PFPquestion}:
\cref{uncountablerange} proves that the class of Borel functions from an analytic to a separable metrizable space satisfies the PFP,
 and thanks to their game-theoretical proof, Lutz and Siskind in \cite{lutz20231} have proven that, under the Axiom of Determinacy (AD), all functions from $\cantor$ to itself have the PFP.
 
Using the Open Graph Dichotomy (another consequence of AD) for all open graphs on separable metrizable spaces -- OGD for short (see for instance \cite{carroymillersoukup}) one can prove

\begin{proposition}
Assume OGD.
Then all continuous functions between separable metrizable spaces have the PFP.
\end{proposition}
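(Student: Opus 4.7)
The plan is to apply OGD to the continuous function $f:X\to Y$ through a well-chosen open graph on $X$, the two alternatives of the dichotomy matching exactly the two alternatives of the PFP. Let $f:X\to Y$ be continuous between separable metrizable spaces, and assume that $\im f$ is uncountable; we aim to show that $\id_{\cantor}$ embeds in $f$.

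Consider the graph $G$ on $X$ defined by $x\,G\,x'$ iff $f(x)\neq f(x')$. I first check that $G$ is open in $X\times X$: if $f(x)\neq f(x')$, then since $Y$ is metrizable (hence Hausdorff) we can pick disjoint open neighborhoods $V\ni f(x)$ and $V'\ni f(x')$, and by continuity of $f$ the set $f^{-1}(V)\times f^{-1}(V')$ is an open neighborhood of $(x,x')$ contained in $G$.

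Now I apply OGD to the separable metrizable space $X$ equipped with the open graph $G$. In the first alternative, $X$ is a countable union of $G$-independent sets $A_n$. But being $G$-independent means that $f$ is constant on $A_n$, so $\im f=\bigcup_n f(A_n)$ is countable, contradicting our assumption. In the second alternative, there is a continuous injection $\sigma:\cantor\to X$ whose image is a $G$-clique, i.e.\ $f\sigma$ is injective. Compactness of $\cantor$ turns the continuous injection $\sigma$ into a topological embedding, and likewise turns the continuous injection $f\sigma:\cantor\to Y$ into an embedding. By \cref{EquivEmbedCantor}, this is precisely the statement that $\id_{\cantor}$ embeds (topologically) in $f$, and a fortiori continuously reduces to $f$.

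There is no real obstacle here, beyond making sure that the open graph is defined on $X$ itself (so that OGD applies directly) rather than on some derived space. The proof is essentially a reformulation of the PFP as an instance of OGD, exactly as the Perfect Set Property for separable metrizable spaces is the instance of OGD for the trivial complete graph (equivalently, for the equality graph's complement). Note that, unlike the proof of \cref{uncountablerange}, no analyticity or Borel measurability assumption is needed: the continuity of $f$ is used only to ensure that $G$ is open.
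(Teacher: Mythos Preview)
Your proof is correct and follows essentially the same approach as the paper: define the open graph $G=\{(x,x')\mid f(x)\neq f(x')\}$ on $X$, apply OGD, and use \cref{EquivEmbedCantor} to conclude. The only differences are cosmetic: you spell out the openness of $G$ and the compactness argument, while the paper leaves these implicit.
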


\begin{proof}
Given $X,Y$ separable metrizable and $f:X\to Y$ continuous, consider the graph $G$ of all $(x,x')$ such that $f(x)\neq f(x')$. The continuity of $f$ ensures that $G$ is open in $X\times X$.
Now if $G$ has countable chromatic number then $f$ has countable image, and otherwise by OGD there is a continuous $\sigma:\cantor\to X$ such that
$x\neq x'$ implies $(\sigma(x),\sigma(x'))\notin G$, meaning in particular $f\sigma(x)\neq f\sigma(x')$. Conclude using \cref{EquivEmbedCantor}.
\end{proof}

We have seen that PFP implies the Perfect Set Property, but the three results we just mentioned suggest that there might be an equivalence:

\begin{question}
Does the Perfect Set Property implies the Perfect Function Property?
\end{question}

Since combining PFP with our \cref{Introthm:BQOonScat} allows for a \bqo{} result following the proof of \cref{FirststepforBQOthm}, we get

\begin{corollary}
Assume OGD. Continuous reducibility is \bqo{} on the class of all continuous functions from a zero-dimensional separable metrizable space to a metrizable space. %between separable metrizable spaces that have zero-dimensional domain.
\end{corollary}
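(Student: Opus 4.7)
The plan is to essentially rerun the proof of \cref{FirststepforBQOthm}, but using the Perfect Function Property granted by OGD (via the proposition just proved) in place of the analyticity hypothesis on the domain. The conclusion will then follow from \cref{Introthm:BQOonScat} together with the fact that \bqo{}s are closed under finite unions.

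More precisely, I would argue as follows. Let $\mathcal{F}$ denote the class in the statement, and let $f:X\to Y$ be in $\mathcal{F}$. If $f$ is scattered, then because $X$ is second countable and $f$ is continuous, $\im f$ is countable and \cref{RepresentationforFunctions} yields some $\hat f\in\sC$ with $f\equiv\hat f$. So the scattered part of $\mathcal{F}$ embeds into $\sC$ up to equivalence, which is \bqo{} by \cref{Introthm:BQOonScat}. If on the other hand $f$ is not scattered, then $\id_\Q\segm f$ by \cref{prop:nlc_implies_nonscattered}. In the subcase where $\im f$ is countable, \cref{ConsequencesofUniversality} gives $f\leq\id_\Q$, so $f\equiv\id_\Q$. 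In the remaining subcase, $\im f$ is uncountable; here I invoke the preceding proposition (the one proved just above from OGD), which establishes that every continuous function between separable metrizable spaces has the PFP, to conclude that $\id_{\cantor}\segm f$. Combining this with $f\leq\id_{\cN}$ from \cref{ConsequencesofUniversality} and the equivalence $\id_{\cantor}\equiv\id_{\cN}$, we get $f\equiv\id_{\cN}$.

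Thus, up to continuous equivalence, $\mathcal{F}$ is the union of $\sC$ with the two singleton equivalence classes $\{\id_\Q\}$ and $\{\id_{\cN}\}$. Since $\sC$ is \bqo{} by \cref{Introthm:BQOonScat} and finite quasi-orders are trivially \bqo{}, and since \bqo{} is preserved under finite unions, continuous reducibility is \bqo{} on $\mathcal{F}$. There is no real obstacle here, since the OGD hypothesis has already done all the work of supplying the missing PFP; the argument is a direct transcription of the proof of \cref{FirststepforBQOthm} with the one line about analyticity replaced by a reference to the new proposition.
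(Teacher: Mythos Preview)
Your proposal is correct and matches the paper's approach exactly: the paper does not spell out a separate proof but simply remarks that combining the PFP (granted by OGD via the preceding proposition) with \cref{Introthm:BQOonScat} lets one rerun the argument of \cref{FirststepforBQOthm}. One small detail worth tightening: the codomain $Y$ is only assumed metrizable, so before invoking the preceding proposition you should pass to the equivalent function $f:X\to\im f$, whose codomain is separable metrizable since $X$ is separable and $f$ is continuous.
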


\bibliographystyle{alpha}
\bibliography{biblio.bib}

\end{document}